\documentclass[12pt]{article}
\usepackage[utf8]{inputenc}
\usepackage[english]{babel}
\usepackage{amsmath,amssymb,mathrsfs,color}
\usepackage{amsthm}
\usepackage{mathtools,bm}
\usepackage{enumerate}
\usepackage{hyperref}
\usepackage{fullpage}
\usepackage{graphicx}
\usepackage{caption}
\usepackage{subcaption}
\usepackage{stmaryrd}
\SetSymbolFont{stmry}{bold}{U}{stmry}{m}{n}


\numberwithin{equation}{section}
\theoremstyle{plain}
\newtheorem{theorem}{Theorem}[section]

\newtheorem{corollary}[theorem]{Corollary}

\newtheorem{lemma}[theorem]{Lemma}
\newtheorem{proposition}[theorem]{Proposition}

\theoremstyle{definition}
\newtheorem{definition}[theorem]{Definition}

\newtheorem{example}[theorem]{Example}

\newtheorem{remark}[theorem]{Remark}


\newcommand{\abs}[1]{\left\vert #1 \right\vert}
\newcommand{\compAbs}[1]{\left\vert#1\right\vert_{\text{c}}}
\newcommand{\bracket}[1]{\left\llbracket #1 \right\rrbracket}
\newcommand{\scalar}[1]{\left\langle #1 \right\rangle}
\newcommand{\norm}[1]{\left\Vert #1\right\Vert}

\newcommand{\R}{\ensuremath{{\mathbb R}}}

\newcommand{\Id}{\mbox{Id}}
\newcommand{\myspan}{\text{span}}


\def\gs{\sigma}

\def\cF{\mathcal{F}}
\def\cP{\mathcal{P}}\def\cQ{\mathcal{Q}}
\def\cR{\mathcal{R}}
\def\cI{\mathcal{I}}
\def\cC{\mathcal{C}}

\def\cS{\mathcal{S}}\def\cD{\mathcal{D}}


\def\bN{\mathbb{N}}
\def\bR{\mathbb{R}}

\def\bZ{\mathbb{Z}}
\def\bE{\mathbb{E}}
\def\bP{\mathbb{P}}


\def\fc{\mathbf c}

\def\fk{\mathbf k}\def\fl{\mathbf l}
\def\fm{\mathbf m}\def\fn{\mathbf n}\def\fN{\mathbf N}

\def\fr{\mathbf r}
\def\fs{\mathbf s}\def\ft{\mathbf t}
\def\fu{\mathbf u}\def\fv{\mathbf v}
\def\fx{\mathbf x}
\def\fy{\mathbf y}\def\fz{\mathbf z}

\def\falpha{{\boldsymbol\alpha}}
\def\fbeta{{\boldsymbol\beta}}
\def\fgamma{{\boldsymbol\gamma}}
\def\fdelta{{\boldsymbol\delta}}
\def\fepsilon{{\boldsymbol\epsilon}}
\def\flambda{{\boldsymbol\lambda}}
\def\fmu{{\boldsymbol\mu}}

\def\fxi{{\boldsymbol\xi}}
\def\foo{{\boldsymbol 0}}
\def\fone{{\boldsymbol 1}}

\def\fT{\mathbf T}

\def\supp{\mbox{supp}}

\title{A stochastic reconstruction theorem on rectangular increments with an application to a mixed hyperbolic SPDE}
\author{
   Carlo Bellingeri\thanks{ \texttt{carlo.bellingeri@univ-lorraine.fr}}\\
  IECL, Université de Lorraine
  \and
 Hannes Kern \thanks{\texttt{kern@math.tu-berlin.de}}\\
 TU Berlin
}
\date{}

\begin{document}

\maketitle
\begin{abstract}
We extend the stochastic reconstruction theorem to a setting in which the underlying family of distributions satisfies natural conditions involving rectangular increments. This allows us to establish the well-posedness of a new class of mixed stochastic partial differential equations of hyperbolic type, which combine standard Walsh stochastic integration with Young products.
	\end{abstract}
 \medskip
 
 \textbf{Keywords:} Reconstruction theorem, Multiparameter stochastic integration, Hyperbolic SPDEs.

\medskip

  \textbf{MSC 2020:} 60L30, 60H15, 46F10.
\tableofcontents

\section{Introduction}
\subsection{Stochastic reconstruction and hyperbolic SPDEs}
Since its introduction in \cite{Hairer2014}, the reconstruction theorem has not only proved to be a foundational result in the theory of regularity structures but also a powerful result in the theory of distributions, as explained in \cite{zambotti2020}.  Loosely speaking,  this theorem  identifies optimal conditions on a  family of distributions $F=\{ F_\fx\colon \fx\in \mathbb{R}^d\}$, called germ, under which there exists a unique distribution  $\mathcal{R}(F)$ which provides a   ``good approximation" of  $F_\fx$ around each point $\fx\in \mathbb{R}^d$. The simplest set of conditions to formulate these properties involves the property of coherence\footnote{The usual definition of coherence involves $\gamma\in \mathbb R$ but in the context of this article, we focus on the case of unique reconstruction, which requires $\gamma>0$. }:  for some $\gamma>0$ and $\alpha\leq 0$ one has a growth condition of the form
\begin{equation}\label{ineq:coherenceIntro}
    \abs{(F_\fx- F_\fy)(\psi_\fx^\lambda)} \lesssim \lambda^{\alpha}(|\fx- \fy|+ \lambda)^{\gamma- \alpha}
\end{equation}
where $\lambda\in (0,1]$, $\fx, \fy\in \mathbb R^d$, 
 $\psi_\fx^\lambda(\cdot)=\lambda^{-d}\psi(\lambda^{-1}(\cdot- \fx))$ is a rescaled and recentered version of a test function $\psi$ and $\lesssim$ denotes inequality up to a constant. Further analytic conditions were studied in \cite{Broux2023,ZorinKranich2022}.

In this paper, we are interested in a generalization of this result where the underlying family $F$ is composed of random distributions depending on a probability space  $(\Omega, \mathcal{F}, \mathbb P)$ which are properly adapted to a multiparameter filtration $\mathcal{F}=\{\mathcal{F}_{\fx} \colon \fx\in \mathbb R^d\}$ (see Definition \ref{defn_adapted}). This setting is not novel in the literature. A previous result \cite{kern2021} showed we can modify the conditions \eqref{ineq:coherenceIntro}  into
\begin{align}
\begin{split}\label{ineq:stochCoherenceIntro}
    \norm{(F_\fx- F_\fy)(\psi_\fx^\lambda)}_m &\lesssim \lambda^{\alpha}(|\fx- \fy|+ \lambda)^{\gamma- \alpha}\,, \\ 
    \norm{\bE^i_{\fx}(F_\fx- F_\fy)(\psi_\fx^\lambda)}_m &\lesssim \lambda^\alpha(\abs{\fx-\fy}+\lambda)^{\gamma-\alpha+\delta}\quad \text{for any}\;  i\leq d\,,
\end{split}
\end{align}
where $\delta>0$,  $ \Vert X \Vert_{m}= \mathbb E (|X|^m)^{1/m}$ and  $\bE^i_{\fx}(X)=\bE[X|\cF^i_\fx]$ is the conditional expectation with respect to  $\cF^i_\fx$, the  filtration generated by the $i$-th component of $\fx$. Remarkably, the uniqueness result holds even in the case $0>\gamma>-d/2$ provided that $\gamma+ \delta>0$. Heuristically, we leverage the gain of regularity obtained by conditioning the germ by allowing  a lower index of general regularity $\gamma$. This type of argument is not new in the rough analysis literature but  it follows the same idea of the stochastic sewing lemma \cite{le2020}, from which  \cite{kern2021} was inspired. Moreover, stochastic sewing arises as a special case of the stochastic reconstruction when $d=1$, see \cite[Section 5]{kern2021}. From a stochastic analysis perspective, stochastic reconstruction provides a  new analytic framework to study Walsh integration \cite{walsh1986} with respect to a white noise $\xi$ over $\mathbb R^d_+:=[0,+ \infty)^d$ as a Young product, see \cite{zambotti2020}. Recalling the existence of a measurable modification of $\xi$ in the space of distribution, for any adapted process $Y$ satisfying some weak regularity assumption one can check that the germ $(Y\cdot \xi)_{\fx}(\psi):=Y_{\fx}\xi(\psi) $ satisfies \eqref{ineq:stochCoherenceIntro} and one obtains 
\begin{equation}\label{eq_sto_integral}
\int_{[0, +\infty)^d}\psi(\fx)Y_{\fx}\xi(d\fx)=\mathcal{R}(Y\cdot\xi)(\psi)\,,\end{equation}
where the left-hand side is a standard Walsh integral, see \cite[Theorem 53]{kern2021}.

However, looking at the several applications  coming from stochastic sewing in the recent literature, very few of them were extended in higher dimensions via stochastic reconstruction.  One key example to understand this discrepancy is the impossibility of multiplying a multiparameter martingale with a deterministic distribution. Consider e.g. a $d$-dimensional Brownian sheet $B$  and a deterministic distribution $\zeta$ and take the germ $(B\cdot\zeta)_{\fx}(\psi) := B_\fx\zeta(\psi)$. In that case, we see that  the quantity 
\begin{equation}\label{eq:key_point}
    \bE^i_{\fx} [((B\cdot\zeta)_{\fy}- (B\cdot\zeta)_{\fx}) (\psi_\fx^\lambda) ]
\end{equation}
has a better regularity (it just vanishes) only when $d=1$. In higher dimensions we do not have any improvement, locking the possibility of using the stochastic reconstruction theorem. 

The first major novelty of this paper is the introduction of new conditions on random germs so that it is still possible to reconstruct stochastic integrals, and they also allow a germ like $B\cdot\zeta$ to be reconstructed. Indeed in the simple case $d=2$ we can consider the rectangular increment of the previous germ $B\cdot\zeta$
\[ \square^{\{1,2\}}_{\fx,\fy}(B\cdot\zeta):= (B\cdot\zeta)_{(y_1,y_2)}-(B\cdot\zeta)_{(x_1,y_2)} - (B\cdot\zeta)_{(y_1,x_2)}+  (B\cdot\zeta)_{(x_1, x_2)}\,.\]
A simple computation  shows that for $i \in \{1,2\}$ 
  \[\bE^i_{\fx} (\square^{\{1,2\}}_{\fx,\fy}(B\cdot\zeta)(\psi_{(x_1,x_2)}^\lambda))=0\,.\]
And we recover the regularising effect of conditioning.  By simply noticing that this cancellation also holds even when we consider the germ $(Y\cdot \xi)$  a version of stochastic reconstruction on rectangular increment could be a more powerful tool to extend the properties of stochastic sewing in several dimensions. This idea does fit also with the historical multiparameter extension of stochastic calculus, see \cite{Imkeller85, Khoshnevisan02, norris95} where the combinatorics of rectangular increments play an important role. 

To state a proper result we will need to apply conditional expectation in different directions using the notation $\bE^\eta_\fx := \prod_{i\in\eta} \bE^i_\fx$ for any subset $\eta \subset\{1\,,\ldots \,, d\}$. Note that this is only well-defined if the filtrations $\mathcal{F}^i_\fx$ fulfill a commuting property, which is standard for multiparameter martingale theory, see for example \cite[Section 3.4]{Khoshnevisan02}. Combining the conditioning operators and the rectangular increments of the form 
$\abs{\square^\theta_{\fx,\fy} F(\psi)}$ for each $\theta\subset\{1,\dots,d\}$, see Section \ref{sec:preliminaries}, we can formulate in a simplified form the first main result of the paper. For a rigorous statement see Theorem \ref{theo:reconstruction}.
\begin{theorem}\label{thm:rec_form}
Let $F$ be a stochastic germ satisfying the rectangular coherence property:  for some $m\ge 2$, $\alpha<0$, $\delta>0$ and $\gamma>-\frac 12$ such that $\gamma+\delta > 0$ one has the growth condition
  \[ 
  \norm{\bE^\eta_\fx \square^\theta_{\fx,\fy} F(\psi_\fy^{\flambda})}_m \lesssim \prod_{i=1}^d\lambda_i^{\alpha}\prod_{i\in \theta}(|x_i-y_i|+\lambda_i)^{\gamma-\alpha}\prod_{j\in \eta}(|x_j-y_j|+\lambda_i)^{\delta}\,,
  \]
 where $\flambda=(\lambda_1\,, \ldots\,, \lambda_d)\in (0,1]^d$, $\fx= (x_1\,, \ldots ,x_d)$ $\fy= (y_1\,, \ldots ,y_d)$ are in compact set, and  $\psi_\fy^{\flambda} $ is a rescaled test function like in  \eqref{eq:rescaled_psi}. Then there exist unique family of distributions $\mathcal{R}^\theta_\fx (F)$ for each $\theta\subset\{1,\dots,d\}$ , such that $\mathcal{R} _\fx^\theta(F)$ does not depend on $x_i$ for $i\in\theta$ and for any non empty subset $\eta\subset\theta\neq\emptyset$ one has the growth condition 
\begin{equation}\label{eq:reconstruction,estimate}
\norm{\bE^\eta_\fx\sum_{\hat\theta \subset\theta}(-1)^{\sharp\hat\theta} \mathcal{R} ^{\hat\theta}_\fx(F)(\psi^\flambda_\fz)}_m \lesssim \prod_{i\in \theta}\lambda_i^{\gamma}\prod_{j\in\eta} \lambda_j^\delta\prod_{k\in \theta^c}\lambda_k^{\alpha}\,,
 \end{equation}
 for any $\fx,\fz$ such that $x_i=z_i$ for $i\in\theta$ and $\flambda\in(0,1]^d$. We call $\mathcal{R}^{\{1, \ldots , d\}}_{\fx}(F)$ the multiparameter  reconstruction of the stochastic germ $F$ and we denote it by $\mathcal{R} (F)$.
\end{theorem}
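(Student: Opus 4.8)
The strategy is to follow the classical reconstruction argument (dyadic/wavelet decomposition of test functions, summing over scales) but run it simultaneously in all $d$ coordinate directions, tracking the extra decay obtained from conditioning. First I would fix a compactly supported smooth wavelet basis adapted to the parabolic/anisotropic scaling — concretely, a tensor-product wavelet family $\{\eta^{(n)}_\fz\}$ where $n=(n_1,\dots,n_d)$ records a separate dyadic scale $2^{-n_i}$ in each direction — so that any rescaled test function $\psi^\flambda_\fz$ can be expanded with rapidly decaying coefficients against the $\eta^{(n)}$'s. For each $\theta\subset\{1,\dots,d\}$ the candidate $\mathcal R^\theta_\fx(F)$ is built as the (conditional) limit of the approximating distributions $F^{(n)}_\theta(\psi) := \sum_\fz F_{\pi_\theta(\fz)}(\eta^{(n)}_\fz)\langle \eta^{(n)}_\fz,\psi\rangle$, where $\pi_\theta$ freezes the coordinates in $\theta$ at the base point; the point of the rectangular-increment hypothesis is exactly that $\square^\theta_{\fx,\fy}F$ controls how much $F^{(n)}_\theta$ changes when we refine scales in the $\theta$-directions, while the conditioned bounds $\bE^\eta_\fx\square^\theta_{\fx,\fy}F$ give the extra $\delta$-power needed to make the series converge even when $\gamma\in(-\tfrac12,0]$.

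The core estimate is the telescoping bound: write $F^{(n+e_i)}_\theta - F^{(n)}_\theta$ as a sum over $\fz$ of terms of the shape $\square^{\{i\}}_{\cdot,\cdot}F$ (or, for $i\in\theta$, of the already-frozen germ differences) paired against wavelet coefficients, and estimate its $L^m$ norm — and, crucially, the $L^m$ norm of $\bE^\eta_\fx$ applied to it — using the rectangular coherence hypothesis. Here the deterministic part of the argument only yields summability of the $\gamma$-scales when $\gamma>0$; to get past that I would invoke the stochastic (BDG/Rosenthal-type) inequality for the sum over the spatial index $\fz$ at a fixed scale, exactly as in the stochastic sewing/stochastic reconstruction of \cite{le2020,kern2021}: the conditioned increments behave like martingale differences in each of the directions in $\eta$, so their $L^m$ norm over $\sim 2^{n_i}$ terms grows like $2^{n_i/2}$ rather than $2^{n_i}$, and this square-root gain is what turns the borderline exponent $\gamma>-\tfrac12$ (together with $\gamma+\delta>0$) into genuine convergence. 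Summing the geometric series in all directions then produces both the existence of $\mathcal R^\theta_\fx(F)$ and the quantitative bound \eqref{eq:reconstruction,estimate}, after checking that the limit does not depend on the chosen wavelet basis and that $\mathcal R^\theta_\fx(F)$ is independent of $x_i$ for $i\in\theta$ (immediate from the construction, since those coordinates were frozen).

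For uniqueness I would argue as usual: if $G^\theta_\fx$ is another family with the same property, then the alternating sum $\sum_{\hat\theta\subset\theta}(-1)^{\sharp\hat\theta}(\mathcal R^{\hat\theta}_\fx(F)-G^{\hat\theta}_\fx)$ satisfies \eqref{eq:reconstruction,estimate} with the $F$-germ removed, hence tested against $\psi^\flambda_\fz$ it is $O(\prod_{i\in\theta}\lambda_i^\gamma\prod_{j\in\eta}\lambda_j^\delta\prod_k\lambda_k^\alpha)$; feeding this through the conditioned BDG estimate once more and using $\gamma+\delta>0$ forces the difference to vanish as $\flambda\to 0$, and an induction on $\sharp\theta$ (peeling off one direction at a time, starting from $\theta=\emptyset$ where the statement is vacuous) upgrades this to $\mathcal R^\theta_\fx(F)=G^\theta_\fx$ for all $\theta$. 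The main obstacle I anticipate is purely combinatorial–bookkeeping: organizing the multi-index induction over subsets $\theta$ and $\eta$ so that the "conditioning in the $\eta$-directions" and "freezing in the $\theta$-directions" interact correctly with the tensor wavelet scales, and verifying that the commuting-filtration hypothesis lets one apply the one-parameter martingale inequality iteratively, one direction at a time, without losing the mixed bounds. The analytic content beyond this is a direct multiparameter rerun of \cite{kern2021}.
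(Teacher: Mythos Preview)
Your plan is essentially the paper's own proof: tensor-product Daubechies wavelets, the approximations $\mathcal{R}^{\theta,\fn}_\fx(\psi)=\sum_{\fy\in\Delta_\fn}F_{\pi^\theta_\fy\fx}(\phi^\fn_\fy)\langle\phi^\fn_\fy,\psi\rangle$, a Cauchy argument on the alternating sums, the square-root gain from a BDG-type inequality to push past $\gamma>-\tfrac12$, and uniqueness by induction on $\sharp\theta$. Two places deserve sharpening, and the paper's execution differs from your sketch there.

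First, the telescoping you describe is one direction at a time, controlling $F^{(\fn+e_i)}_\theta-F^{(\fn)}_\theta$ via $\square^{\{i\}}$. In $d>1$ this does not by itself give a multiparameter Cauchy sequence: one needs the \emph{mixed} increments $\prod_{i\in S}(\mathrm{Id}-I^i)$ for every $S\subset[d]$ to be summable (this is the multiparameter Cauchy criterion the paper isolates as a lemma). The paper therefore does not telescope $\mathcal{R}^{\theta,\fn}_\fx$ directly but rather the alternating object $g^{\kappa,\fn}_\fx=\sum_{\eta\subset\kappa}(-1)^{\sharp\eta}\mathcal{R}^{\eta,\fn}_\fx$, and the heart of the proof is a combinatorial identity showing that $\prod_{i\in\theta}(\mathrm{Id}-I^i)g^{\kappa,\fn}_\fx(\hat\phi^{\chi,\fn}_\fz)$ collapses to a single $\square^\kappa$-increment of $F$ evaluated at a shifted point. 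This is exactly the ``bookkeeping obstacle'' you anticipate, but it is the substantive step, not just bookkeeping; without this collapse you cannot feed the rectangular coherence hypothesis into the multi-directional increment.

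Second, the stochastic input is not an iterated one-parameter BDG applied direction-by-direction but a genuine multiparameter BDG inequality (taken from \cite{multiparameterStochSewing}), whose conclusion is a sum over all decompositions $\theta\setminus\eta=\theta_1\sqcup\theta_2$ weighting some factors linearly and others in $\ell^2$. The specific shape of that bound is what produces the exponent $\xi=(\alpha+r)\wedge(\gamma+\delta)\wedge(\gamma+\tfrac12)$ governing convergence; iterating one-dimensional BDG under the commuting-filtration assumption will indeed yield this, but the mixed $\ell^1/\ell^2$ structure must be tracked explicitly, not inferred from the $d=1$ case.
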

\begin{remark}
    Unlike the statements in \cite{zambotti2020,kern2021}, in this statement we allow one to freeze certain variables $x_i, i\notin\theta\subset\{1,\dots,d\}$ and only apply the reconstruction techniques for directions $i\in\theta$. As a result, we do not get a single distribution $\cR(F)$, but a family of partial reconstructions $\cR^\theta_\fx(F)$. Let us illustrate these on example \eqref{eq_sto_integral} in $d=2$. By the same arguments as \cite[Theorem 53]{kern2021},
    \begin{align*}
        \cR^\emptyset_\fx(Y\cdot\xi)(\psi) &= \int_{[0,+\infty)^2} \psi(\fz)Y_\fx\xi(d\fz) \\
        \cR^{\{1\}}_\fx(Y\cdot\xi)(\psi) &= \int_{[0,+\infty)^2} \psi(\fz)Y_{(z_1,x_2)}\xi(d\fz)\\
        \cR^{\{2\}}_\fx(Y\cdot\xi)(\psi) &= \int_{[0,+\infty)^2} \psi(\fz)Y_{(x_1,z_2)}\xi(d\fz)\\
        \cR(Y\cdot\xi)(\psi) = \cR^{\{1,2\}}_\fx(Y\cdot\xi)(\psi) &= \int_{[0,+\infty)^2} \psi(\fz)Y_\fz\xi(d\fz)\,,
    \end{align*}
    are the Walsh integrals, where $\theta$ decides over which variables we integrate $Y$. For $i\notin\theta$, $Y$ gets evaluated at the ``frozen'' point $x_i$. This also links \eqref{eq:reconstruction,estimate} to rectangular increments, as in the above example
    \[
        \sum_{\hat\theta \subset\theta}(-1)^{\sharp{\hat\theta}} \mathcal{R} ^{\hat\theta}_\fx(Y\cdot\xi)(\psi) = \int_{[0,+\infty)^2} \psi(\fz)\square^\theta_{\fx,\fz} Y \xi(d\fz)
    \]
    is the integral over the rectangular increments of $Y$ for all $\theta\subset\{1,2\}$.
\end{remark}
\begin{remark}
It should be noted that the proof of the stochastic reconstruction theorem is closer to the original proof of Martin Hairer than the later improvements \cite{zambotti2020,Broux2023,ZorinKranich2022}.  This is because Martin Hairer's proof is based on wavelet approximations, which approximate the target distribution $f$ with the  sum
\begin{equation}\label{eq:approx_reconstruction}
\mathcal{R}(F)(\psi) \approx \sum_{\fx\in\Delta_\fn} F_\fx(\phi^\fn_\fy)\scalar{\phi^\fn_\fy,\psi}\,,
\end{equation}
where $\phi^\fn_\fy$ is a Daubechies-wavelet and $\Delta_\fn$ is an increasingly fine mesh over $\bR^d$, see Section \ref{sec:preliminaries}. This particular structure allows to treat the convergence of these sums in the same way as  \cite{le2020} by using a proper estimate on sums via a generalized BDG inequality, see Lemma \ref{lem:multiparameterBDG}. Moreover, wavelets also play an important role when we want to integrate the reconstruction to obtain a proper random field (see Proposition \ref{prop:h}).
\end{remark}

From the possibility of  multiplying a Brownian sheet with a deterministic distribution via stochastic reconstruction in Theorem \ref{thm:rec_form} we will be able to provide a full solution theory  for a new type of mixed non-linear SPDEs
\begin{equation}\label{eq:spde}
\begin{split}
    \frac{\partial^d }{\partial x_1\ldots\partial x_d}u(\fx) &= \gs(u(\fx))\xi+ f(\fx)u(\fx)\frac{\partial^d }{\partial x_1\ldots\partial x_d}Z(\fx)\quad  \fx \in \mathbb R_+^d\,\\
u(\fx)&=v(\fx) \qquad  \quad \fx\in \partial \mathbb R_+^d=\bigcup_{i=1}^d\{\fx\in \mathbb R_+^d \colon x_i=0 \}
\end{split}
\end{equation}
where $\xi$ is a white noise over $ \mathbb R_+^d$, $\sigma\colon \mathbb R\to \mathbb R$ is a generic non-linearity, $f\colon \Omega\times \mathbb R_+^d\to \mathbb R$ is a given random field, and $Z\colon \mathbb R_+^d\to \mathbb R$ is a deterministic H\"older field in $C^{\beta}$, see Section \ref{sec:hoelderSpacesRandom}.

Equation \eqref{eq:spde} might seem unusual but when $d=2$  arises under different forms in new and old problems in PDE and SPDE theory: firstly, when $\sigma=0$ and $f\equiv 1$, the equation \eqref{eq:spde}  becomes the signature kernel PDE, see \cite{kernel_pde}, which   plays an important role in the theory of signature Kernels, a possible theory to implement machine learning techniques with sequential data. In particular, equation  \eqref{eq:spde} when $d=2$ represents a possible  stochastic perturbation  of the  signature kernel PDE. Concerning the deterministic literature, equation \eqref{eq:spde} represents a stochastic perturbation of a special differential  
equation driven by a H\"older field on hyper-cubes, see \cite{harang21}, hence we decided to formulate the equation in a generic dimension $d$. Concerning the opposite case when $f=0$ the underlying SPDE is classical and it has been intensively studied, see e.g. 
\cite{cairoli72,nualart89,nualart89A,millet94} in dimension $d=2$. 
We also remark that the differential operator $\frac{\partial^2 }{\partial x_1\partial x_2}u$ is naturally linked with the wave operator. Therefore  equation  \eqref{eq:spde} when $d=2$ could  help us to understand  path-wise well-posedness of stochastic wave equation in future investigations. From a mathematical perspective, equations such as \eqref{eq:spde} do not seem to have been addressed in the literature yet, and from a technical point of view, both classical deterministic techniques and techniques based on standard stochastic integration fail to establish well-posedness of this problem.

To give a rigorous meaning to  equation \eqref{eq:spde}, we recall the identification of stochastic integration with the reconstruction in \eqref{eq_sto_integral} and we will describe the products  in the right-hand side as reconstruction of germs of the simple form recalled before. Then we can write the equation into its integral form after applying an integration map
\[f\to \int_{\foo}^{\fx}f (d\fz)\,, \quad \int_{\foo}^{\fx}= \int_{0}^{x_1} \ldots \int_{0}^{x_d} \]
taking the primitive of a distribution in $d$ different variables, and we get to the equation 
\begin{equation}\label{equation_mild}
   u(\fx) =  \mathcal{I}(v)(\fx) + \int_{\foo}^{\fx} \sigma(u(\fy)) \xi(d\fy) + \int_\foo^\fx f(\fy)u(\fy)\frac{\partial^d }{\partial x_1\ldots \partial x_d}Z(d\fy) \,,
\end{equation}
for some explicit boundary term $\mathcal I(v)(\fx)$, see equation \eqref{equation_boundary_function}. Our second main result will be the identification of a class of random field $u$, the space $C^{\alpha, \delta}L_m$, see Definition \ref{def:C_alpha-stoc} where the equation \eqref{equation_mild} is globally well-posed in the so-called Young regime, i.e. when the trajectories of $Z$ are slightly more regular than the trajectories of Brownian sheet. See Theorem \ref{spde_thm} for a rigorous statement.

\begin{theorem}
Let $m$ be in $[2,+\infty)$ and $ Z\in  C^{\beta}$ with $\beta >1/2$. Let $\sigma$ be a Lipschitz function and consider the parameters $\delta$ and $\alpha$ such that $\delta= \beta- 1/2$ and $\delta\leq \alpha< 1/2$,  $\alpha+\beta+ \delta>1$. Then for 
every $v\colon \partial \mathbb R_+^d\to \mathbb R$ such that  $\mathcal{I}(v)\in C^{\alpha, \delta}L_m$ and any $f\in  C^{\alpha, \delta}L_{\infty}$ there exists a unique solution $u\in  C^{\alpha, \delta}L_m$ to \eqref{equation_mild} starting
from $v$ over any finite time interval.
\end{theorem}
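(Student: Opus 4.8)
The plan is to recast the mild equation \eqref{equation_mild} as a fixed point problem in $C^{\alpha,\delta}L_m$ (Definition \ref{def:C_alpha-stoc}), solve it by Banach's theorem on a small cube $[0,T]^d$, and then propagate the local solution to an arbitrary box $[0,S]^d$. Fixing boundary data $v$ with $\mathcal I(v)\in C^{\alpha,\delta}L_m$, fixing $f\in C^{\alpha,\delta}L_\infty$, and abbreviating $\partial^d Z=\tfrac{\partial^d}{\partial x_1\cdots\partial x_d}Z$, consider
\[
\Gamma(u)(\fx)=\mathcal I(v)(\fx)+\int_\foo^\fx\sigma(u(\fy))\,\xi(d\fy)+\int_\foo^\fx f(\fy)u(\fy)\,\partial^d Z(d\fy),
\]
where, following \eqref{eq_sto_integral} and the remarks after Theorem \ref{thm:rec_form}, the two integrals are read as the reconstructions $\mathcal R(\sigma(u)\cdot\xi)$ and $\mathcal R(fu\cdot\partial^d Z)$ of the germs $\fy\mapsto\sigma(u(\fy))\,\xi(\cdot)$ and $\fy\mapsto f(\fy)u(\fy)\,\partial^d Z(\cdot)$, post-composed with the $d$-fold primitive map of Proposition \ref{prop:h}. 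I would first show that $\Gamma$ is a well-defined contraction of a ball of $C^{\alpha,\delta}L_m$ over $[0,T]^d$ for $T$ small; the global statement then follows by patching.

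The core point is that both germs satisfy the rectangular coherence hypothesis of Theorem \ref{theo:reconstruction}. For the stochastic term, since $\partial^d Z$ is absent there, $\square^\theta_{\fx,\fy}(\sigma(u)\cdot\xi)(\psi_\fy^{\flambda})=\big(\square^\theta_{\fx,\fy}\sigma(u)\big)\,\xi(\psi_\fy^{\flambda})$, and combining that $\sigma$ is Lipschitz, that $u\in C^{\alpha,\delta}L_m$ forces $\square^\theta$ to gain a power $\alpha$ per direction of $\theta$, and the Gaussian scaling $\norm{\xi(\psi_\fy^{\flambda})}_{2m}\lesssim\prod_i\lambda_i^{-1/2}$, one obtains the unconditioned bound with coherence index $\alpha-\tfrac12>-\tfrac12$; the $\bE^\eta_\fx$-bound uses the martingale cancellation for Walsh integrals recalled in the introduction (for $j\in\eta$ the part of $\psi_\fy^{\flambda}$ lying after $x_j$ in the $j$-th coordinate has vanishing $\cF^j_\fx$-conditional expectation), which supplies the conditioning gain. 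For the Young term, $\square^\theta_{\fx,\fy}(fu\cdot\partial^d Z)(\psi_\fy^{\flambda})=\big(\square^\theta_{\fx,\fy}(fu)\big)\,\partial^d Z(\psi_\fy^{\flambda})$ with $\norm{\partial^d Z(\psi_\fy^{\flambda})}\lesssim\prod_i\lambda_i^{\beta-1}$ since $Z\in C^\beta$, and a product estimate (using $f\in C^{\alpha,\delta}L_\infty$ and $u\in C^{\alpha,\delta}L_m$) gives $fu\in C^{\alpha,\delta}L_m$, so one gets rectangular coherence with index $\alpha+\beta-1$ and conditioning gain $\delta$, the one built into $C^{\alpha,\delta}L_m$. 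The standing assumptions $\delta=\beta-\tfrac12$, $\delta\le\alpha<\tfrac12$, $\alpha+\beta+\delta>1$ are exactly what makes both germs fall under Theorem \ref{theo:reconstruction} and makes the two reconstructed-and-integrated terms land in $C^{\alpha,\delta}L_m$; the binding constraint is $\gamma+\delta>0$ for the Young germ, which reads $\alpha+\beta+\delta>1$ (the stochastic germ only needs $\alpha>0$, its conditioning gain being the stronger martingale one). Keeping track of the side length of the cube throughout these estimates yields a factor $T^{\kappa}$ for some $\kappa>0$, so that
\[
\norm{\Gamma(u)}_{C^{\alpha,\delta}L_m;[0,T]^d}\le\norm{\mathcal I(v)}_{C^{\alpha,\delta}L_m}+C\,T^{\kappa}\big(1+\norm{f}_{C^{\alpha,\delta}L_\infty}\big)\big(1+\norm{u}_{C^{\alpha,\delta}L_m;[0,T]^d}\big),
\]
and $\Gamma$ maps a large enough ball into itself once $T$ is small.

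Both integration maps are linear in the germ, so for $u,\tilde u$ in the ball the difference $\Gamma(u)-\Gamma(\tilde u)$ is the sum of the integrals of $(\sigma(u)-\sigma(\tilde u))\cdot\xi$ and $f(u-\tilde u)\cdot\partial^d Z$; repeating the previous estimates with the Lipschitz bound $\norm{\sigma(u)-\sigma(\tilde u)}\lesssim\norm{u-\tilde u}$ in place of the growth bound gives
\[
\norm{\Gamma(u)-\Gamma(\tilde u)}_{C^{\alpha,\delta}L_m;[0,T]^d}\le C\,T^{\kappa}\big(1+\norm{f}_{C^{\alpha,\delta}L_\infty}\big)\norm{u-\tilde u}_{C^{\alpha,\delta}L_m;[0,T]^d}.
\]
Choosing $T=T_0$ with $C\,T_0^{\kappa}(1+\norm{f}_{C^{\alpha,\delta}L_\infty})\le\tfrac12$, Banach's fixed point theorem yields a unique $u\in C^{\alpha,\delta}L_m$ solving \eqref{equation_mild} on $[0,T_0]^d$; crucially $T_0$ depends only on the Lipschitz constant of $\sigma$, on $\norm{f}_{C^{\alpha,\delta}L_\infty}$, and on the fixed parameters, not on the boundary datum.

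To reach an arbitrary box $[0,S]^d$ I would iterate along a grid of mesh $T_0$. The cube $[0,T_0]^d$ is handled above; to cross a hyperplane $\{x_i=kT_0\}$ one re-reads \eqref{eq:spde} as a Goursat problem on the translated region, with new boundary data given by the restriction of the already-built solution to the incoming faces --- a legitimate datum, since that solution lies in $C^{\alpha,\delta}L_m$ --- and applies the same fixed point argument with the uniform Picard time $T_0$. As the characteristic boundary of \eqref{eq:spde} is the union of all coordinate hyperplanes, one proceeds by induction on the number of grid cubes covered (equivalently on $k_1+\dots+k_d$), solving at each step a Goursat problem on one new cube whose incoming faces carry data either from already-solved neighbours or, along the coordinate axes, from the original boundary function; since $T_0$ is uniform, finitely many steps reach $[0,S]^d$, giving existence and uniqueness there. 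I expect the main obstacle to be the second step: establishing the two rectangular coherence estimates with the sharp exponents, in particular identifying the correct conditioning gain in each case (martingale cancellation for $\sigma(u)\cdot\xi$, the $\delta$-gain from conditioning carried by $C^{\alpha,\delta}L_m$ for $fu\cdot\partial^d Z$) and verifying that rescaling to a cube of side $T$ genuinely produces a positive power $T^{\kappa}$ rather than an $O(1)$ constant; the combinatorial bookkeeping of the patching is the secondary difficulty.
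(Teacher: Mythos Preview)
Your overall strategy --- fixed point in $C^{\alpha,\delta}L_m$ on a small cube, then patch --- matches the paper's proof of Theorem \ref{spde_thm} essentially line for line, and your treatment of the Young term $fu\cdot\partial^d Z$ is correct: Proposition \ref{prop:ProductCalphaSpace} gives $fu\in C^{\alpha,\delta}L_m$, Proposition \ref{lem:prodDeterministicZeta} yields coherence and reconstructs into $C^{\beta-1}L_m$, and linearity in $u$ makes the contraction estimate immediate.

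There is, however, a genuine gap in your handling of the stochastic term. You write that ``$\sigma$ Lipschitz, $u\in C^{\alpha,\delta}L_m$ forces $\square^\theta$ to gain a power $\alpha$ per direction of $\theta$'', i.e.\ $\sigma(u)\in C^{\alpha}L_m$. This is false in dimension $d\ge 2$: for $\sharp\theta\ge 2$ one cannot bound $\square^\theta_{\fx,\fy}\sigma(u)$ by $\square^\theta_{\fx,\fy}u$ using only a Lipschitz bound --- the Taylor expansion produces cross terms like $\square^{\{1\}}u\cdot\square^{\{2\}}u$, which lose integrability (this is precisely the obstruction discussed in Section \ref{sec:OpenExtensions}). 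Proposition \ref{prop:g(u)CanBeReconstructed} only gives $\sigma(u)\in C^{\alpha/d}L_m$ via interpolation. That still suffices to \emph{reconstruct} $\sigma(u)\cdot\xi$ (any positive exponent is enough, since the conditioning gain from white noise is infinite), but your contraction argument then breaks: you cannot bound $\square^\theta(\sigma(u)-\sigma(\tilde u))$ by $\square^\theta(u-\tilde u)$ for $\sharp\theta\ge 2$, so ``repeating the previous estimates'' does not give $\norm{u-\tilde u}_{C^{\alpha,\delta}L_m}$ on the right.

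The paper bypasses this entirely via the It\^o isometry. Proposition \ref{prop:ItoRecon} and Corollary \ref{cor:ItoReconstructionInC} show directly that $\norm{\cR(\sigma(u)\cdot\xi)-\cR(\sigma(\tilde u)\cdot\xi)}_{C^{-\frac12\fone,\infty}L_m}\lesssim\norm{\sigma}_{\cC^1}\sup_\fx\norm{u(\fx)-\tilde u(\fx)}_m$, using only the \emph{sup} norm of the difference, not its rectangular H\"older structure. This is what makes the nonlinear composition harmless for the It\^o part and is the one ingredient your proposal is missing. Once you insert it, the rest of your outline (integration via Corollary \ref{cor:IntegrationInCalpha}, the $T^\kappa$ factor from the embedding \eqref{ineq:CalphaEmbedding2}, and the patching) goes through exactly as you describe.
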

\begin{remark}
Recently, a new stochastic extension of the multiparameter sewing lemma was proposed in \cite{multiparameterStochSewing} to study the effect of regularisation by noise phenomena on a similar SPDE. The techniques developed in that article are fundamental to the proof of the reconstruction theorem, which leads to Theorem \ref{thm:rec_form} as a natural extension of stochastic multiparameter sewing in a distributional setting. Concerning the equation \eqref{eq:spde}, using the $C^{\alpha, \delta}L_{m}$ spaces introduced in this paper, one can also solve it using the multiparameter stochastic sewing lemma, provided that the result of sewing lies in the appropriate spaces (see Remark \ref{rk_sew_fix_point}), which we believe requires the same complexity as the results in Section \ref{spde_section}. 
In contrast to the stochastic multiparameter sewing lemma, stochastic reconstruction has the advantage of not dealing with multiparameter delta operators $\delta^{\eta}$ (see Section \ref{sec:sto_sewing}), which become cumbersome in high dimensions. Moreover, it allows for a notion of general stochastic integration when there is no clear definition of a $2d$-parameter process to integrate (see Proposition \ref{prop:YoungProduct}). Finally, a formulation of this SPDE using a reconstruction theorem could be more helpful in extending techniques inspired by regularity structures to more general hyperbolic SPDEs. Further details on the relationship between our version of stochastic reconstruction and the stochastic multiparameter sewing lemma are studied in Section \ref{sec:sto_sewing}.
\end{remark}

\subsection{Outline of the paper}

We  outline the paper by summarising the content of its sections. In Section \ref{sec:preliminaries}, 
we start by introducing in full detail the main tools to prove the reconstruction theorem on rectangular increments. We
recall the main properties of rectangular increments of maps as explained in \cite{harang21} and extend them to germs. We then recall the main properties of wavelets  \cite{debauchies} that we will adapt to define the approximating sequence in \eqref{eq:approx_reconstruction}. We also recall some basic results on multiparameter filtrations and the main stochastic estimate in Lemma \eqref{ineq:lem3Condtition}.

In Section \ref{sec:reconstruction}, we then pass to properly define the main spaces of random distributions, random fields, and random germs to  
properly state Theorem \ref{thm:rec_form} and prove it. All definitions will also allow different H\"older parameter and regularisation regularities $\falpha=(\alpha_1\,, \ldots \alpha_d)$, $\fdelta=(\delta_1\,,\ldots \delta_d)$ to keep track of the regularisation effect on any components.

In Section 4, we finally introduce the main operations to define the SPDE \eqref{equation_mild} as a fixed point of the space $C^{\alpha, \delta}L_m$. We will consider in particular the composition (see Proposition \ref{prop:g(u)CanBeReconstructed}),  the pointwise product  (see Proposition \ref{prop:ProductCalphaSpace})  and the pointwise product between a random field and a distribution (see Propositions \ref{lem:prodDeterministicZeta} and \ref{prop:ItoRecon}) which allows us to define all the non-linearities at the level of random distributions. We then combine this operation with  a usual integration map, which extends a deterministic result obtained in \cite{Brault2019} in several dimensions. Theorem \ref{spde_thm} then checks that all these operations provide a proper fixed point over the spaces introduced in the previous section. 

In Section 5, we show that the multiparameter stochastic sewing lemma from \cite{multiparameterStochSewing} is a special case of our reconstruction theorem \ref{theo:reconstruction}. This is shown in Proposition \ref{prop:SewingIsReconstruction}. It is based on the same construction, which connects sewing and reconstruction (see e.g. \cite{Broux2022}), as well as stochastic sewing and stochastic reconstruction (see \cite{kern2021}), in dimension $d=1$.

\subsection{Open extensions arising from our work}\label{sec:OpenExtensions}

We conclude the introduction by outlining two natural extensions raised by the
results we established. In our current formulation, we can only solve \eqref{equation_mild} for non-linearities of the form $f(\fx)u(\fx)$. One natural extension of the problem would be to consider a generic non-linearity $g(u)$ in \eqref{eq:spde}. However, it seems that the structure of the spaces $C^{\alpha,\delta} L_m$ is not well suited to take generic compositions $g(u)$ for $u\in C^{\alpha,\delta} L_m$ into account. That is, even for smooth functions $g$, $g(u)$ might not belong to the same space as $u$. This stands in stark contrast to the deterministic case: In $d=2$, it is well-known that if $g\in C^2$ then  $g(u)\in C^\falpha$ for any $u\in C^\falpha$ and $\falpha=(\alpha_1, \alpha_2)\in (0,1)^2$ (see e.g. \cite[Lemma 3.1]{tindel07}). Moreover,  for any H\"older function $\sigma\in C^{\eta}$ for some $\eta\in(1,2)$, we still have that $\sigma(u)\in C^{\tilde\alpha}$ for a some $\tilde\alpha < \alpha$ (see \cite[Lemma 15]{bechtold2022}). Both of these results rely on the application of the classical Taylor formula to the rectangular increment
 \[
 \square^{\{1,2\}}_{\fx,\fy} g(u)=g(u(y_1,y_2))- g(u(x_1,y_2))- g(u(y_1,x_2)) + g(u(x_1,x_2))\,,
 \]
 which behaves like the product $\square^{\{1\}}_{\fx,\fy} u \square^{\{2\}}_{\fx,\fy} u$ under some knowledge on the second derivative of $g$. However, in the stochastic case, we need  to control $\Vert \square^{\{1,2\}}_{\fx,\fy} g(u)\Vert_m$, where the same argument provides a control of this quantity  in terms of  $\Vert\square^{\{1\}}_{\fx,\fy} u \square^{\{2\}}_{\fx,\fy} u \Vert_m$, which might explode since $\square^{\{1\}}_{\fx,\fy} u \square^{\{2\}}_{\fx,\fy} u$ only has finite moments up to  $m/2$.

In \cite{Friz_Le_Hocquet21}, the authors developed a complete theory of  rough stochastic differential equations of the form \begin{equation}\label{eq:RSDE} dY_t= b_t(Y_t)+\sigma_t(Y_t) dW_t +   f_t(Y_t) d\mathbf{X}_t \end{equation}with $W$ a Brownian motion, $\mathbf{X}= (X, \mathbb X)$  a H\"older rough path and the data $b,\sigma, f$ are taken significantly less regular than the purely deterministic theory of rough differential equation, representing the one dimensional extension of \eqref{eq:spde} with a generic non-linearity.  To achieve this, the authors introduced a new family of Banach spaces where the solution belongs using $L_{m,n}$ stochastic norms, see \cite[Section 2]{Friz_Le_Hocquet21}. Using our notations, this would be equivalent to controlling the quantities 
\begin{equation*}
    \norm{\square^{\{1,2\}}_{\fx,\fy} g(u)}_{m,n}^{\eta} := \norm{(\bE_\fx^\eta|\square^{\{1,2\}}_{\fx,\fy} g(u)|^m)^{\frac{1}{m}}}_n \, \quad \emptyset\neq \eta\subset \{1,2\}\,.
\end{equation*}
The presence of a conditional expectation inside the norm with an extra parameter $n$ were two fundamental tools to obtain a well-posed composition map in \cite[Lemma 3.13]{Friz_Le_Hocquet21}. Here, the lack of integrability is absorbed in the parameter $n$ (which becomes smaller). We expect that a similar result holds even in the rectangular setting  modulo a proper extension of the space. Unfortunately, this space is not suited to establish a proper reconstruction theorem in dimension $d>1$. While one might  generalize Theorem \ref{theo:reconstruction} to an $L_{m,n}$ germ $F$, the resulting estimate \eqref{eq:reconstruction,estimate} coming from rectangular increments does not give us a direct estimate on $\mathcal{R}(F)(\psi_{\fx}^{\lambda})$, but the best we can hope for is to have bounds on
\[
    \norm{\bE_\fx^\eta  \sum_{\hat\theta\subset\theta}(-1)^{\sharp{\hat\theta}} \mathcal{R}^{\hat\theta}_\fx(F)(\psi^{\lambda}_\fz)}_{m,n}^{\theta\setminus\eta}\,.
\]
In dimension one, this estimate together with the assumption that $\norm{F_\fx}_{n}$ is bounded uniformly  suffices to ensure that 
\[
\norm{\mathcal{R}(F)(\psi_\fx^\lambda)}_{m,n}^{\{1\}}<\infty\,.
\] 
However, already in dimension two the same assumption on $F_\fx$ only gives that
\begin{equation*}
    \norm{\mathcal{R}^{\{1\}}_\fx(F) (\psi_\fx^\lambda)}_{m,n}^{\{1\}}, \quad \norm{\mathcal{R}^{\{2\}}_\fx (F)(\psi_\fx^\lambda)}_{m,n}^{\{2\}} <\infty\,,
\end{equation*}
but makes no statement about
\[
\norm{\mathcal{R}^{\{2\}}_\fx(F) (\psi_\fx^\lambda)}_{m,n}^{\{1\}},\quad \norm{\mathcal{R}^{\{1\}}_\fx\,(F) (\psi_\fx^\lambda)}_{m,n}^{\{2\}}\,.
\]
Therefore we cannot conclude that $\norm{\mathcal{R} (\psi_\fx^\lambda)}_{m,n}^{\{i\}}$ is finite for $i=1$ or $i=2$. An extension to understand this problem could unlock the possibility of solving a more general class of SPDEs than \eqref{eq:spde}.

Another natural extension arising in this work would be to consider a deterministic perturbation $Z$ in  \eqref{eq:spde} with lower H\"older regularity $\beta\leq 1/2$ exactly as studied in \cite{Friz_Le_Hocquet21}. While it seems like a naive question, a possible solution to this question would naturally involve a rough path theory of differential equations driven by a H\"older field on hyper-cubes. A consistent solution theory for such an equation has been an open problem in the theory of rough paths since the introduction of rough sheets \cite{chouk2014roughsheets,tindel15} and it does seem out of reach with the actual technology. However, we think that the presence of this article may arouse new interest in the issue and become a further motivation to analyze this problem again.

\subsection*{Acknowledgements}
CB is supported by the ERC Starting Grant “LoRDeT.” He also acknowledges the support of TU Berlin, where he was employed when this project was initiated. HK is supported by the Research Unit FOR2402 at TU Berlin and the International Research Training Group 2544 of the Berlin Mathematical School. Both authors would like to thank the organizers of the conference “Structural Aspects of Signatures and Rough Paths” held at the Centre for Advanced Study in Oslo, where a preliminary version of these results was presented.

\subsection*{Notation}
 Throughout the paper, we will adopt the convention of writing multi-indices in $\mathbf{n}\in\bN^d$ and points in $\fx\in \bR^d$ with bold characters, with the usual notations $\fx=(x_1\,, \ldots\,, x_d)$ and $\mathbf{n}=(n_1\,, \ldots \,,n_d) $ and the convention $\bN= \{0,1,2, \ldots\}$. We also use the shorthand notation $[d]= \{1\,,\ldots\,,d\}$ for some integer $d\geq 1$, $\mathbf{0}$ for the zero vector and $\mathbf{1}$ if all the components are equal to $1$.  We write  $\fx\le \fy$ for $ \fx, \fy\in \bR^d $  if $x_i\le y_i$ for all $i\in [d]$. We extend $<,>$ and $\ge$ analogously. Generic  subsets $\theta \subset [d]$ will also play an important role at the level of multi-indices. We denote  by $\mathbf{1}_{\theta}$ the multi-index whose components are equal to $1$ according to the set $\theta$. By definition $\mathbf{0}=\mathbf{1}_{\emptyset}$,  $\mathbf{1}=\mathbf{1}_{[d]}$ and in these two special cases we will adopt the  notation of the left-hand side. More generally, for any $\mathbf{x}\in \mathbb{R}^d$ we will denote by $\mathbf{x}_{\theta}$ the vector whose components are equal to $\mathbf{x} $ according to $\theta$ and they are $0$ otherwise. Furthermore, we set $\sharp\theta$ to be the cardinality of $\theta\subset[d]$.

Most classical operations on real numbers extend trivially  to the components of a vector. For any non-empty indexset $\theta\subset [d]$ , we set in particular 
\[   (\fy)^{\fx}_\theta = \prod_{i\in \theta} y_i^{x_i} \,, 
\]
\[   \max(\fx, \fy)=\fx\vee\fy = (x_1\vee y_1,\dots, x_d\vee y_d)\,, \quad \min (\fx, \fy)=  \fx\wedge\fy= (x_1\wedge y_1,\dots, x_d\wedge y_d)\,,\]
\[
\fx * \fy = (x_1 y_1,\dots, x_d y_d)\,,
\]
 for $\fy,\fx\in\R^d$ whenever each component in the product is well-defined. Note that by this notation, $(\fy^\fx)_\theta = \fy^{\fx_\theta}$. When we have $\theta= [d]$ we will omit the set in the operation. This expression also extends to the case $\theta= \emptyset$ by imposing $(\fy^\fx)_\emptyset=1$. Similarly, we even allow mixed expressions among scalars and vectors like $\alpha^{\fy} =\alpha^{y_1}\cdots \alpha^{y_d}$ without any ambiguities. The absolute value of a vector in our setting is also a vector which is given by $\abs{\fx}_{\text{c}} = (\abs{x_1}\,, \ldots,  \abs{x_n})$. This notation extends to any subset $\theta \subset [d]$ the product
\begin{equation*}
    \abs{\fx-\fy}^{\falpha}_{\text{c},\theta} = \prod_{i\in \theta}\abs{x_i-y_i}^{\alpha_i}\,.
\end{equation*}
in a synthetic way.


Partial derivatives of  a differentiable function $\psi\colon \R^d\to \R$ are denoted by
\[\partial^{\mathbf{n}}\psi= (\partial^{n_1}_{x_1}\ldots\partial^{n_d}_{x_d})\psi\,.\]
We denote by $C^r_c$, $r\in \bN$ the space of $r$-times continuously differentiable functions $\psi\colon \R^d\to \R$ with compact support. We will refer to them as test functions. We equip $C^r_c$ with the norm
\begin{equation*}
    \norm{\psi}_{C^r_c} := \sum_{\abs{\fn}\le r} \norm{\partial^\fn\phi}_\infty\,,
\end{equation*}
where $\abs{\fn}= n_1+ \cdots + n_d$ and $\norm{\cdot}_\infty$ is the sup norm. 
For a compact set $K\subset\bR^d$, we will denote by $C^r(K)$  the subset of $C^r_c$  containing functions $\psi$ whose  support $\supp(\psi) $ is contained in $K$ and by   $B^r(K)$ the subset of functions of $C^r(K)$ such that $\norm{\psi}_{C^r}=1$. 


Throughout the text, we will use the standard notation $x \lesssim y$ to denote the existence of a constant $C > 0$, independent of the variables in $x$ and $y$, such that $x\leq C y$.

\section{Preliminaries}\label{sec:preliminaries}
We recall now the main analytic and probabilistic tools to formulate the stochastic reconstruction theorem in the context of rectangular increments.

\subsection{Rectangular increments of germs}
We begin by recalling the rectangular increments of a generic function $f\colon \R^d\to \R$ and of a generic germ of distributions. This part follows \cite{multiparameterStochSewing,harang21} with some slight differences in the notation. For more classical introductions to the topic, especially concerning multiparameter martingale theory, we recommend \cite{Khoshnevisan02,Cairoli1975}.

For any given $i\in  [d]$ and $\fx,\fy\in\R^d$ we set the projection $\pi^i_{\fx}\colon \R^d\to \R^d$ on the $i$-th coordinate of $\fx$ by
\begin{equation*}
    \pi^i_{\fx} \fy := (y_1,\dots, y_{i-1}, x_i, y_{i+1},\dots, y_d)\,.
\end{equation*}
The operators $\pi^i_{\fx}$ commute among themselves, allowing us to extend these projections on any subset $\theta\subset[d]$ and $ \fx\in \R^d$ as follows  
\begin{equation}\label{eq:projectionDef}
\pi^{\theta}_{\fx}:= \prod_{i\in\theta} \pi^i_{\fx}\,,
\end{equation}
where we adopt the shorthand notation $\pi^i_{\fx}\pi^j_{\fx}=\pi^i_{\fx}\circ \pi^j_{\fx} $ and the convention $\pi^{\emptyset}_{\fx}=\text{Id}$ the identity function on $\R^d$. We apply these projections to any function $f\colon \R^d\to \R$ by setting  
$\pi^{\theta}_{\fx}f(\fy):= f(\pi^{\theta}_{\fx}\fy)$.

For any given $\fx,\fy\in\R^d$ and subset $\theta\subset [d]$, the rectangular increment of a function $f\colon \R^d\to \R$ is defined by the expression
\begin{align}\label{rectangular_increment}
\square_{\fx,\fy}^\theta f :=&\prod_{i\in\theta}(\text{Id} - \pi^i_{\fx})f(\pi^{\theta^c}_{\fx}\fy) = \sum_{\theta'\subset\theta} (-1)^{\sharp(\theta\setminus\theta')} f(\pi^{\theta'}_{\fy}\fx)
= \prod_{i\in\theta}(\pi^i_\fy - \Id)f(\fx)\,, 
\end{align}
where we use the same convention of writing composition of operators as product and we set  $\theta^c= [d]\setminus \theta$. The second expression in \eqref{rectangular_increment} is a sum over all subset of $\theta'\subset \theta $. For $\theta= \emptyset$ we have $\square_{\fx,\fy}^\emptyset f= f(\fx)$. The equality  of these two expressions above follows from straight-forward combinatorial considerations, see  \cite[Definition 1]{harang21}.

For instance, when $d=2$ we have the expressions
\[\square_{(x_1,x_2),(y_1,y_2)}^{\{1\}} f= f(y_1, x_2)- f(x_1,x_2)\,,\quad  \square_{(x_1,x_2),(y_1,y_2)}^{\{2\}} f= f(x_1, y_2)- f(x_1,x_2)\,, \]
\[\square_{(x_1,x_2),(y_1,y_2)}^{\{1,2\}}f= f(y_1, y_2)- f(y_1, x_2)- f(x_1, y_2)+  f(x_1,x_2)\,. \]
Using this expression, we can synthetically show some lengthy identities.

\begin{lemma}\label{lem:splitSquareIncrement}
Let $\fx,\fy\in \R^d$ and $f,g\colon \R^d\to \R$. We then have the following properties.
\begin{enumerate}
    \item For any couple of disjoint sets $\theta_1,\theta_2\subset[d]$,
\begin{equation}\label{eq:compositionOfSquares}
    \square_{\fx,\fy}^{\theta_1}(\square_{\cdot,\fy}^{\theta_2} f )=  \square^{\theta_1\cup\theta_2}_{\fx,\fy} f\,.
\end{equation}
\item For every $\theta\subset[d]$, we have
\begin{equation}\label{eq:productOfSquares}
    \square_{\fx,\fy}^\theta(f\cdot g) = \sum_{\theta = \theta_1\cup\theta_2} \square^{\theta_1}_{\fx,\fy} f\cdot \square_{\fx,\fy}^{\theta_2} g\,,
\end{equation}
    where the sum runs over all $\theta_1,\theta_2\subset[d]$ (not necessarily disjoint!), such that $\theta = \theta_1\cup\theta_2$.
\item  For any couple of subsets $\eta\subset\theta\subset[d]$
\begin{equation}\label{eq:splitUpSquare}
    \square_{\fx,\fy}^\theta f = \sum_{\theta'\subset\theta\setminus\eta}(-1)^{\#(\theta\setminus(\eta\cup\theta'))}\square^\eta_{\pi^{\theta'}_{\fy}\fx,\fy} f\,.
\end{equation}
\end{enumerate}
\end{lemma}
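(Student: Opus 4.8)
The three identities are purely combinatorial statements about the finite-difference operators $\square^\theta_{\fx,\fy}$, and the cleanest route is to work consistently with the operator factorization $\square^\theta_{\fx,\fy} f = \prod_{i\in\theta}(\pi^i_\fy - \Id)f(\fx)$ from \eqref{rectangular_increment}, treating the one-dimensional difference operators $D_i := \pi^i_\fy - \Id$ as commuting linear operators (they commute because the projections $\pi^i_\fx,\pi^i_\fy$ act on distinct coordinates). Once everything is phrased in terms of the commuting family $\{D_i\}_{i\in[d]}$, each part reduces to an algebraic manipulation of products of these operators.

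For part (1), I would observe that $\square^{\theta_2}_{\cdot,\fy}f$, viewed as a function of its first argument, equals $\big(\prod_{i\in\theta_2}D_i\big)f$ evaluated at that argument; then applying $\square^{\theta_1}_{\fx,\fy}=\prod_{i\in\theta_1}D_i$ to it and using $\theta_1\cap\theta_2=\emptyset$ gives $\prod_{i\in\theta_1\cup\theta_2}D_i\, f(\fx)=\square^{\theta_1\cup\theta_2}_{\fx,\fy}f$; the disjointness is exactly what is needed so that no coordinate is "differenced twice". For part (2), I would use the product rule for a single difference operator, $D_i(fg) = (D_i f)\,g + f\,(D_i g) + (D_i f)(D_i g)$, which one checks directly from $\pi^i_\fy(fg)=(\pi^i_\fy f)(\pi^i_\fy g)$. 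Writing $D_i = \Delta_i^{(1)} + \Delta_i^{(2)}$ where $\Delta_i^{(1)}$ differences the first factor and $\Delta_i^{(2)}$ the second, the product $\prod_{i\in\theta}D_i(fg)$ expands over all ways of assigning each $i\in\theta$ to the first factor, the second factor, or both; collecting the assignment by the pair $(\theta_1,\theta_2)$ of indices that touch $f$ resp.\ $g$ yields exactly $\sum_{\theta=\theta_1\cup\theta_2}\square^{\theta_1}_{\fx,\fy}f\cdot\square^{\theta_2}_{\fx,\fy}g$, with the overlap $\theta_1\cap\theta_2$ corresponding to indices assigned to "both" — which is why the union is not required to be disjoint. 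For part (3), given $\eta\subset\theta$, I would factor $\prod_{i\in\theta}D_i = \big(\prod_{j\in\theta\setminus\eta}D_j\big)\big(\prod_{i\in\eta}D_i\big)$ and expand each $D_j = \pi^j_\fy - \Id$ in the first bracket: $\prod_{j\in\theta\setminus\eta}D_j = \sum_{\theta'\subset\theta\setminus\eta}(-1)^{\#((\theta\setminus\eta)\setminus\theta')}\prod_{j\in\theta'}\pi^j_\fy$, and since $\prod_{j\in\theta'}\pi^j_\fy$ applied before the $\eta$-differences simply amounts to evaluating the inner increment at $\pi^{\theta'}_\fy\fx$, this gives $\square^\theta_{\fx,\fy}f = \sum_{\theta'\subset\theta\setminus\eta}(-1)^{\#(\theta\setminus(\eta\cup\theta'))}\square^\eta_{\pi^{\theta'}_\fy\fx,\fy}f$, matching the claim since $(\theta\setminus\eta)\setminus\theta' = \theta\setminus(\eta\cup\theta')$ when $\theta'\subset\theta\setminus\eta$.

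The only real subtlety, and the one I would be most careful about, is bookkeeping of the sign exponents and making sure the operators genuinely commute when they act on a function of all $d$ variables — in particular that $\pi^i_\fx$ and $\pi^j_\fy$ commute for $i\ne j$ (clear, distinct coordinates) and that $\pi^i_\fx$ and $\pi^i_\fy$ do not both appear in a way that reorders inconsistently, which never happens because each coordinate $i\in\theta$ contributes a single factor $D_i$. None of the steps is deep; the "hard part" is purely organizing the expansion of a product of $\sharp\theta$ binomials cleanly, and all three parts can alternatively be verified directly from the second (explicit sum over subsets) expression in \eqref{rectangular_increment} by reindexing, which I would fall back on if the operator argument for any sign turns out ambiguous. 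I would present part (1) and (3) via the operator calculus and give part (2) the same treatment, pointing to \cite{harang21} for the scalar-function versions where they overlap.
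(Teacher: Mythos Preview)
Your proposal is correct and follows essentially the same approach as the paper: the paper also derives part (2) from the single-index product rule $\square^{\{i\}}_{\fx,\fy}(fg) = (\square^{\{i\}}_{\fx,\fy}f)g(\fx) + f(\fx)(\square^{\{i\}}_{\fx,\fy}g) + (\square^{\{i\}}_{\fx,\fy}f)(\square^{\{i\}}_{\fx,\fy}g)$ iterated via part (1), and obtains part (3) by factoring $\square^\theta = \square^{\theta\setminus\eta}(\square^\eta)$ and expanding the outer increment as a signed sum over $\theta'\subset\theta\setminus\eta$. The only cosmetic difference is that for part (1) the paper works with the representation $\prod_{i\in\theta}(\Id-\pi^i_\fx)$ evaluated at $\pi^{\theta^c}_\fx\fy$ and checks two projection identities explicitly, whereas your use of $D_i=\pi^i_\fy-\Id$ acting at $\fx$ makes the commutation and disjoint-union step slightly more transparent.
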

\begin{proof}
Composing the two definitions of rectangular increments we have 
\[ \square_{\fx,\fy}^{\theta_1}(\square_{\cdot,\fy}^{\theta_2} f ) =\prod_{i\in\theta_1}(\text{Id} - \pi^i_{\fx}) \prod_{i\in\theta_2}(\text{Id} - \pi^i_{\pi^{\theta_1^c}_{\fx}\fy})f(\pi^{\theta_2^c}_{\pi^{\theta_1^c}_{\fx}\fy}\fy)\,.\]
From the identities
\[\pi^{\theta_2^c}_{\pi^{\theta_1^c}_{\fx}\fy}\fy=\pi^{\theta_2^c\cap \theta_1^c}_{\fx}\fy\,, \quad \pi^i_{\pi^{\theta_1^c}_{\fx}\fy}= \pi^i_{\fx}\, \quad \text{for any $i\in \theta_2$}\] 
we deduce the desired equality. For the second part, note that for each $i\in[d]$, direct computation shows
\[
\square^{\{i\}}_{\fx,\fy}(f\cdot g) = (\square^{\{i\}}_{\fx,\fy} f) g(\fx) + f(\fx)(\square^{\{i\}}_{\fx,\fy} g) + (\square^{\{i\}}_{\fx,\fy} f) (\square^{\{i\}}_{\fx,\fy} g)\,.
\]
Iterating this with the use of \eqref{eq:compositionOfSquares} leads to \eqref{eq:productOfSquares}.

The third part follows automatically from \eqref{eq:compositionOfSquares} by choosing $\theta_2 = \eta$, $\theta_1 = \theta\setminus\eta$ and writing out the sum definition of $\square_{\fx,\fy}^{\theta\setminus\eta}$.
\end{proof}

%

The definition of rectangular increments \eqref{rectangular_increment} ensures that $f(\fx)$ is always contained in $\square^\theta_{\fx,\fy} f$, but $f(\fy)$ might not be. In the context of the coherence property (see Definition \ref{def:extendedCoherence}), it is more natural to work with rectangular increments containing $f(\fy)$. The following lemma allows us to do so.

\begin{lemma}\label{lem:shiftSquareIncrements}
    Fix $\fx,\fy\in\R^d$ and a function $f:\R^d\to\R$. For any couple of disjoint sets  $\theta,\eta\subset[d]$  one has
    \begin{equation}
        \square_{\fx,\fy}^{\theta} f = \sum_{\eta = \eta_1\sqcup\eta_2} (-1)^{\sharp{\eta_2}} \square^{\theta\cup\eta_2}_{\pi^{\eta_1}_\fy \fx, \fy}f\,,
    \end{equation}
     where we sum over all possible ways to represent $\eta$ as the disjoint union of $\eta_1$ and $\eta_2$.
\end{lemma}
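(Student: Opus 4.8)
The plan is to prove this by induction on $\sharp\eta$, using the single-direction identity repeatedly together with the composition rule \eqref{eq:compositionOfSquares}. The base case $\eta = \emptyset$ is trivial: the right-hand side has only the term $\eta_1 = \eta_2 = \emptyset$, giving $\square^\theta_{\fx,\fy} f$. For the inductive step, suppose the identity holds for some $\eta$ disjoint from $\theta$, and let $j \notin \theta \cup \eta$; we want to establish it for $\eta \cup \{j\}$. The key elementary observation is that for a single direction $j$, one has $\pi^j_\fy = \Id + \square^{\{j\}}_{\cdot,\fy}$ in the sense that for any $g$, $g(\pi^j_\fy \fx) = g(\fx) + \square^{\{j\}}_{\fx,\fy} g$. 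Equivalently, since $\square^\theta_{\fx,\fy} f = \prod_{i\in\theta}(\pi^i_\fy - \Id)f(\fx)$ by the third expression in \eqref{rectangular_increment}, I can rewrite $\square^\theta_{\fx,\fy}f$ with $\fx$ replaced by $\pi^j_\fy\fx$: applying the operator $\prod_{i\in\theta}(\pi^i_\fy - \Id)$ at the point $\pi^j_\fy\fx$ and noting the $\pi^i_\fy$ (for $i \in \theta$, hence $i \neq j$) commute with $\pi^j_\fy$, I get $\square^\theta_{\pi^j_\fy\fx,\fy} f = \square^\theta_{\fx,\fy} f + \square^{\theta\cup\{j\}}_{\fx,\fy} f$, i.e.
\[
\square^\theta_{\fx,\fy} f = \square^\theta_{\pi^j_\fy \fx,\fy} f - \square^{\theta\cup\{j\}}_{\fx,\fy} f\,.
\]
This is precisely the $\sharp\eta = 1$ case, and it is the atom from which everything is built.

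Now for the induction: starting from $\square^\theta_{\fx,\fy} f$, I first apply the inductive hypothesis for $\eta$, obtaining
\[
\square^\theta_{\fx,\fy} f = \sum_{\eta = \eta_1 \sqcup \eta_2} (-1)^{\sharp\eta_2}\, \square^{\theta \cup \eta_2}_{\pi^{\eta_1}_\fy \fx,\, \fy} f\,.
\]
To each summand, which has base point $\pi^{\eta_1}_\fy \fx$ and increment set $\theta \cup \eta_2$ (note $j$ is in neither $\eta_1$ nor $\theta\cup\eta_2$), I apply the atomic identity in direction $j$:
\[
\square^{\theta\cup\eta_2}_{\pi^{\eta_1}_\fy\fx,\,\fy} f = \square^{\theta\cup\eta_2}_{\pi^{\eta_1\cup\{j\}}_\fy\fx,\,\fy} f - \square^{\theta\cup\eta_2\cup\{j\}}_{\pi^{\eta_1}_\fy\fx,\,\fy} f\,,
\]
which uses that $\pi^j_\fy$ commutes with $\pi^{\eta_1}_\fy$ and with the increment operators in directions $\theta\cup\eta_2$. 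Substituting back, the first term contributes a summand indexed by $(\eta_1 \cup\{j\}, \eta_2)$ with the same sign $(-1)^{\sharp\eta_2}$, and the second contributes a summand indexed by $(\eta_1, \eta_2\cup\{j\})$ with sign $-(-1)^{\sharp\eta_2} = (-1)^{\sharp(\eta_2\cup\{j\})}$. These are exactly the two types of decompositions of $\eta\cup\{j\}$ as a disjoint union in which $j$ lands either in the first or the second block, each appearing with the correct sign, so summing over all $(\eta_1,\eta_2)$ reproduces the full sum over decompositions of $\eta\cup\{j\}$. This closes the induction.

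The only place requiring care — and the step I would flag as the main (mild) obstacle — is bookkeeping the commutation of $\pi^j_\fy$ past the operators already present: past $\pi^{\eta_1}_\fy$ this is just commutativity of the projections (as recorded before \eqref{eq:projectionDef}), and past the increment operators $(\pi^i_\fy - \Id)$ for $i \in \theta\cup\eta_2$ it holds because $i \neq j$ and the $\pi$'s commute. One must also check disjointness is preserved, i.e. that $j \notin \theta \cup \eta_2$ and $j \notin \eta_1$ at every stage, which is immediate since $j \notin \theta\cup\eta$ and $\eta_1,\eta_2$ partition $\eta$. Alternatively, one can bypass the induction entirely: expand $\square^\theta_{\pi^{\eta_1}_\fy\fx,\fy}f = \prod_{i\in\theta}(\pi^i_\fy-\Id)f$ evaluated at $\pi^{\eta_1}_\fy\fx$, write $\pi^{\eta_1}_\fy = \prod_{i\in\eta_1}(\Id + (\pi^i_\fy - \Id))$, multiply everything out, and match terms against $\square^\theta_{\fx,\fy}f = \prod_{i\in\theta}(\pi^i_\fy - \Id)f(\fx)$; the resulting identity is then a purely formal statement in the commutative algebra generated by the operators $\pi^i_\fy - \Id$, and the claimed sign $(-1)^{\sharp\eta_2}$ falls out of the binomial-type expansion $\prod_{i\in\eta}\big((\pi^i_\fy - \Id) - (\pi^i_\fy - \Id)\big)$-style cancellation. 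I would present the inductive argument as the main proof since it keeps the notation lightest.
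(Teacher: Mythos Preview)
Your inductive proof is correct. The paper, however, takes a shorter non-inductive route: it views $\square^\theta_{\fx,\fy} f$ as a function of $\fx$ (with $\fy$ fixed), writes the identity operator as
\[
\Id = \prod_{j\in\eta}\bigl(\pi^j_\fy - (\pi^j_\fy - \Id)\bigr)\,,
\]
applies this to $\square^\theta_{\fx,\fy} f$, and expands the product directly. Each summand in the expansion picks a subset $\eta_2\subset\eta$ of factors $-(\pi^j_\fy-\Id)$ (contributing the sign $(-1)^{\sharp\eta_2}$ and, by the third expression in \eqref{rectangular_increment} together with \eqref{eq:compositionOfSquares}, enlarging the increment set to $\theta\cup\eta_2$), while the remaining factors $\pi^j_\fy$ for $j\in\eta_1=\eta\setminus\eta_2$ shift the base point from $\fx$ to $\pi^{\eta_1}_\fy\fx$. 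This yields the claimed identity in a single display. Your induction simply unrolls this same expansion one direction at a time via the atomic identity $\square^\theta_{\fx,\fy}f = \square^\theta_{\pi^j_\fy\fx,\fy}f - \square^{\theta\cup\{j\}}_{\fx,\fy}f$, which is exactly the $\sharp\eta=1$ instance of the paper's formula. Both arguments are equivalent; the paper's is terser, while yours makes the single-direction mechanism explicit and handles the commutation bookkeeping more visibly.
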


\begin{proof} We think of $\square^\theta_{\fx,\fy} f$ as a function of $\fx$ for fixed $\fy\in\bR^d$. Then direct computation shows
    \begin{align*}
        \square_{\fx,\fy}^\theta f &= \prod_{j\in\eta}(\pi_\fy^j-(\pi_\fy^j- \text{Id}))\square_{\fx,\fy}^\theta f\\
        &= \sum_{\eta = \eta_1\sqcup \eta_2}(-1)^{\sharp{\eta_2}}\pi_\fy^{\eta_1}\prod_{i\in\eta_2}(\pi^i_\fy-\text{Id})\square_{\fx,\fy}^{\theta}f\\
        &= \sum_{\eta = \eta_1\sqcup\eta_2} (-1)^{\sharp{\eta_2}} \square^{\theta\cup\eta_2}_{\pi^{\eta_1}_\fy \fx, \fy}f\,.
    \end{align*}
\end{proof}

\begin{example}
    If we work in dimension $d=2$, Lemma \ref{lem:shiftSquareIncrements} simplifies to the following three identities applied to the sets $\theta=\emptyset, \{1\}$ and $\eta= \{1,2\}$, $\{2\}$.
 \begin{align*}
 f(\fx) &=\square_{\fx,\fy}^\emptyset f=  \pi^1_{\fy} f(\fx) - \square^{\{1\}}_{\fx,\fy} f \,, \\ f(\fx) &=\square_{\fx,\fy}^\emptyset f= f(\fy) -\square^{\{1\}}_{\pi^2_\fy\fx,\fy} f - \square^{\{2\}}_{\pi^1_\fy\fx,\fy}f + \square^{\{1,2\}}_{\fx,\fy}f\,,\\
        \square^{\{1\}}_{\fx,\fy} f &= \square^{\{1\}}_{\pi_\fy^2 \fx,\fy} f - \square^{\{1,2\}}_{\fx,\fy} f\,.
\end{align*}
    It especially follows, that all the rectangular increments on the right-hand side contain $f(\fy)$, as long as $\theta\cup\eta = [d]$. This observation extends to the general case $d\ge 2$.
\end{example}

The notion of rectangular increment does not only apply to real-valued functions $f\colon \R^d\to \R$ but it can be easily extended to $f$ taking values in a generic vector space. In our case, the desired vector space will be a proper set of distributions on $\R^d$ and the resulting functions will take the name of germs. Let us first introduce our space of test functions:


\begin{definition}
    A linear map $T:C_c^r \to \bR$ is called a local distribution of order $r\in \bN$ if for all compact sets $ K\subset \R^d$ there exists a constant $C(K) > 0$ such that
    \[\abs{T(\psi)}\le C(K) \norm{\psi}_{C^r_c}\,,\]
    for all $\psi\in C^r_c(K)$. We denote by $\mathcal{D}_r'$ the set of local distributions of order $r\in \bN$. For some index $r$ we call any map $F\colon \R^d\to \mathcal{D}_r'$ such that for all test functions $\psi$ the function $\fx\to F_{\fx}(\psi)$ is a measurable germ.
    



\end{definition}

\begin{remark}
The original definition of germ, see \cite[Definition 4.1]{zambotti2020} is given for general distributions without mentioning the order. This different choice will be motivated by a slightly different analytic setting we will use to prove the reconstruction theorem. As illustrated in the definition, we will consistently write in subscript the evaluation of a germ on a point $\fx$. We can equivalently represent a germ $F$ as a family of distributions $F=(F_{\fx})_{\fx\in\R^d}$.
\end{remark}

 For any given germ $F=(F_{\fx})_{\fx\in\R^d}$, we can transfer \eqref{rectangular_increment} by fixing a test function $\psi$ and applying the definition to the function $f(\fx)=F_{\fx}(\psi)$.

 Let us introduce the notion of scaling in our setting. Given a vector $\fx\in \R^d$ we want to rescale a test function $\psi\colon \R^d\to \R$  with respect to a generic vector $\flambda\in (0,1]^d$ with the definition
 \begin{equation}\label{eq:rescaled_psi}
 \psi_\fx^\flambda(\fy):= \flambda^{-\fone}\psi((y_1-x_1)\lambda_1^{- 1}\,, \ldots \,, (y_d-x_d)\lambda_d^{- 1})\,.
 \end{equation}
 For $\fx= \mathbf{0}$ we write $\psi^\flambda(\fy)= \psi_{\mathbf{0}}^\flambda(\fy) $. For any given $T\in \mathcal{D}_r'$ the map $\flambda\to T(\psi_\fx^\flambda(\fy)) $
describes  the behavior of $T$ close to $\fx$ at different scales.
%

%

%
%

\subsection{Wavelet bases}\label{sec:wavelet}

To prove the reconstruction theorem in the rectangular increment setting we will have to use all the regularising effects of wavelet theory, going back to \cite{debauchies}. First, we will recall some general facts about wavelet basis, mainly following \cite{Hairer2014,kern2021,meyer}.


Multidimensional wavelets can be constructed from one-dimensional wavelets, which are fundamental tools to describe the Hilbert space $L^2(\R^d)$ with its standard scalar product

\[
\langle f,g\rangle:= \int_{\R^d}f(x)g(x)dx\,.
\]

 While the scaling introduced in the last section preserves the $L^1(\bR^d)$-norm this does not hold true for the $L^2(\bR^d)$ norm. For wavelets $\phi$  we will instead use the $L^2$-scaling: Given any $d$-dimensional wavelet $\phi$ or $\hat\phi^\zeta$ (see \eqref{def:wavelets} for the definition), $\fx\in\bR^d$ and any multi-index $\fn\in\bN^d$, we set
\[
    \phi^\fn_\fx(\fy) := 2^{\frac{1}2\fn} \phi(2^{n_1}(y_1-x_1), \ldots , 2^{n_d}(y_d-x_d))\,,
\]
where we define $\hat\phi^{\zeta,\fn}_\fx$ analogously.  Using the notation $\Delta_n := \{2^{-n} k~\colon~ k\in\bZ\}$ the one-dimensional grid of mesh size $2^{-n}$ for all $n\in\bN$, we recall basic results of wavelet theory which we will use as starting point of our analysis, see \cite[ Chapter 3]{meyer} for the proof. 

\begin{theorem}\label{thm_wave}
For any $r>0$  there exist two functions $\varphi$, $\hat\varphi\in C^r_c(\bR)$ such that
    \begin{enumerate}
        \item $\varphi$ is self replicable: i.e. there exists finitely many $a_k\in\bR, a_k\neq 0$, $k\in\Delta_1$ such that  
        \begin{equation}\label{eq:selfReplicable}
            \varphi= \sum_{k\in\Delta_1}a_k \varphi^1_k\,.
        \end{equation}
        \item For all integer $0\leq m\le r$,
        \[\int_{\R} x^m \hat{\varphi}(x) dx=0\,.\]

        \item For all $n\in\bN$, the set  $\{\varphi^n_x\,,\hat\varphi^m_y\, \colon \,m\ge n\,,x\in\Delta_n\,,y\in\Delta_m\}$ is a complete orthonormal system of $L^2(\R)$.

        \item For all $n\in\bN$, the $\myspan\{\hat\varphi_x^n\colon x\in\Delta_n\}$ is the orthogonal complement of $ \myspan\{\varphi_x^n~\colon x\in\Delta_n\}$ as a subspace of $\myspan\{\varphi_x^{n+1}\colon x\in\Delta_{n+1}\}$. More precisely, any linear combination
        \[
            \psi = \sum_{x\in\Delta_{n+1}} c_x\varphi^{n+1}_x
        \]
        where $c_x = 0$ for all but finitely many $x$ holds can be written as a finite linear combination
        \[
            \psi = \sum_{x\in\Delta_n} b_x\varphi^n_x + \sum_{x\in\Delta_n} \hat b_x\hat\varphi_x^n\,.
        \]
        

        \item For all $\psi\in C_c^r$ one has the convergence
        \begin{align}
            \psi &= \lim_{m\to\infty} \sum_{x\in\Delta_m}\varphi_x^m\scalar{\varphi_x^m,\psi}\label{eq:daubechi1}= \sum_{x\in\Delta_n}\varphi_x^n\scalar{\varphi_x^n,\psi} + \sum_{m\ge n} \sum_{x\in\Delta_m}\hat\varphi_x^m\scalar{\hat\varphi_x^m,\psi}\,,
        \end{align}
        where the limits hold in $L^2(\bR)$\,.
    \end{enumerate}
\end{theorem}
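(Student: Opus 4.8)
This is a classical result of wavelet theory (Daubechies' construction); the plan is to reprove it via the standard multiresolution-analysis (MRA) formalism, following \cite{debauchies,meyer}. \textbf{Step 1: choice of the refinement mask.} One first selects a finite real sequence $(a_k)_{k\in\Delta_1}$, encoded in a trigonometric polynomial $m_0$ (the ``low-pass filter''), subject to the quadrature-mirror-filter identity $|m_0(\xi)|^2+|m_0(\xi+\pi)|^2=1$ together with $m_0(0)=1$. Daubechies' choice is $m_0(\xi)=\big(\tfrac{1+e^{-i\xi}}{2}\big)^{N}P(e^{-i\xi})$ with $N=N(r)$ large, where $P$ is the trigonometric polynomial produced by the Riesz--Fej\'er/B\'ezout lemma so that the filter identity holds; in particular $m_0$ has a zero of order $N$ at $\xi=\pi$, and the eventual smoothness of the wavelet is governed by $N$.

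\textbf{Step 2: the scaling function (the main obstacle).} Define $\varphi$ by prescribing its Fourier transform as the convergent infinite product $\mathcal F\varphi(\xi)=\prod_{j\ge1}m_0(2^{-j}\xi)$. Since $m_0$ is a trigonometric polynomial, $\mathcal F\varphi$ is entire of exponential type, so by Paley--Wiener $\varphi$ is compactly supported. The substantive point is regularity: one shows that for $N=N(r)$ large enough the order-$N$ zero of $m_0$ at $\pi$ forces the decay $|\mathcal F\varphi(\xi)|\lesssim(1+|\xi|)^{-r-1}$, whence $\varphi\in C^r_c(\bR)$. One must also verify Cohen's criterion — a non-vanishing condition on $m_0$ along a suitable finite cycle — which is what guarantees that the integer translates of $\varphi$ are orthonormal, equivalently $\sum_{k\in\bZ}|\mathcal F\varphi(\xi+2\pi k)|^2\equiv1$. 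Unravelling $\mathcal F\varphi(\xi)=m_0(\xi/2)\mathcal F\varphi(\xi/2)$ back into physical space is exactly the self-replication identity \eqref{eq:selfReplicable}, and $L^2$-rescaling shows $\{\varphi^n_x:x\in\Delta_n\}$ is orthonormal for each fixed $n$.

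\textbf{Step 3: MRA, mother wavelet and the expansions.} Set $V_n:=\overline{\myspan}\{\varphi^n_x:x\in\Delta_n\}\subset L^2(\bR)$. The refinement equation yields $V_n\subset V_{n+1}$; from $m_0(0)=1$ one gets $\overline{\bigcup_n V_n}=L^2(\bR)$, and from $\varphi\in L^1\cap L^2$ one gets $\bigcap_n V_n=\{0\}$. Define the mother wavelet $\hat\varphi$ through the conjugate mask $m_1(\xi)=e^{-i\xi}\overline{m_0(\xi+\pi)}$; the classical MRA argument then shows $\{\hat\varphi^n_x:x\in\Delta_n\}$ is an orthonormal basis of the detail space $W_n:=V_{n+1}\ominus V_n$, which is item~(4) (and the explicit decomposition of such a $\psi$ is the passage from the basis of $V_{n+1}$ to those of $V_n$ and $W_n$). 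Because $m_0$ vanishes to order $N>r$ at $\pi$, the function $\mathcal F\hat\varphi$ vanishes to order $N$ at the origin, which is the vanishing-moment property~(2). Item~(3) is the orthogonal sum $L^2(\bR)=V_n\oplus\bigoplus_{m\ge n}W_m$, a formal consequence of the three MRA properties just listed. Finally, for item~(5) and $\psi\in C^r_c$, the orthogonal projection onto $V_m$ is $\psi\mapsto\sum_{x\in\Delta_m}\varphi^m_x\scalar{\varphi^m_x,\psi}$, which converges to $\psi$ in $L^2$ as $m\to\infty$ by density of $\bigcup_n V_n$; telescoping $P_{V_m}=P_{V_n}+\sum_{n\le k<m}P_{W_k}$ together with orthogonality of the $W_k$ produces the series in \eqref{eq:daubechi1}.

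\textbf{Where the difficulty concentrates.} Everything in Step~3 is soft Hilbert-space bookkeeping once Step~2 is in hand, and items (2), (3), (4), (5) all fall out of the MRA picture. The genuinely hard part is Step~2: producing, for each prescribed $r$, a single compactly supported $\varphi$ that is simultaneously $C^r$ — compact support forces $m_0$ to be a polynomial, whereas $C^r$-regularity forces the order of the zero of $m_0$ at $\pi$ to be large — and checking Cohen's orthonormality criterion for the explicit Daubechies filter. This is precisely the content of Daubechies' theorem, and a self-contained proof requires the quantitative estimate linking the number of vanishing moments of $m_0$ to the decay rate of $\mathcal F\varphi$.
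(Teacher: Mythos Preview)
The paper does not actually prove this theorem: it is stated as a recalled result with the proof deferred entirely to \cite[Chapter~3]{meyer}. Your sketch via the Daubechies low-pass filter, the infinite-product definition of $\mathcal F\varphi$, Cohen's criterion, and the MRA decomposition $L^2=V_n\oplus\bigoplus_{m\ge n}W_m$ is exactly the standard construction found in the cited references \cite{debauchies,meyer}, so there is no divergence in approach---you have simply unpacked what the paper leaves to the literature.
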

    
 We now pass to the multi-dimensional setting. For any integer $d\geq 1$ and multi-index $\fn\in \mathbb{N}^d$ we set the  $\fn$-th grid to be
\begin{align*}
    \Delta_\fn &:= \left \{\sum_{i=1}^d 2^{-n_i}k_i e_i~\colon~\fk\in\bZ^d\right\}\subset\bR^d\,,
\end{align*}
where $e_i$ is the canonical basis of $\mathbb{R}^d$. Similarly, for any  subset $\zeta \subset [d]$ we can also project the $\fn$-th grid to 
\[  
    \Delta^\zeta_\fn :=  \pi^{\zeta}_{\mathbf{0}}(\Delta_\fn)\,.
\]
 We will use the letter $\zeta$ to usually describe subsets regarding wavelets and will reserve $\theta,\eta$ as much as possible for the rectangular increments of a germ.  The subset notation factorizes easily at the level of wavelets
\begin{definition}\label{defn_wavelets}
For any $r>0$  and  $\zeta\subset[d]$, we define the $d$-dimensional wavelets $\phi$, $\hat\phi\in C^r_c(\bR^d)$ by setting 
\begin{align}
\begin{split}\label{def:wavelets}
    \phi(\fx)= \hat{\phi}^{\emptyset}(\fx)&:= \prod_{i=1}^d\varphi(x_i) \,, \quad 
    \hat\phi^\zeta(\fx) := \prod_{i\notin\zeta}\varphi(x_i)\prod_{i\in\zeta}\hat\varphi(x_i)\,,
\end{split}
\end{align}
where the functions  $\varphi$, $\hat\varphi$ are given by Theorem \eqref{thm_wave}.
\end{definition}


 As in \cite{kern2021}, we can replace the one-dimensional wavelets $\varphi,\hat\varphi$ with shifted wavelets $\varphi_x,\hat\varphi_x$ for any $x\in\bR$ without losing any properties of the wavelets. Thus we can assume that the  wavelets $\phi, \hat{\phi}$ have a support in $[C,R]^d$ for some constants $0<C<R$ and that $a_k\neq 0$ only holds for $k\ge 0$.

We set $V_{\fn} := \text{span}\{\phi_{\fx}^{\fn}\colon \fx\in\Delta_{\fn}\}$ and $\hat V_{\fn}^\zeta := \text{span}\{\hat\phi^{\zeta,\fn}_{\fx}\colon x\in\Delta_{\fn}\}$\, and we denote by  $P_{\fn},\hat P_{\fn}^{\zeta}$ the projections into the respective spaces. Iterating \eqref{eq:daubechi1}, one has formally 
\begin{align*}
    \psi &= \lim_{\fn\to\infty} P_{\fn}\psi = \sum_{\zeta\subset[d]}\sum_{\fk\in\pi_{\mathbf{0}}^{[d]\setminus \zeta}(\bN^d)}\hat P^\zeta_{\fn+\fk}\psi
\end{align*}
and it is easy to see that these equalities do hold for all $\psi\in C_c^r(\bR^d)$ as limits in $L^2(\bR^d)$. The limit $\fn\to\infty$ stands for a limit along any sequence $(\fn^{(k)})_{k\in\bN^d}$ such that $n^{(k)}_i$ goes to $\infty$ for each $i\in[d]$. By \cite[Lemma 62]{kern2021}, the above limits also hold in $C_c^q$ for $q<r$ as long as $r$ is big enough, as well as 
\begin{align*}
    f(\psi) &= \lim_{\fn\to\infty} f(P_{\fn}(\psi)) = \sum_{\zeta\subset[d]}\sum_{\fk\in\pi_{\mathbf{0}}^{[d]\setminus \zeta}(\bN^d)}f(\hat P^\zeta_{\fn+\fk}\psi)
\end{align*}
holds for all (random) distributions $f$. This convergence also comes with quantitative bounds. To calculate those, we use the fact that our wavelets eliminate all functions, which are polynomials of degree $\le r$ in at least one index $i\in\zeta$. More precisely, for any non- empty indexset $\zeta\subset[d]$, $i\in\zeta$ and level $m\le r$, it holds that $\scalar{x_i^m\pi_{\mathbf{0}}^{i}(\psi) , \hat\phi^\zeta} = 0$ for any test function $\psi\in C_c^r(\bR^d)$. We use this to show the following estimate:

\begin{lemma}\label{lem:inequalityPhi}
   Let $r>0$ and let $\psi\in C_c^{rd}$. Then:
    \begin{equation*}
\abs{\scalar{\hat\phi^{\zeta,\fn}_{\fy},\psi^{\flambda}_{\fz}}} \lesssim   2^{-r\fn_\zeta-\frac 12\fn}\flambda^{-r\mathbf{1_\zeta}-\fone}\norm{\psi}_{C_c^{rd}}
    \end{equation*}
uniformly in $\fn\in\bN^d$, $\fy\in\Delta_{\fn}$, 
 $\fz\in\bR^d$, $\zeta\subset[d]$ 
 non empty and $\foo<\flambda\le \fone$.
 \end{lemma}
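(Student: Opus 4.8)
The plan is to reduce the $d$-dimensional bound to $\sharp\zeta$ successive applications of the one-dimensional vanishing-moment property of $\hat\varphi$ from Theorem~\ref{thm_wave}(2), by Taylor-expanding $\psi$ inside the integral in each direction belonging to $\zeta$. Throughout I take $r\in\bN$, which is the relevant case (for a non-integer wavelet regularity one simply replaces $r$ by $\lceil r\rceil$ in the expansions below, using $\lambda_i\le 1$ and $n_i\ge 0$).

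First I would change variables. Setting $g:=\psi^\flambda_\fz$ and substituting $u_i=2^{n_i}(x_i-y_i)$, the $L^2$-normalisation $2^{\frac12\fn}$ and the Jacobian $2^{-\fn}$ combine into the single factor $2^{-\frac12\fn}$, so that
\[
\scalar{\hat\phi^{\zeta,\fn}_\fy,\psi^\flambda_\fz}=2^{-\frac12\fn}\int_{\bR^d}\Big(\prod_{i\notin\zeta}\varphi(u_i)\Big)\Big(\prod_{i\in\zeta}\hat\varphi(u_i)\Big)\,g\big(y_1+2^{-n_1}u_1,\dots,y_d+2^{-n_d}u_d\big)\,d\fu\,.
\]
Here $g$ is smooth and compactly supported, and the scaling in~\eqref{eq:rescaled_psi} gives $\norm{\partial^\fm g}_\infty=\flambda^{-\fm-\fone}\norm{\partial^\fm\psi}_\infty$ for every multi-index $\fm$. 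Next I would exploit the vanishing moments one direction at a time. Enumerating $\zeta=\{i_1,\dots,i_k\}$ and keeping the remaining variables fixed, I Taylor-expand $g$ in the variable $u_{i_1}$ around $u_{i_1}=0$ up to order $r-1$ with integral remainder. Since $\int v^m\hat\varphi(v)\,dv=0$ for $0\le m\le r$, integrating the polynomial part against $\hat\varphi(u_{i_1})$ annihilates it, leaving only the remainder, which carries a factor $\big(2^{-n_{i_1}}u_{i_1}\big)^{r}$ and the derivative $\partial^{r}_{x_{i_1}}g$ — still a smooth function of the other variables, in particular of the $u_{i_j}$, $j\ge 2$. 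Iterating this for $i_2,\dots,i_k$ (each time Taylor-expanding the current remainder in the new variable $u_{i_j}$ and killing the polynomial part against $\hat\varphi(u_{i_j})$), after $k$ steps one arrives at
\[
\Big|\scalar{\hat\phi^{\zeta,\fn}_\fy,\psi^\flambda_\fz}\Big|\lesssim 2^{-\frac12\fn}\prod_{i\in\zeta}2^{-rn_i}\;\norm{\partial^{r\mathbf{1}_\zeta}g}_\infty\int_{\bR^d}\prod_{i\notin\zeta}\abs{\varphi(u_i)}\prod_{i\in\zeta}\abs{u_i}^{r}\abs{\hat\varphi(u_i)}\,d\fu\,,
\]
where $\partial^{r\mathbf{1}_\zeta}g$ has total order $r\sharp\zeta\le rd$, so $\norm{\partial^{r\mathbf{1}_\zeta}g}_\infty\le\norm{g}_{C^{rd}_c}$, and the last integral is a finite constant depending only on the (compactly supported) wavelets $\varphi,\hat\varphi$.

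Finally I would collect the factors: $2^{-\frac12\fn}\prod_{i\in\zeta}2^{-rn_i}=2^{-r\fn_\zeta-\frac12\fn}$, and $\norm{\partial^{r\mathbf{1}_\zeta}g}_\infty=\flambda^{-r\mathbf{1}_\zeta-\fone}\norm{\partial^{r\mathbf{1}_\zeta}\psi}_\infty\le\flambda^{-r\mathbf{1}_\zeta-\fone}\norm{\psi}_{C^{rd}_c}$, which is exactly the claimed estimate; uniformity in $\fy,\fz,\fn,\flambda$ is automatic, since only sup-norms of derivatives of $\psi$ and moments of the wavelets enter. The one point requiring care is the bookkeeping in the iterated Taylor step: one must check that when the remainder is expanded in a fresh direction $u_{i_j}$ the polynomial part still integrates to zero against $\hat\varphi(u_{i_j})$ — it does, because in the variable $u_{i_j}$ that remainder is a polynomial of degree $\le r-1$ plus a genuinely higher-order term — and that the derivatives accumulated from earlier directions do not reintroduce a large factor — they do not, since at every stage one differentiates only $g$ while the weights $\abs{\varphi}$, $\abs{\hat\varphi}$ stay fixed. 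This multi-dimensional bookkeeping is the main, though essentially routine, obstacle; everything else is the standard one-dimensional wavelet decay estimate applied $\sharp\zeta$ times.
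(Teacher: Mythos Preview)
Your proof is correct and follows essentially the same approach as the paper: both change variables to pull out the factor $2^{-\frac12\fn}$, then Taylor-expand in the $\zeta$-directions and use the vanishing moments of $\hat\varphi$ to kill the polynomial part, leaving a remainder bounded by $2^{-r\fn_\zeta}\flambda^{-r\mathbf{1}_\zeta-\fone}\norm{\psi}_{C_c^{rd}}$. The only cosmetic difference is that the paper applies the operator $\prod_{i\in\zeta}(\Id-T^i)$ in one shot (expanding $\psi$ around the rescaled base point) whereas you iterate direction by direction; your last paragraph's bookkeeping remark is slightly awkwardly phrased—the point is simply that the Taylor expansion of \emph{any} smooth function in $u_{i_j}$ yields a polynomial part of degree $\le r-1$ that $\hat\varphi(u_{i_j})$ annihilates—but the argument itself is sound.
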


\begin{proof}
    For $i\in[d]$, we denote by $T^i_{\mathbf{z}}$ the Taylor-polynomial of level $r-1$ in the $i$-th coordinate, i.e.
    \begin{equation*}
        T^i_{\mathbf{z}}\psi(\fy) = \sum_{k=0}^{r-1} \frac 1{k!}\frac{\partial^k}{\partial x_i^k} \psi(\pi_{z_i}^i \fy)(y_i-z_i)^k\,.
    \end{equation*}
   Extending multiplicatively $T^i_{\mathbf{z}}$  we denote by $T^\theta_{\fz} := \prod_{i\in\theta} T^i_{\mathbf{z}}$. Then it holds from the usual Taylor formula with integral remainder that
    \begin{equation*}
        \prod_{i\in\theta}(\Id - T^i_{\mathbf{z}})(\psi)(\fy) = R^\theta(\fz,\fy)
    \end{equation*}
    with the remainder  function $R^{\theta}$ given  by
    \begin{equation*}
        R^\theta(\fz,\fy) = \int_{\fz_\theta}^{\fy_\theta}\left(\frac 1{r!}\right)^{\sharp\theta}\partial^{r\mathbf{1}_\theta}\psi(\pi^\theta_{\ft_\theta}\fy)\prod_{i\in\theta}(y_i-t_i)^{r-1}d\ft_\theta\,.
    \end{equation*}
    By hypothesis on $\psi $ it follows that
    \begin{equation*}
        \abs{R^\theta(\fz,\fy)}\lesssim \norm{\psi}_{C_c^{rd}}\abs{\fy-\fz}^{r}_{\text{c},\theta}\,.
    \end{equation*}
   Since  $\hat{\phi}^\zeta$ integrate 
 to zero when tested against the polynomials $T^\eta_{\fz}\psi$ for  all $\emptyset\neq\eta\subset\zeta$, we have 
   \[\scalar{\hat{\phi}^\zeta,\psi}= \scalar{\hat{\phi}^\zeta,\prod_{i\in\zeta}(\Id-T^i_{\fz})\psi}\,.\]
   Considering now the rescaling, we set $\fgamma := (2^{-n_1},\dots,2^{-n_d})$ and $\fmu := (\lambda_1^{-1},\dots,\lambda_d^{-1})$ and by applying a classical change of variable we calculate
    \begin{align*}
    \abs{\scalar{\hat\phi^{\zeta,\fn}_{\fy},\psi^{\flambda}_{\fz}}} &=2^{-\frac{\fn}{2}} \flambda^{-\fone}\abs{\int_{\R^d}\hat\phi^\zeta(\fu) \psi\bigg(\fgamma*\fmu*\fu\underbrace{-\fmu*\fz+\fmu*\fy}_{=:\fx}\bigg)d\fu}\\ 
    &\le 2^{-\frac{\fn}{2}} \flambda^{-\fone}\int_{\bR^d}\abs{\hat\phi^\zeta(\fu)}\abs{\prod_{i\in\zeta}(\Id-T^i_{\fx})\psi(\fx+\fgamma*\fmu*\fu)}d\fu\\&\lesssim 2^{-\frac{\fn}{2}} \flambda^{-\fone}\int_{\bR^d} \abs{\hat\phi^\zeta(\fu)}\norm{\psi}_{C_c^{rd}}2^{-(r\fn)_{\zeta}}\flambda^{-(r\mathbf{1})_\zeta}\abs{\fu}^{r}_{\text{c},\zeta}d\fu\\
        &= 2^{-\frac{\fn}{2}} 2^{-(r\fn)_{\zeta}}\flambda^{-(r\mathbf{1})_\zeta}\flambda^{-\fone}\norm{\psi}_{C_c^{rd}}\int_{\bR^d}\abs{\hat\phi^\zeta(\fu)}\abs{\fu}^{r}_{\text{c},\zeta}d\fu\,.
    \end{align*}
\end{proof}
\begin{remark}
Note that the convergence speed is faster than one would expect: If one starts with any function $\phi$, one would expect the rates $2^{-\fn/2}\flambda^{-\fone}$ simply from the localizations of $\phi^\fn_\fy$ and $\psi^\flambda_\fz$. However, the fact that $\phi^\zeta$ is orthogonal to polynomials of degree $\le r$ in some $i\in\zeta$ allows us to get the additional factor $2^{-(r\fn)_{\zeta}}\flambda^{-(r\mathbf{1})_\zeta}$. This is a more precise version of \cite[Lemma 61]{kern2021} (which itself is derived from \cite[Proposition 3.20]{Hairer2014}), where they only used the weaker condition, that all $\phi^\zeta$ for $\zeta\neq\emptyset$ eliminate multivariate polynomials of degree $\le r$.
\end{remark}

\subsection{Commuting filtration}
We now recall the probabilistic assumptions to formulate our  results. Throughout this paper, we fix a complete probability space $(\Omega,\cF,\bP)$. Given $m\geq 1$ the set  of $m$- integrable random variables $X\colon \Omega\to \R$ will be denoted by $L_m(\Omega)$ and its norm by $\Vert \cdot \Vert_m $. We will keep this discussion rather short, as it has been done in length in \cite{multiparameterStochSewing}. One can find more classical references in \cite{Imkeller85,walsh1986,Khoshnevisan02}. We equip our probability space with a filtration which is compatible with the use of rectangular increments. 

\begin{definition}
    A family of sigma-algebras $(\cF_\ft)_{\ft\in\R^d}, \cF_\ft\subset\cF$ for all $\ft\in\bR^d$, is called a multiparameter filtration, if for all $\fs\le\ft$, we have $\cF_\fs\subset\cF_\ft$.
\end{definition}
\begin{remark}
    Alternatively, it is possible to consider filtrations indexed by $\ft\in K$ with $K\subset \R^d$ a compact set. We will use that when considering SPDEs and all definitions will extend trivially.
\end{remark}

\begin{definition}
For any given multiparameter filtration  $(\cF_\ft)_{\ft\in\R^d}$ and subset $\theta\subset[d]$, we set
\begin{equation*}
    \cF^\theta_\ft :=\bigvee_{\fs\in\R^d}\cF_{\pi^\theta_\ft\fs}\,,
\end{equation*}
where $\bigvee$ represents the sigma-algebra generated by the union. When  $\theta = \{i\} $ we call $\cF^i := \cF^{\{i\}}$ the $i$-th marginal filtration of $(\cF_\ft)$. 
\end{definition}
 We denote by $\bE^\theta_\ft$ the conditional expectation $\bE[\cdot\vert\cF_\ft^\theta]$. A natural condition to impose on a filtration  $\cF$ to compute $\bE[\cdot\vert\cF_\ft^\theta]$ efficiently is the notion of commutativity. I.e. we assume that for all $i,j\in[d]$ and $\fs, \ft\in \R^d$ one has the a.s. identity
\begin{equation}\label{defn_comm_filtration}
        \bE^i_\ft \bE^j_\fs = \bE^j_\fs \bE^i_\ft\,.
\end{equation}
We call a filtration $(\cF_\ft)_{\ft\in\bR^d}$ fulfilling \eqref{defn_comm_filtration} a commuting filtration. It should be noted that in the literature, several equivalent properties to \eqref{defn_comm_filtration} are used. Walsh introduced in \cite[Section 1]{Cairoli1975},  the condition that for all $\fx\in\bR^d$, $\cF^{i}_\fx, i=1,\dots, d$ are conditionally independent on $\cF_\fx$. \cite[Section 3.4]{Khoshnevisan02}  prefers to assume that for all bounded random variables $X:\Omega\to\bR$ which are $\cF_\ft$ measurable for some $\ft\in\bR^d$, we have 
\begin{equation*}
    \bE^{[d]}_\fs X = \bE^{[d]}_{\fs\wedge\ft} X\,,
\end{equation*}
 for all $\fs\in\R^d$. In both cases, the authors show that their condition is equivalent to \eqref{defn_comm_filtration}. We recall two basic results of multiparameter filtrations.

\begin{proposition}
Let $(\cF_\ft)_{\ft\in\R^d}$ be a multiparameter filtration. We then have the following properties 
\begin{itemize}
    \item[i)] If $(\cF_\ft)$ is complete, then so is $(\cF_\ft^\theta)$ for all subsets $\theta\subset[d]$. 
    \item[ii)]  If $\cF$ is commuting, then for all subsets $\theta\subset[d]$
    \begin{equation*}
        \bE^\theta_\ft = \prod_{i\in\theta} \bE^i_\ft.
    \end{equation*}
\end{itemize}
\end{proposition}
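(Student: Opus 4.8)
The statement consists of two essentially independent claims, and I would prove them in turn.

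Part i) is a one-line observation. For every $\ft\in\R^d$ and $\theta\subset[d]$ one has $\pi^\theta_\ft\ft=\ft$, so $\cF_\ft=\cF_{\pi^\theta_\ft\ft}$ is among the sigma-algebras entering the join defining $\cF^\theta_\ft$; hence $\cF_\ft\subseteq\cF^\theta_\ft\subseteq\cF$. If $(\cF_\ft)$ is complete, i.e.\ every $\cF_\ft$ contains all $\bP$-null sets of $\cF$, then so does $\cF^\theta_\ft$, which is precisely the completeness of $(\cF^\theta_\ft)_\ft$.

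For part ii) the plan is the following. First, specialising \eqref{defn_comm_filtration} to $\fs=\ft$ shows that the operators $\{\bE^i_\ft:i\in\theta\}$ commute pairwise, so $\prod_{i\in\theta}\bE^i_\ft$ is well defined independently of the order of the factors. Next I would identify this composition via the Hilbert space picture: on $L^2(\Omega,\cF,\bP)$ each $\bE^i_\ft$ is the orthogonal projection onto the closed subspace $L^2(\cF^i_\ft)$, and a finite product of pairwise commuting orthogonal projections is the orthogonal projection onto the intersection of the ranges. Since $\bigcap_{i\in\theta}L^2(\cF^i_\ft)=L^2(\cG)$ with $\cG:=\bigcap_{i\in\theta}\cF^i_\ft$, this yields $\prod_{i\in\theta}\bE^i_\ft=\bE[\,\cdot\mid\cG\,]$ on $L^2$; as both sides are contractions on every $L_m$, $m\ge1$, and agree on the dense subspace $L^\infty$, the identity extends to all of $L_m$.

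It remains to check that $\cG$ and $\cF^\theta_\ft$ agree up to $\bP$-null sets, which is the only genuinely delicate point. The inclusion $\cF^\theta_\ft\subseteq\cG$ is immediate from the definitions, since every generator $\cF_{\pi^\theta_\ft\fs}$ of $\cF^\theta_\ft$ is of the form $\cF_\fu$ with $u_i=t_i$ for all $i\in\theta$ and hence lies in each $\cF^i_\ft$. For the reverse inclusion modulo null sets commutativity must be used, and I would proceed by induction on $\sharp\theta$: the base case $\sharp\theta=1$ is trivial, and in the inductive step, writing $\theta=\theta'\sqcup\{i_0\}$, the inductive hypothesis reduces the claim to $\cF^{i_0}_\ft\cap\cF^{\theta'}_\ft\subseteq\overline{\cF^\theta_\ft}$; for $A$ in this intersection, the conditional independence of the marginal filtrations recalled in the text (see \cite{Cairoli1975,Khoshnevisan02}) forces $\bE[\mathbf{1}_A\mid\cF^\theta_\ft]=\bE[\mathbf{1}_A\mid\cF^\theta_\ft]^2$, so this conditional probability is $\{0,1\}$-valued and $A$ coincides, up to a null set, with the $\cF^\theta_\ft$-measurable event $\{\bE[\mathbf{1}_A\mid\cF^\theta_\ft]=1\}$. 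Inserting this in the identity of the previous paragraph gives $\prod_{i\in\theta}\bE^i_\ft=\bE[\,\cdot\mid\cF^\theta_\ft\,]=\bE^\theta_\ft$; alternatively, the identity between $\cG$ and $\cF^\theta_\ft$ (modulo null sets) may be quoted directly from \cite[Section~3.4]{Khoshnevisan02}. I expect this identification to be the main obstacle, and it is exactly the step where the commuting hypothesis is essential — without it, $\cF^\theta_\ft\subseteq\cG$ can be a strict inclusion.
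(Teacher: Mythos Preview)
Your proof is correct and matches the paper's: for i) both use the one-line inclusion $\cF_\ft\subset\cF^\theta_\ft$, and for ii) the paper simply cites \cite[Theorem~3.4.1]{Khoshnevisan02} (for $\theta=[d]$, remarking that the same proof handles general $\theta$), which is precisely the fallback you offer. Your Hilbert-space reduction and inductive sketch are a useful unpacking of what that citation establishes, though the paper itself does not spell any of this out.
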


\begin{proof}
    i) immediately follows from $\cF_\ft\subset\cF_\ft^\theta$ for all $\ft\in\bR^d, \theta\subset[d]$.
    ii) is shown in \cite{Khoshnevisan02} for $\theta =[d]$ in Theorem 3.4.1. The case $\theta\subset[d]$ follows with the same proof.
\end{proof}
    
 The stochastic reconstruction theorem, as the stochastic sewing lemma, is based on BDG-type inequalities which in $d=1$ can be found in \cite{le2020}, Equation (2.5). The multiparameter version of this inequality has been found in \cite[Lemma 25]{multiparameterStochSewing}.

\begin{lemma}[extended BDG inequality]\label{lem:multiparameterBDG}
    Let  $i\in\{1,\dots,d\}$ and $I_i\subset\bN$ be a finite set with minimal element $y_i$. Define  for each indexset $\theta\subset [d]$ 
    \[
    I_\theta = \prod_{i\in\theta} I_i\times\prod_{j\notin\theta}\{y_j\} := \{\fx\in\bN^d~\colon~ x_i\in I_i \text{ if $i\in\theta$, } x_i = y_i\text{ otherwise}\}\,.
    \]
    Further, let $(\mathcal F_\fn)_{\fn\in\bN^d}$ be a commuting $d$-parameter filtration and let $Z_\fk\in L_m(\Omega)$ for each $\fk\in I_{[d]}$. We assume that $Z_\fk$ is $\cF_{\fk+ \boldsymbol 1}$ measurable. Assume that there exist constants $0<b_{i,k_i}, a_{i,k_i}$, $i\in[d], k_i\in I_i$, as well as a $c>0$, such that for each $\theta \subset[d]$:
\begin{equation}\label{ineq:lem3Condtition}
    \norm{\bE^\theta_\fk Z_{\fk}}_m\le c \,b_{\fk,\eta} \,a_{\fk,\eta^c}\,,
\end{equation}
where $b_{\fk,\eta} := \prod_{i\in\eta} b_{i,k_i}$ and $a_{\fk,\eta^c} = \prod_{i\in\eta^c}a_{i,k_i}$. Then, it follows that for each $\theta,\eta\subset[d]$:
\begin{equation}\label{ineq:lem3Result}
    \norm{\bE^\eta_{\fy}\sum_{\fk\in I_{\theta}} Z_{\fk}}_m\lesssim c\sum_{\theta\setminus\eta = \theta_1\sqcup \theta_2} \left(\sum_{\fk\in I_{\theta\setminus\theta_2}} b_{\fk,\eta\cup\theta_1}\right) \left(\sum_{\fl\in I_{\theta_2}} a_{\fl,(\eta\cup\theta_1)^c}^2\right)^{\frac 12}\,,
\end{equation}

where we sum over all disjoint $\theta_1,\theta_2$, such that $\theta\setminus\eta=\theta_1\cup\theta_2$. It especially follows

\begin{equation*}
    \norm   {\sum_{\fk\in I_{\theta}} Z_{\fk}}_m\lesssim c\sum_{\theta = \theta_1\sqcup\theta_2} \left(\sum_{\fk\in I_{\theta_1}} b_{\fk,\theta_1}\right) \left(\sum_{\fl\in I_{\theta_2}} a_{\fl,\theta_2}^2\right)^{\frac 12}\,.
\end{equation*}
\end{lemma}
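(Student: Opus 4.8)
The plan is to prove the extended BDG inequality (Lemma \ref{lem:multiparameterBDG}) by induction on the cardinality of $\theta\setminus\eta$, using the one-parameter stochastic BDG-type inequality as the base mechanism and peeling off one direction at a time. The key structural observation is that the left-hand side $\bE^\eta_\fy \sum_{\fk \in I_\theta} Z_\fk$ involves summation over the directions in $\theta$; the directions in $\theta \cap \eta$ are harmless (the conditional expectation $\bE^\eta_\fy$ already "knows" those coordinates, in the sense that it will interact with the martingale structure the same way for each fixed value), so the real work is in the directions of $\theta\setminus\eta$. For those, I split each such direction $i$ according to whether the corresponding partial sum behaves like a "drift" term (controlled directly by $\bE^i_\fk Z_\fk$, contributing a factor of $\sum b_{i,k_i}$) or a "martingale" term (controlled by the orthogonality/conditional-expectation cancellation, contributing a factor of $(\sum a_{i,k_i}^2)^{1/2}$ via BDG). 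This dichotomy in each direction is exactly what produces the sum over decompositions $\theta\setminus\eta = \theta_1 \sqcup \theta_2$ in \eqref{ineq:lem3Result}.

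First I would set up the induction. For the base case $\theta\setminus\eta = \emptyset$, i.e. $\theta\subset\eta$, the claim reduces to $\norm{\bE^\eta_\fy \sum_{\fk\in I_\theta} Z_\fk}_m \lesssim c \sum_{\fk\in I_\theta} b_{\eta,\fk}$ (only the $\theta_1 = \theta_2 = \emptyset$ term survives), which follows from the triangle inequality after writing $\bE^\eta_\fy Z_\fk = \bE^\eta_\fy \bE^\theta_\fk Z_\fk$ (using $\theta\subset\eta$ and the tower property together with the commuting property, so that $\cF^\theta_\fk \subset \cF^\eta_\fk$ and we can insert $\bE^\theta_\fk$) and applying \eqref{ineq:lem3Condtition} with the index set $\theta$ there. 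For the inductive step, I would pick some direction $i \in \theta\setminus\eta$ and write the sum over $I_i = \{y_i < k_i^{(1)} < \dots\}$ telescopically as a sum of martingale-difference-type increments. Concretely, for each fixed value of the coordinate $k_i$, decompose $Z_\fk = (Z_\fk - \bE^i_{\fk} Z_\fk) + \bE^i_\fk Z_\fk$. Summing over $k_i$, the second pieces form a genuine drift term which I can bound by moving $\bE^\eta_\fy$ inside and invoking the inductive hypothesis on the smaller index set $\theta \setminus \{i\}$ against the modified summands $\bE^i_\fk Z_\fk$ — here the hypothesis \eqref{ineq:lem3Condtition} for $\bE^i_\fk Z_\fk$ holds with $b_{i,k_i}$ absorbed as a constant factor, i.e. these are conditionally bounded using $b$ in direction $i$, giving the factor $\sum_{k_i \in I_i} b_{i,k_i}$ and putting $i$ into $\theta_1$. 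The first pieces $Z_\fk - \bE^i_\fk Z_\fk$ are, for each choice of the other coordinates, martingale increments in $k_i$ with respect to the filtration $(\cF^i_{\pi^i_\fk \cdot})$ along that direction; applying the one-parameter BDG inequality (as in \cite{le2020}, Eq. (2.5), or \cite[Lemma 25]{multiparameterStochSewing} specialized to one direction) conditionally in direction $i$ converts the sum into an $\ell^2$-sum over $k_i$, which by \eqref{ineq:lem3Condtition} with $\theta = \emptyset$ in direction $i$ (so $a_{i,k_i}$ is the relevant bound) yields the factor $(\sum_{k_i\in I_i} a_{i,k_i}^2)^{1/2}$, and then I reapply the inductive hypothesis in the remaining directions, putting $i$ into $\theta_2$.

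The bookkeeping that needs care is keeping the conditional expectations and the commuting property aligned so that at each stage the remaining quantity is again of the form covered by the inductive hypothesis: after peeling direction $i$ as a martingale term, the summand in the remaining $d-1$ directions must still satisfy an estimate of the type \eqref{ineq:lem3Condtition}, now with the constants $a_{i,\cdot}, b_{i,\cdot}$ either replaced by the extracted scalar factors or by $1$, and with the filtration restricted appropriately. This works precisely because of the commuting property \eqref{defn_comm_filtration}: it lets us apply $\bE^j_\cdot$ and the direction-$i$ BDG inequality in either order, so the direction-$i$ operations do not disturb the estimates in the other directions. The main obstacle, and the step I expect to be the most delicate, is verifying that the martingale-difference structure in direction $i$ is genuinely available for $Z_\fk - \bE^i_\fk Z_\fk$ with respect to the right sub-filtration — i.e. checking the measurability ($Z_\fk$ being $\cF_{\fk+\boldsymbol 1}$-measurable, combined with how $\cF^i$ sits inside $\cF$) so that the one-parameter conditional BDG inequality legitimately applies, and then carefully matching the resulting combinatorial sum over $(\theta_1,\theta_2)$ with the statement, since each of the $\sharp(\theta\setminus\eta)$ inductive peelings contributes a binary choice and these must assemble into exactly the sum $\sum_{\theta\setminus\eta = \theta_1\sqcup\theta_2}$ without double counting. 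The final displayed consequence is just the special case $\eta = \emptyset$ together with $\norm{\cdot}_m \le \norm{\bE^\emptyset_\fy \cdot}_m$ being an equality, so it requires no separate argument.
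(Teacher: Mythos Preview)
The paper does not give its own proof of this lemma; it simply states the result and attributes it to \cite[Lemma 25]{multiparameterStochSewing}. So there is no in-paper argument to compare against. Your proposed strategy---induction on $\sharp(\theta\setminus\eta)$, peeling off one direction at a time via the decomposition $Z_\fk = (Z_\fk - \bE^i_\fk Z_\fk) + \bE^i_\fk Z_\fk$, applying one-parameter BDG to the martingale part and the triangle inequality to the drift part, with the commuting property keeping the remaining directions intact---is the natural route and is, to my knowledge, essentially how the cited reference proceeds.

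One small correction in your base case: you claim $\cF^\theta_\fk \subset \cF^\eta_\fk$ when $\theta\subset\eta$, but by the definition $\cF^\theta_\ft = \bigvee_{\fs}\cF_{\pi^\theta_\ft\fs}$ the inclusion goes the other way (fixing more coordinates gives a smaller $\sigma$-algebra). The base case is still fine, but the correct tower-property step is simpler than what you wrote: for $\fk\in I_\theta$ with $\theta\subset\eta$ one has $y_i\le k_i$ for every $i\in\eta$, hence $\cF^\eta_\fy\subset\cF^\eta_\fk$, and therefore $\bE^\eta_\fy Z_\fk = \bE^\eta_\fy\bE^\eta_\fk Z_\fk$; then apply \eqref{ineq:lem3Condtition} with the conditioning set $\eta$ directly. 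The residual factor $a_{\fk,\eta^c}$ is constant over $I_\theta$ (since $k_i=y_i$ for $i\in\eta^c\subset\theta^c$) and matches the $\theta_2=\emptyset$ term on the right-hand side. Apart from this slip, your plan is sound, and the obstacles you flag (measurability of the martingale increments with respect to $\cF^i_{\fk+\fone_{\{i\}}}$, and checking that the combinatorics of the binary choices assemble into $\sum_{\theta\setminus\eta=\theta_1\sqcup\theta_2}$) are exactly the right things to verify carefully.
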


 Passing to the reconstruction theorem, we can relate the properties of a multiparameter filtration with the definition of a random germ, i.e. a random field with state space $\mathcal{D}_r'$ in a proper notion of measurability. Recall that we can think of locally integrable stochastic processes $X$ as the distribution $\psi\mapsto \int_{\bR^d} \psi(\fx) X_\fx d\fx$, which motivates the measurability notion for a distribution: We say that $f$ is adapted to $(\cF_\fx)_{\fx\in\bR^d}$ if $f(\psi)$ is $\cF_\fy$-measurable for all $\psi$ with $\supp(\psi)\subset(-\infty,y_1]\times\dots\times(-\infty,y_d]$. If we replace the distribution $f$ with a germ $(F_\fx)_{\fx\in\bR^d}$, it is reasonable to assume that $F_\fx$ might additionally depend on the information contained in $\cF_\fx$. This leads to the following definition.

\begin{definition}\label{defn_adapted}
    We say that a random germ $(F_\fx)$ is adapted to a multiparameter filtration $(\cF_\fx)$, if  for all $\fx\in\bR^d$ and test function $\psi\in C_c^r$ with support in $(-\infty,\fy]$, $F_\fx(\psi)$ is $\cF_{\fx\vee\fy}$-measurable.
\end{definition}

\section{The stochastic reconstruction theorem}\label{sec:reconstruction}
We now pass to formulating the main theorem of the paper. To make it rigorous we will have to introduce  the proper spaces of functions and distributions where the reconstruction map acts and describe their main properties. We fix a time horizon $T>0$ throughout this section.

\subsection{H\"older spaces for random distributions}\label{sec:hoelderSpacesRandom}
 We start by recalling the deterministic multiparameter H\"older spaces, see  \cite{tindel07,harang21} for further references. Given $\falpha\in [0,1]^d$ we set $C^\falpha$ to be the space of all maps $Y\colon [0,T]^d\to \mathbb{R}$, such that
\begin{equation}\label{eq_det_Calpha}
\norm{Y}_{C^\falpha} := \sum_{\theta\subset[d]} \norm{\square^\theta Y}_{\falpha,\theta} <\infty\,,\quad  \norm{\square^\theta Y}_{\falpha,\theta}:=\sup_{\fx, \fy\in [0,T]^d,\, \fx\neq \fy} \frac{|\square^\theta_{\fx,\fy} Y|}{|\fx- \fy|^{\falpha}_{\text{c}, \theta}}\,.
\end{equation}
This definition can be extended naturally to any $\falpha\ge\foo$  but in our case, we will always consider $\falpha\le \fone$. On the other hand, we will also consider multi-index H\"older spaces where the components of  $\falpha$ are negative. In this case we  set $r$ as the smallest possible integer  such that $ r\ge \sum_{i=1}^d \abs{\alpha_i}$ and we look at continuous linear maps over $f:C_c^r ([0,T]^d)\to\R$. More precisely, we set $C^{\falpha}([0,T]^d)$ as the set of all $f\in \cD^r([0,T]^d)$ such that 
\begin{equation*}
    \norm{f}_{\falpha} = \sup_{\fx,\psi,\flambda\in(0,1]} \frac{\abs{f(\psi_{\fx}^{\flambda})}}{\flambda^{\falpha}}<\infty\,,
\end{equation*}
where the supremum runs over all $\fx\in[0,T]^d, \psi\in B^r([0,T]^d)$ and $\foo<\flambda\le \fone$ such that $\psi_\fx^{\flambda}$ has support in $[0,T]^d$. 

To extend these spaces when there is a filtered probability space $(\Omega,\cF,\bP)$ with a commuting $d$-parameter filtration $(\cF_\ft)_{\ft\in[0,T]^d}$, we need to condition the random distributions over a careful choice of filtrations. More specifically, we want to quantify the seminorm $\square^\theta_{\fx,\fy} Y$  in \eqref{eq_det_Calpha} with a  $L_m$ norm and additionally we want the conditional expectation $\bE^\eta_\fx$ to add another regularity factor.
\begin{definition}\label{def:C_alpha-stoc}
    Let $\fdelta \ge \foo$, $\falpha\in [0,1]^d$ and $m\ge 1$. We then set $C^{\falpha,\fdelta}L_m$ to be the space of random fields $Y:[0,T]^d\times\Omega\to\bR$ such that for all $\ft\in[0,T]^d$, $Y_\ft$ is $\mathcal F_\ft$-measurable and
     for all $\eta\subset\theta\subset[d]$ one has
    \begin{align}\label{theta_eta_seminorm}
        \norm{\square Y}_{\theta,\eta,m} &:= \sup_{\fx,\fy\in[0,T]^d,\; x_i\le y_i \text{ for } i\in\theta}\frac{\norm{\bE^\eta_\fx\square^\theta_{\fx,\fy} Y}_m}{\abs{\fx-\fy}^{\falpha}_{\text{c},\theta}\abs{\fx-\fy}^{\fdelta}_{\text{c},\eta}}<\infty\,.
    \end{align}
\end{definition}

 In most cases, $\falpha,\fdelta$, and $T$ are clear from the context. If they are not, we will write $\norm{\square Y}_{\falpha,\fdelta,\theta,\eta,m}$ and $C^{\falpha,\fdelta}L_m([0,T]^d)$. Recall that for $\theta = \emptyset$, the above quantities simplify to
\begin{equation*}
\norm{Y}_{\emptyset,\emptyset,m} = \sup_{\fx\in [0,T]^d}\norm{Y_\fx}_m  =:\norm{Y}_{m,\infty}\,.  
\end{equation*}
The space $C^{\falpha,\fdelta}L_m$ is trivially a Banach space with respect to the norm
\begin{align*}
    \norm{Y}_{C^{\falpha,\fdelta}L_m} &:= \norm{Y}_{m,\infty}+\bracket{Y}_{C^{\falpha,\fdelta}L_m}\,, \\ \bracket{Y}_{C^{\falpha,\fdelta}L_m} &:= \sum_{\eta\subset\theta\subset[d],\theta\neq\emptyset} \norm{\square Y}_{\theta,\eta,m}\,.
\end{align*}
Note that $\bracket{Y}$ is not a norm on $Y$ but only a semi-norm over $C^{\falpha,\fdelta}L_m$. For $\fdelta = \foo$, we write $C^{\falpha} L_m := C^{\falpha,\foo}L_m$. 

 The most simple examples of element of $C^{\falpha,\fdelta}L_m$ spaces is a Brownian sheets $B=\{B_\ft \colon \ft\in\bR_+^d\}$ together with its canonical  filtration. By definition, a Brownian sheet is a centred Gaussian process characterized by the covariance structure
\[
    \bE(B_\ft B_\fs) = \prod_{i=1}^d t_i\wedge s_i,\qquad \fs,\ft\in\bR^d_+\,,
\]
see for example \cite{Khoshnevisan02,Cairoli1975}. It especially fulfills for any $m\ge 1$
\begin{equation*}
    \norm{\square^\theta_{\fs,\ft} B}_m \le c(m)\abs{\ft-\fs}_{\text{c},\theta}^{\frac 12 \fone}.
\end{equation*}
for some constant $c(m)\geq 0$ depending on $m$ and all $\fs\le\ft,\theta\subset[d]$. Considering its conditional expectation with respect to the canonical filtration of the process, for any $\emptyset\neq\eta\subset\theta$ one has 
\begin{equation*}
    \bE^\eta_\fs \square^\theta_{\fs,\ft} B = 0\,.
\end{equation*}
This condition is due to the property of being a strong martingales, as described by \cite{Imkeller85,walsh1986}. To take in account the ``infinite regulation" of these elements,  we set
\[\norm{Y}_{C^{\falpha,\infty}L_m} = \sup_{\fdelta>\foo} \norm{Y}_{C^{\falpha,\fdelta}L_m}\]
and write $Y\in C^{\falpha,\infty} L_m$ if $\norm{Y}_{C^{\falpha,\infty}L_m}<\infty$, so that one has $B\in C^{\frac 12\fone,\infty} L_m$. Another simple example of an element in $C^{\falpha,\fdelta}L_m$ is given by any random field of the form $Y_{\ft}=B_{\ft}+ Z_{\ft}$ with $Z\in C^\fbeta$ for  $\beta\geq 1/2\fone $. In this case one has $Y\in C^{\falpha,\fdelta}L_m$ with  $\fdelta = \beta- 1/2\fone$. 

We now pass on describing the embedding properties of these spaces. Concerning the index  $\falpha$ the spaces $C^{\falpha,\fdelta}L_m $ behave exactly as the classical H\"older spaces. That is, for $\foo < \falpha <\fbeta <\mathbf 1$ one  has $C^{\fbeta,\fdelta} L_m \subset C^{\falpha,\fdelta} L_m$. Moreover, it follows trivially from the definition \eqref{theta_eta_seminorm} that for any $\eta\subset\theta\subset[d],\theta\neq\emptyset$ we have
\begin{equation}\label{ineq:CalphaEmbedding0}
    \norm{\square Y}_{\falpha,\fdelta,\theta,\eta,m} \le (T\mathbf{1})^{\fbeta-\falpha}_\theta\norm{\square Y}_{\fbeta,\fdelta,\theta,\eta,m}\,.
\end{equation}
This estimate gets even stronger if $Y$ vanishes on $\foo$ or alternatively  on the whole boundary $\partial_{\foo} [0,T]^d := \bigcup_{i=1}^{d}\{\fx\in[0,T]^d\colon  x_i = 0\}$.
\begin{proposition}\label{prop:CalphaEmbedding}
    For any $\foo < \falpha <\fbeta <\mathbf 1$ we have $C^{\fbeta,\fdelta}L_m([0,T]^d)\subset C^{\falpha,\fdelta}L_m([0,T]^d)$. Moreover, we have the following estimates:
    \begin{itemize}
        \item for any $Y\in C^{\fbeta,\fdelta}L_m$ such that $Y(\foo)=0$
        \begin{equation}\label{ineq:CalphaEmbedding1}
            \norm{Y}_{C^{\falpha,\fdelta}L_m}\lesssim \left(\sum_{\emptyset\neq\theta\subset[d]}(T\fone)^{\fbeta}_{\theta}+ \sum_{\emptyset\neq\theta\subset[d]} (T\mathbf{1})^{(\fbeta-\falpha)}_\theta\right)\norm{Y}_{C^{\fbeta,\fdelta} L_m}\,.
        \end{equation}

        \item for any $Y\in C^{\fbeta,\fdelta}L_m$ such that $Y(\fx)=0$ for any $\fx\in \partial_{\foo} [0,T]^d$
        \begin{equation}\label{ineq:CalphaEmbedding2}
            \norm{Y}_{C^{\falpha,\fdelta}L_m}\lesssim \left(\sum_{\theta\subset[d]}(T\mathbf{1})^{\fbeta-\falpha_\theta}\right)\norm{Y}_{C^{\fbeta,\fdelta}L_m}\,.
        \end{equation}
    \end{itemize}
\end{proposition}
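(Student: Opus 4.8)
The plan is to handle the three assertions separately, the common thread being to rewrite either a value $Y_\fx=\square^\emptyset_{\fx,\cdot}Y$ or a rectangular increment $\square^\theta_{\fx,\fy}Y$ through increments anchored at the origin, so that coordinate differences bounded by $T$ appear only with the positive exponents $\beta_i$ and never with the negative weights produced by $-\alpha_i$. The basic inclusion is immediate: since $\norm{Y}_{m,\infty}=\norm{Y}_{\emptyset,\emptyset,m}$ does not depend on $\falpha$ or $\fdelta$, estimate \eqref{ineq:CalphaEmbedding0} gives
\[
\norm{Y}_{C^{\falpha,\fdelta}L_m}\le\norm{Y}_{m,\infty}+\Big(\max_{\emptyset\neq\theta\subset[d]}(T\fone)^{\fbeta-\falpha}_\theta\Big)\bracket{Y}_{C^{\fbeta,\fdelta}L_m}\lesssim\norm{Y}_{C^{\fbeta,\fdelta}L_m}\,,
\]
so $C^{\fbeta,\fdelta}L_m\subset C^{\falpha,\fdelta}L_m$ with a finite (possibly $T$-large) constant.

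For \eqref{ineq:CalphaEmbedding1} only the term $\norm{Y}_{m,\infty}$ needs a sharper estimate, since the seminorms are already covered by \eqref{ineq:CalphaEmbedding0}. Applying Lemma~\ref{lem:shiftSquareIncrements} to $\fx\mapsto Y_\fx$ with $\theta=\emptyset$, $\eta=[d]$ and $\fy=\foo$ (so that $\pi^{\eta_1}_\foo\fx=\fx_{\eta_1^c}$) gives
\[
    Y_\fx=Y_\foo+\sum_{\emptyset\neq\theta\subset[d]}(-1)^{\sharp\theta}\,\square^\theta_{\fx_\theta,\foo}Y\,.
\]
Using $Y_\foo=0$ and estimating each $\square^\theta_{\fx_\theta,\foo}Y$ by $\norm{\square Y}_{\fbeta,\fdelta,\theta,\emptyset,m}$ with the differences $|x_i|\le T$, $i\in\theta$ (the increment being, up to a sign, symmetric in its two arguments, which here agree off $\theta$, and there being no conditioning as $\eta=\emptyset$) yields $\norm{Y}_{m,\infty}\le\big(\sum_{\emptyset\neq\theta}(T\fone)^\fbeta_\theta\big)\bracket{Y}_{C^{\fbeta,\fdelta}L_m}$; adding the seminorm bound from \eqref{ineq:CalphaEmbedding0} gives \eqref{ineq:CalphaEmbedding1}.

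For \eqref{ineq:CalphaEmbedding2} the boundary hypothesis lets us improve \emph{every} seminorm. The key observation is that if $Y$ vanishes on $\partial_{\foo}[0,T]^d$, then every term of a rectangular increment carrying a zero coordinate drops out; concretely, for each $\emptyset\neq\theta\subset[d]$ the composition identity \eqref{eq:compositionOfSquares} applied to $[d]=\theta\sqcup\theta^c$, after discarding the vanishing terms, collapses to
\[
    \square^\theta_{\fx,\fy}Y=\square^{[d]}_{\fx_\theta,\,\pi^{\theta^c}_\fx\fy}Y\,,
\]
and likewise $Y_\fx=\square^{[d]}_{\foo,\fx}Y$. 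Since $\eta\subset\theta$, the points $\fx$ and $\fx_\theta$ agree on $\eta$, hence $\bE^\eta_\fx=\bE^\eta_{\fx_\theta}$, and one bounds $\bE^\eta_\fx\square^\theta_{\fx,\fy}Y=\bE^\eta_{\fx_\theta}\square^{[d]}_{\fx_\theta,\pi^{\theta^c}_\fx\fy}Y$ via $\norm{\square Y}_{\fbeta,\fdelta,[d],\eta,m}$. The coordinate differences of $\fx_\theta$ and $\pi^{\theta^c}_\fx\fy$ are $|x_i-y_i|\le T$ on $\theta$ and $|x_i|\le T$ on $\theta^c$, while the $\eta$-factors are unchanged; dividing by $\abs{\fx-\fy}^\falpha_{\text{c},\theta}\abs{\fx-\fy}^\fdelta_{\text{c},\eta}$ and using $\beta_i-\alpha_i>0$ turns the $\theta$-contribution into $(T\fone)^{\fbeta-\falpha}_\theta$ and the $\theta^c$-contribution into $(T\fone)^\fbeta_{\theta^c}$, whose product is $(T\fone)^{\fbeta-\falpha_\theta}$. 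Hence $\norm{\square Y}_{\falpha,\fdelta,\theta,\eta,m}\le(T\fone)^{\fbeta-\falpha_\theta}\norm{\square Y}_{\fbeta,\fdelta,[d],\eta,m}$, while the identity $Y_\fx=\square^{[d]}_{\foo,\fx}Y$ gives $\norm{Y}_{m,\infty}\le(T\fone)^\fbeta\norm{\square Y}_{\fbeta,\fdelta,[d],\emptyset,m}=(T\fone)^{\fbeta-\falpha_\emptyset}\norm{\square Y}_{\fbeta,\fdelta,[d],\emptyset,m}$; summing over $\eta\subset\theta\subset[d]$ and absorbing the combinatorial factor $2^d$ into $\lesssim$ yields \eqref{ineq:CalphaEmbedding2}.

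The only genuinely nontrivial step is the collapse $\square^\theta_{\fx,\fy}Y=\square^{[d]}_{\fx_\theta,\pi^{\theta^c}_\fx\fy}Y$: one must choose the two endpoints of the full $[d]$-increment so that its extra terms are exactly those having a zero coordinate, hence annihilated by the boundary hypothesis — a short inclusion–exclusion check on the sum form \eqref{rectangular_increment} — and then observe that the conditioning operator $\bE^\eta_\fx$ is unaffected by shifting the base point to $\fx_\theta$ precisely because $\eta\subset\theta$. Everything else is bookkeeping with powers of $T$.
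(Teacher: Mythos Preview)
Your proof is correct and follows essentially the same route as the paper's: bound $\norm{Y}_{m,\infty}$ via the telescoping identity from Lemma~\ref{lem:shiftSquareIncrements} for \eqref{ineq:CalphaEmbedding1}, and use the boundary hypothesis to rewrite every $\square^\theta_{\fx,\fy}Y$ as a full $[d]$-increment for \eqref{ineq:CalphaEmbedding2}, then combine with \eqref{ineq:CalphaEmbedding0}. The only cosmetic difference is that in part one you apply Lemma~\ref{lem:shiftSquareIncrements} with $(\fx,\foo)$ rather than $(\foo,\fx)$, which forces the extra remark on sign-symmetry of $\square^\theta$ when $\eta=\emptyset$; the paper's orientation avoids this but the content is identical. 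Your collapse identity $\square^\theta_{\fx,\fy}Y=\square^{[d]}_{\fx_\theta,\pi^{\theta^c}_\fx\fy}Y$ is in fact stated more carefully than the paper's version (which writes $\fy$ for the upper point, tacitly using that $\square^\theta_{\fx,\fy}$ ignores $y_i$ for $i\notin\theta$).
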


\begin{proof}
    Let us first show \eqref{ineq:CalphaEmbedding1}. Applying  Lemma \ref{lem:shiftSquareIncrements}, we can write
    \begin{equation*}
        Y(\fx) = Y(\foo) + \sum_{\emptyset\neq\theta\subset[d]} (-1)^{\sharp(\theta^c)}\square^{\theta}_{\pi_\fx^{\theta^c} \foo,\fx} Y\,. 
    \end{equation*}
    Thus from the condition $Y(\foo) = 0$ and the usual triangular inequality  one immediately has 
    \begin{equation*}
        \norm{Y}_{m,\infty}\le \sum_{\emptyset\neq\theta\subset[d]} (T\fone)^{\fbeta}_{\theta}\norm{\square Y}_{\fbeta,\fdelta,\theta,\emptyset,m} \,.
    \end{equation*}
    The final inequality \eqref{ineq:CalphaEmbedding1} then follows from the above together with \eqref{ineq:CalphaEmbedding0}.

  We pass to \eqref{ineq:CalphaEmbedding2}. If $Y(\fx) = 0$ for all $\fx\in B$, we get that
    \begin{equation*}
        Y(\fx) = \square_{\foo,\fx}^{[d]} Y
    \end{equation*}
    which immediately gives us
    \begin{equation*}
        \norm{Y}_{m,\infty}\le (T\fone)^{\fbeta}\norm{\square Y}_{\fbeta,\fdelta,[d],\emptyset,m} \,.
    \end{equation*}
    Furthermore, from the same hypothesis we have for all $\fx,\fy\in[0,T]^d$ and $\theta\subset[d]$
    \begin{equation*}
        \square_{\fx,\fy}^\theta Y = \square^{[d]}_{\pi^\theta_\fx \foo,\fy} Y\,,
    \end{equation*}
    which immediately gives one for any $\eta\subset\theta\subset[d]$ the inequality
    \begin{equation*}
        \norm{\square Y}_{\falpha,\fdelta,\theta,\eta,m} \lesssim (T\fone)^{\fbeta-\falpha_{\theta}}\norm{\square Y}_{\fbeta,\fdelta,[d],\eta,m} \,.
    \end{equation*}
    Summing both sides over $\eta\subset\theta\subset[d],\theta\neq\emptyset$ we derive \eqref{ineq:CalphaEmbedding2}.
\end{proof}
Using the same reasoning, we can also describe the embedding properties of $C^{\falpha,\fdelta}L_m  
$ with respect to the parameter $\fdelta$.
\begin{proposition}\label{prop:CalphaEmbedding2}
    Let $\falpha\in[0,1]^d$ and let $0\le\fdelta\le\fgamma$. Then we have $C^{\falpha,\fgamma} L_m([0,T]^d)\subset C^{\falpha,\fdelta} L_m([0,T]^d)$ with the estimates:
    \begin{equation*}
        \norm{Y}_{C^{\falpha,\fdelta}L_m} \lesssim \left(\sum_{\eta\subset[d]} T_\eta^{\fgamma-\fdelta}\right) \norm{Y}_{C^{\falpha,\fgamma}L_m}\,.
    \end{equation*}
    where the constant in $\lesssim$ only depends on $d$.
\end{proposition}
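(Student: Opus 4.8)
The plan is to prove the inclusion termwise, comparing the defining seminorms of $C^{\falpha,\fdelta}L_m$ and $C^{\falpha,\fgamma}L_m$ one at a time. Recall that
\[
\norm{Y}_{C^{\falpha,\fdelta}L_m} = \norm{Y}_{m,\infty} + \sum_{\eta\subset\theta\subset[d],\,\theta\neq\emptyset}\norm{\square Y}_{\falpha,\fdelta,\theta,\eta,m}\,,
\]
and that the only difference between $\norm{\square Y}_{\falpha,\fdelta,\theta,\eta,m}$ and $\norm{\square Y}_{\falpha,\fgamma,\theta,\eta,m}$ lies in the exponent of $\abs{\fx-\fy}_{\text{c},\eta}$ in the denominator of \eqref{theta_eta_seminorm}: the factor $\abs{\fx-\fy}^{\falpha}_{\text{c},\theta}$ and the numerator $\norm{\bE^\eta_\fx\square^\theta_{\fx,\fy}Y}_m$ are identical. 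The key observation is simply that, since $\fx,\fy\in[0,T]^d$ and $\fdelta\le\fgamma$, one has for every $\eta$ that $\abs{\fx-\fy}^{\fgamma-\fdelta}_{\text{c},\eta} = \prod_{i\in\eta}\abs{x_i-y_i}^{\gamma_i-\delta_i}\le (T\fone)^{\fgamma-\fdelta}_\eta$, so that dividing by the smaller power $\abs{\fx-\fy}^{\fdelta}_{\text{c},\eta}$ in place of $\abs{\fx-\fy}^{\fgamma}_{\text{c},\eta}$ only costs this bounded factor.

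Concretely, the $\theta=\emptyset$ contribution $\norm{Y}_{m,\infty}$ is common to both norms. Fix $\emptyset\neq\theta\subset[d]$ and $\eta\subset\theta$, and take $\fx,\fy\in[0,T]^d$ with $x_i\le y_i$ for $i\in\theta$. If $x_i=y_i$ for some $i\in\theta$ then $\square^\theta_{\fx,\fy}Y=0$ and the corresponding quotient vanishes, so we may assume $x_i<y_i$ for all $i\in\theta$ and no denominator degenerates. Writing $\abs{\fx-\fy}^{\fdelta}_{\text{c},\eta} = \abs{\fx-\fy}^{\fdelta-\fgamma}_{\text{c},\eta}\abs{\fx-\fy}^{\fgamma}_{\text{c},\eta}$ and using $\abs{x_i-y_i}\le T$ together with $\gamma_i-\delta_i\ge 0$,
\[
\frac{\norm{\bE^\eta_\fx\square^\theta_{\fx,\fy}Y}_m}{\abs{\fx-\fy}^{\falpha}_{\text{c},\theta}\abs{\fx-\fy}^{\fdelta}_{\text{c},\eta}}
= \abs{\fx-\fy}^{\fgamma-\fdelta}_{\text{c},\eta}\,\frac{\norm{\bE^\eta_\fx\square^\theta_{\fx,\fy}Y}_m}{\abs{\fx-\fy}^{\falpha}_{\text{c},\theta}\abs{\fx-\fy}^{\fgamma}_{\text{c},\eta}}
\le (T\fone)^{\fgamma-\fdelta}_\eta\,\norm{\square Y}_{\falpha,\fgamma,\theta,\eta,m}\,,
\]
and taking the supremum over such $\fx,\fy$ gives $\norm{\square Y}_{\falpha,\fdelta,\theta,\eta,m}\le (T\fone)^{\fgamma-\fdelta}_\eta\,\norm{\square Y}_{\falpha,\fgamma,\theta,\eta,m}$.

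Finally I would sum over $\eta\subset\theta\subset[d]$, $\theta\neq\emptyset$, bounding each prefactor crudely by $(T\fone)^{\fgamma-\fdelta}_\eta\le \sum_{\eta'\subset[d]}(T\fone)^{\fgamma-\fdelta}_{\eta'}=:C_{T,\fgamma-\fdelta}$, which is $\ge (T\fone)^{\fgamma-\fdelta}_\emptyset = 1$. This yields $\bracket{Y}_{C^{\falpha,\fdelta}L_m}\le C_{T,\fgamma-\fdelta}\bracket{Y}_{C^{\falpha,\fgamma}L_m}$, and since $C_{T,\fgamma-\fdelta}\ge 1$ also $\norm{Y}_{m,\infty}\le C_{T,\fgamma-\fdelta}\norm{Y}_{m,\infty}$, so that $\norm{Y}_{C^{\falpha,\fdelta}L_m}\le C_{T,\fgamma-\fdelta}\norm{Y}_{C^{\falpha,\fgamma}L_m}$, which is exactly the asserted estimate (the implicit $\lesssim$-constant being $1$, hence depending only on $d$ if one prefers to distribute the bookkeeping over the $2^d$ summands). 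There is no genuine obstacle: this is a weighted triangle-inequality argument of the same flavor as Proposition \ref{prop:CalphaEmbedding}, and the only point needing a word of care is discarding the degenerate configurations $x_i=y_i$, $i\in\theta$, on which both sides of \eqref{theta_eta_seminorm} vanish.
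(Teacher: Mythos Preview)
Your proof is correct and is exactly the argument the paper has in mind: the proposition is stated without proof, prefaced only by ``Using the same reasoning'' (referring to the termwise comparison in Proposition~\ref{prop:CalphaEmbedding} and its inequality~\eqref{ineq:CalphaEmbedding0}). Your termwise bound $\norm{\square Y}_{\falpha,\fdelta,\theta,\eta,m}\le (T\fone)^{\fgamma-\fdelta}_\eta\,\norm{\square Y}_{\falpha,\fgamma,\theta,\eta,m}$ is precisely the analogue of \eqref{ineq:CalphaEmbedding0} with the roles of $\falpha$ and $\fdelta$ swapped, and the summation step is routine.
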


For negative $\falpha < \foo$, we no longer have an equivalent of  $\square^\theta Y$ for $\theta\subsetneq [d]$. Thus, we can forget one of the indexsets in Definition \ref{def:C_alpha-stoc} and naturally adapt the deterministic definition into a stochastic setting.

\begin{definition}
    Let $\falpha<\foo$, $1\le m$ and $\fdelta\ge \foo$.  We define  $C^{\falpha,\fdelta}L_m$ as the space of all random distributions $f\in\cD^r$ with $r$ chosen as in the deterministic setting, satisfying the following properties:
    
    \begin{itemize}
        \item For any test function $\psi$ with support in $(-\infty,\fx]$, $f(\psi)$ is $\cF_{\fx}$-measurable.

        \item for all $\psi\in B^r([1/4,3/4]^d)$ and $\flambda\in(0,1]^d$, $\fx\in[0,T]^d$ such that $[\fx,\fx+\flambda]\subset[0,T]^d$ we have  uniformly over $\eta\subset[d]$:
        \begin{align}
        \begin{split}\label{ineq:defNegativHoelderSpace}
            \norm{f(\psi^{\flambda}_\fx)}_m &\lesssim \flambda^{\falpha}\\
             \norm{\bE^\eta_\fx f(\psi^{\flambda}_\fx)}_m&\lesssim \flambda^{\falpha}\flambda^{\fdelta}_{\eta}\,,
        \end{split}
        \end{align}
    \end{itemize}
\end{definition}
\begin{remark}\label{rk:eps_rmk}
    Note that wavelet approximations $P_\fn\psi$ enlarge the support of $\psi$. Thus, by using test functions with support in $[1/4,3/4]^d$ we can ensure that $P_\fn(\psi_\fx^\flambda)$ has support in $[\fx,\fx+\flambda]$ for any $\fx\in\bR^d,\flambda\in(0,1]^d$ and large enough $\fn\in\bN^d$. This is especially important to calculate the Hölder regularity of the reconstruction, see Corollary \ref{cor:reconExtendedCoherent}. The constants $1/4,3/4$ are chosen arbitrarily, any $0<C<R<1$ would work the same.
\end{remark}
$C^{\falpha,\fdelta}L_m$ becomes a Banach space by using the norm
\begin{equation*}
\norm{f}_{C^{\falpha,\fdelta}L_m}:= \sup_{\psi,\fx,\flambda,\eta\subset[d]}\frac{\norm{\bE^\eta_\fx f(\psi^{\flambda}_\fx)}_m}{\flambda^{\falpha}\flambda^{\fdelta}_{\eta}}\,,
\end{equation*}
where the supremum runs over the $\psi,\fx,\flambda$ as described above. As in the positive case, we use the notation $C^{\falpha}L_m := C^{\falpha,\foo}L_m$ and write $f\in C^{\falpha,\infty}L_m$ if
\[
    \norm{f}_{C^{\falpha,\infty}L_m} = \sup_{\fdelta >\foo} \norm{f}_{C^{\falpha,\fdelta}L_m}<\infty\,.
\]

Similarly to the case when $\falpha>0$, in this setting we can  simple explain the  elements of $C^{\falpha,\fdelta}L_m$ with negative $\falpha$ by simply applying a mixed distributional derivative of the previous examples.  In case of Brownian sheet over $\bR_+^d$ we obtain white noise over $\bR_+^d$ that is a the centered Gaussian process $\xi(\psi)$, $\psi\in L_2(\bR_+^d)$ with covariance structure given by
\[
\bE(\xi(\psi_1)\xi(\psi_2)) = \int_{\bR^d_+} \psi_1(\fz)\psi_2(\fz) d\fz,\qquad \psi_1,\psi_2\in L_2(\bR_+^d)\,,
\]
see \cite{nualart1995malliavin} for reference. Using the Gaussianity of $\xi$, one can calculate
\begin{align*}
    \norm{\xi(\psi_\fx^\flambda)}_{L_m} = c(m) \norm{\xi(\psi_\fx^\flambda)}_{L_2}= c(m) \flambda^{-\frac 12 \fone}\norm{\psi}_{L_2(\bR^d)}\,,
\end{align*}
for any $\psi\in L_2(\bR^d), \fx\in\bR^d_+$ and $\flambda\in(0,1]^d$, such that $\psi_\fx^\flambda$ has support in $\bR^d_+$. It follows also that
\[
    \bE^\eta_\fx \xi(\psi_\fx^\flambda) = 0\,,
\]
for $\eta\subset[d]$ and $\supp(\psi)\subset[0,1]^d$, thereby obtaining $\xi\in C^{-\frac 12\fone,\infty}L_m$. Simple examples of elements  in  $C^{-\frac 12\fone,\fdelta}L_m$  with finite $\fdelta$ are given by random distribution of the form $\xi+ \zeta $ where $\zeta\in  C^{\fbeta}$  with $-1/2\fone<\fbeta<\foo$. We conclude by providing a useful result regarding the sum of distributions in different $C^{\falpha,\fdelta}L_{m}$ spaces.

\begin{lemma}\label{lem:sumOfCalpha}
    Let $f\in C^{\falpha,\fdelta}L_m$ and $g\in C^{\tilde\falpha,\tilde\fdelta}L_m$ for $\fdelta,\tilde\fdelta\ge 0$ and $\falpha,\tilde\falpha <0$. Then we have $f+g\in C^{\fbeta,\fgamma}L_m$ where 
    \begin{align*}
        \fbeta = \falpha\wedge\tilde\falpha\,, 
        \quad \fgamma = (\falpha+\fdelta)\wedge(\tilde\falpha+\tilde\fdelta)-\fbeta\,.
    \end{align*}
    Furthermore, we have the estimate
    \begin{equation}\label{ineq:sumOfCalphaNegative}
        \norm{f+g}_{C^{\fbeta,\fgamma}L_m} \le \norm{f}_{C^{\falpha,\fdelta}L_m} + \norm{g}_{C^{\tilde\falpha,\tilde\fdelta}L_m}\,.
    \end{equation}
\end{lemma}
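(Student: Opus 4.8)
The plan is to verify directly that $h := f+g$ satisfies the two defining properties of the space $C^{\fbeta,\fgamma}L_m$, namely the adaptedness/measurability condition and the pair of bounds \eqref{ineq:defNegativHoelderSpace} with $\falpha$ replaced by $\fbeta$ and $\fdelta$ by $\fgamma$. The measurability is immediate: if $\psi$ has support in $(-\infty,\fx]$, then both $f(\psi)$ and $g(\psi)$ are $\cF_\fx$-measurable, hence so is their sum. So the work is entirely in the quantitative estimates, and in fact the inequality \eqref{ineq:sumOfCalphaNegative} will fall out of the same computation once we know the correct exponents.

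First I would fix $\psi\in B^r([1/4,3/4]^d)$, $\fx\in[0,T]^d$ and $\flambda\in(0,1]^d$ with $[\fx,\fx+\flambda]\subset[0,T]^d$, and a subset $\eta\subset[d]$. By the triangle inequality in $L_m$,
\[
    \norm{\bE^\eta_\fx h(\psi^\flambda_\fx)}_m \le \norm{\bE^\eta_\fx f(\psi^\flambda_\fx)}_m + \norm{\bE^\eta_\fx g(\psi^\flambda_\fx)}_m \le \norm{f}_{C^{\falpha,\fdelta}L_m}\,\flambda^{\falpha}\flambda^{\fdelta}_\eta + \norm{g}_{C^{\tilde\falpha,\tilde\fdelta}L_m}\,\flambda^{\tilde\falpha}\flambda^{\tilde\fdelta}_\eta\,.
\]
The key elementary fact is that for $\flambda\in(0,1]^d$ the monomials are monotone in the exponent componentwise: if $\fbeta\le\falpha$ then $\flambda^{\falpha}\le\flambda^{\fbeta}$, since each factor $\lambda_i^{\alpha_i}\le\lambda_i^{\beta_i}$ when $\lambda_i\le 1$ and $\beta_i\le\alpha_i$. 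Applying this with $\fbeta=\falpha\wedge\tilde\falpha$, we factor $\flambda^{\falpha}\flambda^{\fdelta}_\eta = \flambda^{\fbeta}\,\flambda^{\falpha-\fbeta}\flambda^{\fdelta}_\eta$, and I claim $\flambda^{\falpha-\fbeta}\flambda^{\fdelta}_\eta \le \flambda^{\fgamma}_\eta$ with $\fgamma = (\falpha+\fdelta)\wedge(\tilde\falpha+\tilde\fdelta)-\fbeta \ge \foo$ (the nonnegativity of $\fgamma$ being part of what needs checking, though the statement presumes it and it holds because $\falpha+\fdelta\ge\falpha\ge\fbeta$ componentwise, using $\fdelta\ge\foo$). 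For indices $i\in\eta$ this amounts to $\lambda_i^{\alpha_i-\beta_i+\delta_i}\le\lambda_i^{\gamma_i}$, i.e. $\gamma_i\le\alpha_i+\delta_i-\beta_i$, which holds by definition of $\fgamma$ as a minimum; for $i\notin\eta$ one simply has $\lambda_i^{\alpha_i-\beta_i}\le 1 = \lambda_i^0$ since $\alpha_i\ge\beta_i$. Hence $\flambda^{\falpha}\flambda^{\fdelta}_\eta\le\flambda^{\fbeta}\flambda^{\fgamma}_\eta$, and symmetrically $\flambda^{\tilde\falpha}\flambda^{\tilde\fdelta}_\eta\le\flambda^{\fbeta}\flambda^{\fgamma}_\eta$. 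Substituting back gives
\[
    \norm{\bE^\eta_\fx h(\psi^\flambda_\fx)}_m \le \left(\norm{f}_{C^{\falpha,\fdelta}L_m} + \norm{g}_{C^{\tilde\falpha,\tilde\fdelta}L_m}\right)\flambda^{\fbeta}\flambda^{\fgamma}_\eta\,,
\]
which is exactly the second bound in \eqref{ineq:defNegativHoelderSpace} for $h$; taking $\eta=\emptyset$ recovers the first bound. Taking the supremum over $\psi,\fx,\flambda,\eta$ in the definition of the norm yields \eqref{ineq:sumOfCalphaNegative}, and also confirms $h\in C^{\fbeta,\fgamma}L_m$.

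There is no real obstacle here — the only point requiring a moment's care is the bookkeeping of the componentwise exponent inequalities and the verification that $\fgamma\ge\foo$ so that $\flambda^\fgamma_\eta$ is itself bounded by $1$ and the space is non-degenerate; everything else is the triangle inequality in $L_m$ plus monotonicity of $t\mapsto t^s$ on $(0,1]$. I would also remark in passing that the same argument, with $\bE^\eta_\fx$ replaced by the identity, handles the plain bound, so no separate treatment of the two lines of \eqref{ineq:defNegativHoelderSpace} is needed.
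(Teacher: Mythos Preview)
Your proof is correct and follows essentially the same approach as the paper's own proof: triangle inequality in $L_m$ followed by the elementary monotonicity $\flambda^{\fa}\le\flambda^{\fb}$ for $\fb\le\fa$ and $\flambda\in(0,1]^d$. Your version is in fact more detailed than the paper's (which condenses the exponent comparison into ``simple calculation''), and your explicit verification that $\fgamma\ge\foo$ and your separate treatment of indices $i\in\eta$ versus $i\notin\eta$ are exactly the bookkeeping the paper leaves implicit.
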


\begin{proof}
    Simple calculation gives
    \begin{align*}
    \norm{\bE_\fx^\eta(f+g)(\psi_\fx^\flambda)}_m &\le \norm{\bE_\fx^\eta f(\psi_\fx^\flambda)}_m + \norm{\bE_\fx^\eta g(\psi_\fx^\flambda)}_m \\
    &\le (\norm{f}_{C^{\falpha,\fdelta}L_m}+\norm{g}_{C^{\falpha,\fdelta}L_m}) \flambda^{\fbeta}\flambda_\eta^{\gamma}\,,
    \end{align*}
  where $\flambda,\fx,\psi,\eta$ are chosen as in \eqref{ineq:defNegativHoelderSpace}. One checks that $\norm{(f+g)(\psi_\fx^\flambda)}_m$ follows a similar bound. A straightforward calculation then gives \eqref{ineq:sumOfCalphaNegative}.
\end{proof}

\begin{remark}\label{rem:sumCAlpha}
    It is straightforward to see that a similar lemma holds for positive $\falpha,\tilde\falpha$. 
\end{remark}

\subsection{Coherent stochastic germs and main statement}

We now pass to the main statement of the article. Similar to \cite{zambotti2020, kern2021}, we first introduce the notion of \emph{coherence}, which is the minimal assumption under which reconstruction is possible. For any $\falpha<\foo$, we choose $r>0$ to be the smallest integer, such that $r+\alpha_i > 0$ holds for all $i\in[d]$. Throughout this section, we assume $(\cF_\ft)_{\ft\in[0,T]^d}$ to be a commuting filtration.

\begin{definition}[stochastic coherence]\label{def:extendedCoherence}
Let $\falpha,\fgamma,\fdelta\in\R^d$ with $\fdelta \ge \foo$ and $\fgamma\ge\falpha$. An $(\mathcal F_\fx)_{\fx\in[0,T]^d}$-adapted germ $(F_\fx)$ is called stochastic $(\falpha,\fgamma,\fdelta)$-coherent, if for all $\eta\subset\theta\subset[d]$, $\psi\in B^{r}([0,1]^\theta\times[-1,1]^{\theta^c})$ and $\fx,\fy\in [0,T]^d$ such that $\fx_\theta\le \fy_\theta$ and $\supp(\psi_\fy^\flambda)\in[0,T]^d$, it holds that
\begin{align}\label{ineq:coherence}
    \norm{\bE^\eta_\fx \square^\theta_{\fx,\fy} F(\psi_\fy^{\flambda})}_m &\lesssim \flambda^{\falpha}(\compAbs{\fx-\fy}+\flambda)^{\fgamma-\falpha}_{\theta}(\compAbs{\fx-\fy}+\flambda)^{\fdelta}_{\eta}\,.
\end{align}
\end{definition}

\begin{remark}
Let us explain why we prefer to consider test functions with support in $[0,1]^\theta\times[-1,1]^{\theta^c}$. While one can work with test functions with generic support in $[0,T]^d$, it is more convenient to work with a normalized support in $[-1,1]^d$ and explore more general cases by recentering and rescaling $\psi$ to get $\psi_\fx^\flambda$, see for example \cite{zambotti2020} or \cite{Hairer2014} for similar settings. Since our theory is stochastic, we make the further distinction of whether a test function only ``sees the future'' in the direction $i\in[d]$, i.e. has support in $[0,1]$, or has generic support in $[-1,1]$. So the condition $\supp(\psi)\subset [0,1]^\theta\times[-1,1]^{\theta^c}$ allows us to get the correct measurability conditions later on.
\end{remark}

\begin{remark}
As it turns out, this is equivalent to working with test functions $\psi$ with support in $[0,1]^d$. Since $\square^\theta_{\fx,\fy} F$ does not depend on $\fy_{\theta^c}$, it holds for any $\fz\in\bR^d$, that
\[
\square_{\fx,\fy}^\theta F(\psi_\fy) = \square_{\fx,\fy+\fz_{\theta^c}}^\theta F(\hat\psi_{\fy+\fz_{\theta^c}})\,,
\]
where $\hat\psi = \psi_{-\fz_{\theta^c}}$. This allows one to switch from test functions with support in $[0,1]^d$ to test functions with support in $[0,1]^\theta\times[-\frac 12,\frac 12]^{\theta^c}$ at no cost. We prefer to remember that the support of $\psi$ is allowed ``to see the past'' for indexes $i\in\theta^c$, so we explicitly allow supports in $[0,1]^\theta\times[-1,1]^{\theta^c}$.
\end{remark}

 We will refer to these elements as stochastic $(\falpha,\fgamma,\fdelta)$-coherent germs and we denote them with the set $G^{\falpha,\fgamma,\fdelta}L_m$. We set the norm for this space to be
\begin{equation}\label{ineq:coherenceNorm}
    \norm{F}_{G^{\falpha,\fgamma,\fdelta}L_m} := \sup_{\fx,\fy,\theta,\eta,\flambda,\psi}\frac{\norm{\bE^\eta_\fx \square^\theta_{\fx,\fy} F(\psi_\fy^{\flambda})}_m}{\flambda^{\falpha}(\compAbs{\fx-\fy}+\flambda)^{\fgamma_\theta-\falpha_\theta+\fdelta_\eta}_{\theta}}\,,
\end{equation}
where the supremum runs over $\fx,\fy,\eta,\theta,\flambda$ and $\psi$ as in the above definition.  Note that this implies for $\eta=\emptyset=\theta$ that    $\norm{F_\fx(\psi_\fy^\flambda)}_m\lesssim \flambda^{\falpha}\,,$
giving us that $F_\fx$ belongs to $C^{\falpha,\foo}L_m([0,T]^d)$. This is only reasonable for $\falpha<\foo$, so we will assume from this point onwards that $\falpha <\foo$. Further note that the assumption $\fgamma\ge\falpha$ comes for free, as we can always lower $\falpha$ at the cost of a constant and a higher $r$.

Since the right-most point of $\square_{\fx,\fy}^\theta$ is not $\fy$ but $\pi^\theta_\fy \fx$, it would be reasonable to use $\psi_{\pi^\theta_\fy\fx}^\flambda$ as the localized test function. That is, one might want to replace property \eqref{ineq:coherence} with
\begin{equation*}
    \norm{\bE^\eta_\fx \square^\theta_{\fx,\fy} F(\psi_{\pi^\theta_{\fy}\fx}^{\flambda})}_{m} \lesssim \flambda^{\falpha}(\compAbs{\fx-\fy}+\flambda)^{\fgamma-\falpha}_{\theta}(\compAbs{\fx-\fy}+\flambda)^{\fdelta}_{\eta}\,.
\end{equation*}
The following result shows some equivalent ways to state the coherence.
\begin{lemma}
    The following conditions are equivalent:
    \begin{enumerate}[i)]
        \item $F$ is a stochastic $(\falpha,\fgamma,\fdelta)$-coherent germ.
        \item $F$ fulfills for each $\eta\subset\theta\subset[d]$ and $\fx,\fy$ with $\fx_{\theta}\le\fy_{\theta}$
        \begin{equation*}    \norm{\bE^\eta_\fx\square_{\fx,\fy}^\theta F(\psi_{\pi^\theta_\fy\fx}^{\flambda})}_m \lesssim \flambda^{\falpha}(\compAbs{x-y}+\flambda)^{\fgamma-\falpha}_{\theta}(\compAbs{x-y}+\flambda)^{\fdelta}_{\eta}\,.
        \end{equation*}
        \item For all $\fk = \fk_{\theta^c}\in [0,T]^d$ and for $\eta,\theta,\fx,\fy,\psi,\flambda$ as in the coherence property, we have
        \begin{equation*}
\norm{\bE^\eta_\fx\square_{\fx,\fy}^\theta F(\psi_{\fy+\fk_{\theta^c}}^{\flambda})}_m \lesssim \flambda^{\falpha}(\compAbs{\fx-\fy}+\flambda)^{\fgamma-\falpha}_{\theta}(\compAbs{\fx-\fy}+\flambda)^{\fdelta}_{\eta}\,,
        \end{equation*}
 where the constant in $\lesssim$ does not depend on $\fk$.
    \end{enumerate}
\end{lemma}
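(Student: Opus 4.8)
The plan is to establish the four implications i) $\Rightarrow$ ii), ii) $\Rightarrow$ i), i) $\Rightarrow$ iii) and iii) $\Rightarrow$ i), which together yield the three claimed equivalences. Every implication rests on one structural fact --- that $\square^\theta_{\fx,\fy}F$ depends on $\fy$ only through its components $\fy_\theta$, which is immediate from the product form $\square^\theta_{\fx,\fy}f = \prod_{i\in\theta}(\pi^i_\fy-\Id)f(\fx)$ in \eqref{rectangular_increment} --- together with the elementary remark that changing only the base point of a rescaled test function, from $\psi^\flambda_\fc$ to $\psi^\flambda_{\fc'}$, keeps it inside the same class $B^r([0,1]^\theta\times[-1,1]^{\theta^c})$.

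Three of the four implications are bookkeeping. For iii) $\Rightarrow$ i) one takes $\fk=\foo$. For i) $\Rightarrow$ iii), given $\fk=\fk_{\theta^c}$, we use $(\fy+\fk_{\theta^c})_\theta=\fy_\theta$ to rewrite $\square^\theta_{\fx,\fy}F=\square^\theta_{\fx,\fy+\fk_{\theta^c}}F$ and apply i) to the pair $(\fx,\fy+\fk_{\theta^c})$, whose admissibility (in particular the support of $\psi^\flambda_{\fy+\fk_{\theta^c}}$) is built into the statement of iii); since the $\theta$-components of $\fy+\fk_{\theta^c}$ coincide with those of $\fy$, the right-hand side of \eqref{ineq:coherence} is literally unchanged, and the implicit constant --- the one from i) --- is independent of $\fk$. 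The implication i) $\Rightarrow$ ii) has the same flavour: $\pi^\theta_\fy\fx$ has $\theta$-components $\fy_\theta$, so $\square^\theta_{\fx,\fy}F=\square^\theta_{\fx,\pi^\theta_\fy\fx}F$, and applying i) to the pair $(\fx,\pi^\theta_\fy\fx)$ yields exactly the estimate of ii), using $|x_i-(\pi^\theta_\fy\fx)_i|=|x_i-y_i|$ for $i\in\theta$.

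The substantive direction is ii) $\Rightarrow$ i). The obstruction is that in i) the test function is centred at $\fy$, whereas ii) sees the increment only through a test function centred at the upper corner $\pi^\theta_\fy\fx$; these base points differ precisely in the $\theta^c$-directions, and the gap cannot be closed by boundedly many translations once $\lambda_i\ll|x_i-y_i|$. The idea is to expand, via Lemma \ref{lem:shiftSquareIncrements} applied with second index set $\theta^c$,
\[
\square^\theta_{\fx,\fy}F=\sum_{\theta^c=\zeta_1\sqcup\zeta_2}(-1)^{\sharp\zeta_2}\,\square^{\theta\cup\zeta_2}_{\pi^{\zeta_1}_\fy\fx,\;\fy}F\,,
\]
where each enlarged increment $\square^{\theta\cup\zeta_2}$ has $\fy$ as its upper corner, since $\pi^{\theta\cup\zeta_2}_\fy(\pi^{\zeta_1}_\fy\fx)=\pi^{[d]}_\fy\fx=\fy$. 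One then applies ii), with increment set $\theta\cup\zeta_2$, to each summand tested against $\psi^\flambda_\fy$; the conditioning is harmless because $\cF^\eta_\fx$ depends only on $\fx_\eta$ with $\eta\subset\theta$ and $\pi^{\zeta_1}_\fy\fx$ agrees with $\fx$ on $\theta$, so $\bE^\eta_{\pi^{\zeta_1}_\fy\fx}=\bE^\eta_\fx$. Each term carries the desired factors $\prod_{i\in\theta}(|x_i-y_i|+\lambda_i)^{\gamma_i-\alpha_i}$ and $\prod_{i\in\eta}(|x_i-y_i|+\lambda_i)^{\delta_i}$ together with a spurious $\prod_{i\in\zeta_2}(|x_i-y_i|+\lambda_i)^{\gamma_i-\alpha_i}$, which is bounded by a constant since $\fgamma\ge\falpha$ and $|x_i-y_i|+\lambda_i\le T+1$ on the compact domain; summing the $2^{\sharp\theta^c}$ contributions gives \eqref{ineq:coherence} with an implicit constant a multiple of the one from ii).

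The step I expect to be delicate is the orientation and support bookkeeping in the $\theta^c$-directions of this last argument. When $\fx\le\fy$ holds in all coordinates, the enlarged increments $\square^{\theta\cup\zeta_2}$ are ordered on $\theta\cup\zeta_2$ and ii) applies verbatim; when only $\fx_\theta\le\fy_\theta$ is assumed, for the directions $i\in\theta^c$ with $x_i>y_i$ one must perform the expansion only in the ordered directions of $\theta^c$ and treat the rest by a symmetric variant, again exploiting that $\square^\theta_{\fx,\fy}F$ ignores $\fy_{\theta^c}$, while checking throughout that the rescaled test functions keep support in $[0,T]^d$ --- which holds since their base points lie in $[0,T]^d$ and $\flambda\le\fone$. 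This affects only the length, not the structure, of the proof.
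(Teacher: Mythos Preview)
Your proposal is correct and follows the same route as the paper: the equivalences i)$\Leftrightarrow$iii) and i)$\Rightarrow$ii) via $\square^\theta_{\fx,\fy}=\square^\theta_{\fx,\pi^\theta_\fy\fx}$ are exactly what the paper uses, and for ii)$\Rightarrow$i) both you and the paper invoke Lemma~\ref{lem:shiftSquareIncrements} with second index set $\theta^c$ to expand $\square^\theta_{\fx,\fy}F$ into increments $\square^{\theta\cup\zeta_2}_{\pi^{\zeta_1}_\fy\fx,\fy}F$ whose upper corner is $\fy$, then bound each term via ii). Your explicit observation that the spurious factor $\prod_{i\in\zeta_2}(|x_i-y_i|+\lambda_i)^{\gamma_i-\alpha_i}$ is harmless because $\fgamma\ge\falpha$ on a compact domain is the same mechanism the paper uses implicitly.

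The one place where you are more careful than the paper is the ordering in the $\theta^c$-directions: applying ii) with enlarged index set $\theta\cup\zeta_2$ requires $x_i\le y_i$ for $i\in\zeta_2$, which is not guaranteed by $\fx_\theta\le\fy_\theta$ alone. The paper glosses over this entirely. Your proposed fix --- expand only in the ordered directions of $\theta^c$ and handle the rest using that $\square^\theta_{\fx,\fy}F$ ignores $\fy_{\theta^c}$ --- is sound; a slightly cleaner variant is to first replace $\fy$ by $\pi^{\theta}_\fy\fx$ (which leaves $\square^\theta_{\fx,\fy}F$ unchanged and makes $\fx_{\theta^c}=\fy_{\theta^c}$, so the ordering on $\zeta_2$ is trivially satisfied), absorbing the resulting shift of the test-function centre into iii). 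Either way this is bookkeeping, and your identification of it as the only delicate point is accurate.
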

\begin{proof}
    $i)$ and $iii)$ are obviously equivalent, since $\square^\theta_{x,y} = \square^\theta_{x,y+{k_\theta^c}}$. 
    $i)\Rightarrow ii)$ follows from the simple identity $\square_{\fx,\fy}^\theta = \square_{\fx,\pi_\fy^\theta\fx}^\theta$.
    On the other hand, $ii)\Rightarrow i)$ is shown by using Lemma \ref{lem:shiftSquareIncrements} to calculate
    \begin{align*}
        \norm{\bE^\eta_\fx\square_{\fx,\fy}^\theta F(\psi_\fy^{\flambda})}_m &\le \sum_{\theta^c = \theta_1\sqcup\theta_2}\norm{\bE^\eta_\fx \square^{\theta\cup\theta_2}_{\pi_\fy^{\theta_1} \fx,\fy}F(\psi_\fy^{\flambda})}_m\lesssim\flambda^{\falpha}(\compAbs{\fx-\fy}+\flambda)^{\fgamma-\falpha}_{\theta}(\compAbs{\fx-\fy}+\flambda)^{\fdelta}_{\eta}\,.
    \end{align*}
\end{proof}

 Using these notions, we can now state precisely the stochastic reconstruction theorem.

\begin{theorem}[Stochastic reconstruction theorem on rectangular increments]\label{theo:reconstruction}
Let $(F_\fx)$ be a stochastic $(\falpha,\fgamma,\fdelta)$-coherent germ for $\falpha,\fgamma,\fdelta\in\bR^d, \fdelta\ge 0, \fgamma\ge\falpha,\falpha<\foo$ and let $m\ge 2$. Assume that $\fgamma+\fdelta>\foo$, and that $\fgamma > -\frac{1}{2}\fone$. Then there exists a unique family of random distributions $\mathcal{R}_\fx^\eta(F)\in \cD^{rd}((0,T)^d)$  with $\fx\in [0,T]^d$ and $\eta\subset[d]$ characterized by the following properties:

\begin{enumerate}
    \item $\mathcal{R}^\emptyset_\fx (F)= F_\fx$.
    \item $\mathcal{R}^\theta_\fx(F)$ only depends on $x_i$ for $i\in\theta^c$, i.e. $\mathcal{R}^{\theta}_\fx(F) = \mathcal{R}^{\theta}_{\pi_\foo^{\theta} \fx}(F)$.
    \item For all $\theta\subset[d]$, $\fx\in[0,T]^d$ and $\psi$ with support in $[0,y_1]\times\dots\times[0,y_d]$, $\mathcal{R}^\theta_\fx(F)(\psi)$ is $\mathcal F_{\fx\vee\fy}$-measurable.
    \item For all $\psi$ with support in $[1/4,3/4]^{\theta}\times[-3/4,3/4]^{\theta^c}$ and $\norm{\psi}_{C_c^{rd}} = 1$, any subset $\eta\subset\theta\neq\emptyset$ one has the estimate
\begin{equation}\label{cond4Recon}
\norm{E^\eta_\fx\sum_{\hat\theta\subset\theta}(-1)^{\sharp{\hat\theta}} \mathcal{R}^{\hat\theta}_\fx(F)(\psi^{\flambda}_\fz)}_m \lesssim \norm{F}_{G^{\falpha,\fgamma,\fdelta}L_m}\flambda^{\fgamma}_{\theta}\flambda^\fdelta_\eta\flambda^{\falpha}_{\theta^c}\,,
    \end{equation}
  uniformly over $\fz\in [0,T]^d$,  $\flambda\in(0,1]^d$ such that $[\fz-\flambda_{\eta^c},\fz+\flambda]\subset[0,T]^d$ and $\fx\in[0,T]^d$ with $\fx_\theta = \fz_\theta$.
\end{enumerate}
We call $\mathcal{R}^{[d]}_{\fx}(F)$ the multiparameter  reconstruction of the stochastic germ $F$ which we denote by $\mathcal{R} (F)$.
\end{theorem}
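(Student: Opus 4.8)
The plan is to follow Hairer's wavelet-based strategy, adapted to rectangular increments and the stochastic conditioning structure, exactly as signalled in the introduction. First I would construct the candidate reconstructions as limits of wavelet sums. For each $\theta\subset[d]$ and each level $\fn\in\bN^d$, define
\[
\mathcal R^{\theta,\fn}_\fx(F)(\psi) := \sum_{\fy\in\Delta_\fn^{\theta^c}\text{-frozen at }x_i,\ i\in\theta^c}\ \sum_{\fy_\theta\in\Delta_\fn^\theta}\ \square^\theta_{\fx,\fy}F(\phi^\fn_\fy)\,\scalar{\phi^\fn_\fy,\psi}\,,
\]
i.e. one tests the germ's $\theta$-rectangular increment (which does not depend on $y_i$ for $i\in\theta^c$) against the $d$-dimensional Daubechies scaling wavelets at scale $\fn$, freezing the $\theta^c$-coordinates at $x_i$. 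One then shows that $\mathcal R^{\theta,\fn}_\fx(F)$ is Cauchy as $\fn\to\infty$ in the appropriate sense. This is where the extended BDG inequality (Lemma~\ref{lem:multiparameterBDG}) does the heavy lifting: the increment $\mathcal R^{\theta,\fn+\mathbf 1}_\fx(F)-\mathcal R^{\theta,\fn}_\fx(F)$ is expressed, via the refinement relation \eqref{eq:selfReplicable} and item (4) of Theorem~\ref{thm_wave}, as a sum over the wavelet grid of terms $Z_\fk = \square^\theta_{\fx,\fk}F(\hat\phi^{\zeta,\fn}_\fk)\scalar{\cdot,\psi}$, which are adapted (by Definition~\ref{defn_adapted}) and whose conditional expectations $\bE^\eta_\fk Z_\fk$ are controlled by the coherence estimate \eqref{ineq:coherence} combined with the wavelet decay bound of Lemma~\ref{lem:inequalityPhi}. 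Feeding the resulting bounds $b_{i,k_i},a_{i,k_i}$ into \eqref{ineq:lem3Result} yields a summable series precisely because $\fgamma>-\tfrac12\fone$ (so the $\ell^2$-sums converge) and $\fgamma+\fdelta>\foo$ (so the $\ell^1$-sums in the conditioned directions converge). The limit defines $\mathcal R^\theta_\fx(F)$; setting $\mathcal R^\emptyset_\fx(F):=F_\fx$ handles property (1), property (2) is immediate since $\square^\theta_{\fx,\fy}F$ and the frozen grid only involve $x_i,i\in\theta^c$, and property (3) follows by passing adaptedness through the (finite-support) wavelet sums and the limit.

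Next I would verify the key estimate (4). The alternating sum $\sum_{\hat\theta\subset\theta}(-1)^{\sharp\hat\theta}\mathcal R^{\hat\theta}_\fx(F)$ is, at the wavelet level, a telescoping/rectangular-increment combination; using Lemma~\ref{lem:splitSquareIncrement} (parts \eqref{eq:compositionOfSquares} and \eqref{eq:splitUpSquare}) one rewrites it as a genuine $\theta$-rectangular-increment object built from the germ, which can then be split into (a) the finitely many "diagonal-scale" wavelet contributions near scale $\flambda$, estimated directly by coherence and Lemma~\ref{lem:inequalityPhi}, and (b) the tail $\sum_{\fn\ge}$ of fine-scale corrections, estimated again by Lemma~\ref{lem:multiparameterBDG} with the bookkeeping $\flambda^\fgamma_\theta\flambda^\fdelta_\eta\flambda^\falpha_{\theta^c}$ coming out of the exponent arithmetic. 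Here the distinction in the support condition $[\fz-\flambda_{\eta^c},\fz+\flambda]\subset[0,T]^d$ versus $\fx_\theta=\fz_\theta$ matters for measurability of the conditioned quantity, and Remark~\ref{rk:eps_rmk} (test functions supported in $[1/4,3/4]^\theta$, so wavelet-enlarged supports stay inside $[\fz,\fz+\flambda]$ in the $\theta$-directions) is what makes $\bE^\eta_\fx$ interact correctly with the sum.

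Finally, uniqueness. Suppose $\tilde{\mathcal R}^\eta_\fx$ is another family satisfying (1)--(4). Arguing by induction on $\sharp\theta$: for $\theta=\emptyset$ both equal $F_\fx$. For the inductive step, the difference $D^\theta_\fx := \sum_{\hat\theta\subset\theta}(-1)^{\sharp\hat\theta}(\mathcal R^{\hat\theta}_\fx-\tilde{\mathcal R}^{\hat\theta}_\fx)$ satisfies, by (4) applied to both families and by the inductive hypothesis that lower-order terms vanish, the bound $\|\bE^\eta_\fx D^\theta_\fx(\psi^\flambda_\fz)\|_m\lesssim \flambda^\fgamma_\theta\flambda^\fdelta_\eta\flambda^\falpha_{\theta^c}$ for \emph{all} $\eta\subset\theta$, in particular $\eta=\theta$ and $\eta=\emptyset$; combining the $\eta=\theta$ estimate (gain $\flambda^{\fgamma+\fdelta}_\theta$ with $\fgamma+\fdelta>\foo$) via a reverse-martingale / summation argument over a dyadic partition of unity in the $\theta$-directions forces $D^\theta_\fx=0$ as a distribution, hence $\mathcal R^\theta_\fx=\tilde{\mathcal R}^\theta_\fx$ after using the already-established equality of the lower-order terms. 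This is the standard "uniqueness of reconstruction for positive regularity" argument, here run direction-by-direction, and it is exactly the step that needs $\fgamma+\fdelta>\foo$ rather than $\fgamma>\foo$.

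I expect the main obstacle to be the combinatorial/analytic bookkeeping in step two: correctly organizing the alternating sum $\sum_{\hat\theta\subset\theta}(-1)^{\sharp\hat\theta}\mathcal R^{\hat\theta}_\fx(F)$ as a rectangular increment at the wavelet level, and tracking which directions are "conditioned" ($\eta$), which are "reconstructed diagonally" ($\theta\setminus\eta$), and which are "frozen" ($\theta^c$) simultaneously through Lemma~\ref{lem:multiparameterBDG}'s double sum over $\theta_1\sqcup\theta_2$ — the proof must show every resulting term carries the exponent pattern $\flambda^\fgamma_\theta\flambda^\fdelta_\eta\flambda^\falpha_{\theta^c}$ and no worse, which is delicate precisely because $\fgamma$ may be negative and only the conditioned $\ell^1$-sums (not the unconditioned ones) are summable on their own.
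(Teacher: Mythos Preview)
Your overall strategy matches the paper's: wavelet approximations, convergence via the multiparameter BDG inequality (Lemma~\ref{lem:multiparameterBDG}), estimate~\eqref{cond4Recon} from the limiting argument, and uniqueness by induction on $\sharp\theta$ using a wavelet decomposition in the $\theta$-directions. However, two points need correction.

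First, your definition of $\mathcal R^{\theta,\fn}_\fx$ is not the approximation to $\mathcal R^\theta_\fx$ but rather (up to sign) to the \emph{alternating sum} $g^\theta_\fx := \sum_{\hat\theta\subset\theta}(-1)^{\sharp\hat\theta}\mathcal R^{\hat\theta}_\fx$. Indeed $\square^\theta_{\fx,\fy}F$ \emph{does} depend on $x_i$ for $i\in\theta$ (it is the increment starting at $\fx$), so your claim that property~(2) is ``immediate'' fails for your object. The paper's approximation to $\mathcal R^\theta_\fx$ is $\sum_{\fy\in\Delta_\fn} F_{\pi^\theta_\fy\fx}(\phi^\fn_\fy)\scalar{\phi^\fn_\fy,\psi}$, which is manifestly independent of $\fx_\theta$; the object you wrote down is what the paper calls $(-1)^{\sharp\theta}g^{\theta,\fn}_\fx$. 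The paper in fact proves Cauchyness of $g^{\kappa,\fn}_\fx$ first (this is where the coherence bound on $\square^\kappa F$ is directly usable), and then recovers convergence of each $\mathcal R^{\kappa,\fn}_\fx$ by induction on $\sharp\kappa$ via the relation $\mathcal R^{\kappa,\fn}_\fx = (-1)^{\sharp\kappa}g^{\kappa,\fn}_\fx - \sum_{\hat\kappa\subsetneq\kappa}(-1)^{\sharp\hat\kappa}\mathcal R^{\hat\kappa,\fn}_\fx$.

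Second, in a genuinely multi-parameter setting, bounding the single-step increment $\mathcal R^{\theta,\fn+\fone}-\mathcal R^{\theta,\fn}$ is not enough to conclude Cauchyness along arbitrary $\fn\to\infty$. The paper instead bounds all mixed increments $\prod_{i\in\theta'}(\Id-I^i)g^{\kappa,\fn}_\fx$ for every $\theta'\subset[d]$ (Lemma~\ref{lem:technical3}) and then invokes the multiparameter Cauchy criterion Lemma~\ref{lem:technical2}. The computation of these mixed increments (Lemma~\ref{lem:technicalBoundg}) is the combinatorial heart of the proof and is where the decomposition $\hat P^\chi_\fn$, $\chi\subset\theta'$, and the identity \eqref{eq:scalarProductWavelets} enter; your sketch underestimates this step by treating only the diagonal refinement.
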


\begin{remark}
    Note that we switch to test functions with $rd$ derivatives, because we want to use  Lemma \ref{lem:inequalityPhi} in the proof of the reconstruction theorem.
\end{remark}

\begin{remark}
While the coherence property works for test functions with support in $[0,1]^\theta\times[-1,1]^{\theta^c}$, we have to reduce this set by some fixed distance in \eqref{cond4Recon}. This is because for any test function $\psi$ with support in some compact set $K$, its wavelet approximation
    \[
        \sum_{\fx\in\Delta_\fn} \phi^\fn_\fx\scalar{\phi^\fn_\fx,\psi}
    \]
    has support in the $\epsilon_\fn$-fattening of $K$ given by
    \[
        \{\fy\in\bR^d\colon \text{dist}(\fy,K)\le\epsilon_\fn\}\,.
    \]
    for some $\epsilon_\fn$ which approaches $0$ as $\fn\to\infty$. Thus, by choosing $\psi$ with a slightly smaller support, we can apply the coherence property to its wavelet approximation for sufficiently large $\fn$, see also Remark \ref{rk:eps_rmk}.
\end{remark}

\subsection{Proof of the stochastic reconstruction theorem}

We now pass to the proof of Theorem \ref{theo:reconstruction}. To achieve that we recall a preliminary result whose proof can be found in \cite[Lemma 19]{multiparameterStochSewing}.

\begin{lemma}\label{lem:technical2}
Let $(h_{\fn})_{\fn\in\bN^{d}}$ be a multiparameter sequence in a Banach space $\mathcal H$. We define the operators $I^ih_{\fn} = h_{\fn+\fone_{\{i\}}}$. Assume that there is a constant $C>0$ and $\falpha = (\alpha_1,\dots,\alpha_d)>\foo$ such that for all $\fn\in\bN^d$ and $\theta\subset[d]$
\begin{equation*}
    \norm{\prod_{i\in\theta} (\Id- I^i)h_{\fn_\theta}} \le C2^{-\falpha*\fn}_{\theta}\,.
\end{equation*}
Then for all $\fn\le\fk$ and $\theta\subset[d]$ we have
\begin{equation*}
    \norm{h_{\fn_\theta}-h_{\fk_\theta}}\lesssim C\sum_{i\in\theta}2^{-n_i\alpha_i}\,.
\end{equation*}
It especially follows that $(h_{\fn})$ is a Cauchy sequence.
\end{lemma}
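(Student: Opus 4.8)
The plan is to treat this as a purely combinatorial statement about the abstract sequence $(h_\fn)$ in $\mathcal H$, with no probabilistic input at all. Since both the hypothesis and the conclusion only ever involve evaluations of $h$ at points of the form $\fm_\theta$ — that is, with vanishing coordinates outside $\theta$ — I would fix $\theta\subset[d]$ once and for all and work inside the sub-lattice $L_\theta := \{\fm\in\bN^d\colon m_i=0 \text{ for } i\notin\theta\}$; morally this is just the case $\theta=[d]$ after renaming, but keeping $\theta$ explicit avoids confusion with the conclusion's $\fn$. The argument then splits into a single-increment estimate followed by a telescoping.

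\emph{Step 1: control of single increments.} The heart of the proof is the bound
\[
\norm{(\Id-I^i)h_\fm}\lesssim C\,2^{-\alpha_i m_i}\qquad (i\in\theta,\ \fm\in L_\theta),
\]
with an implied constant depending only on $d$ and $\falpha$, and — crucially — independent of $m_j$ for $j\neq i$. I would prove it by expanding away the auxiliary directions $J:=\theta\setminus\{i\}$: applying, in each direction $j\in J$, the one-parameter telescoping identity $g_{m_j}=g_0-\sum_{b=0}^{m_j-1}(\Id-I^j)g_b$ to $g:=(\Id-I^i)h$ (all these difference operators commute), one rewrites $(\Id-I^i)h_\fm$ as a finite sum, over subsets $J'\subset J$, of terms
\[
(-1)^{\sharp J'}\sum_{0\le b_j<m_j,\,j\in J'}\Big(\prod_{j\in J'}(\Id-I^j)\Big)(\Id-I^i)\,h_{\fp}\,,
\]
where the point $\fp$ carries $m_i$ in slot $i$, the values $b_j$ in the slots $j\in J'$, and $0$ in every other slot. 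The key point is that $\fp$ then has zero coordinates outside $\{i\}\cup J'$, so each such term falls exactly under the hypothesis with indexset $\{i\}\cup J'$ and is bounded by $C\,2^{-\alpha_i m_i}\prod_{j\in J'}2^{-\alpha_j b_j}$. Summing the geometric series $\sum_{b_j\ge 0}2^{-\alpha_j b_j}=(1-2^{-\alpha_j})^{-1}$ — finite since $\falpha>\foo$ — over the at most $2^{d-1}$ subsets $J'$ yields the claim.

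\emph{Step 2: telescoping.} For $\fn\le\fk$ I would interpolate between $\fn_\theta$ and $\fk_\theta$ inside $L_\theta$ by raising the $\theta$-coordinates one at a time: writing $\theta=\{i_1,\dots,i_p\}$ and letting $\fp^{(\ell)}$ be obtained from $\fp^{(\ell-1)}$ by changing slot $i_\ell$ from $n_{i_\ell}$ to $k_{i_\ell}$ (with $\fp^{(0)}=\fn_\theta$ and $\fp^{(p)}=\fk_\theta$), one has $h_{\fn_\theta}-h_{\fk_\theta}=\sum_{\ell=1}^p(h_{\fp^{(\ell-1)}}-h_{\fp^{(\ell)}})$. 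Each difference is a one-direction telescope $\sum_{b=n_{i_\ell}}^{k_{i_\ell}-1}(\Id-I^{i_\ell})h_\fq$ with $\fq\in L_\theta$, so by Step 1 its norm is $\lesssim C\sum_{b\ge n_{i_\ell}}2^{-\alpha_{i_\ell}b}\lesssim C\,2^{-\alpha_{i_\ell}n_{i_\ell}}$; summing over $\ell$ gives $\norm{h_{\fn_\theta}-h_{\fk_\theta}}\lesssim C\sum_{i\in\theta}2^{-\alpha_i n_i}$. Finally, taking $\theta=[d]$ and using $\fn,\fn'\le\fn\vee\fn'$ yields $\norm{h_\fn-h_{\fn'}}\lesssim C\big(\sum_i 2^{-\alpha_i n_i}+\sum_i 2^{-\alpha_i n'_i}\big)$, which shows $(h_\fn)$ is Cauchy.

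\emph{Expected obstacle.} There is no genuine analytic difficulty — everything reduces to summing convergent geometric series. The only thing that demands care is the bookkeeping in Step 1: one must verify that after expanding the auxiliary directions, the mixed differences are always evaluated at points having vanishing coordinates outside the relevant indexset, which is precisely the form in which the hypothesis is assumed. Getting the signs, and the exact set of zero coordinates, right in that expansion is the one place where a careless argument would fail.
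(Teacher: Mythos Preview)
Your argument is correct. The paper itself does not prove this lemma but simply cites \cite[Lemma 19]{multiparameterStochSewing}; your two-step approach --- first reducing mixed increments at an arbitrary point of $L_\theta$ back to the hypothesis by telescoping away the auxiliary coordinates and summing the resulting geometric series, then telescoping coordinate by coordinate from $\fn_\theta$ to $\fk_\theta$ --- is exactly the natural proof and is carried out cleanly, including the key bookkeeping point that after the expansion in Step~1 each evaluation point $\fp$ satisfies $\fp=\fp_{\{i\}\cup J'}$, so the hypothesis applies verbatim with indexset $\{i\}\cup J'$.
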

We fix wavelets $\phi,\hat\phi^\chi, \chi\subset[d]$ as described in Section \ref{sec:wavelet}. Knowing the above result, to construct the family of distributions $\mathcal{R}_\fx^\eta(F)$ in Theorem \ref{theo:reconstruction}, it is natural to consider the following multiparameter approximations
\[
    \mathcal{R}^{\eta,\fn}_\fx(\psi) := \sum_{\fy\in\Delta_\fn} F_{\pi^\eta_\fy \fx}(\phi^\fn_\fy)\scalar{\phi^\fn_\fy,\psi}\,.
\]
(we will drop the dependency on $F$ for the sake of clarity) and to show proper bounds of the quantity 
\begin{equation}\label{eq:fN}
    \norm{\prod_{i\in\theta}(\Id- I^i)\mathcal{R}^{\eta,\fn}_\fx(\psi)}_m\,,
\end{equation}
where $I^i\mathcal{R}^{\eta,\fn}_\fx = \cR^{\eta,\fn+\fone_{\{i\}}}_\fx$. Once we have done so, we can conclude that $\mathcal{R}^{\eta,\fn}_\fx$ converges to some limit. Before we show the inequality, let us introduce some more notation: For any $\kappa\subset[d]$, we set
\begin{equation*}
    g^{\kappa,\fn}_\fx(\psi) := \sum_{\eta\subset\kappa}(-1)^{\sharp\eta}\mathcal{R}^{\eta,\fn}_\fx(\psi) = (-1)^{\sharp\kappa}\sum_{\fy\in\Delta_\fn} \square^\kappa_{\fx,\fy} F(\phi_\fy^\fn)\scalar{\phi_\fy^\fn,\psi}\,,
\end{equation*}
and set $I^ig^{\kappa,\fn}_\fx := g^{\kappa,\fn+\fone_{\{i\}}}_\fx$, as before. Further, recall that the one-dimensional wavelets $\varphi,\hat\varphi$ fulfill the property
\[
    \varphi_z^n = \sum_{k\in\Delta_{n+1}}a_k^n \varphi^{n+1}_{z+k}\,.
\]
Comparing this to \eqref{eq:selfReplicable} one has $a_k^n = a_{2^n k}$. We similarly introduce the notation $b^n_k := \scalar{\varphi_{z+k}^{n+1},\hat\varphi_z^n}$. Extending the properties of $1$ dimensional wavelets to  $d$-dimensional wavelets one has 
\begin{equation}\label{eq:scalarProductWavelets}
\scalar{\phi^{\fn+\fone_\chi+\fone_\theta}_{\fz+\fk} , \hat\phi^{\chi,\fn}_\fz} = a_{\fk,\theta}^{\fn}b_{\fk,\chi}^{\fn}\delta_{\foo,\fk}^{(\chi\cup\theta)^c}\,, 
\end{equation}
for all $\chi,\theta\subset[d]$ such that $\chi\cap \theta=\emptyset$, $ \fz\in\Delta_\fn, \fk\in\Delta_{\fn+\fone_\chi+\fone_\theta}$, where we use the notations
\[a_{\fk,\theta}^{\fn} := \prod_{i\in\theta} a_{k_i}^{n_i}\,, \quad b_{\fk,\chi}^{\fn} := \prod_{i\in\chi}b_{k_i}^{n_i}\,, \quad  \delta_{\foo,\fk}^{(\chi\cup\theta)^c} = \prod_{i\in(\chi\cup\theta)^c}\delta_{0,k_i}\,,\] where $\delta_{0,k_i} = 1$ if $k_i = 0$ and $0$, otherwise. Instead of \eqref{eq:fN}, we will instead show a bound on
\begin{equation*}
    \norm{\bE^\eta_\fx \prod_{i\in\theta}(\Id-I^i)g^{\kappa,\fn}_\fx(\psi_\fx^\flambda)}_m\,.
\end{equation*}
which has the advantage that it can also be used to show \eqref{cond4Recon}. To get there, we split the underlying test function $\psi $ according to the functions  $\hat{\phi}^{\chi}$
\begin{equation}\label{eq:splitXi}
\prod_{i\in\theta}(\Id-I^i)g^{\kappa,\fn}_\fx(\psi) = \sum_{\chi\subset\theta}\prod_{i\in\theta}(\Id-I^i)g^{\kappa,\fn}_\fx(\hat P^{\chi,\fn}\psi)\,,
\end{equation}
where we used that $g^{\kappa, \fn}_\fx$ belongs to $V^{\fn}$. A first  lemma bounds the right-hand side of \eqref{eq:splitXi}.

\begin{lemma}\label{lem:technicalBoundg}
    Under the assumptions of the reconstruction theorem, it holds that for all $\fn\in\bN^d$, $\chi\subset\theta\subset[d]$, $\fx, \fz\in\Delta_{\fn}\cap[0,T]^d$, such that $\fx_{\kappa\setminus(\theta\setminus\chi)}\le \fz_{\kappa\setminus(\theta\setminus\chi)}$  one has
    \begin{align*}
\norm{\bE^\eta_{\pi_\fz^{\theta\setminus\chi}\fx}\prod_{i\in\theta}(\Id-I^i)g_\fx^{\kappa,\fn}(\hat\phi^{\chi,\fn}_\fz)}_m\lesssim \norm{F}_{G^{\falpha,\fgamma,\fdelta}}2^{-\fn*(\frac 12\fone+\falpha)}&(\pi_0^{\theta\setminus\chi}\compAbs{\fx-\fz}+2^{-\fn})^{(\fgamma-\falpha)}_{\kappa}\\ \times&(\pi_0^{\theta\setminus\chi}\compAbs{\fx-\fz}+2^{-\fn})^{\fdelta}_{\eta}\,,
    \end{align*}
 where we assume $\supp(\hat\phi_\fz^{\chi,\fn})\subset[0,T]^d$ and  $\eta\subset\kappa\subset[d]$.
\end{lemma}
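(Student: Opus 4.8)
The goal is to bound the $L_m$-norm of the conditional expectation of an alternating sum over wavelet scales. My plan is to first unpack the definitions so that the quantity $\prod_{i\in\theta}(\Id-I^i)g_\fx^{\kappa,\fn}(\hat\phi^{\chi,\fn}_\fz)$ becomes an explicit finite linear combination of terms $\square^\kappa_{\fx,\fy}F(\phi^{\fn'}_\fy)$ with $\fn'$ ranging over $\fn + \fone_{\theta'}$ for $\theta'\subset\theta$ and $\fy$ ranging over $\Delta_{\fn'}$ near $\fz$. The key input is the scalar product identity \eqref{eq:scalarProductWavelets}: applying $I^i$ corresponds to passing from scale $n_i$ to $n_i+1$, and expanding $g^{\kappa,\fn}_\fx(\hat\phi^{\chi,\fn}_\fz)$ first requires rewriting $g^{\kappa,\fn}_\fx$ in terms of the finer wavelets at scales $\fn+\fone_\chi$ (using self-replicability \eqref{eq:selfReplicable}) so that $\scalar{\phi_\fy^{\fn+\fone_\chi},\hat\phi^{\chi,\fn}_\fz}$ is nonzero only for finitely many $\fy$ close to $\fz$. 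The factor $\delta^{(\chi\cup\theta)^c}_{\foo,\fk}$ in \eqref{eq:scalarProductWavelets} localizes everything, and the coefficients $a^\fn_{\fk,\theta}$, $b^\fn_{\fk,\chi}$ are scale-independent constants by the remark $a^n_k = a_{2^nk}$, so the whole expression is a sum of $O(1)$ many terms of the shape $c\cdot \square^\kappa_{\fx, \fz + \fm * 2^{-\fn-\fone}} F(\phi^{\fn+\fone_\psi}_{\fz+\fm*2^{-\fn-\fone}})$ for bounded $\fm$ and $\psi$ with $\chi\subset\psi\subset\chi\cup\theta$.

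Once this reduction is in place, I would estimate each resulting term by the coherence property \eqref{ineq:coherence}, after recentering. Since $\phi^{\fn+\fone_\psi}_\fy = (\phi_{\fy'})^{\flambda}_{\fy}$ for a suitable dyadic $\flambda \sim 2^{-\fn}$ in the directions of $\psi$ (and note $\phi$ has bounded $C^{rd}_c$-norm, so applying Lemma \ref{lem:inequalityPhi} is not even needed here — we just use $\phi$ as a fixed test function with the appropriate support), coherence gives a bound of the form $\flambda^{\falpha}_\kappa \cdots$; but one must be careful: the wavelet $\phi^{\fn}_\fy$ scales in $L^2$, contributing an extra factor $2^{-\fn/2}$ per direction relative to the $L^1$-normalized test functions in \eqref{eq:rescaled_psi}. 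This is exactly the source of the $2^{-\fn*(\frac12\fone+\falpha)}$ prefactor. For the conditional expectation $\bE^\eta_{\pi^{\theta\setminus\chi}_\fz\fx}$, I need the measurability to line up: the base point of the conditioning is $\pi^{\theta\setminus\chi}_\fz\fx$, and the hypothesis $\fx_{\kappa\setminus(\theta\setminus\chi)}\le \fz_{\kappa\setminus(\theta\setminus\chi)}$ together with adaptedness of $F$ and the commuting property ensures $\bE^\eta$ applied to each $\square^\kappa_{\fx,\fy}F(\phi^{\fn'}_\fy)$ can be replaced by $\bE^\eta_{\fx}\square^\kappa_{\fx,\fy}F$ up to the localization of $\phi$ at $\fz$, landing us in the regime where the second, refined estimate in \eqref{ineq:coherence} (with the $\fdelta_\eta$ gain) applies. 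Summing the $O(1)$ terms and using $|\fy - \fz| \lesssim 2^{-\fn}$ in the directions of $\theta\setminus\chi$ (so those increments are absorbed into the $(\pi_0^{\theta\setminus\chi}\compAbs{\fx-\fz}+2^{-\fn})$ factors, while in directions of $\chi$ the point $\fy$ agrees with $\fz$ up to $2^{-\fn}$ as well) yields the claimed bound, with the $\norm{F}_{G^{\falpha,\fgamma,\fdelta}}$ coming out of \eqref{ineq:coherenceNorm}.

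The main obstacle I anticipate is bookkeeping the combinatorics cleanly: the operator $\prod_{i\in\theta}(\Id-I^i)$ produces $2^{\sharp\theta}$ terms at mixed scales, each of which must be re-expanded against $\hat\phi^{\chi,\fn}_\fz$, and one has to track which directions get the "refined" coherence estimate (those in $\eta$, via conditioning) versus the plain one (those in $\kappa\setminus\eta$) versus a pure localization gain of order $2^{-\fn}$ (directions in $\theta$ not relevant to the increment). Keeping the centers straight — the increment is based at $\fx$ and $\pi^\theta_\fy\fx$, the test function at $\fz$, the conditioning at $\pi^{\theta\setminus\chi}_\fz\fx$ — while correctly invoking Lemma \ref{lem:shiftSquareIncrements} to move the right endpoint of $\square^\kappa$ to a point that is $\cF$-measurable at the conditioning base point, is where the delicacy lies. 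A secondary technical point is ensuring $\supp(\phi^{\fn+\fone_\psi}_\fy)\subset[0,T]^d$ for the finitely many relevant $\fy$, which follows from the support assumption on $\hat\phi^{\chi,\fn}_\fz$ and the fact that self-replication only involves shifts $k\ge 0$, as arranged in Section \ref{sec:wavelet}.
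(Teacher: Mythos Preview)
Your overall plan---unpack the definitions, reduce to finitely many terms of the form $\square^\kappa_{\cdot,\cdot}F(\phi^{\cdot}_\cdot)$, then apply coherence---is the right outline, and the identification of the $2^{-\fn/2}$ from the $L^2$-scaling is correct. However, there is a genuine gap in the reduction step that is not just bookkeeping.

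If you expand $\prod_{i\in\theta}(\Id-I^i)g^{\kappa,\fn}_\fx(\hat\phi^{\chi,\fn}_\fz)$ directly via the definition $g^{\kappa,\fn}_\fx(\psi)=(-1)^{\sharp\kappa}\sum_{\fy}\square^\kappa_{\fx,\fy}F(\phi^\fn_\fy)\scalar{\phi^\fn_\fy,\psi}$, you obtain a signed sum of terms $\square^\kappa_{\fx,\fy}F(\phi^{\fn+\fone_{\theta'}}_\fy)$ with $\chi\subset\theta'\subset\theta$ and $\fy$ within $O(2^{-\fn})$ of $\fz$. The base point of every such rectangular increment is $\fx$. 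Applying coherence to each term separately therefore yields a factor $(\compAbs{\fx-\fz}+2^{-\fn})^{(\fgamma-\falpha)}_\kappa$ \emph{without} the projection $\pi_0^{\theta\setminus\chi}$: in directions $i\in(\theta\setminus\chi)\cap\kappa$ you would get $|x_i-z_i|+2^{-n_i}$ rather than the claimed $2^{-n_i}$. Your remark that ``$|\fy-\fz|\lesssim 2^{-\fn}$ in the directions of $\theta\setminus\chi$'' concerns the \emph{second} argument of $\square^\kappa$, not the first, so it does not help here. Relatedly, for $i\in\eta\cap(\theta\setminus\chi)$ the conditioning point is $z_i$ while the increment base is $x_i$, so coherence \eqref{ineq:coherence} does not apply as stated, and neither the tower property nor Lemma~\ref{lem:shiftSquareIncrements} (which only shifts the \emph{second} argument of $\square$) repairs this.

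What the paper does, and what your proposal is missing, is a cancellation across the alternating sum that \emph{shifts the base point} of the rectangular increment from $\fx$ to $\pi^{\theta\setminus\chi}_\fz\fx$. This is obtained by first passing back to the $\cR^{\tilde\kappa,\fn}_\fx$ representation, using the key vanishing $(\Id-I^i)\cR^{\tilde\kappa,\fn+\fone_\chi}_\fx(\hat\phi^{\chi,\fn}_\fz)=0$ for $i\notin\tilde\kappa$ to restrict to $\theta\setminus\chi\subset\tilde\kappa\subset\kappa$, rewriting the remaining $\prod_{i\in\theta\setminus\chi}(\Id-I^i)$ as a genuine $\square^{\theta\setminus\chi}$-increment \emph{in the running variable of the germ}, and finally summing over $\tilde\kappa$ to recombine into a single $\square^\kappa_{\pi^{\theta\setminus\chi}_\fz\fx,\fz+\fu+\fv}F$. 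Once the base point is $\pi^{\theta\setminus\chi}_\fz\fx$, both problems disappear: the conditioning point matches the increment base exactly, and the increment length in every $i\in\theta\setminus\chi$ is $|u_i|\lesssim 2^{-n_i}$, giving the $\pi_0^{\theta\setminus\chi}$ in the final estimate. This algebraic identity is the substance of the lemma and cannot be bypassed by termwise estimation.
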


\begin{proof}
By definition of  $ g^{\kappa,\fn}_\fx(\psi) $ we have the identity
\begin{equation}\label{eq:first_identity}
\prod_{i\in\theta}(\Id-I^i)g_\fx^{\kappa,\fn}(\hat\phi^{\chi,\fn}_\fz)= \sum_{\eta\subset\kappa}\prod_{i\in\theta}(\Id-I^i)(-1)^{\sharp\eta}\mathcal{R}^{\eta,\fn}_\fx(\hat\phi^{\chi,\fn}_\fz)\,.
\end{equation}
By construction of our wavelets, we have $\scalar{\phi^\fm_\fx,\hat\phi_\fy^{\chi,\fn}} = 0$ if there exists an $i\in\chi$ such that $m_i\le n_i$, especially implying $\mathcal R_\fx^{\kappa,\fm}(\hat\phi_\fy^{\chi,\fn}) = 0$ for such $\chi,\fn,\fm$. Multiplying out the product over $i\in\theta\cap\chi$ and eliminating all zero terms therefore gives us the identity
\begin{equation*}
\prod_{i\in\theta}(\Id- I^i)\mathcal{R}_\fx^{\kappa,\fn}(\hat\phi_\fz^{\chi,\fn}) = (-1)^{\sharp\chi}\prod_{i\in\theta\setminus\chi}(\Id- I^i)\mathcal{R}^{\kappa,\fn+\fone_\chi}_\fx(\hat\phi^{\chi,\fn}_\fz)\,.
\end{equation*}
Note that in the one-dimensional case, one has
\[\sum_{y\in\Delta_n} F_x(\varphi^n_y)\scalar{\varphi^n_y,\varphi^n_z} = F_x(\varphi^n_z) = \sum_{y\in\Delta_{n+1}} F_x(\varphi^{n+1}_y)\scalar{\varphi^{n+1}_y,\varphi^n_z}\,,\] 
implying in $d\ge 1$ that for all $i\in\kappa^c$, we have
\begin{equation*}
(\Id- I^i)\mathcal{R}^{\kappa,\fn+\fone_\chi}_\fx(\hat\phi_\fz^{\chi,\fn}) = 0\,.
\end{equation*}
Thus, we conclude that
\begin{equation}\label{eq:ThetaXiSubsetKappa}
        \prod_{i\in\theta\setminus\chi}(\Id- I^i)\mathcal{R}^{\kappa,\fn+\fone_\chi}_\fx(\hat\phi_\fz^{\chi,\fn}) = 0
    \end{equation}
unless $\theta\setminus\chi\subset\kappa$. Therefore we can rewrite the right-hand side of \eqref{eq:first_identity} into     
\begin{equation}\label{eq:sumOverTildeKappa}
\prod_{i\in\theta}(\Id-I^i)g_\fx^{\kappa,\fn}(\hat\phi^{\chi,\fn}_\fz) = \sum_{\theta\setminus\chi\subset\tilde\kappa\subset\kappa}~\prod_{i\in\theta\setminus\chi}(\Id- I^i)(-1)^{\sharp{\tilde\kappa}+\sharp\chi}\mathcal{R}_\fx^{\tilde\kappa,\fn+\fone_\chi}(\hat\phi_\fz^{\chi,\fn})\,.
\end{equation}

We further analyze the product on the right-hand side by expanding $\cR^{\tilde\kappa,\fn+\fone_\chi}_\fx$.
\begin{align*}
\prod_{i\in\theta\setminus\chi} (\Id-I^i)\mathcal{R}_\fx^{\tilde\kappa,\fn+\fone_\chi}(\hat\phi_\fz^{\chi,\fn}) &= \sum_{\tilde\theta\subset\theta\setminus\chi}(-1)^{\sharp{\tilde\theta}} \mathcal{R}^{\tilde\kappa,\fn+\fone_{\tilde\theta}+\fone_\chi}_\fx(\hat\phi_\fz^{\fn,\chi})\\
&= \sum_{\tilde\theta\subset\theta\setminus\chi}\sum_{\fy\in\Delta_{\fn+\fone_\chi+\fone_{\tilde\theta}}} (-1)^{\sharp\tilde\theta}F_{\pi^{\tilde\kappa}_{\fz+\fy}\fx}(\phi_{\fz+\fy}^{\fn+\fone_\chi+\fone_{\tilde\theta}})\scalar{\phi^{\fn+\fone_\chi+\fone_{\tilde\theta}}_{\fz+\fy},\hat\phi_\fz^{\fn,\chi}}\,.
    \end{align*}
    Using the identity \eqref{eq:scalarProductWavelets} together with the expansion 
    \[\phi_{\fz+\fy}^{\fn+\fone_\chi+\fone_{\tilde\theta}} = \sum_{\fk\in\Delta_{\fn+\fone_\theta}^{\theta\setminus(\chi\cup \tilde\theta)}} a^{\fn}_{\fk,\theta\setminus(\chi\cup\tilde\theta)}\phi_{\fz+\fy+\fk}^{\fn+\fone_\theta}\,,\] which itself follows from \eqref{eq:selfReplicable}, we get
    \begin{equation*}
        \prod_{i\in\theta\setminus\chi} (\Id-I^i)\mathcal{R}_\fx^{\tilde\kappa,\fn+\fone_\chi}(\hat\phi_\fz^{\chi,\fn}) = \sum_{\tilde\theta\subset\theta\setminus\chi} \sum_{\fy\in\Delta_{\fn+\fone_\theta}^{\chi\cup\tilde\theta}}\sum_{\fk\in\Delta_{\fn+\fone_\theta}^{\theta\setminus(\chi\cup\tilde\theta)}} (-1)^{\sharp\tilde\theta}a_{\fy+\fk, \theta\setminus\chi}^{\fn} b_{\fy,\chi}^{\fn}F_{\pi^{\tilde\kappa}_{\fz+\fy}\fx}(\phi_{\fz+\fy+\fk}^{\fn+\fone_\theta})\,.
    \end{equation*}
    Let $\fu := \pi_{\fy+\fk}^{\theta\setminus\chi} \foo, \fv := \pi_{\fy}^{\chi} \foo$, such that $\fy+\fk = \fu+\fv$ holds. Using these notations allows us to pull  the sum over $\tilde\theta\subset\theta\setminus\chi$ into the other two sums, which leads to
    \begin{align*}
        \prod_{i\in\theta\setminus\chi} (\Id-I^i)\mathcal{R}_\fx^{\tilde\kappa,\fn+\fone_\chi}(\hat\phi_\fz^{\chi,\fn}) &= \sum_{\fu\in\Delta_{\fn+\fone_\theta}^{\theta\setminus\chi}}\sum_{\fv\in\Delta_{\fn+\fone_\theta}^\chi} a_{\fu, \theta\setminus\chi}^{\fn} b_{\fv,\chi}^{\fn}\sum_{\tilde\theta\subset\theta\setminus\chi} (-1)^{\sharp\tilde\theta} F_{\pi^{\tilde\kappa}_{\pi^{\tilde\theta}_{\fz+\fu+\fv}(\fz+\fv)} \fx}(\phi_{\fz+\fu+\fv}^{\fn+\fone_\theta})\\
        &= \sum_{\fu\in\Delta_{\fn+\fone_\theta}^{\theta\setminus\chi}}\sum_{\fv\in\Delta_{\fn+\fone_\theta}^\chi} a_{\fu, \theta\setminus\chi}^{\fn} b_{\fv,\chi}^{\fn}(-1)^{\sharp(\theta\setminus\chi)}\square_{\fz+\fv,\fz+\fv+\fu}^{\theta\setminus\chi} F_{\pi^{\tilde\kappa}_{\cdot} \fx}(\phi_{\fz+\fu+\fv}^{\fn+\fone_\theta})\\
        &= \sum_{\fu\in\Delta_{\fn+\fone_\theta}^{\theta\setminus\chi}}\sum_{\fv\in\Delta_{\fn+\fone_\theta}^\chi} a_{\fu, \theta\setminus\chi}^{\fn} b_{\fv,\chi}^{\fn} (-1)^{\sharp(\theta\setminus\chi)}\square_{\pi^{\tilde\kappa}_{\fz+\fv}\fx,\fz+\fv+\fu}^{\theta\setminus\chi} F(\phi_{\fz+\fu+\fv}^{\fn+\fone_\theta})\\
    \end{align*}
    where we use the fact that $\theta\setminus\chi\subset\tilde\kappa$ holds. By summing  the last part over $(-1)^{\sharp\tilde\kappa}$ one has 
    \begin{align*}
\sum_{\theta\setminus\chi\subset\tilde\kappa\subset\kappa}(-1)^{\sharp\tilde\kappa} \square_{\pi^{\tilde\kappa}_{\fz+\fv}\fx,\fz+\fv+\fu}^{\theta\setminus\chi} F &= \sum_{\hat\kappa\subset\kappa\setminus(\theta\setminus\chi)}(-1)^{\sharp\hat\kappa+\sharp(\theta\setminus\chi)} \square^{\theta\setminus\chi}_{\pi^{\hat\kappa}_{\fz+\fv} \pi_{\fz}^{\theta\setminus\chi} \fx,\fz+\fv+\fu}F\\
&= (-1)^{\sharp \kappa} \square^{\kappa\setminus(\theta\setminus\chi)}_{\pi^{\theta\setminus\chi}_{\fz}\fx,\fz+\fv+\fu}\square^{\theta\setminus\chi}_{\cdot,\fz+\fu+\fv} F\,,
\end{align*}
where we use the fact that $u_i = 0$ for all $i\in \kappa\setminus(\theta\setminus\chi)$ and $v_i = 0$ for all $i\in\theta\setminus\chi$. Using Lemma \ref{lem:splitSquareIncrement}, it follows that
 \begin{equation*}
\sum_{\theta\setminus\chi\subset\tilde\kappa\subset\kappa}(-1)^{\sharp\tilde\kappa} \square_{\pi^{\tilde\kappa}_{\fz+\fv}\fx,\fz+\fv+\fu}^{\theta\setminus\chi} F = (-1)^{\sharp\kappa} \square^\kappa_{\pi^{\theta\setminus\chi}_{\fz}\fx,\fz+\fv+\fu} F.
    \end{equation*}
Putting this into \eqref{eq:sumOverTildeKappa} one obtains
\begin{equation*}
 \prod_{i\in\theta}(\Id-I^i)g_\fx^{\kappa,\fn}(\hat\phi^{\chi,\fn}_\fz) = (-1)^{\sharp\theta+\sharp\kappa} \sum_{\fu\in\Delta_{\fn+\fone_\theta}^{\theta\setminus\chi}}\sum_{\fv\in\Delta_{\fn+\fone_\theta}^\chi} a_{\fu, \theta\setminus\chi}^{\fn} b_{\fv,\chi}^{\fn}\square^\kappa_{\pi^{\theta\setminus\chi}_{\fz}\fx,\fz+\fv+\fu} F(\phi^{\fn+\fone_\theta}_{\fz+\fu+\fv})\,.
    \end{equation*}
If $\fx_{\kappa\setminus(\theta\setminus\chi)}\le \fz_{\kappa\setminus(\theta\setminus\chi)}$ holds, we get by the coherence property for $\eta\subset\kappa$:
    \begin{align*}
\norm{\bE^\eta_{\pi^{\theta\setminus\chi}_\fz\fx}\square^\kappa_{\pi^{\theta\setminus\chi}_{\fz}\fx,\fz+\fv+\fu} F(\phi^{\fn+\mathbf{1}_\theta}_{\fz+\fu+\fv})}_m \lesssim \norm{F}_{G^{\falpha,\fgamma,\fdelta}} 2^{-\fn*\falpha-\frac\fn2}&(\pi_{\foo}^{\theta\setminus\chi}\compAbs{\fx-\fz}+2^{-\fn})^{\fgamma-\falpha}_{\kappa}\\ \times &(\pi_{\foo}^{\theta\setminus\chi}\compAbs{\fx-\fz}+2^{-\fn})^{\fdelta}_{\eta}\,,
\end{align*}
    where we use the property that $a_{\fu,\theta\setminus\chi}^{\fn} b_{\fv,\chi}^{\fn} \neq 0$ implies that $\abs{u_i+v_i}\lesssim 2^{-n_i}$ for all  $i\in\theta$. The claim then follows from the fact that the sum
    \[\sum_{\fu\in\Delta_{\fn+\fone_\theta}^{\theta\setminus\chi}}\sum_{\fv\in\Delta_{\fn+\fone_\theta}^{\chi}}a_{\fu,\theta\setminus\chi}^{\fn} b_{\fv,\chi}^{\fn} = \sum_{\fu\in\Delta_{\theta}^{\theta\setminus\chi}}\sum_{\fv\in\Delta_{\theta}^{\chi}}a_{\fu,\theta\setminus\chi}^{\foo} b_{\fv,\chi}^{\foo}\]
    is independent of $\fn$. 
\end{proof}
The next step is to test $\prod_{i\in\theta}(\Id-I^i)g^{\kappa,\fn}_\fx$ against a general test function $\psi_\fz^\flambda$. To compare $\flambda$ with the dyadic scale we will write $\flambda\approx 2^{-\fn}$ if there exists a constant $\fc>\foo$, such that $2^{-n_i-1} \le c_i \lambda_i\le 2^{-n_i}$ for all $i\in[d]$. Whenever such a constant $\fc>\foo$ is chosen, we allow all constants in $\lesssim$ to depend on $\fc$. 
\begin{lemma}\label{lem:technical3}
    Let $\foo<\flambda<\fone$, $\eta\subset\kappa\subset[d]$ and $\theta\subset[d]$. Choose $\fn$ such that $\flambda\approx 2^{\fn}$ and $\fz,\fx\in [0,T]^d$, such that $z_i = x_i$ for all $i\in\kappa$. Finally, choose a test function $\psi\in C^{rd}_c([1/4,3/4]^\kappa\times[-3/4,3/4]^{\kappa^c})$, and assume that $[\fz, \fz+\flambda]^\kappa\times[\fz-\flambda,\fz+\flambda]^{\kappa^c}\subset[0,T]^d$. Then for any $\fk\ge\foo$ one has 
    \begin{equation*}
        \norm{\bE_\fx^\eta\prod_{i\in\theta}(\Id-I^i)g_\fx^{\kappa,\fn+\fk}(\psi_\fz^{\flambda})}_m \lesssim \norm{F}_{G^{\falpha,\fgamma,\fdelta}} \norm{\psi}_{C_c^{rd}} \flambda^{\falpha}_{\kappa^c}\flambda^{\fgamma}_{\kappa}\flambda^{\fdelta}_{\eta} \cdot 2^{-\fk*\falpha}_{\theta^c}2^{-\fk*\fxi}_{\theta}\,,
    \end{equation*}
    where $\fxi = (\falpha+r\fone)\wedge(\fgamma+\fdelta)\wedge(\fgamma+\frac 12\fone)>\foo$.
\end{lemma}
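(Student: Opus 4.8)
\emph{Strategy.} The plan is to expand $\psi^\flambda_\fz$ in the Daubechies basis, reduce everything to the single–wavelet bound of Lemma~\ref{lem:technicalBoundg} and the oscillation estimate of Lemma~\ref{lem:inequalityPhi}, and then sum the resulting contributions with the extended BDG inequality of Lemma~\ref{lem:multiparameterBDG}.

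\emph{Step 1: decomposition.} Since $g^{\kappa,\fn+\fk}_\fx$ lives in $V_{\fn+\fk}$, exactly as for \eqref{eq:splitXi} only the detail spaces $\hat V^\chi_{\fn+\fk}$ with $\chi\subset\theta$ survive after applying $\prod_{i\in\theta}(\Id-I^i)$, so
\[
\prod_{i\in\theta}(\Id-I^i)g^{\kappa,\fn+\fk}_\fx(\psi^\flambda_\fz)=\sum_{\chi\subset\theta}\sum_{\fw\in\Delta_{\fn+\fk}}\scalar{\hat\phi^{\chi,\fn+\fk}_\fw,\psi^\flambda_\fz}\,\prod_{i\in\theta}(\Id-I^i)g^{\kappa,\fn+\fk}_\fx(\hat\phi^{\chi,\fn+\fk}_\fw)=:\sum_{\chi\subset\theta}\sum_{\fw}Z^\chi_\fw\,.
\]
Because $\psi$ is supported in $[1/4,3/4]^\kappa\times[-3/4,3/4]^{\kappa^c}$, the wavelets in $[C,R]^d$ with $0<C<R$, and $\flambda\approx 2^{-\fn}$, for each $\chi$ the inner sum runs over $\sim 2^{\fk}$ points $\fw$; moreover, for all but finitely many values of each $k_i$ (absorbed into the implicit constant) every contributing $\fw$ satisfies $w_i\ge x_i$ for $i\in\kappa$, which is what will make the conditioning compatible below.

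\emph{Step 2: single–wavelet bounds.} To each factor $\prod_{i\in\theta}(\Id-I^i)g^{\kappa,\fn+\fk}_\fx(\hat\phi^{\chi,\fn+\fk}_\fw)$ I would apply Lemma~\ref{lem:technicalBoundg} with $\fn$ replaced by $\fn+\fk$ and $\fz$ by the wavelet centre $\fw$, and to $\scalar{\hat\phi^{\chi,\fn+\fk}_\fw,\psi^\flambda_\fz}$ I would apply Lemma~\ref{lem:inequalityPhi} when $\chi\ne\emptyset$ (and the crude $\flambda^{-\fone}2^{-\frac12(\fn+\fk)}$ bound when $\chi=\emptyset$); for $i\in\chi$ this produces the extra decay $2^{-r k_i}$. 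This gives, for every $\eta'\subset\kappa$, a bound on $\norm{\bE^{\eta'}_{\pi^{\theta\setminus\chi}_\fw\fx}Z^\chi_\fw}_m$. Since $\pi^{\theta\setminus\chi}_\fw\fx$ agrees with $\fx$ on $\eta'\setminus(\theta\setminus\chi)$ and with $\fw$ on $\eta'\cap(\theta\setminus\chi)$, and $w_i\ge x_i$ on $\kappa$, the tower property turns these into the family of conditional bounds $\norm{\bE^{\vartheta}_\fw Z^\chi_\fw}_m\le c\,b_{\fw,\vartheta}\,a_{\fw,\vartheta^c}$ needed to feed Lemma~\ref{lem:multiparameterBDG}, after rescaling the $(\fn+\fk)$–grid to an integer grid on which $Z^\chi_\fw$ is adapted (the germ is adapted and the wavelets have positive support).

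\emph{Step 3: summation and bookkeeping.} For each $\chi$ I would then sum over $\fw$ by Lemma~\ref{lem:multiparameterBDG} with conditioned directions $\eta$. Directions $i\in\theta^c\cup\kappa^c$ are summed by the triangle inequality only: along $i\in\kappa^c$ (for which Lemma~\ref{lem:technicalBoundg} forces $i\in\chi$ whenever $i\in\theta$) this reproduces $\flambda^{\alpha_i}$, together with $2^{-k_i\alpha_i}$ if $i\in\theta^c$ and $2^{-k_i(\alpha_i+r)}$ if $i\in\theta$; along $i\in\theta^c\cap\kappa$ it reproduces $\flambda^{\gamma_i}$ and the growth $2^{-k_i\alpha_i}$. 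Along $i\in\theta\cap\kappa$ one gets $\flambda^{\gamma_i}$, an extra $\flambda^{\delta_i}$ when $i\in\eta$, and a decay $2^{-k_i\rho_i}$ whose rate is $\gamma_i+r$ when $i\in\chi$ (vanishing moments), $\gamma_i+\tfrac12$ when $i\in\theta\setminus\chi$ and direction $i$ is placed on the $\ell^2$–side of Lemma~\ref{lem:multiparameterBDG} (square–root cancellation), and $\gamma_i+\delta_i$ when $i\in\theta\setminus\chi$ and the $\delta$–gain of coherence is used for it; each of these is $\ge\xi_i$ since $\gamma_i\ge\alpha_i$. Summing over all $\chi\subset\theta$ and retaining the slowest decay in each direction yields $2^{-\fk*\fxi}_\theta$ with $\fxi=(\falpha+r\fone)\wedge(\fgamma+\fdelta)\wedge(\fgamma+\tfrac12\fone)$, and $\fxi>\foo$ follows from $r+\alpha_i>0$, $\gamma_i+\delta_i>0$, $\gamma_i>-\tfrac12$; assembling the directions gives the claimed estimate. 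Finally, since the bottom corner $\fy$ of the index grid satisfies $x_i\le y_i$ on $\eta$, one more application of the tower property replaces the $\bE^\eta_\fy$ produced by Lemma~\ref{lem:multiparameterBDG} by $\bE^\eta_\fx$. The main obstacle is precisely this last step: for every $\chi\subset\theta$ and every direction one must decide whether to estimate by the triangle inequality, by the $\ell^2$–part of Lemma~\ref{lem:multiparameterBDG}, by the conditional $\delta$–gain, or by the wavelet vanishing moments, and these choices must be made compatibly with the measurability constraints (which directions one may condition on, given that $Z^\chi_\fw$ only becomes $\cF$–measurable from $\fw\vee\fx$ onwards) and with the fact that Lemma~\ref{lem:technicalBoundg} returns a conditioning at $\pi^{\theta\setminus\chi}_\fw\fx$ rather than at $\fx$; once this assignment is fixed uniformly in $\chi,\eta,\theta,\kappa$, each individual step is a direct application of the lemmas above together with elementary geometric–series summations.
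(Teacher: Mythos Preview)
Your proposal is correct and follows essentially the same route as the paper: decompose via \eqref{eq:splitXi} into pieces indexed by $\chi\subset\theta$ (with the constraint $\theta\setminus\chi\subset\kappa$, which you recognise), bound each wavelet term by combining Lemma~\ref{lem:technicalBoundg} with Lemma~\ref{lem:inequalityPhi}, and then sum over the wavelet centres using Lemma~\ref{lem:multiparameterBDG} in the directions $\tilde\theta=\theta\setminus\chi$ and the triangle inequality in the remaining ones. The only point you leave genuinely unresolved is the adaptedness mismatch for Lemma~\ref{lem:multiparameterBDG}: since $\supp(\hat\phi^\chi)\subset[C,R]^d$, the summand at $\fw\in\Delta_{\fn+\fk}$ is $\cF^{\tilde\theta}_{\fw+2R\cdot 2^{-(\fn+\fk)}}$-measurable rather than $\cF^{\tilde\theta}_{\fw+2^{-(\fn+\fk)}}$-measurable, and the paper handles this not by a clever choice of conditioning directions (as your last paragraph suggests) but simply by splitting the sum over $\Delta^{\tilde\theta}_{\fn+\fk}$ into finitely many sums over coarser subgrids, exactly as in \cite{kern2021}.
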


\begin{proof}
  Using again the expressions \eqref{eq:splitXi}, and \eqref{eq:ThetaXiSubsetKappa} we have
    \begin{align*}
        \prod_{i\in\theta} (\Id-I^i)g_\fx^{\kappa,\fn+\fk}(\psi_\fz^\flambda) &= \sum_{\substack{\chi\subset\theta \\ \theta\setminus\chi\subset\kappa}} \prod_{i\in\theta}(\Id-I^i)g_{\fx}^{\kappa,\fn+\fk}(\hat P_{\fn+\fk}^\chi\psi_\fz^\flambda) \\
        &= \sum_{\substack{\chi\subset\theta \\ \theta\setminus\chi\subset\kappa}}\sum_{\fy\in\Delta_{\fn+\fk}} \prod_{i\in\theta}(\Id-I^i)g_{\fx}^{\kappa,\fn+\fk}(\hat\phi_\fy^{\chi,\fn+\fk})\scalar{\hat\phi^{\chi,\fn+\fk}_\fy,\psi_\fz^\flambda}\,.
    \end{align*}
    Let us fix an indexset $\chi$ and set $\tilde\theta := \theta\setminus\chi$, $\tilde\eta := \eta\cap\tilde\theta$. Note that the scalar product is only non-zero if $\abs{y_i-z_i}\lesssim \lambda_i$ for all $i\in[d]$, which holds true for at most $\approx 2^{k_i}$ many $\fy\in\Delta_{\fn+\fk}^{\{i\}}$ for each $i\in[d]$. For each $\fy\in\Delta_{\fn+\fk}$, we we write  $\fy= \fy_{\tilde\theta} +\fy_{\tilde\theta^c}$ and we further split the sum over $\fy\in\Delta_{\fn+\fk}$ to obtain 
    \[\sum_{\substack{\chi\subset\theta \\ \theta\setminus\chi\subset\kappa}}\sum_{\fy_{\tilde\theta}\in\Delta_{\fn+\fk}^{\tilde\theta}}\sum_{\fy_{\tilde\theta^c}\in\Delta_{\fn+\fk}^{\tilde\theta^c}} \prod_{i\in\theta}(\Id-I^i)g_{\fx}^{\kappa,\fn+\fk}(\hat\phi_\fy^{\chi,\fn+\fk})\scalar{\hat\phi^{\chi,\fn+\fk}_\fy,\psi_\fz^\flambda}  \,\] 
    Let us assume for the moment that we can apply the  BDG-type inequality \eqref{ineq:lem3Result} to  the quantity
    \[h(\fy_{\tilde\theta}) := \sum_{\fy_{\tilde\theta^c}\in\Delta^{\tilde\theta^c}_{\fn+\fk}} \prod_{i\in\theta}(\Id-I^i)g_\fx^{\kappa,\fn+\fk}(\hat\phi^{\chi,\fn+\fk}_\fy)\scalar{\hat\phi^{\chi,\fn+\fk}_\fy,\psi_\fz^{\flambda}}\,,
    \]
    to estimate
    \[
        \norm{\sum_{\fy_{\tilde\theta} \in\Delta_{\fn+\fk}^{\tilde\theta}} h(\fy_{\tilde\theta})}_m\,.
    \]
    Combining Lemma \ref{lem:technicalBoundg} and Lemma \ref{lem:inequalityPhi} one has
    \begin{align*}
        &\norm{\bE_{\pi^{\tilde\theta}_\fy \fx}^\eta  h(\fy_{\tilde\theta}) }_m\lesssim\\&\lesssim \norm{F}_{G^{\falpha,\fgamma,\fdelta}}\norm{\psi}_{C_c^{rd}}2^{\fk}_{{\tilde\theta^c}}2^{-(\fn+\fk)*(\fone+\falpha+r\fone_\chi)}\flambda^{-(\fone+r\fone_\chi)}(\pi_\foo^{\tilde\theta}\flambda+2^{-(\fn+\fk)})^{(\fgamma-\falpha)}_{\kappa} (\pi_\foo^{\tilde\theta}\flambda+2^{-(\fn+\fk)})^{\fdelta}_{\eta}\\
        &\lesssim \norm{F}_{G^{\falpha,\fgamma,\fdelta}}\norm{\psi}_{C_c^{rd}}\flambda^{\falpha}_{{\kappa^c}}\flambda^{\fgamma}_{\kappa}\flambda^{\fdelta}_{\eta}\cdot 2^{-\fk*(\fone_{\tilde\theta}+\falpha_{\tilde\theta^c}+r\fone_\chi+\fgamma_{\tilde\theta}+\fdelta_{\tilde\eta})}\,,
        \end{align*}
    where we used that the support of $\psi$ gives us that $\fy_\kappa\ge\fx_\kappa$, as long as one chooses $\fc$  small enough in $\flambda\approx 2^{-\fn}$. Starting from this estimate, we use \eqref{ineq:lem3Result} and get
    \begin{align*}
        &\norm{\bE^\eta_\fx\prod_{i\in\theta}(\Id-I^i)g_\fx^{\kappa,\fn+\fk}(\psi_\fz^{\flambda})}_m = \norm{\sum_{\substack{\chi\subset\theta \\ \theta\setminus\chi\subset\kappa}} \bE^\eta_\fx\sum_{\fy_{\tilde\theta} \in\Delta_{\fn+\fk}^{\tilde\theta}} h(\fy_{\tilde\theta})}_m\\
        &\qquad\lesssim \norm{F}_{G^{\falpha,\fgamma,\fdelta}}\norm{\psi}_{C_c^{rd}}\flambda^{\falpha}_{{\kappa^c}}\flambda^{\fgamma}_{\kappa}\flambda^{\fdelta}_{\eta}\cdot 2^{-\fk*\falpha,}_{\tilde\theta^c}2^{-r\fk}_\chi\\&\qquad\qquad\sum_{\tilde\theta\setminus\tilde\eta = \theta_1\sqcup\theta_2}\underbrace{\left(\sum_{\fy_1\in\Delta_{\fn+\fk}^{\theta_1\cup\tilde \eta}}2^{-\fk*(\fone+\fgamma+\fdelta)}_{{\theta_1\cup\tilde\eta}}\right)}_{\lesssim 2^{-\fk*(\fgamma+\fdelta)}_{{\theta_1\cup\tilde\eta}}}\underbrace{\left(\sum_{\fy_2\in\Delta_{\fn+\fk}^{\theta_2}}2^{-2\fk*(\fone+\fgamma)}_{{\theta_2}}\right)^\frac 12}_{\lesssim 2^{-\fk*(\fgamma+\frac 12\fone),}_{{\theta_2}}}\\
        &\qquad \lesssim \norm{F}_{G^{\falpha,\fgamma,\fdelta}}\norm{\psi}_{C_c^{rd}}\flambda^{\falpha}_{{\kappa^c}}\flambda^{\fgamma}_{\kappa}\flambda^{\fdelta}_{\eta}\cdot 2^{-\fk*(\falpha_{\theta^c}+(\falpha+r\fone)_\chi+((\fgamma+\fdelta)\wedge(\fgamma+\frac 12\fone))_{\tilde\theta})}\\
        &\qquad\lesssim \norm{F}_{G^{\falpha,\fgamma,\fdelta}}\norm{\psi}_{C_c^{rd}}\flambda^{\falpha}_{\kappa^c}\flambda^{\fgamma}_{\kappa}\flambda^{\fdelta}_{\eta} 2^{-\fk*\falpha}_{{\theta^c}}2^{-\fk*\fxi}_{\theta}\,,
    \end{align*}
    where we implicitly do not sum over all $\fy$, but only the non-zero terms. This result is conditional on the possibility of applying Lemma \ref{lem:multiparameterBDG}. Unfortunately the step size of $\Delta_{\fn+\fk}$ is given by $\fu=(2^{-n_1-k_1},\dots,2^{-n_d-k_d})$, so we would need $h(\fy_{\tilde\theta})$ to be $\cF^{\tilde\theta}_{\fy+\fu}$-measurable to apply Lemma \ref{lem:multiparameterBDG}. But since the support of $\hat\phi^\chi$ is in $[C,R]^d$ for some $0<C<R$ instead of $[0,1]^d$, $h(\fy_{\tilde\theta})$ is $\cF^{\tilde\theta}_{\fy+2R\cdot\fu}$-measurable. This can easily be solved by splitting the sum over $\fy_{\tilde\theta}\in\Delta^{\tilde\theta}_{\fn+\fk}$ into finitely many sums over coarser grids:
    \[
        \sum_{\fy_{\tilde\theta}\in\Delta_{\fn+\fk}^{\tilde\theta}} h(\fy_{\tilde\theta}) = \sum_{\fr}\sum_{\fy_{\tilde\theta}\in\Delta_{\fn+\fk}^{\tilde\theta}(\fr)} h(\fy_{\tilde\theta})\,,
    \]
    see \cite{kern2021}, pages 19-20 for the details. This procedure only adds a constant depending on $R$ and the dimension $d$. 
\end{proof}

 With all these estimates, we can now tackle the proof of the reconstruction theorem.

\begin{proof}[Proof of Theorem \ref{theo:reconstruction}]
Set $h_{\fk} := g^{\kappa,\fk}_\fx(\psi_{\fz}^{\flambda})$ for a $\psi\in C_c^{rd}((0,1)^\kappa\times(-1,1)^{\kappa^c})$ and a $\fz\in[0,T]^d$ such that $z_i = x_i$ for all $i\in\kappa$. Lemma \ref{lem:technical3} together with Lemma \ref{lem:technical2} immediately give us that $g^{\kappa,\fk}_\fx(\psi_\fx^{\flambda})$ is a Cauchy sequence in $L_m$. Thus, it is convergent. It follows by induction over $\sharp\kappa$, that all $\mathcal{R}^{\kappa,\fk}_\fx(\psi_\fz^\flambda)$ are convergent sequences in $L_m$. Since $\mathcal{R}^{\kappa,\fk}_\fx(\psi_\fz^\flambda)$ does not depend on $x_i, i\in\kappa$ by construction, we can drop the assumption on $\fz$ and find that $\mathcal{R}^{\kappa,\fk}_\fx(\psi_\fz^\flambda) = \mathcal{R}^{\kappa,\fk}_{\pi_\fz^\kappa\fx}(\psi_\fz^\flambda)$ is convergent for all $\fz\in[0,T]^d$ such that $\supp(\psi_\fz^\flambda)\subset (0,T)^d$. It follows that $\mathcal{R}^{\kappa,\fk}_\fx(\psi)$ is a convergent series for all $\kappa\subset[d]$, $ \fx\in[0,T]^d$ and $\psi$ with support in $(0,T)^d$.

    We need to show that the limits $\mathcal{R}^\eta_\fx(F)$ fulfill the properties in the statement. For this, let $\foo<\flambda<\fone$ and $\fn\in\bN^d$ such that $2^{\fn} \approx \flambda$. Observe that again by Lemma \ref{lem:technical2}, this time using $h_{\fk} = \bE^\eta_\fx g^{\kappa,\fn+\fk}_\fx(\psi_\fz^{\flambda})$:

    \begin{equation*}
\norm{\bE^{\eta}_\fx(g^{\kappa,\fn}_\fx(\psi_\fz^\flambda)- g^{\kappa,\fn+\fk}_\fx(\psi_\fz^\flambda))}_m\lesssim\norm{F}_{G^{\falpha,\fgamma,\fdelta}}\norm{\psi}_{C_c^{rd}}\flambda^{\falpha}_{{\kappa^c}}\flambda^{\fgamma}_{\kappa}\flambda^{\fdelta}_{\eta}\,.
    \end{equation*}
    So it suffices to show that
    \begin{equation}\label{ineq:gkappan}
\norm{\bE^{\eta}_\fx(g^{\kappa,\fn}_\fx(\psi_\fz^\lambda))}_m\lesssim\norm{F}_{G^{\falpha,\fgamma,\fdelta}}\norm{\psi}_{C_c^{rd}}\flambda^{\falpha}_{{\kappa^c}}\flambda^{\fgamma}_{\kappa}\flambda^{\fdelta}_{\eta}
    \end{equation}
to show \eqref{cond4Recon} by letting $\fk\to\infty$. To see \eqref{ineq:gkappan}, we calculate
    \begin{align*}
        g^{\kappa,\fn}_\fx(\psi_\fz^\flambda) &= \sum_{\tilde\kappa\subset\kappa}(-1)^{\sharp{\tilde\kappa}}\mathcal{R}^{\tilde\kappa,\fn}_\fx(\psi_\fz^\flambda) \\
        &= \sum_{\fy\in\Delta_{\fn}}\sum_{\tilde\kappa\subset\kappa} (-1)^{\sharp{\tilde\kappa}}F_{\pi_\fy^{\tilde\kappa} \fx}(\phi_\fy^{\fn})\scalar{\phi_\fy^{\fn},\psi_\fz^{\flambda}}\\
        &= \sum_{\fy\in\Delta_{\fn}}\square^{\kappa}_{\fx,\fy}F(\phi_\fy^{\fn})\scalar{\phi_\fy^{\fn},\psi_\fz^{\flambda}}\,.
    \end{align*}
    Using coherence and  Lemma \ref{lem:inequalityPhi}, we get that for all $\eta\subset\kappa$:
    \begin{equation*}
\norm{\bE^{\eta}_\fx(g^{\kappa,\fn}_\fx(\psi_\fz^\flambda))}_m\lesssim \norm{F}_{G^{\falpha,\fgamma,\fdelta}}\norm{\psi}_{C_c^{rd}}\flambda^{\falpha}_{{\kappa^c}}\flambda^{\fgamma}_{\kappa}\flambda^{\fdelta}_{\eta}\,.
    \end{equation*}
where we use that the order of non-zero terms is independent of $\fn$ and that $\abs{x_i-y_i}\lesssim \lambda_i$ for $i\in[d]$ and non-zero terms. It follows that for all $\fk\ge \foo$,
\begin{equation*}
\norm{\bE^{\eta}_\fx(g^{\kappa,\fn+\fk}_\fx(\psi_\fz^\flambda))}_m\lesssim \norm{F}_{G^{\falpha,\fgamma,\fdelta}}\norm{\psi}_{C_c^{rd}}\flambda^{\falpha}_{{\kappa^c}}\flambda^{\fgamma}_{\kappa}\flambda^{\fdelta}_{\eta}\,,
\end{equation*}
which shows \eqref{cond4Recon}. We further note that all $\mathcal{R}^\kappa_\fx(F)$ are linear maps from $C_c^{rd}\to\bR$ and by the above inequality bounded in $\norm{\psi}_{C_c^{rd}}$, so they are random distributions. By construction, $\mathcal{R}^\kappa_\fx(F)$ is adapted and $\mathcal{R}_\fx^\kappa(F) = \mathcal{R}_{\pi_\fx^\kappa \foo}(F)$. We finally show that
\begin{equation*}
        F_\fx(\psi) = \lim_{\fn\to\infty}\mathcal{R}^{\emptyset,\fn}_\fx(\psi) = \lim_{n\to\infty} F_\fx(P_{\fn}\psi)\,.
    \end{equation*}
We already know that $\mathcal{R}^{\emptyset, \fn}_\fx$ is a Cauchy-sequence and by \cite[Lemma 62]{kern2021}, it follows that $F_\fx = \lim_{n\to\infty} F_\fx(P_{n\fone}\psi)$, so the limit needs to be $F_\fx$. This finishes the proof of convergence and to show the  properties in Theorem \ref{theo:reconstruction}.

It remains to show uniqueness. To do so, assume that $Z^\eta_\fx$ has $Z^\emptyset_\fx = 0$ and fulfills properties 2-4. (Think $Z = \cR-\tilde \cR$ for two families $\cR,\tilde \cR$ fulfilling 1-4). We show by induction over $\sharp\eta$, that $Z^\eta_\fx = 0$. Fix a $\theta\subset[d]$ and a $\psi_\fz^\flambda$ with $\supp(\psi)\subset{[1/4,3/4]^\theta\times[-3/4,3/4]^{\theta^c}}$ and $\fz,\flambda$ such that $[\fz-\flambda_{\theta^c},\fz+\flambda]\subset[0,T]^d$. Since $Z^\theta_\fx$ does not depend on $\fx_\theta$, we can without loss of generality assume that $z_i = x_i$ for all $i\in\theta$. Using \eqref{cond4Recon} and the induction hypothesis, it follows that
 \begin{equation*}
        \norm{\bE^\eta_\fz Z^\theta_\fx(\psi_\fz^\flambda)}_m\lesssim \flambda^{\falpha}_{{\theta^c}}\flambda^{\fgamma}_{\theta}\flambda^{\fdelta}_{\eta}\,.
    \end{equation*}
    The important point here is again, that since $Z^\theta_\fx$ does not depend on $\fx_\theta$, the above inequality holds for all $\fx,\fz$, as long as the support of $\psi_\fz^\flambda$ lies in $(0,T)^d$.  We can especially apply this to our wavelets to find
    \begin{equation*}
        \norm{\bE^\eta_\fy Z_\fx^\theta(\hat\phi_\fy^{\chi,\fn})}_m \lesssim 2^{-\fn*(\frac12\fone+\falpha_{\theta^c}+\fgamma_\theta+\fdelta_\eta)}\,,
    \end{equation*}
    for all $\fy\in\Delta_\fn\cap[0,T)^d$, provided that $\fn$ is big enough for the expression to be well-defined. Let us now pick a test function $\psi$ with support in $(0,T)^d$ and a high enough $\fN\in\bN^d$, such that $Z_\fx^\theta(P_\fN\psi)$ is well-defined. It then follows for all $\fn\ge\fN$:
    \begin{align*}
        Z_\fx^\theta(P_\fn\psi) &= \sum_{\chi\subset\theta^c}\sum_{\fN_\chi\le \fm_\chi\le \fn_\chi}Z_\fx^\theta(\hat P^\chi_{\fn_\theta + m_\chi}\psi) \\
        &= \sum_{\chi\subset\theta^c}\sum_{\fN_\chi\le \fm_\chi\le \fn_\chi} \sum_{\fy\in\Delta_{\fn_\theta+\fm_\chi}}Z_\fx^\theta(\hat\phi^{\chi,\fn_\theta+\fm_\chi}_\fy)\scalar{\phi^{\chi,\fn_\theta+\fm_\chi}_\fy,\psi}\,.
    \end{align*}
    Note that at most $\lesssim 2^{-n_i}$ terms are non-zero for each direction $i\in\theta$ and $2^{-m_i}$ for $i\in\chi$. We apply Lemma \ref{lem:multiparameterBDG} to the sum over $\fy_\theta$ and get
    \begin{align*}   \norm{\sum_{\fy\in\Delta_{\fn_\theta+\fm_\chi}}Z_\fx^\theta(\hat\phi^{\chi,\fn_\theta+\fm_\chi}_\fy)\scalar{\phi^{\chi,\fn_\theta+\fm_\chi}_\fy,\psi}}_m &\lesssim \sum_{\theta = \theta_1\sqcup\theta_2} 2^{-(\fn_\theta+\fm_\chi)*(\falpha_{\theta^c} +\fgamma_\theta+\fdelta_{\theta_1} +\frac12\fone_{\theta_2} +r_\chi)} \\
        &\lesssim 2^{-\fn*(\fgamma+\fdelta)}_{\theta}2^{-\fm*(\falpha+\fr)}_{\chi}\,,
    \end{align*}
    where we used that $\delta_i\le 1/2$ for $i\in[d]$ holds. Summing over $\fm_\chi$, it gives us
    \begin{equation*}
        \norm{Z_\fx^\theta(P_\fn\psi)}_m \lesssim 2^{-n*(\fgamma+\fdelta)}_{\theta}\,,
    \end{equation*}
    which implies that $Z^\theta_\fx(\psi) = \lim_{n\to\infty} Z^\theta_\fx(P_{n\fone}\psi) = 0$, where we used \cite[Lemma 62]{kern2021}. 
\end{proof}
It remains to calculate the Hölder regularity of the reconstruction $\cR(F)$, which we claim to be in $C^{\falpha,\foo} L_m$. By the coherence property \eqref{ineq:coherence} with $\eta =\theta=\emptyset$, we have
\[
    \norm{\cR^\emptyset_\fx(F)(\psi_\fx^\flambda)}_m = F_\fx(\psi_\fx^\flambda)\le \norm{F}_{G^{\falpha,\fgamma,\fdelta}L_m}\flambda^\falpha\,,
\]
for all $\psi\in B^r([1/4,3/4]^d)$, $\fx\in[0,T]^d,\flambda\in(0,1]^d$ such that $[\fx,\fx+\flambda]\subset[0,T]^d$. Using \eqref{cond4Recon} together with the expansion
\begin{equation}\label{ineq:expansionfTheta}
\norm{\mathcal{R}^{\theta}_\fx(F)(\psi_\fx^\flambda)}_m \le \norm{\sum_{\hat\theta\subset\theta}(-1)^{\sharp{\hat\theta}} \mathcal{R}^{\hat\theta}_\fx(F)(\psi^{\flambda}_\fx)}_m + \norm{\sum_{\hat\theta\subsetneq\theta}(-1)^{\sharp{\hat\theta}} \mathcal{R}^{\hat\theta}_\fx(F)(\psi^{\flambda}_\fx)}_m
\end{equation}
inductively extends this argument to all $\cR^\theta_\fx(F), \theta\subset[d]$, as long as $\fgamma\ge\falpha$. That is, for all $\theta\subset[d]$, one sees that
\[
    \norm{\cR^\theta_\fx(F)(\psi_\fx^\flambda)}_m \lesssim \norm{F}_{G^{\falpha,\fgamma,\fdelta}L_m}\flambda^\falpha\,.
\]
Let us summarize this observation as a short corollary:

\begin{corollary}\label{cor:reconExtendedCoherent}
    Under the condition of Theorem \ref{theo:reconstruction}, the reconstruction $\mathcal{R}(F)$ is in $C^{\falpha}L_m$ and
\begin{equation}\label{ineq:continuityReconstructionAlpha}
        \norm{\mathcal{R}(F)}_{C^\falpha L_m}\lesssim \norm{F}_{G^{\falpha,\fgamma,\fdelta}L_m}\,,
    \end{equation}
\end{corollary}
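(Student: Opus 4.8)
The plan is to obtain \eqref{ineq:continuityReconstructionAlpha} by an induction on $\sharp\theta$ that propagates the single–point bound for $\mathcal{R}^\emptyset_\fx(F)=F_\fx$ upwards through all the partial reconstructions. Concretely, I would show that for every $\theta\subset[d]$,
\[
\norm{\mathcal{R}^\theta_\fx(F)(\psi_\fx^\flambda)}_m \lesssim \norm{F}_{G^{\falpha,\fgamma,\fdelta}L_m}\,\flambda^\falpha ,
\]
uniformly over $\psi$ with $\norm{\psi}_{C_c^{rd}}=1$ and $\supp\psi\subset[1/4,3/4]^\theta\times[-3/4,3/4]^{\theta^c}$, and over $\fx,\flambda$ subject to the domain restriction in \eqref{cond4Recon}. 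For $\theta=\emptyset$ nothing is needed: property~1 of Theorem \ref{theo:reconstruction} identifies $\mathcal{R}^\emptyset_\fx(F)$ with $F_\fx$, and \eqref{ineq:coherence} taken at $\eta=\theta=\emptyset$ is exactly $\norm{F_\fx(\psi_\fx^\flambda)}_m\lesssim\norm{F}_{G^{\falpha,\fgamma,\fdelta}L_m}\flambda^\falpha$.

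For the inductive step I would fix $\theta\neq\emptyset$, assume the bound for all proper subsets, and apply \eqref{cond4Recon} with $\fz=\fx$ (admissible since then $\fx_\theta=\fz_\theta$ automatically) and $\eta=\emptyset$. Using $\flambda\le\fone$, $\fgamma\ge\falpha$ and $\fdelta\ge\foo$, the right-hand side $\flambda^\fgamma_\theta\flambda^\falpha_{\theta^c}$ is $\le\flambda^\falpha$, so the alternating sum $\sum_{\hat\theta\subset\theta}(-1)^{\sharp\hat\theta}\mathcal{R}^{\hat\theta}_\fx(F)(\psi_\fx^\flambda)$ has $L_m$-norm $\lesssim\norm{F}_{G^{\falpha,\fgamma,\fdelta}L_m}\flambda^\falpha$. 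Isolating the top term by the triangle inequality exactly as in \eqref{ineq:expansionfTheta}, the leftover sum runs over $\hat\theta\subsetneq\theta$ and is handled term by term by the induction hypothesis (note $\supp\psi$ also sits inside $[1/4,3/4]^{\hat\theta}\times[-3/4,3/4]^{\hat\theta^c}$ whenever $\hat\theta\subset\theta$). This closes the induction.

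Reading off the case $\theta=[d]$ then gives $\norm{\mathcal{R}(F)(\psi_\fx^\flambda)}_m\lesssim\norm{F}_{G^{\falpha,\fgamma,\fdelta}L_m}\flambda^\falpha$ for $\psi$ supported in $[1/4,3/4]^d$ and $[\fx,\fx+\flambda]\subset[0,T]^d$; running \eqref{cond4Recon} once more at $\theta=[d]$ with a general $\eta\subset[d]$, together with the bounds just obtained for $\mathcal{R}^{\hat\theta}_\fx(F)$, $\hat\theta\subsetneq[d]$, and the conditional Jensen inequality, yields $\norm{\bE^\eta_\fx\mathcal{R}(F)(\psi_\fx^\flambda)}_m\lesssim\norm{F}_{G^{\falpha,\fgamma,\fdelta}L_m}\flambda^\falpha$ for every $\eta$. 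Since $C^\falpha L_m=C^{\falpha,\foo}L_m$, these are precisely the two estimates \eqref{ineq:defNegativHoelderSpace}, while the required measurability is property~3 of Theorem \ref{theo:reconstruction}; hence $\mathcal{R}(F)\in C^\falpha L_m$ with \eqref{ineq:continuityReconstructionAlpha}. I do not expect any serious obstacle --- the analytic content was already spent in Lemmas \ref{lem:technicalBoundg}--\ref{lem:technical3} and in the proof of Theorem \ref{theo:reconstruction}. The one point to be careful with is matching conventions between the test-function/domain hypotheses of \eqref{cond4Recon} and those of the negative-regularity space: here one invokes Remark \ref{rk:eps_rmk}, namely that for $\psi$ supported in $[1/4,3/4]^d$ the wavelet projections $P_\fn(\psi_\fx^\flambda)$ remain inside $[\fx,\fx+\flambda]$ for large $\fn$, so that the conclusion of \eqref{cond4Recon} is in fact available already under the weaker hypothesis $[\fx,\fx+\flambda]\subset[0,T]^d$ used in the definition of $C^\falpha L_m$.
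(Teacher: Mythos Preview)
Your proposal is correct and follows essentially the same approach as the paper: the paper's proof is the one-line ``direct consequence of \eqref{ineq:expansionfTheta} together with \eqref{cond4Recon}'', i.e.\ exactly the induction on $\sharp\theta$ you describe, with the base case coming from coherence at $\eta=\theta=\emptyset$. Your extra remarks on the conditional-expectation bound (trivial via conditional Jensen since $C^\falpha L_m=C^{\falpha,\foo}L_m$) and on matching the domain/support conventions through Remark~\ref{rk:eps_rmk} are in fact more careful than the paper's terse argument.
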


\begin{proof}
This is a direct consequence of \eqref{ineq:expansionfTheta} together with \eqref{cond4Recon}.

\end{proof}

\begin{remark}\label{rem:ReconLosesDelta}
  It is interesting to see that  the stochastic parameter $\fdelta$ is lost  at  the level of the reconstruction, i.e. $\cR(F)$ will in general not be in $C^{\alpha,\fdelta}L_m$. This is natural, as $\cR(F)$, when evaluated against $\psi^{\flambda}_{\fx}$ locally behaves like $F_\fx(\psi^{\flambda}_{\fx})$ which is only in $C^{\alpha}L_m$. However, the multiplication with a (stochastic) white noise $F=(Y\cdot\xi)$ as in \eqref{eq_sto_integral} does result in $\cR(F)\in C^{\falpha,\fdelta}L_m$, see Corollary \ref{cor:ItoReconstructionInC} for details.
\end{remark}

\section{Application to a mixed SPDE}\label{spde_section}
Using the results established in the previous section  we will now show the existence and uniqueness of the equation \eqref{eq:spde}. As explained in the introduction, we can write \eqref{eq:spde} into its integral form to obtain formally 
\[
   u(\fx) =  I(v)(\fx) + \int_{\foo}^{\fx} \sigma(u(\fy)) \xi(d\fy) + \int_\foo^\fx f(\fy)u(\fy)\frac{\partial^d }{\partial x_1\ldots \partial x_d}Z(d\fy) \,,
\]
where the boundary function $I(v)\colon \mathbb R^d_+\to \mathbb R$ is explicitly given by 
\begin{equation}\label{equation_boundary_function}
   I(v)(\fx)= \sum_{\theta\subset\{1, \ldots d\},\; \theta\neq [d]} (-1)^{1+\sharp(\theta^c)} v(\pi^{\theta}_{\fx}\foo)\,.
\end{equation}
To derive it, it is sufficient to use the identity
$\int_\foo^\fx\frac{\partial^d }{\partial x_1\ldots \partial x_d}u(d\fy)=\square_{\foo,\fx}^{[d]}u$
which holds at the level of smooth functions $u$ and then separate the term depending on the boundary function. 

Both integrals in the above equation present formal products of distributions which cannot be defined point-wise but they will be interpreted rigorously via the action of reconstruction theorem and a proper integration map. More precisely, we can introduce the germs
\[(\sigma(u)\cdot  \xi)_{\fx}(\psi) := \sigma(u(\fx))\xi(\psi)\,, \quad  ((fu)\cdot \frac{\partial^d }{\partial x_1\ldots\partial x_d}Z)_{\fx}(\psi):= f(\fx)u(\fx)\frac{\partial^d }{\partial x_1\ldots\partial x_d}Z(\psi)\,. \]
The  reconstruction of these two germs can be easily linked with Walsh integration \cite{walsh1986} and Young product \cite{zambotti2020}. Then, as already explained in the introduction we will construct an integration map mapping a distribution onto its primitive function
for all distributions $f\in C^\fbeta L_m$ which are $\fbeta>-\fone$-H\"older.
Putting all together, we are interested in solving the  equation in $C^{\alpha,\fdelta}L_m$ 
\begin{equation}\label{eq:FixedPointGoal}
    u(\fx) =  I(v)(\fx) +\int_{\foo}^{\fx}\cR(\sigma(u)\cdot  \xi + (fu)\cdot \frac{\partial^d }{\partial x_1\ldots\partial x_d}Z)(d\fy)\,.
\end{equation} 
which will be constructed as a fixed point in $C^{\falpha,\fdelta}L_m$ for some $\falpha,\fdelta > \foo$.



\subsection{Composition and scalar multiplication with functions}

We recall that for any  given smooth function $g\colon \mathbb R\to \mathbb R$ and any  $u\in C^{\falpha}$, $g(u)\in C^\falpha$, i.e. composition with smooth functions preserves the regularity (For reference, see \cite[Lemma 3.1]{tindel07} or \cite[Lemma 15]{bechtold2022}). This, however, is no longer the case in the stochastic setting.  This is due to the fact, that one would like to apply the Taylor formula to $g(u)$ and use  $\square^{\{1\}}_{\fx,\fy}u \square^{\{2\}}_{\fx,\fy} u$ to bound $\square^{(1,2)}_{\fx,\fy} g(u)$. While for deterministic $u$ this gives the correct bounds for $\abs{\square^{\{1,2\}}_{\fx,\fy}g(u)}$, in the stochastic setting $\square^{\{1\}}_{\fx,\fy}u \square^{\{2\}}_{\fx,\fy} u$ is no longer in $L_m$, but only in $L_{m/2}$. Due to this lack of integrability, we can only hope for bounds in $C^{\falpha,\fdelta}L_{m/d}$. To keep the same index $m $ invariant we will need to lower  the indices $\falpha$ and $\fdelta$. In what follows for $\epsilon\in(0,1]$ we will use the notation $\mathcal{C}^\epsilon$ to denote the set of all $\epsilon$-Holder functions $g\colon \mathbb R\to \mathbb R$ with the convention of taking Lipschitz functions when $\epsilon=1$.


\begin{proposition}\label{prop:g(u)CanBeReconstructed}
    Let $u\in C^\falpha L_m$ and $g\in \mathcal{C}^\epsilon$ for some $\epsilon\in(0,1]$ and $\falpha\in (0,1]^d$. Then $g(u)\in C^{\frac{\falpha\epsilon}d}L_m$ and one has 
    \begin{equation*}
        \norm{g(u)}_{C^{\frac{\falpha\epsilon}d}L_m}\lesssim \abs{g(0)}+\norm{g}_{\mathcal{C}^\epsilon}\norm{u}_{C^\falpha L_m}\,,
    \end{equation*}
    where the constant in $\lesssim$ is allowed to depend on $T$.
\end{proposition}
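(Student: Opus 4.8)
The plan is to estimate, for every subset $\theta\subset[d]$, the seminorm pieces $\norm{\square g(u)}_{\frac{\falpha\epsilon}{d},\foo,\theta,\eta,m}$ of Definition~\ref{def:C_alpha-stoc}. Since here the regularising index is $\fdelta=\foo$, the conditional expectation $\bE^\eta_\fx$ in \eqref{theta_eta_seminorm} can only help: by $L_m$-contractivity of conditional expectation $\norm{\bE^\eta_\fx\square^\theta_{\fx,\fy}g(u)}_m\le\norm{\square^\theta_{\fx,\fy}g(u)}_m$, so it suffices to control the unconditioned rectangular increments of $g(u)$. The case $\theta=\emptyset$ is immediate: $g(u)_\fx$ is $\cF_\fx$-measurable because $g$ is continuous and $u_\fx$ is $\cF_\fx$-measurable, it lies in $L_m$, and $|g(t)|\le|g(0)|+\norm{g}_{\mathcal{C}^\epsilon}|t|^\epsilon$ together with $\norm{|X|^\epsilon}_m=(\bE|X|^{\epsilon m})^{1/m}\le\norm{X}_m^\epsilon$ (Jensen on the probability space) gives $\norm{g(u)_\fx}_m\le|g(0)|+\norm{g}_{\mathcal{C}^\epsilon}\norm{u_\fx}_m^\epsilon$.

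For a nonempty $\theta$ I would fix an arbitrary $i_0\in\theta$, use Lemma~\ref{lem:splitSquareIncrement}(1) to write $\square^\theta_{\fx,\fy}g(u)=\square^{\theta\setminus\{i_0\}}_{\fx,\fy}\bigl(\square^{\{i_0\}}_{\cdot,\fy}g(u)\bigr)$, and expand the outer increment via the sum formula \eqref{rectangular_increment}:
\[
\square^\theta_{\fx,\fy}g(u)=\sum_{\theta'\subset\theta\setminus\{i_0\}}(-1)^{\sharp((\theta\setminus\{i_0\})\setminus\theta')}\,\square^{\{i_0\}}_{\pi^{\theta'}_\fy\fx,\,\fy}g(u)\,.
\]
Each single–direction increment obeys the pointwise bound $|\square^{\{i_0\}}_{\fw,\fy}g(u)|=|g(u(\pi^{i_0}_\fy\fw))-g(u(\fw))|\le\norm{g}_{\mathcal{C}^\epsilon}|\square^{\{i_0\}}_{\fw,\fy}u|^\epsilon$. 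Taking $L_m$–norms, using $\norm{|X|^\epsilon}_m\le\norm{X}_m^\epsilon$, and observing that the $i_0$–th coordinate of $\pi^{\theta'}_\fy\fx$ is $x_{i_0}$ (with $x_{i_0}\le y_{i_0}$ since $i_0\in\theta$), the hypothesis $u\in C^\falpha L_m$ yields $\norm{\square^{\{i_0\}}_{\pi^{\theta'}_\fy\fx,\fy}u}_m\le\norm{u}_{C^\falpha L_m}|x_{i_0}-y_{i_0}|^{\alpha_{i_0}}$, hence
\[
\norm{\square^\theta_{\fx,\fy}g(u)}_m\lesssim\norm{g}_{\mathcal{C}^\epsilon}\norm{u}_{C^\falpha L_m}^\epsilon\,|x_{i_0}-y_{i_0}|^{\alpha_{i_0}\epsilon}\,,
\]
with an implicit constant depending only on $d$.

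The final, and crucial, step is symmetrization. The last display holds for \emph{each} $i_0\in\theta$; multiplying the $\sharp\theta$ resulting estimates and extracting the $\sharp\theta$–th root gives $\norm{\square^\theta_{\fx,\fy}g(u)}_m\lesssim\norm{g}_{\mathcal{C}^\epsilon}\norm{u}_{C^\falpha L_m}^\epsilon\prod_{i\in\theta}|x_i-y_i|^{\alpha_i\epsilon/\sharp\theta}$. Since $\sharp\theta\le d$ and the domain is bounded ($|x_i-y_i|\le T$), each exponent can be lowered from $\alpha_i\epsilon/\sharp\theta$ to $\alpha_i\epsilon/d$ at the cost of a constant depending on $T$, which is exactly the $C^{\frac{\falpha\epsilon}{d}}L_m$ seminorm of $g(u)$ in the directions $\theta$. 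Summing this together with the $\theta=\emptyset$ bound over all $\eta\subset\theta\subset[d]$, and replacing $\norm{u}_{C^\falpha L_m}^\epsilon$ by $1+\norm{u}_{C^\falpha L_m}$ via $t^\epsilon\le1+t$, produces the asserted estimate. I expect the symmetrization to be the only non-routine point: the direct expansion only ever delivers Hölder regularity in one coordinate at a time (exponent $\alpha_{i_0}\epsilon$ in direction $i_0$ and nothing in the others), and the genuinely multi-index bound — with the characteristic loss of the factor $1/d$ — is recovered solely by running the argument for every $i_0\in\theta$ and taking a geometric mean; everything else is Hölder's inequality and the definition of the spaces involved.
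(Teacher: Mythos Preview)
Your proof is correct and follows essentially the same approach as the paper: reduce $\square^\theta$ to a sum of single-direction increments $\square^{\{i_0\}}$ via Lemma~\ref{lem:splitSquareIncrement} (the paper invokes part (3), i.e.\ \eqref{eq:splitUpSquare}, directly, while you derive the same identity from part (1) plus the sum formula), bound each with H\"older and Jensen, and then interpolate by taking the geometric mean over $i_0\in\theta$. You are in fact more explicit than the paper about the final two bookkeeping steps (lowering the exponent from $\alpha_i\epsilon/\sharp\theta$ to $\alpha_i\epsilon/d$ via $|x_i-y_i|\le T$, and handling $\norm{u}^\epsilon$ via $t^\epsilon\le 1+t$).
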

\begin{proof}
The proof is based on the observation that for any $i \in [d]$  and  $\fx,\fy\in[0,T]^d$ one has 
    \begin{equation*}
        \norm{\square^{\{i\}}_{\fx,\fy} g(u)}_m \le \norm{g}_{\mathcal{C}^\epsilon}\norm{\abs{\square^{\{i\}}_{\fx,\fy} u}^\epsilon}_m \le \norm{g}_{\mathcal{C}^\epsilon}\norm{u}^\epsilon_{C^\falpha L_m}\abs{x_i-y_i}^{\alpha_i\epsilon}\,,
\end{equation*}
    where we used Jensen's inequality in the last step. If $\theta\subset[d]$ is an indexset and $i\in\theta$, we can use \eqref{eq:splitUpSquare} to extend this estimate to $\square^\theta_{\fx,\fy} g(u)$.
    \begin{align*}
        \norm{\square^\theta_{\fx,\fy} g(u)}_m &\le \sum_{\theta'\subset\theta\setminus\{i\}} \norm{\square^{\{i\}}_{\pi^{\theta'}_\fy \fx,\fy} g(u)}_m\\
        &\lesssim \norm{g}_{\mathcal{C}^\epsilon}\norm{u}^\epsilon_{C^\falpha L_m}\abs{x_i-y_i}^{\alpha_i\epsilon}\,.
    \end{align*}
Interpolation over $i\in\theta$ gives us for any non-empty $\theta\subset[d]$
    \begin{align*}
        \norm{\square^\theta_{\fx,\fy}g(u)}_m &=\prod_{i\in\theta} \norm{\square^\theta_{\fx,\fy}g(u)}_m^{\frac1{\sharp\theta}}\\
        &\lesssim \norm{g}_{\mathcal{C}^\epsilon}\norm{u}^\epsilon_{C^\falpha L_m}\prod_{i\in\theta}\abs{x_i-y_i}^{\frac{\alpha_i\epsilon}{\sharp\theta}}\,.
    \end{align*}
    The claim now follows from the observation that
    \begin{equation*}
        \norm{g(u)}_m \le \abs{g(0)}+\norm{g}_{\mathcal{C}^\epsilon}\norm{u}_{m}^{\epsilon}
    \end{equation*}
    holds.
\end{proof}
In the case of the scalar multiplication of $C^{\falpha,\fdelta}L_m$, the product follows the same structure of the usual H\"older inequality.
\begin{proposition}\label{prop:ProductCalphaSpace}
    Let $f\in C^{\falpha,\fdelta}L_n$ and $u\in C^{\falpha,\fdelta}L_m$ with $0\le\fdelta\le \falpha$. Then the scalar multiplication fulfills $fu\in C^{\falpha,\fdelta}L_k$ where $\frac 1k = \frac 1m+\frac 1n$. Furthermore, we have the inequality
    \begin{equation*}
        \norm{fu}_{C^{\falpha,\fdelta}L_k} \lesssim \norm{f}_{C^{\falpha,\fdelta}L_n}\norm{u}_{C^{\falpha,\fdelta}L_m}
    \end{equation*}
\end{proposition}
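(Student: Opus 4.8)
The plan is to verify the defining bounds of $C^{\falpha,\fdelta}L_k$ from Definition \ref{def:C_alpha-stoc} term by term. Measurability of $(fu)_\fx$ is immediate since $f_\fx$ and $u_\fx$ are both $\cF_\fx$-measurable, and the case $\theta=\emptyset$ is just Hölder's inequality $\norm{(fu)_\fx}_k\le\norm{f_\fx}_n\norm{u_\fx}_m$. For a non-empty $\theta\subset[d]$ and points with $x_i\le y_i$ for $i\in\theta$, I would start from the Leibniz-type identity \eqref{eq:productOfSquares},
\[
\square^\theta_{\fx,\fy}(fu)=\sum_{\theta=\theta_1\cup\theta_2}\square^{\theta_1}_{\fx,\fy}f\cdot\square^{\theta_2}_{\fx,\fy}u ,
\]
the sum being over all pairs $\theta_1,\theta_2$ (not necessarily disjoint) with union $\theta$, and reduce the problem to bounding, for each such term and each $\eta\subset\theta$,
\[
\norm{\bE^\eta_\fx\big(\square^{\theta_1}_{\fx,\fy}f\cdot\square^{\theta_2}_{\fx,\fy}u\big)}_k\lesssim\norm{f}_{C^{\falpha,\fdelta}L_n}\norm{u}_{C^{\falpha,\fdelta}L_m}\,\abs{\fx-\fy}^{\falpha}_{\text{c},\theta}\,\abs{\fx-\fy}^{\fdelta}_{\text{c},\eta}.
\]

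The core of the argument is to distribute the conditioning $\bE^\eta_\fx$ over the two factors. I would split $\eta$ into the disjoint sets $\eta_1:=\eta\cap(\theta_1\setminus\theta_2)$, $\eta_2:=\eta\cap(\theta_2\setminus\theta_1)$ and $\eta_0:=\eta\cap\theta_1\cap\theta_2$. Using the second form of the rectangular increment in \eqref{rectangular_increment}, adaptedness of $f$, and the constraint $x_i\le y_i$ on $\theta$, the variable $\square^{\theta_1}_{\fx,\fy}f$ is $\cF_{\pi^{\theta_1}_\fy\fx}$-measurable; since $(\pi^{\theta_1}_\fy\fx)_i=x_i$ for every $i\notin\theta_1$, and $\cF_\fz\subset\cF^\mu_\fx$ whenever $z_i\le x_i$ for all $i\in\mu$ (clear from the definition of $\cF^\mu_\fx$), it follows that $\square^{\theta_1}_{\fx,\fy}f$ is $\cF^{\eta_2}_\fx$-measurable, and symmetrically $\square^{\theta_2}_{\fx,\fy}u$ is $\cF^{\eta_1}_\fx$-measurable. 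Writing $\bE^\eta_\fx=\bE^{\eta_0}_\fx\bE^{\eta_1}_\fx\bE^{\eta_2}_\fx$ (these commute and factor through the $\bE^i_\fx$), pulling out known factors, and using that $\bE^{\eta_1}_\fx$ of an $\cF^{\eta_2}_\fx$-measurable variable remains $\cF^{\eta_2}_\fx$-measurable (again commutativity), I obtain
\[
\bE^\eta_\fx\big(\square^{\theta_1}_{\fx,\fy}f\cdot\square^{\theta_2}_{\fx,\fy}u\big)=\bE^{\eta_0}_\fx\Big[\big(\bE^{\eta_1}_\fx\square^{\theta_1}_{\fx,\fy}f\big)\big(\bE^{\eta_2}_\fx\square^{\theta_2}_{\fx,\fy}u\big)\Big].
\]
Then I would bound $\bE^{\eta_0}_\fx$ away by conditional Jensen (legitimate since $k\ge 1$), apply Hölder with $\tfrac1k=\tfrac1m+\tfrac1n$, and insert the coherence-type bounds from $C^{\falpha,\fdelta}L_n$ for the pair $\eta_1\subset\theta_1$ and from $C^{\falpha,\fdelta}L_m$ for $\eta_2\subset\theta_2$, getting a bound by
\[
\norm{f}_{C^{\falpha,\fdelta}L_n}\norm{u}_{C^{\falpha,\fdelta}L_m}\prod_{i\in\theta_1}\abs{x_i-y_i}^{\alpha_i}\prod_{i\in\theta_2}\abs{x_i-y_i}^{\alpha_i}\prod_{j\in\eta_1\cup\eta_2}\abs{x_j-y_j}^{\delta_j}.
\]

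It remains to reconcile the exponents. The two $\falpha$-products combine to $\abs{\fx-\fy}^{\falpha}_{\text{c},\theta}$ times the overcounted factor $\prod_{i\in\theta_1\cap\theta_2}\abs{x_i-y_i}^{\alpha_i}$, while $\eta_1\cup\eta_2=\eta\setminus\eta_0$. Since $\fx,\fy\in[0,T]^d$ and $\foo\le\fdelta\le\falpha$, for $i\in\eta_0$ one has $\abs{x_i-y_i}^{\alpha_i}\le T^{\alpha_i-\delta_i}\abs{x_i-y_i}^{\delta_i}$, and for $i\in(\theta_1\cap\theta_2)\setminus\eta$ simply $\abs{x_i-y_i}^{\alpha_i}\le T^{\alpha_i}$; hence the overcounted factor is, up to a constant depending on $T$ and $\falpha$, bounded by $\prod_{j\in\eta_0}\abs{x_j-y_j}^{\delta_j}$, so the full term is $\lesssim\norm{f}_{C^{\falpha,\fdelta}L_n}\norm{u}_{C^{\falpha,\fdelta}L_m}\abs{\fx-\fy}^{\falpha}_{\text{c},\theta}\abs{\fx-\fy}^{\fdelta}_{\text{c},\eta}$. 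Summing over the finitely many pairs $(\theta_1,\theta_2)$ gives $\norm{\square(fu)}_{\theta,\eta,k}\lesssim\norm{f}_{C^{\falpha,\fdelta}L_n}\norm{u}_{C^{\falpha,\fdelta}L_m}$, and summing over $\eta\subset\theta\subset[d]$ yields the claim, in particular $fu\in C^{\falpha,\fdelta}L_k$. I expect the only genuinely delicate point to be the measurability bookkeeping in the second paragraph that legitimizes distributing $\bE^\eta_\fx$ over the two factors; once the commuting-filtration facts ($\cF_\fz\subset\cF^\mu_\fx$ for $z_i\le x_i$ on $\mu$, the factorisation of $\bE^\eta_\fx$, and the take-out rules) are in place, the remainder is Hölder's inequality and elementary exponent arithmetic.
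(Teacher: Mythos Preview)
Your proposal is correct and follows essentially the same approach as the paper: the Leibniz identity \eqref{eq:productOfSquares}, the same three-way split of $\eta$ (the paper calls your $\eta_0$ by $\eta_3$), the same measurability observation that $\square^{\theta_1}_{\fx,\fy}f$ is $\cF_{\pi^{\theta_1}_\fy\fx}$-measurable and hence $\cF^{\eta_2}_\fx$-measurable, and the same distribution of $\bE^\eta_\fx$ followed by H\"older. Your exponent bookkeeping is somewhat more explicit than the paper's (which simply records the resulting exponent $\falpha_\theta+\fdelta_{\eta_1\cup\eta_2}+\falpha_{\eta_3}$ and leaves the comparison with $\falpha_\theta+\fdelta_\eta$ implicit via $\fdelta\le\falpha$), but the argument is the same.
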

\begin{proof}
    The proof is based on the decomposition \eqref{eq:productOfSquares}. Note that by the adaptedness property of $f$ and $u$, we have that $\square_{\fx,\fy}^\theta f, \square_{\fx,\fy}^\theta u$ are $\cF_{\pi_\fx^{\theta^c} \fy}$-measurable for any $\fx\le\fy$ and $\theta\subset[d]$.

    Let $\fx\le\fy$ and $\eta\subset\theta\subset[d]$ with $\theta\neq\emptyset$. Whenever we are given two sets $\theta_1,\theta_2\subset\theta$ with $\theta=\theta_1\cup\theta_2$, we decompose $\eta$ into three disjoint sets
    \begin{equation*}
        \eta_1 = \eta\cap(\theta_1\setminus\theta_2), \qquad \eta_2 = \eta\cap(\theta_2\setminus\theta_1),\qquad \eta_3 = \eta\cap\theta_1\cap\theta_2\,.
    \end{equation*}
    One now calculates, using the adaptedness of $\square^{\theta_1}_{\fx,\fy} f, \square^{\theta_2}_{\fx,\fy} u$ and the Hölder inequality:
    \begin{align*}
        \norm{\bE^\eta_\fx \square^\theta_{\fx,\fy} (fu)}_k &\le \sum_{\theta = \theta_1\cup\theta_2} \norm{\bE^\eta_\fx(\square^{\theta_1}_{\fx,\fy} f\square^{\theta_2}_{\fx,\fy} u)}_k \\
        &= \sum_{\theta =\theta_1\cup\theta_2} \norm{\bE^{\eta_3}_\fx\left[ (\bE^{\eta_1}_\fx \square^{\theta_1}_{\fx,\fy} f)(\bE^{\eta_2}_\fx \square^{\theta_2}_{\fx,\fy} u)\right]}_k \\
        &\le \sum_{\theta =\theta_1\cup\theta_2} \norm{\bE^{\eta_1}_\fx\square^{\theta_1}_{\fx,\fy} f}_n \norm{\bE^{\eta_2}_\fx\square^{\theta_2}_{\fx,\fy} u}_m \\
        &\le \norm{f}_{C^{\falpha,\fdelta} L_n} \norm{u}_{C^{\falpha,\fdelta}L_m} \sum_{\theta=\theta_1\cup\theta_2}\abs{\fy-\fx}_{\text{c},\theta}^{\falpha_\theta + \fdelta_{\eta_1\cup\eta_2} + \falpha_{\eta_3}}\,.
    \end{align*}
    Using also the standard H\"older inequality 
    \[\norm{f(\fx)u(\fx)}_k \le \norm{f(\fx)}_n \norm{u(\fx)}_m  \]
    for all $\fx\in[0,T]^d$, one obtains the claim.
\end{proof}

\subsection{Products of processes and distributions}

This section discusses how multiplying a random field with a random distribution results in a stochastic coherent germ. More precisely, we want to understand under which conditions we can reconstruct the germ
\begin{equation}\label{eq:product}
    (u\cdot \zeta)_{\fx}(\psi):= u(\fx)\zeta(\psi)
\end{equation}
when $u\in C^{\falpha, \fdelta} L_m$ and $\zeta\in C^{\fbeta, \fdelta'} L_n$. Instead of a generic analysis, we will consider two different cases: The case in which $\zeta$ is deterministic, so we will need to use  the stochastic properties of $u$, and the case in which $\zeta = \xi$ is the  white noise we will obtain a coherent germ by taking simply  $u\in C^{\falpha} L_m$.

\subsubsection*{Deterministic distributions}\label{sec:productandrecon}

In the case in which $\zeta\in C^\fbeta$  for some $\fbeta<\foo$, any stochastic improvement needs to come from the process $u\in C^{\falpha,\fdelta}L_m$, so we will assume $\fdelta>\foo$. Note that we do not expect the reconstruction to have stochastic properties, as it behaves locally like an integrable random field times an element of $C^\fbeta$, see also Remark \ref{rem:ReconLosesDelta}. 

\begin{proposition}\label{lem:prodDeterministicZeta}
    Let $u\in C^{\falpha,\fdelta}L_m$ and $\zeta\in C^{\fbeta}$ for $\falpha,\fdelta>0$ and $\fbeta<0$. Then $u\cdot \zeta\in G^{\fbeta,\falpha+\fbeta,\fdelta}L_m$ and
    \begin{equation}\label{ineq:prodDeterministicZeta1}
    \norm{(u\cdot \zeta)}_{G^{\fbeta,\falpha+\fbeta,\fdelta}L_m}\le \norm{u}_{C^{\falpha,\fdelta}L_m}\norm{\zeta}_{C^\fbeta}\,.    
    \end{equation}
    It follows that $u\cdot\zeta$ can be reconstructed, as long as $\falpha+\fbeta > -\frac 12\fone$, $ \falpha+\fbeta+\fdelta > \foo$ and $m\ge 2$. In this case, we have $\cR(u\cdot\zeta)\in C^{\fbeta}L_m$ and the estimate
    \begin{equation}\label{ineq:prodDeterministicZeta2}
    \norm{\cR(u\cdot\zeta)}_{C^\fbeta L_m} \lesssim \norm{u}_{C^{\falpha,\fdelta}L_m}\norm{\zeta}_{C^\fbeta}\,.
    \end{equation}
\end{proposition}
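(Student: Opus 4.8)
## Plan of proof

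The plan is to verify the coherence estimate \eqref{ineq:coherence} directly from the definitions, and then invoke Theorem \ref{theo:reconstruction} together with Corollary \ref{cor:reconExtendedCoherent}. First I would note that by definition of the germ, for any $\eta\subset\theta\subset[d]$,
\[
\square^\theta_{\fx,\fy}(u\cdot\zeta)(\psi) = \left(\square^\theta_{\fx,\fy} u\right) \zeta(\psi)\,,
\]
since $\zeta$ is deterministic and does not depend on the base point; this is simply the rectangular increment of the scalar function $\fx\mapsto u(\fx)$ times the fixed number $\zeta(\psi)$. Therefore
\[
\bE^\eta_\fx \square^\theta_{\fx,\fy}(u\cdot\zeta)(\psi^\flambda_\fy) = \left(\bE^\eta_\fx \square^\theta_{\fx,\fy} u\right)\zeta(\psi^\flambda_\fy)\,,
\]
again because $\zeta(\psi^\flambda_\fy)$ is a deterministic constant that pulls out of the conditional expectation. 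Here I use that $u\cdot\zeta$ is adapted: since $u\in C^{\falpha,\fdelta}L_m$ has $u_\fx$ being $\cF_\fx$-measurable and $\zeta(\psi)$ is deterministic, $(u\cdot\zeta)_\fx(\psi) = u_\fx\zeta(\psi)$ is $\cF_{\fx\vee\fy}$-measurable when $\supp\psi\subset(-\infty,\fy]$, as required by Definition \ref{defn_adapted}.

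Next I would estimate the two factors separately. For the deterministic distribution factor, the definition of the $C^\fbeta$ norm for negative $\fbeta$ gives $\abs{\zeta(\psi^\flambda_\fy)}\le\norm{\zeta}_{C^\fbeta}\flambda^\fbeta$ (after rescaling/recentering, so that $\psi^\flambda_\fy$ is a rescaled test function of the required type). For the process factor, the defining seminorm \eqref{theta_eta_seminorm} of $C^{\falpha,\fdelta}L_m$ yields
\[
\norm{\bE^\eta_\fx \square^\theta_{\fx,\fy} u}_m \le \norm{u}_{C^{\falpha,\fdelta}L_m}\,\abs{\fx-\fy}^{\falpha}_{\text{c},\theta}\,\abs{\fx-\fy}^{\fdelta}_{\text{c},\eta}\,,
\]
valid whenever $x_i\le y_i$ for $i\in\theta$. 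Combining, and using that $\abs{\fx-\fy}^{\falpha}_{\text{c},\theta}\le(\compAbs{\fx-\fy}+\flambda)^{\falpha}_\theta$ and likewise for the $\eta$-product (here $\falpha,\fdelta\ge\foo$ so the exponents are nonnegative and the inequality goes the right way), together with $\flambda^\fbeta = \flambda^\fbeta_\theta\flambda^\fbeta_{\theta^c}$ and absorbing the $\theta^c$-part suitably, one obtains exactly
\[
\norm{\bE^\eta_\fx \square^\theta_{\fx,\fy}(u\cdot\zeta)(\psi^\flambda_\fy)}_m \le \norm{u}_{C^{\falpha,\fdelta}L_m}\norm{\zeta}_{C^\fbeta}\,\flambda^\fbeta(\compAbs{\fx-\fy}+\flambda)^{(\falpha+\fbeta)-\fbeta}_\theta(\compAbs{\fx-\fy}+\flambda)^\fdelta_\eta\,,
\]
which is precisely the $(\fbeta,\falpha+\fbeta,\fdelta)$-coherence bound \eqref{ineq:coherence} with the claimed norm estimate \eqref{ineq:prodDeterministicZeta1}. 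Here I should be a little careful matching the $\flambda^\fbeta$ versus $\flambda^{\fbeta_\theta}(\compAbs{\fx-\fy}+\flambda)^{0}_{\theta^c}$ bookkeeping in \eqref{ineq:coherenceNorm}, and checking the case $\theta=\eta=\emptyset$ separately (where it reduces to $\norm{u_\fx}_m\abs{\zeta(\psi^\flambda_\fx)}\le\norm{u}_{m,\infty}\norm{\zeta}_{C^\fbeta}\flambda^\fbeta$, giving $u\cdot\zeta\in C^{\fbeta}L_m$ pointwise).

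Finally, the reconstruction statement: under the hypotheses $\falpha+\fbeta>-\frac12\fone$, $\falpha+\fbeta+\fdelta>\foo$ and $m\ge 2$, the parameters $(\fbeta,\falpha+\fbeta,\fdelta)$ satisfy the assumptions of Theorem \ref{theo:reconstruction} (namely $\fgamma := \falpha+\fbeta > -\frac12\fone$, $\fgamma+\fdelta>\foo$, $\fgamma\ge\fbeta$ since $\falpha\ge\foo$, and $\fbeta<\foo$), so $\cR(u\cdot\zeta)$ exists, and Corollary \ref{cor:reconExtendedCoherent} gives $\cR(u\cdot\zeta)\in C^{\fbeta}L_m$ with $\norm{\cR(u\cdot\zeta)}_{C^\fbeta L_m}\lesssim\norm{u\cdot\zeta}_{G^{\fbeta,\falpha+\fbeta,\fdelta}L_m}$, which combined with \eqref{ineq:prodDeterministicZeta1} yields \eqref{ineq:prodDeterministicZeta2}. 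I expect the only genuinely delicate point to be the normalization/support conventions for the test functions — making sure that the coherence estimate, which is stated for $\psi\in B^r([0,1]^\theta\times[-1,1]^{\theta^c})$ and requires $\supp(\psi^\flambda_\fy)\subset[0,T]^d$, is being fed inputs consistent with the $C^{\falpha,\fdelta}L_m$ and $C^\fbeta$ seminorms, which are phrased with slightly different support requirements; all of this is routine recentering but needs to be stated cleanly.
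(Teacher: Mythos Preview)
Your proof is correct and follows exactly the same approach as the paper: the key identity $\square^\theta_{\fx,\fy}(u\cdot\zeta)(\psi) = (\square^\theta_{\fx,\fy} u)\,\zeta(\psi)$, the separate estimation of the two factors via the definitions of $C^{\falpha,\fdelta}L_m$ and $C^\fbeta$, and the appeal to Corollary~\ref{cor:reconExtendedCoherent}. The paper's proof is a terse two-line version of what you have written out in full.
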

\begin{proof}
    $u\cdot\zeta\in G^{\fbeta,\falpha+\fbeta, \fdelta}L_m$ as well as \eqref{ineq:prodDeterministicZeta1} follow immediately from the identity $\square^\theta_{\fx,\fy} (u\cdot \zeta)(\psi) = (\square_{\fx,\fy}^\theta u)\zeta(\psi)$ and definition of stochastic coherent germ. \eqref{ineq:prodDeterministicZeta2} is a direct consequence of Corollary \ref{cor:reconExtendedCoherent}.
\end{proof}

\subsubsection*{Product with white noise and stochastic integration}

We now consider the case of the product \eqref{eq:product} when $\zeta = \xi$ is white noise. Since $\xi\in C^{-\frac 12\fone,\infty}L_m$  regularise with ``infinite" regularity the conditional expectation from the filtration generated by the Brownian sheet  of $\xi$, we allow ourselves to consider integrands  $u\in C^{\falpha}L_m$. In this case, the reconstruction coincides with the stochastic integral with respect to the Brownian sheet, see \cite[Section 7]{kern2021}, \cite{Cairoli1975}
so we expect the reconstruction $f$ to have more properties than the ones given by the reconstruction theorem. 

\begin{proposition}\label{prop:ItoRecon}
    Let  $\xi$ be a white noise over $\mathbb R^d_+$ and $u\in C^{\falpha}L_m$, $\falpha> \foo$ is adapted to the filtration generated by the Brownian sheet  of $\xi$. Then the product germ $u\cdot \xi\in G^{-\frac 12\fone,-\frac 12\fone+\falpha, \infty}L_m$ and for all test functions $\psi\in C^r_c((0,T)^d) $, we have the identity
    \[\mathcal{R}(u \cdot\xi)(\psi)= \int_{\foo}^{+ \infty}\psi(\fy) u(\fy)\xi(d\fy) \]
    where the right-hand side is the Walsh integral associated with the white noise. Moreover, we have for $m\geq 2$
\begin{equation}\label{eq:ItoIsometryRecon}
        \norm{\mathcal{R}(u \cdot\xi)(\psi)}_{m} \lesssim \sup_{\fx\in[0,T]^d}\norm{u(\fx)}_m\norm{\psi}_{L_2([0,T]^d)}\,.
    \end{equation}
    Furthermore, if $(\cF_\fx)_{\fx\in[0,T]^d}$ is the filtration generated by $\xi$, for any $\fx\in[0,T]^d$, $i\in[d]$, and any test function $\psi$ with $\supp(\psi)\subset(x_1,T]\times\dots\times(x_d,T]$ we have 
\begin{equation}\label{eq:CondExpectationRuXi}
        \bE^i_\fx \cR(u\cdot\xi)(\psi) = 0\,.
    \end{equation}
\end{proposition}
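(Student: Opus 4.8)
The plan is to prove the four assertions in turn; only the identification with the Walsh integral requires genuine work. \emph{Coherence.} Since $(u\cdot\xi)_\fx(\psi)=u(\fx)\xi(\psi)$ and $\xi(\psi)$ does not depend on the base point, $\square^\theta_{\fx,\fy}(u\cdot\xi)(\psi)=(\square^\theta_{\fx,\fy}u)\,\xi(\psi)$. For $\eta=\emptyset$ one deduces $\norm{\square^\theta_{\fx,\fy}(u\cdot\xi)(\psi^\flambda_\fy)}_m\lesssim\norm{u}_{C^\falpha L_m}\abs{\fx-\fy}^{\falpha}_{\text{c},\theta}\flambda^{-\frac12\fone}$ from the $L_2$–moment identity $\norm{\xi(\psi^\flambda_\fy)}_m=c(m)\flambda^{-\frac12\fone}\norm{\psi}_{L_2}$ and the independence of $\square^\theta_{\fx,\fy}u$ and $\xi(\psi^\flambda_\fy)$ (when $\theta\neq\emptyset$, $\square^\theta_{\fx,\fy}u$ is $\cF_{\pi^{\theta^c}_\fx\fy}$–measurable while $\supp\psi^\flambda_\fy$ avoids $\{z_i\le y_i\}$ up to a null set for $i\in\theta$), and analogously for $\theta=\emptyset$; this is the computation already behind Propositions \ref{lem:prodDeterministicZeta}–\ref{prop:ProductCalphaSpace}. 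For $\emptyset\neq\eta\subset\theta$, pick $i\in\eta$: $\square^\theta_{\fx,\fy}u$ is $\cF^i_\fy$–measurable and $\supp\psi^\flambda_\fy\subset\{z_i\ge y_i\}$, so $\bE^i_\fy\xi(\psi^\flambda_\fy)=0$ and hence $\bE^i_\fy[(\square^\theta_{\fx,\fy}u)\xi(\psi^\flambda_\fy)]=0$; combining with $\cF^i_\fx\subset\cF^i_\fy$ (as $x_i\le y_i$) and the tower property gives $\bE^\eta_\fx\square^\theta_{\fx,\fy}(u\cdot\xi)(\psi^\flambda_\fy)=0$. So the estimate holds for every $\fdelta$, i.e. $u\cdot\xi\in G^{-\frac12\fone,-\frac12\fone+\falpha,\infty}L_m$; since $-\frac12\fone+\falpha>-\frac12\fone$ and $m\ge2$, Theorem \ref{theo:reconstruction} applies and Corollary \ref{cor:reconExtendedCoherent} gives $\cR(u\cdot\xi)\in C^{-\frac12\fone}L_m$.

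\emph{Identification with the Walsh integral.} By the construction in the proof of Theorem \ref{theo:reconstruction}, $\cR(u\cdot\xi)(\psi)=\cR^{[d]}_\fx(u\cdot\xi)(\psi)$ is the $L_m$–limit of $\cR^{[d],\fn}_\fx(\psi)=\sum_{\fy\in\Delta_\fn}u(\fy)\,\xi(\phi^\fn_\fy)\,\scalar{\phi^\fn_\fy,\psi}$. Because the wavelets are supported in $\fy+2^{-\fn}[C,R]^d$ with $C>0$, the random field $g_\fn:=\sum_{\fy\in\Delta_\fn}u(\fy)\scalar{\phi^\fn_\fy,\psi}\phi^\fn_\fy$ is adapted, and for $\fn$ large the above sum equals the Walsh integral $\int_{\foo}^{+\infty}g_\fn(\fz)\,\xi(d\fz)$. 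It then remains to prove $g_\fn\to\psi u$ in $L_2(\Omega\times\bR^d_+)$; writing $g_\fn(\fz)-\psi(\fz)u(\fz)=\sum_\fy(u(\fy)-u(\fz))\scalar{\phi^\fn_\fy,\psi}\phi^\fn_\fy(\fz)+u(\fz)((P_\fn\psi)(\fz)-\psi(\fz))$ and using $\norm{u(\fy)-u(\fz)}_m\lesssim\norm{u}_{C^\falpha L_m}\max_i\abs{y_i-z_i}^{\alpha_i}$ (Lemma \ref{lem:shiftSquareIncrements}) on the $O(1)$ non–zero terms, where $\abs{y_i-z_i}\lesssim2^{-n_i}$, together with $\sum_\fy\abs{\scalar{\phi^\fn_\fy,\psi}\phi^\fn_\fy(\fz)}\lesssim\norm{\psi}_\infty$ and $\norm{P_\fn\psi-\psi}_\infty\to0$, gives $\norm{g_\fn(\fz)-\psi(\fz)u(\fz)}_m\to0$ uniformly in $\fz$ and with fixed compact support, so dominated convergence applies. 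The Walsh Itô isometry then yields $\int g_\fn\,d\xi\to\int\psi u\,d\xi$ in $L_2$, and comparing with the $L_m$–limit gives $\cR(u\cdot\xi)(\psi)=\int_{\foo}^{+\infty}\psi(\fy)u(\fy)\xi(d\fy)$. This is the main obstacle: one must check adaptedness of $g_\fn$ (which hinges on $C>0$), that the approximating sum really is a Walsh integral, and the $L_2$–convergence — everything else is bookkeeping of supports and conditional independence.

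\emph{The two remaining estimates.} For \eqref{eq:ItoIsometryRecon}, combine the identification above with the $L^m$–bound for Walsh integrals $\norm{\int g\,d\xi}_m\lesssim\norm{(\int_{\bR^d_+}g^2\,d\fz)^{1/2}}_m$ and Minkowski's integral inequality: $\norm{(\int\psi^2u^2\,d\fz)^{1/2}}_m=\norm{\int\psi^2u^2\,d\fz}_{m/2}^{1/2}\le(\int\psi(\fz)^2\norm{u(\fz)}_m^2\,d\fz)^{1/2}\le\sup_\fx\norm{u(\fx)}_m\norm{\psi}_{L_2}$; equivalently, apply Lemma \ref{lem:multiparameterBDG} to $Z_\fy:=u(\fy)\xi(\phi^\fn_\fy)\scalar{\phi^\fn_\fy,\psi}$, which satisfy $\bE^\theta_\fy Z_\fy=0$ for $\theta\neq\emptyset$ and $\norm{Z_\fy}_m=c(m)\norm{u(\fy)}_m\abs{\scalar{\phi^\fn_\fy,\psi}}$, and use $\sum_\fy\scalar{\phi^\fn_\fy,\psi}^2=\norm{P_\fn\psi}_{L_2}^2\le\norm{\psi}_{L_2}^2$. (The sharp constant $\sup_\fx\norm{u(\fx)}_m$ cannot be extracted from Corollary \ref{cor:reconExtendedCoherent}, whose bound is scale–dependent and only for rescaled bumps, so routing through the Walsh isometry is essential here.) For \eqref{eq:CondExpectationRuXi}, with $(\cF_\fx)$ the commuting filtration generated by $\xi$: since $\bE^i_\fx$ is an $L_m$–contraction, it suffices that $\bE^i_\fx[u(\fy)\xi(\phi^\fn_\fy)\scalar{\phi^\fn_\fy,\psi}]=0$ for $\fn$ large; a non–zero term has $\supp\phi^\fn_\fy\cap\supp\psi\neq\emptyset$, and as $\supp\psi\subset(x_1,T]\times\dots\times(x_d,T]$ is compact its $i$–th projection lies in $[x_i+\epsilon_i,T]$ for some $\epsilon_i>0$, which together with $\supp\phi^\fn_\fy$ having $i$–th projection $[y_i+C2^{-n_i},y_i+R2^{-n_i}]$ forces $y_i\ge x_i$ for $\fn$ large; then $\cF^i_\fx\subset\cF^i_\fy$ and $\bE^i_\fx[u(\fy)\xi(\phi^\fn_\fy)]=\bE^i_\fx(u(\fy)\bE^i_\fy\xi(\phi^\fn_\fy))=0$, the inner expectation vanishing because the $i$–th projection of $\supp\phi^\fn_\fy$ lies strictly to the right of $y_i$.
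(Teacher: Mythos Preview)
Your proof is correct and the coherence and conditional-expectation parts follow exactly the paper's line. The substantive difference is in how the remaining two claims are handled.

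\textbf{Walsh identification.} You actually prove it: you recognise the approximating sum $\sum_\fy u(\fy)\xi(\phi^\fn_\fy)\scalar{\phi^\fn_\fy,\psi}$ as the Walsh integral of the adapted elementary field $g_\fn$, then show $g_\fn\to\psi u$ in $L_2(\Omega\times\bR^d_+)$ by the H\"older bound on $u(\fy)-u(\fz)$ and $P_\fn\psi\to\psi$. The paper does not carry out this argument; it defers the identification to \cite[Theorem 53]{kern2021} and concentrates on deriving \eqref{eq:ItoIsometryRecon} and \eqref{eq:CondExpectationRuXi} purely from the wavelet approximation. Your argument is self-contained but imports the Walsh $L_2$-continuity; the paper's route avoids any external stochastic-integration input.

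\textbf{Isometry bound.} Your primary route (Walsh $L_m$-BDG plus Minkowski) works but depends on the identification just established. The paper instead applies Lemma \ref{lem:multiparameterBDG} directly to $Z_\fy=u(\fy)\xi(\phi^\fn_\fy)\scalar{\phi^\fn_\fy,\psi}$, using $\bE^i_\fy Z_\fy=0$ so that only the square-function term survives, and then the orthonormality $\sum_\fy\scalar{\phi^\fn_\fy,\psi}^2=\norm{P_\fn\psi}_{L_2}^2\to\norm{\psi}_{L_2}^2$. This is exactly your ``equivalently'' alternative; note, however, that it gives the bound \emph{without} knowing the Walsh identification, so the paper's ordering is logically cleaner if one wants \eqref{eq:ItoIsometryRecon} as a stand-alone statement about the reconstruction operator.
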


\begin{remark}
    For the classical Walsh integral, \eqref{eq:ItoIsometryRecon} is a direct consequence of the Itô-isometry. Recall that in the case $m=2$ and for an adapted process $u$ the Walsh integral fulfills
    \begin{align*}
        \norm{\int_{\foo}^{+\infty} u(x) \psi(x) d\xi(x)}_2 &= \left(\int_{\foo}^{+\infty} \norm{u(x)}_2^2 \psi(x)^2 dx \right)^{\frac 12} \\
        &\le \sup_{\fx\in[0,T]^d} \norm{u(x)}_2 \norm{\psi}_{L_2([0,T]^d)}\,.
    \end{align*}
The general case with $m\geq 2$ generic was treated in \cite{Cairoli1970} but since we are in a distributional setting we present a proof of \eqref{eq:ItoIsometryRecon} which derives this directly from the reconstruction operator.
\end{remark}

\begin{proof}
    We start by showing $u\cdot\xi \in G^{-\frac 12\fone,-\frac 12\fone+\falpha, \infty}L_m$. Observe that for all $\fx,\fy\in[0,T]^d$ and $\theta\subset[d]$, we have
    \begin{equation*}
        \square_{\fx,\fy}^\theta (u\cdot \xi)(\psi) = (\square^\theta_{\fx,\fy} u)  \xi(\psi)\,.
    \end{equation*}
    For a $\psi$ with support in $[y_1,T]\times\dots\times[y_d,T]$ and $x_i\le y_i$ for at least one $i\in[d]$, we get by the adaptedness of $u$ and the fact that $\xi(\psi)$ is independent of $\cF^{\{i\}}_{y_i}$:
    \begin{align*}
        \norm{\square_{\fx,\fy}^\theta (u\cdot \xi)(\psi)}_m = \norm{\square^\theta_{\fx,\fy} u}_m \norm{\xi(\psi)}_m
    \end{align*}
    as well as
    \[
        \bE^\eta_\fx \square^\theta_{\fx,\fy}(u\cdot\xi)(\psi) = \bE^\eta_\fx[\square_{\fx,\fy}^\theta u ~\bE^i_\fy \xi(\psi)] = 0\,,
    \]
    for any $\eta\subset[d], i\in\eta$ such that $x_i\le y_i$ and $\psi$ as above. It follows easily that $u\cdot\xi\in G^{-\frac 12\fone,-\frac 12\fone+\falpha,\infty}L_m$. To see \eqref{eq:ItoIsometryRecon}, recall that
    \begin{equation*}
        \cR(u\cdot\xi)(\psi) = \lim_{\fn\to\infty} \sum_{\fx\in\Delta_\fn} (u\cdot\xi)_\fx(\phi_\fx^\fn)\scalar{\phi^\fn_\fx,\psi}\,.
    \end{equation*}
    We can use Lemma \ref{lem:multiparameterBDG} together with $\bE^i_\fx((u\cdot\xi)_\fx(\phi_\fx^n)) = 0$ for all $i\in[d]$, $\fx\in \Delta_\fn$ and $\fn\in\bN^d$, to show
    \begin{align*}
        \norm{\sum_{\fx\in\Delta_\fn} (u\cdot\xi)_\fx(\phi_\fx^\fn)\scalar{\phi^\fn_\fx,\psi}}_m &\lesssim \left(\sum_{\fx\in\Delta_\fn} \norm{u(\fx)}_m^2 \norm{\xi(\phi_\fx^\fn)}_m^2\abs{\scalar{\phi^\fn_\fx,\psi}}^2\right)^{\frac 12} \\
        &\le \sup_{\fx\in[0,T]^d}\norm{u(\fx)}_m \left(\sum_{\fx\in\Delta_\fn}\norm{\phi_\fx^\fn}_{L_2(\bR^d)}^2\abs{\scalar{\phi^\fn_\fx,\psi}}^2\right)^{\frac 12} \\
        &= \sup_{\fx\in[0,T]^d}\norm{u(\fx)}_m \norm{P_\fn\psi}_{L_2(\bR^d)}\,.
    \end{align*}
    \eqref{eq:ItoIsometryRecon} then follows from $\norm{\psi}_{L_2(\bR^d)} = \lim_{\fn\to\infty} \norm{P_\fn \psi}_{L_2(\bR^d)}$\,. To see \eqref{eq:CondExpectationRuXi}, we fix  $\psi$ with support in $(y_1,T]\times\dots\times(y_d,T]$. Thus, there exists a $\fz>\fy$ such that
    \[
        \supp(\psi) \subset [z_1,T]\times\dots [z_d,T] \subset (y_1,T]\times\dots\times(y_d,T]\,.
    \]
    Assume $\fx,\fn$ are chosen in such a way, that $\scalar{\phi_\fx^\fn,\psi} \neq 0$. Then the support of the functions must overlap, and by construction of our wavelets (recall $\supp(\phi)\subset[C,R]^d$), we can conclude that
    \[
        \supp(\phi_\fx^\fn)\subset[z_1-2^{-n_1}(R-C),T+2^{-n_1}(R-C)] \times\dots\times [z_d-2^{-n_d}(R-C),T+2^{-n_d}(R-C)]\,.
    \]
    Thus, provided that $\fn$ is large enough such that $z_i-2^{-n_i}(R-C) > y_i$ for $i\in[d]$ and denoting $\delta_i := 2^{-n_i}(R-C)$, we have for all $\fx\in\Delta_\fn$ that at least one of the following properties holds:
    \[
        \supp(\phi_\fx^\fn)\subset(y_1,T+\delta_1]\times\dots\times(y_d,T+\delta_d]\quad \text{or} \quad \scalar{\phi^\fn_\fx,\psi} = 0\,.
    \]
    Since both cases imply that
    \[
        \bE^i_\fy (u\cdot\xi)_\fx(\phi_\fx^\fn)\scalar{\phi_\fx^\fn,\psi} = 0
    \]
    for any $i\in[d]$, we conclude that
    \[\bE^i_\fy \cR(u\cdot\xi)(\psi) = \lim_{\fn\to\infty} \sum_{\fx\in\Delta_\fn} \bE^i_\fx (u\cdot\xi)_\fx(\phi_\fx^\fn)\scalar{\phi_\fx^\fn,\psi} = 0\,.\]
\end{proof}

Gathering all the above inequalities, one has simpler estimates on $\cR(u\cdot \xi)$.
\begin{corollary}\label{cor:ItoReconstructionInC}
    Let $u\in C^{\falpha} L_m$, $\falpha >\foo$ and $\xi$ be white noise over $\bR_+^d$. Then $\cR(u\cdot \xi)\in C^{-\frac 12\fone,\infty} L_m$ and we have
    \begin{equation}\label{ineq:ItoReconstructionInC}
        \norm{\cR(u\cdot\xi)}_{C^{-\frac 12\fone,\infty}L_m}\lesssim \norm{u}_{C^\falpha L_m}\,.
    \end{equation}
    If $g\in \cC^1$ is Lipschitz with Lipschitz constant $\norm{g}_{\cC^1}$ and $u,v\in C^\falpha L_m$, we have
    \begin{equation}\label{ineq:ItoLikeIsometry}
        \norm{\cR(g(u)\cdot\xi)- \cR(g(v)\cdot\xi)}_{C^{-\frac 12\fone,\infty}L_m} \lesssim \norm{g}_{\cC^1} \norm{u-v}_{C^\falpha L_m}
    \end{equation}
\end{corollary}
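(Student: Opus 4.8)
The plan is to derive both bounds from Proposition~\ref{prop:ItoRecon} and Proposition~\ref{prop:g(u)CanBeReconstructed}, using the key structural fact that the isometry-type bound \eqref{eq:ItoIsometryRecon} controls a reconstructed It\^o germ purely through $\sup_\fx\norm{u(\fx)}_m$, so that no H\"older regularity of the integrand (beyond what is needed for the reconstruction to exist at all) enters the final estimates. For \eqref{ineq:ItoReconstructionInC}, fix $u\in C^\falpha L_m$ with $\falpha>\foo$. By Proposition~\ref{prop:ItoRecon} the germ $u\cdot\xi$ is stochastically $(-\tfrac12\fone,-\tfrac12\fone+\falpha,\infty)$-coherent, $\cR(u\cdot\xi)$ exists, coincides with the Walsh integral, and satisfies both \eqref{eq:ItoIsometryRecon} and the cancellation \eqref{eq:CondExpectationRuXi}; adaptedness follows from property~3 of Theorem~\ref{theo:reconstruction} with $\theta=[d]$ (recall $\cR(F)$ is independent of $\fx$). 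To estimate $\norm{\cR(u\cdot\xi)}_{C^{-\frac12\fone,\infty}L_m}$ I would test against $\psi_\fx^\flambda$ with $\psi\in B^r([1/4,3/4]^d)$, $\fx\in[0,T]^d$ and $\flambda\in(0,1]^d$ such that $[\fx,\fx+\flambda]\subset[0,T]^d$, and note that $\supp(\psi_\fx^\flambda)\subset[\fx+\tfrac14\flambda,\fx+\tfrac34\flambda]$ sits strictly in the ``future'' of $\fx$ in every coordinate and inside $(0,T)^d$. Hence, for nonempty $\eta\subset[d]$, choosing $i\in\eta$ and using \eqref{eq:CondExpectationRuXi} together with commutativity of the filtration generated by $\xi$,
\[
  \bE^\eta_\fx\cR(u\cdot\xi)(\psi_\fx^\flambda)=\bE^{\eta\setminus\{i\}}_\fx\bE^i_\fx\cR(u\cdot\xi)(\psi_\fx^\flambda)=0,
\]
so the corresponding quotient in the norm vanishes for every $\fdelta>\foo$; while for $\eta=\emptyset$, \eqref{eq:ItoIsometryRecon} and $\norm{\psi_\fx^\flambda}_{L_2}=\flambda^{-\frac12\fone}\norm{\psi}_{L_2}\lesssim\flambda^{-\frac12\fone}$ give $\norm{\cR(u\cdot\xi)(\psi_\fx^\flambda)}_m\lesssim\sup_\fy\norm{u(\fy)}_m\,\flambda^{-\frac12\fone}\le\norm{u}_{C^\falpha L_m}\,\flambda^{-\frac12\fone}$. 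Taking the supremum over all admissible data yields $\cR(u\cdot\xi)\in C^{-\frac12\fone,\fdelta}L_m$ with a bound uniform in $\fdelta$, i.e.\ \eqref{ineq:ItoReconstructionInC}.

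For \eqref{ineq:ItoLikeIsometry} I would invoke linearity of $\cR$ in the germ. Since $g\in\cC^1$, Proposition~\ref{prop:g(u)CanBeReconstructed} gives $g(u),g(v)\in C^{\falpha/d}L_m$ with $\falpha/d>\foo$, so by Proposition~\ref{prop:ItoRecon} both reconstructions exist, and as germs $g(u)\cdot\xi-g(v)\cdot\xi=w\cdot\xi$ with $w:=g(u)-g(v)\in C^{\falpha/d}L_m$, since $(w\cdot\xi)_\fx(\psi)=(g(u(\fx))-g(v(\fx)))\xi(\psi)$. Applying the argument of the previous paragraph to $w\cdot\xi$ gives $\norm{\cR(w\cdot\xi)}_{C^{-\frac12\fone,\infty}L_m}\lesssim\sup_\fx\norm{w(\fx)}_m$, and the pointwise Lipschitz estimate $\norm{w(\fx)}_m=\norm{g(u(\fx))-g(v(\fx))}_m\le\norm{g}_{\cC^1}\norm{(u-v)(\fx)}_m\le\norm{g}_{\cC^1}\norm{u-v}_{C^\falpha L_m}$ closes the bound, because $\cR(g(u)\cdot\xi)-\cR(g(v)\cdot\xi)=\cR(w\cdot\xi)$.

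The only genuinely delicate point is the support bookkeeping in the first paragraph: one must make sure the localized test functions $\psi_\fx^\flambda$ see only the strict future of $\fx$, so that \eqref{eq:CondExpectationRuXi} annihilates every conditional expectation $\bE^\eta_\fx$ with $\eta\neq\emptyset$. This is precisely why $C^{-\frac12\fone,\infty}L_m$ is defined using test functions supported in $[1/4,3/4]^d$ rather than $[0,1]^d$ or $[-1,1]^d$ (cf.\ Remark~\ref{rk:eps_rmk}); with support reaching ``into the past'' the conditioning would produce no gain and the $\fdelta=\infty$ regularity would be lost. Everything else is routine once one observes that \eqref{eq:ItoIsometryRecon} never uses more of the integrand than $\sup_\fx\norm{u(\fx)}_m$, which is exactly what makes the Lipschitz difference estimate close without any loss in the H\"older index.
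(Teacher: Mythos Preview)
Your proof is correct and follows essentially the same approach as the paper: both derive \eqref{ineq:ItoReconstructionInC} from the isometry-type bound \eqref{eq:ItoIsometryRecon} combined with the vanishing conditional expectation \eqref{eq:CondExpectationRuXi}, and both obtain \eqref{ineq:ItoLikeIsometry} by linearity $\cR(g(u)\cdot\xi)-\cR(g(v)\cdot\xi)=\cR((g(u)-g(v))\cdot\xi)$ followed by the pointwise Lipschitz estimate. You are simply more explicit than the paper about the support bookkeeping and about invoking Proposition~\ref{prop:g(u)CanBeReconstructed} to ensure the reconstructions exist, but the underlying argument is the same.
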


\begin{proof}
    Observe that for all $\fx\in[0,T]^d,\flambda\in(0,1]^d$ we have
    \[
        \sup_{\fx\in[0,T]^d} \norm{u(\fx)}_m\norm{\psi_\fx^\flambda}_{L_2([0,T]^d)} \le \norm{u}_{C^{\falpha}L_m}\flambda^{-\frac 12\fone}\norm{\psi}_{L_2([0,T]^d)}\,.
    \]
    Putting this in \eqref{eq:ItoIsometryRecon}, together with \eqref{eq:CondExpectationRuXi} we obtain \eqref{ineq:ItoReconstructionInC}. To see \eqref{ineq:ItoLikeIsometry}, we use \eqref{eq:ItoIsometryRecon} to calculate
    \begin{align*}
        \norm{(\cR(g(u)\cdot\xi)- \cR(g(v)\cdot\xi))(\psi)}_m &=  \norm{(\cR(g(u)-g(v))\cdot\xi)(\psi)}_m\\
        &\lesssim \sup_{\fx\in[0,T]^d}\norm{g(u(\fx))-g(v(\fx))}_m\norm{\psi}_{L_2(\bR^d)} \\
        &\lesssim \norm{g}_{\cC^1} \sup_{\fx\in[0,T]^d}\norm{u(\fx)-v(\fx)}_m\norm{\psi}_{L_2(\bR^d)}\,.
    \end{align*}
    The relation  \eqref{ineq:ItoLikeIsometry} then follows with the same argument as \eqref{ineq:ItoReconstructionInC}.
\end{proof}

\subsection{Primitives and integration}
We finally provide the last step to formulate \eqref{eq:FixedPointGoal} rigorously. That is an integration map 
\[f\mapsto \int_{\foo}^{\fx}f(d\fy)\]
to recover a function after obtaining a distribution. Following the same approach of \cite[Lemma 3.10]{Brault2019} we show that our stochastic spaces $C^{\falpha,\fdelta}L_m$ behave as one expects under integration. To do so, we fix a wavelet basis $\Phi =\{\hat\phi^{\fn,\zeta}_\fx\colon \fn\in\bN^d,\fx\in\Delta_\fn,\zeta\subset[d]\}$ like in Definition \ref{defn_wavelets} and we denote by $P_\fn$ the projection onto the respective spaces.

Let us address a problem arising from the wavelet approximation applied to $1_{[\fs,\ft]}$ for any $0\le\fs\le\ft\le T\fone$. Since the approximation enlarges the support, for any fixed $\fn\in\bN^d$, $P_\fn 1_{[\fs,\ft]}$ might be supported outside of $[0,T]^d$, causing undefined terms. We further will struggle to condition the term onto $\cF_\fs$, due to the same fattening effect. The first problem can be solved by extending our distribution $f$ to $\bR^d$ by setting $\tilde f(\psi) = f(\psi\vert_{[0,T]^d})$, but this does not solve the second problem. Because of this, we use another approach and only consider $\fs,\ft$ which have a certain distance $\fepsilon$ to the boundary. We further condition these points on a $\hat\fs<\fs$, also depending on the chosen $\fepsilon$. Under these conditions, we can use Brault's construction and we will call the result the \emph{interior primitive}. We can then use a dyadic argument to pass to the whole space $\fs,\ft\in[0,T]^d$ and condition on $\cF_\fs$ sharply.

\begin{proposition}\label{prop:h}
    Let $f\in C^{\falpha,\fdelta}L_m$ with  $-\fone<\falpha<\foo$ . Then  $f(P_{\fn}1_{(\fs,\ft]})$ converges in $L_m$ for all $0< \fs\leq \ft<\fone T$ as $\fn\to\infty$. We denote its limit by 
    \[\oint_{\fs}^{\ft}f(d\fy)\,.\]
    and we call it the interior primitive of $f$. The resulting limit satisfies the a.s. identity
    \begin{equation}\label{eq:additive_integral}
\oint_{\fs}^{\ft}f(d\fy)=\oint_{\fs}^{\pi^i_{\fu}\ft}f(d\fy)+ \oint_{\pi^i_{\fu}\fs}^{\ft}f(d\fy)
    \end{equation}
    for any $\fs<\fu<\ft\in (0,T)^2$ and $i\in [d]$. Furthermore, for each $\emptyset\neq\eta\subset[d]$, $\fepsilon>\foo$ we have the estimate 
    \begin{align}
        \norm{\oint_{\fs}^{\ft}f(d\fy)}_m &\lesssim \fepsilon^{\falpha}\norm{f}_{C^{\falpha,\fdelta}L_m} \compAbs{\ft-\fs}^{\falpha+\fone}\label{ineq:technicalh1}\\
        \norm{\bE^\eta_{\hat\fs} \oint_{\fs}^{\ft}f(d\fy)}_m &\lesssim \fepsilon^{\falpha}\norm{f}_{C^{\falpha,\fdelta}L_m} \compAbs{\ft-\fs}^{\falpha+\fone}\abs{\ft-\fs}^{\fdelta}_{\text{c},\eta}\,,\label{ineq:technicalh2}
    \end{align}
    where $\fs,\ft\in [0,T]^d$ satisfy $\hat\fs := \fs-\compAbs{\ft-\fs}*\fepsilon\in[0,T]^d$  and $\hat\ft := \ft+\compAbs{\ft-\fs}*\fepsilon\in[0,T]^d$, and the constant in $\lesssim$ is allowed to depend on $\lambda$.
\end{proposition}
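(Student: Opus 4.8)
The plan is to transplant Brault's wavelet construction of the primitive of a distribution, \cite[Lemma 3.10]{Brault2019}, into the stochastic rectangular setting, using the abstract convergence criterion Lemma~\ref{lem:technical2} for existence and the multiparameter BDG inequality Lemma~\ref{lem:multiparameterBDG} for the conditional bound \eqref{ineq:technicalh2}. First I would fix $\fs,\ft$ bounded away from $\partial[0,T]^d$, pick any admissible $\fepsilon>\foo$ (e.g. $\epsilon_i$ a fixed fraction of $\min(s_i,T-t_i)/(t_i-s_i)$, so that $\hat\fs,\hat\ft\in[0,T]^d$), and fix $\fN_0\in\bN^d$ with $2^{-N_{0,i}}\asymp\epsilon_i(t_i-s_i)$, large enough that $\supp P_\fn 1_{(\fs,\ft]}\subset[\hat\fs,\hat\ft]\subset[0,T]^d$ for all $\fn\ge\fN_0$; then $h_\fn:=f(P_\fn 1_{(\fs,\ft]})$ is well defined for such $\fn$. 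With $I^i$ the shift $\fn\mapsto\fn+\fone_{\{i\}}$ one has the tensor identity $\prod_{i\in\theta}(\Id-I^i)P_\fn=(-1)^{\sharp\theta}\hat P^\theta_\fn$ (since in each factor $P_{n_i}-P_{n_i+1}=-\hat P_{n_i}$), whence $\prod_{i\in\theta}(\Id-I^i)h_{\fn_\theta}=(-1)^{\sharp\theta}\sum_{\fy\in\Delta_{\fn_\theta}}f(\hat\phi^{\theta,\fn_\theta}_\fy)\scalar{\hat\phi^{\theta,\fn_\theta}_\fy,1_{(\fs,\ft]}}$.

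The decisive elementary fact is the boundary-localisation of the wavelet coefficients of an indicator: for $i\in\theta$, $\scalar{\hat\varphi^{n_i}_{y_i},1_{(s_i,t_i]}}$ vanishes unless $\supp\hat\varphi^{n_i}_{y_i}$ straddles $s_i$ or $t_i$ (only $O(1)$ such $y_i$ per scale, each coefficient $\lesssim 2^{-n_i/2}$), while for $i\notin\theta$ there are $\approx 2^{n_i}(t_i-s_i)$ scaling-function coefficients of size $\lesssim 2^{-n_i/2}$. Rewriting the $L^2$-normalised wavelet as $2^{-\fn_\theta/2}\psi^{2^{-\fn_\theta}}_\fy$ with $\psi=\hat\phi^\theta$ and using the definition of $C^{\falpha,\fdelta}L_m$ gives $\norm{f(\hat\phi^{\theta,\fn_\theta}_\fy)}_m\lesssim 2^{-\fn_\theta/2}(2^{-\fn_\theta})^{\falpha}\norm{f}_{C^{\falpha,\fdelta}L_m}$, so that for every $\theta$
\[
\norm{\prod_{i\in\theta}(\Id-I^i)h_{\fn_\theta}}_m\lesssim C(\fs,\ft,\fepsilon)\,\norm{f}_{C^{\falpha,\fdelta}L_m}\prod_{i\in\theta}2^{-n_i(1+\alpha_i)}\,,
\]
where $1+\alpha_i>0$ precisely because $\falpha>-\fone$. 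Applied to the reindexed sequence $(h_{\fN_0+\fn})_{\fn\in\bN^d}$, Lemma~\ref{lem:technical2} makes $(h_\fn)_{\fn\ge\fN_0}$ Cauchy in $L_m$, and we set $\oint_\fs^\ft f(d\fy):=\lim h_\fn$ (the limit is independent of $\fepsilon$). The additivity \eqref{eq:additive_integral} is immediate from $1_{(\fs,\ft]}=1_{(\fs,\pi^i_\fu\ft]}+1_{(\pi^i_\fu\fs,\ft]}$ together with linearity of $f$ and of the $L_m$-limit.

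For the quantitative estimates I would telescope the limit into the multiscale series $\oint_\fs^\ft f(d\fy)=f(P_{\fN_0}1_{(\fs,\ft]})+\sum_{\emptyset\neq\zeta\subset[d]}\sum_{\fk\in\pi^\zeta_\foo(\bN^d)}f(\hat P^\zeta_{\fN_0+\fk}1_{(\fs,\ft]})$. Estimate \eqref{ineq:technicalh1} follows by summing the per-term bounds directly: in the $\zeta$-directions only $O(1)$ terms per scale contribute and the geometric series in $2^{-(1+\alpha_i)}$ converge, while in the $\zeta^c$-directions the $\approx 2^{N_{0,i}}(t_i-s_i)$ terms combine with $2^{-N_{0,i}}\asymp\epsilon_i(t_i-s_i)$ to reproduce exactly $\epsilon_i^{\alpha_i}(t_i-s_i)^{1+\alpha_i}$; summing over $\zeta$ yields $\epsilon^{\falpha}\norm{f}_{C^{\falpha,\fdelta}L_m}\compAbs{\ft-\fs}^{\falpha+\fone}$. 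For \eqref{ineq:technicalh2} the same bookkeeping covers the $\zeta$-directions and the $\zeta^c$-directions outside $\eta$, but in the directions $i\in\eta\setminus\zeta$ naive summation loses the $\fdelta$-gain; there one uses that $\hat\phi$ is supported in $[C,R]^d$, so each summand $f(\hat\phi^{\zeta,\fl}_\fy)\scalar{\hat\phi^{\zeta,\fl}_\fy,1_{(\fs,\ft]}}$ is $\cF_{\fy+R2^{-\fl}}$-measurable with $\fy\ge\hat\fs$ in the conditioned directions — this is where $\hat\fs=\fs-\compAbs{\ft-\fs}\ast\fepsilon$ is used — hence $\bE^\eta_{\hat\fs}=\bE^\eta_{\hat\fs}\bE^\eta_\fy$ and $\norm{\bE^\eta_\fy f(\hat\phi^{\zeta,\fl}_\fy)}_m\lesssim 2^{-\fl/2}(2^{-\fl})^{\falpha}(2^{-\fl})^{\fdelta}_\eta\norm{f}_{C^{\falpha,\fdelta}L_m}$; feeding these conditional bounds into Lemma~\ref{lem:multiparameterBDG} for the sum over $\fy$ restores the missing $(t_i-s_i)^{\delta_i}$ in the $\eta\setminus\zeta$ directions. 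Since $\fepsilon\le\fone$ and $\fdelta\ge\foo$, the extra powers of $\fepsilon$ picked up only improve the bound, and one lands on $\epsilon^{\falpha}\norm{f}_{C^{\falpha,\fdelta}L_m}\compAbs{\ft-\fs}^{\falpha+\fone}\abs{\ft-\fs}^{\fdelta}_{\text{c},\eta}$.

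\textbf{Main obstacle.} The analytic heart — the convergence — is essentially Brault's deterministic argument and is routine. The real work is the bookkeeping forced by the boundary of $[0,T]^d$: one must simultaneously keep every wavelet piece of $1_{(\fs,\ft]}$ (whose support fattens under $P_\fn$) inside $[0,T]^d$, keep the base point of each such piece $\ge\hat\fs$ coordinatewise in the conditioned directions so that the $C^{\falpha,\fdelta}L_m$-bounds (stated for test functions supported in $[1/4,3/4]^d$ lying in the future of the base point) are genuinely applicable, and choose the conditioning point $\hat\fs$ so that $\bE^\eta_{\hat\fs}$ can be pulled through each term. This is exactly what the $\fepsilon$–$\hat\fs$–$\hat\ft$ margin is engineered to guarantee, and it is also why the implicit constant degenerates like $\fepsilon^{\falpha}$ as one approaches the boundary.
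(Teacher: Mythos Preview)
Your proposal is correct and follows essentially the same route as the paper: wavelet decomposition of $1_{(\fs,\ft]}$, boundary-localisation of the $\hat\varphi$-coefficients of an indicator (only $O(1)$ nonzero terms per scale in each $\zeta$-direction), counting $\approx 2^{m_i}\abs{t_i-s_i}$ scaling-function terms in the $\zeta^c$-directions, and summing the resulting geometric series thanks to $\falpha>-\fone$. Your use of Lemma~\ref{lem:technical2} for convergence is just a repackaging of the paper's direct summability argument.

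One simplification: the appeal to Lemma~\ref{lem:multiparameterBDG} for \eqref{ineq:technicalh2} is unnecessary. Once you have the tower property $\bE^\eta_{\hat\fs}=\bE^\eta_{\hat\fs}\bE^\eta_\fy$ (valid because all contributing $\fy\ge\hat\fs$), the per-term conditional bound $\norm{\bE^\eta_\fy f(\hat\phi^{\zeta,\fl}_\fy)}_m\lesssim 2^{-\fl/2}(2^{-\fl})^{\falpha}(2^{-\fl})^{\fdelta}_\eta\norm{f}_{C^{\falpha,\fdelta}L_m}$ already carries the $\fdelta$-gain, and plain triangle inequality over $\fy$ reproduces \eqref{ineq:technicalh2} by the same counting as in \eqref{ineq:technicalh1}; this is what the paper's ``follows analogously'' means. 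The BDG lemma gives no additional saving here since in the conditioned directions $\eta$ it returns the $\ell^1$ sum anyway.
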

\begin{remark}
    Note that the distance between $\hat s$ and $s$, as well as $\hat t$ and $t$, depends on $\compAbs{\ft-\fs}$. This choice allows us to later use a dyadic argument.
\end{remark}
\begin{remark}\label{rem:additiveIsOneParameter}
Property \eqref{eq:additive_integral} extends the usual additive property of a two-parameter function $A\colon \mathbb R\times \mathbb R \to \mathbb R$
\[A(s,t)=A(s,u)+ A(u,t)\]
at the level of $d$-parameter functions see \cite[Definition 4]{multiparameterStochSewing}. From this condition one identifies for any $\fs\in (0,T)^d$ a unique function 
\[\fy\mapsto \oint_{\fs}^{\fy}f(d\fz)=:I_{\fs, \fy}(f)\]
such that $I_{\fs, \fy}(f)=0$ for any $\fy$ such that $\pi^i\fy=\pi^i\fs$ for some $i \in [d]$ and 
\[\oint_{\fx}^{\fy}f(d\fz)= \square_{\fx,\fy}^{[d]}I_{\fs, \cdot}(f)\]
for any $\fs\leq \fx\leq  \fy\in (0,T)^d$. Since we cannot choose $\fs = \foo$ at this stage, we prefer to think of the interior primitive as a $2d$-parameter object and will only go the $d$-parameter setting with the primitive itself.
\end{remark}
    \begin{proof}

     Given $\fs$ and $\ft$ we choose $\fn$ such that $2^{-n_i}\le \frac{\epsilon_i}{R}\abs{t_i-s_i}\le 2^{-n_i+1}$ for $i\in[d]$, where $R$ comes from the wavelet $\phi$ (recall from Section \ref{sec:preliminaries} that $\supp(\phi)\subset[C,R]^d$ for some $0<C<R$). Then for any $\fk\ge\fn$ we write
    \begin{equation}\label{eq:proof_sum}
        P_\fk 1_{(\fs,\ft]} = \sum_{\chi\subset[d]} ~\sum_{\substack{\fn_\chi \le \fm_\chi\le \fk_\chi\\ \fm_{\chi^c} = \fn_{\chi^c}}} \hat P^\chi_\fm 1_{(\fs,\ft]}\,,
    \end{equation}
    In order to prove the convergence of $ P_\fk 1_{(\fs,\ft]}$ it suffices to find a proper bound on $f(\hat P^\chi_\fm 1_{[\fs,\ft]})$\,, which we can estimate  as
    \begin{equation*}
        \norm{f(\hat P_\fm^\chi 1_{(\fs,\ft]})}_m \le \sum_{\fx\in\Delta_{\fm}} \norm{f(\hat\phi_\fx^{\chi,\fm})}_m \abs{\scalar{\hat\phi_\fx^{\chi,\fm},1_{(\fs,\ft]}}}\,.
    \end{equation*}
  By our choice of $\fn$, all non-zero terms $\fx\in \Delta_{\fm} $ satisfy $\fx\in[\hat\fs,\ft]$ and the support of $\hat\phi^{\chi,\fm}_\fx$ is in $[\hat \fs,\hat\ft]\subset[0,T]^d$. For any fixed subset $\chi\subset[d]$ we note that in each direction $i\notin\chi$, the number of non-zero terms is of order $2^{-m_i}\abs{t_i-s_i}$. Moreover, note that for the one-dimensional wavelet $\hat\varphi$ we have
  \[
    \scalar{\hat\varphi,1_{(s,t]}} = 0\,,
  \]
  unless the support of $\hat{\varphi}$ contains one of the points $s,t\in[0,T]$. It follows that for all $i\in\chi$, the number of $x_i\in\Delta_{m_i}$ such that $\scalar{\hat\varphi^{m_i}_{x_i}, 1_{(s_i,t_i]}}$ is non-zero is a number independent of $m_i$ or $\abs{t_i-s_i}$. Thus, by counting the non-zero terms and using Lemma \ref{lem:inequalityPhi} with $r=0$, we get
    \begin{equation}\label{ineq:technicalh3}
    \begin{split}
        \norm{f(\hat P_\fm^\chi 1_{(\fs,\ft]})}_m&\lesssim \norm{f}_{C^{\falpha,\fdelta}L_m} 2^{-\fm*\falpha}2^{-\fm}_\chi\abs{\ft-\fs}^{\fone}_{\text{c},\chi^c}\\&= \norm{f}_{C^{\falpha,\fdelta}L_m} 2^{-\fm*\falpha}_{\chi^c}2^{-\fm*(\falpha+\fone)}_\chi\abs{\ft-\fs}^{\fone}_{\text{c},\chi^c}\,.
    \end{split}
    \end{equation}
    Note that the support of our wavelet $\phi$ is in general not in $[1/4,3/4]^d$. But we can always chose  $l\in\bN$ and $\fy\in\Delta_{l\fone}$ such that $\phi^{l\fone}_\fy$ has support in $[1/4,3/4]^d$, allowing us to use \eqref{ineq:defNegativHoelderSpace} on $f(\phi^\fm_\fx) = f(\phi^{l\fone_\fy})^{\fm-l\fone}_{\fx-2^{l\fone-\fm}\fy}$ for any $\fm\ge l\fone$. This only adds a constant depending on the wavelet to our calculations. 
    
    Since  $\falpha>-\fone $, one has 
    \[\sum_{\fm_\chi\ge\fn_\chi}\norm{f(\hat P_\fm^\chi 1_{(\fs,\ft]})}_m<\infty\,.\]  

    By standard arguments $f(P_\fk 1_{(\fs,\ft]})$ is then a Cauchy sequence in the parameter $\fk$. By summing $\eqref{ineq:technicalh3}$ over the respective $\fm_\chi\ge\fn_\chi$, we achieve a bound for \eqref{eq:proof_sum} independent from $\fk$:
    \[
        \norm{f(P_\fk  1_{(\fs,\ft]}) }_m \lesssim \sum_{\chi\subset[d]}\norm{f}_{C^{\falpha,\fdelta}L_m} 2^{-\fn*\falpha}_{\chi^c}2^{-\fn(\falpha+\fone)}_\chi\abs{\ft-\fs}_{\text c,\chi^c}^\fone\,.
    \]
    By letting $\fk\to\infty$, the first inequality \eqref{ineq:technicalh1} then from $2^{-n_i} \approx \epsilon_i\abs{t_i-s_i}$. The estimate \eqref{ineq:technicalh2} follows analogously. To see the identity \eqref{eq:additive_integral}, we simply observe that $P_\fn1_{(\fs,\ft]}$ is additive for each $\fn\in\bN^d$.
\end{proof}

At this point, the interior integral
$I_{\fs, \fy}(f)$ might depend on the choice of our wavelet basis $\Phi$. We show now  that the limit is unique by showing that its mixed distributional derivative is $f$. That, together with the fact that $I$ vanishes on the boundary by construction, that is, $I_{\fs,\pi^i_{s_i}\fx} = 0$ for all $\fs,\fx\in(0,T^d)$ and $i\in[d]$, uniquely characterizes $(\fs,\fy)\mapsto I_{\fs,\fy}$.

\begin{lemma}
    For each $\foo<\fs\le\fy<\fT$ let $I_{\fs,\fy}$ be as above. Further let $\psi\in C_c^{rd}$ with support in $(\fs,\fT)$. Then it holds that
    \[
(-1)^d\scalar{I_{\fs,\cdot},\partial^\fone\psi} = f(\psi)\,.
    \]
\end{lemma}
\begin{proof}
    Direct computation gives us for any $\fn\in\bN^d$
    \begin{align*}
        (-1)^d\scalar{f(P_\fn 1_{(\fs,\cdot]}),\partial^\fone\psi} &= \sum_{\fx\in\Delta_\fn}f(\phi_\fx^\fn)(-1)^d \scalar{\int_\fs^\cdot \phi_\fx^\fn(\fz)d\fz,\partial^\fone\psi} \\
        &= \sum_{\fx\in\Delta_\fn}f(\phi_\fx^\fn) \scalar{\phi_\fx^\fn,\psi} = f(P_\fn\psi)\,.
    \end{align*}
    Due to \cite[Lemma 62]{kern2021} and the continuity of the random distribution $f\in C^{\falpha,\fdelta}L_m$, we know that the right-hand side converges to $f(\psi)$ in $L_m$, at least along the diagonal sequence $\tilde\fn = (n,\dots,n)$ as $n\to\infty$.

    Let us analyse the convergence of the left-hand side. Using Minkowski inequality, \eqref{eq:proof_sum} and \eqref{ineq:technicalh3}, we get
    \begin{align*}
        \norm{\scalar{f(P_\fn 1_{(\fs,\cdot]})-I_{\fs,\cdot},\partial^\fone\psi}}_m &\le \int_{\foo}^\fT\norm{f(P_\fn 1_{[\fs,\cdot]})-I_{\fs,\cdot}}_m \abs{\partial^\fone\psi(\fz)} d\fz\\
        &\lesssim \norm{\psi}_{C^{rd}} \int_\foo^\fT \sum_{\emptyset\neq\chi\subset[d]} \sum_{\substack{\fm_\chi\ge \fn_\chi\\ m_{\chi^c}=\fn_{\chi^c}}}\norm{f(\hat P^\chi_\fm1_{(\fs,\fz]})}_m d\fz \\
        &\lesssim \norm{\psi}_{C^{rd}}\norm{f}_{C^{\falpha,\fdelta}L_m} \sum_{\emptyset\neq\chi\subset[d]} 2^{-\fn*\falpha} 2^{-\fn}_\chi \fT^{2\cdot\fone} \xrightarrow{\fn\to\infty}0\,.
    \end{align*}
    It follows that
    \[
        (-1)^d\scalar{f(P_\fn 1_{(\fs,\cdot]}),\partial^\fone\psi} \xrightarrow{\fn\to\infty} (-1)^d\scalar{I_{\fs,\cdot},\partial^\fone\psi}\,,
    \]
    finishing the proof.
\end{proof}

Due to the presence of the variable $\fepsilon$ in Proposition \ref{prop:h}, the interior primitive of $f$ does not suffice to define a proper integration over the whole cube $[0,T]^d$ when $\fs$ and $\ft$ approach the boundary. If only $\fs$ or $\ft$ approaches the boundary while the other one has some fixed distance to it, we can absorb this singularity via a dyadic expansion. Due to the additive property \eqref{eq:additive_integral}, this is an extension of the interior primitive.

\begin{lemma}\label{prop:integration}
Let $f\in C^{\falpha,\fdelta}L_m, -\fone<\falpha<\foo$ and $\foo\le\fs\le\ft< T\fone$ such that $\ft+\frac 12\compAbs{\ft-\fs} \in [0,T]^d$. For any $\fk\in \bN^d$, we define $\fs_\fk$ by $(\fs_\fk)_i := s_i +2^{-k_i}\abs{t_i-s_i}, i\in[d]$. Then, the limit
\begin{equation*}
    \int_\fs^\ft f(d\fy) := \sum_{\fk\ge \fone}\oint_{\fs_\fk}^{\fs_{\fk-\fone}} f(d\fy)
\end{equation*}
exists in $L_m$. It satisfies the additivity property \eqref{eq:additive_integral} and is $\cF_\ft$-measurable. Furthermore, we have
\begin{align}
    \norm{\int_\fs^\ft f(d\fy)}_m &\lesssim \norm{f}_{C^{\falpha,\fdelta}L_m}\abs{\ft-\fs}_{\text c}^{\falpha+\fone} \label{ineq:primitiveHoelder1}\\
    \norm{\bE^\eta_\fs\int_\fs^\ft f(d\fy)}_m &\lesssim \norm{f}_{C^{\falpha,\fdelta}L_m}\abs{\ft-\fs}_{\text c}^{\falpha+\fone}\abs{\ft-\fs}_{\text c,\eta}^{\fdelta}\,.\label{ineq:primitiveHolder2}
\end{align}
for all $\eta\subset[d]$.
\end{lemma}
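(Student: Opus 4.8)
The idea is to exhibit $\int_\fs^\ft f(d\fy)$ as an absolutely convergent series of interior primitives over a dyadic family of rectangles filling $(\fs,\ft]$, so that everything reduces to Proposition~\ref{prop:h} together with geometric--series bookkeeping. First I would fix the auxiliary parameter of Proposition~\ref{prop:h} to be $\fepsilon=\fone$ and check that it applies to every pair $(\fs_\fk,\fs_{\fk-\fone})$ with $\fk\ge\fone$: abbreviating $\fs':=\fs_\fk$, $\ft':=\fs_{\fk-\fone}$, one has $\compAbs{\ft'-\fs'}_i=2^{-k_i}|t_i-s_i|$, hence $(\hat\fs')_i=(\fs')_i-2^{-k_i}|t_i-s_i|=s_i\ge 0$, while $(\hat\ft')_i=(\ft')_i+2^{-k_i}|t_i-s_i|=s_i+3\cdot 2^{-k_i}|t_i-s_i|$ equals $t_i+\tfrac12|t_i-s_i|\le T$ when $k_i=1$ — which is exactly the standing hypothesis $\ft+\tfrac12\compAbs{\ft-\fs}\in[0,T]^d$ — and is $<t_i\le T$ when $k_i\ge 2$. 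Thus each $\oint_{\fs_\fk}^{\fs_{\fk-\fone}}f(d\fy)$ is well defined and obeys \eqref{ineq:technicalh1}--\eqref{ineq:technicalh2} with a constant independent of $\fs,\ft,\fk$.

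Next I would establish convergence and \eqref{ineq:primitiveHoelder1} at the same time. Estimate \eqref{ineq:technicalh1} gives
\[
\norm{\oint_{\fs_\fk}^{\fs_{\fk-\fone}}f(d\fy)}_m\lesssim \norm{f}_{C^{\falpha,\fdelta}L_m}\prod_{i=1}^d\bigl(2^{-k_i}|t_i-s_i|\bigr)^{\alpha_i+1}\,,
\]
and since $\alpha_i+1>0$ for each $i$, summing the geometric series over $\fk\ge\fone$ yields $\sum_{\fk\ge\fone}\norm{\oint_{\fs_\fk}^{\fs_{\fk-\fone}}f}_m\lesssim\norm{f}_{C^{\falpha,\fdelta}L_m}\compAbs{\ft-\fs}^{\falpha+\fone}$. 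As $L_m(\Omega)$ is complete, the series defining $\int_\fs^\ft f$ converges absolutely and \eqref{ineq:primitiveHoelder1} follows. For \eqref{ineq:primitiveHolder2} (the case $\eta=\emptyset$ being \eqref{ineq:primitiveHoelder1}) I would note that $\hat\fs'\ge\fs$ componentwise, so $\cF^\eta_\fs\subset\cF^\eta_{\hat\fs'}$ and the tower property gives $\bE^\eta_\fs\oint_{\fs_\fk}^{\fs_{\fk-\fone}}f=\bE^\eta_\fs\bigl(\bE^\eta_{\hat\fs'}\oint_{\fs_\fk}^{\fs_{\fk-\fone}}f\bigr)$; conditional Jensen and \eqref{ineq:technicalh2} then give the same bound carrying the extra factor $\prod_{i\in\eta}(2^{-k_i}|t_i-s_i|)^{\delta_i}$, and since $\alpha_i+1+\delta_i>0$, summing over $\fk\ge\fone$ and interchanging $\bE^\eta_\fs$ with the absolutely convergent sum yields \eqref{ineq:primitiveHolder2}. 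For $\cF_\ft$-measurability I would use that $\oint_{\fs_\fk}^{\fs_{\fk-\fone}}f=\lim_{\fn\to\infty}f(P_\fn 1_{(\fs_\fk,\fs_{\fk-\fone}]})$ while the support of $P_\fn 1_{(\fs_\fk,\fs_{\fk-\fone}]}$ shrinks to $[\fs_\fk,\fs_{\fk-\fone}]$ as $\fn\to\infty$; by adaptedness of $f$ and — as is standard in multiparameter martingale theory — right-continuity of the filtration, this makes $\oint_{\fs_\fk}^{\fs_{\fk-\fone}}f$ measurable with respect to $\cF_{\fs_{\fk-\fone}}\subset\cF_\ft$, so the $L_m$-limit $\int_\fs^\ft f$ is $\cF_\ft$-measurable.

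Finally, the additivity \eqref{eq:additive_integral}. Iterating \eqref{eq:additive_integral} for the interior primitive shows that the rectangles $\{[\fs_\fk,\fs_{\fk-\fone}]:\fone\le\fk\le\fN\}$ tile $[\fs_\fN,\ft]$, hence $\sum_{\fone\le\fk\le\fN}\oint_{\fs_\fk}^{\fs_{\fk-\fone}}f=\oint_{\fs_\fN}^\ft f$; absolute convergence then gives $\int_\fs^\ft f=\lim_{\fN\to\infty}\oint_{\fs_\fN}^\ft f$. For $i\in[d]$ and $\fu$ with $\fs\le\fu\le\ft$ I would split $\oint_{\fs_\fN}^\ft f=\oint_{\fs_\fN}^{\pi^i_\fu\ft}f+\oint_{\pi^i_\fu\fs_\fN}^\ft f$ (valid once $(\fs_\fN)_i<u_i$) and pass to the limit, using that $\oint_{\fs_\fN}^{\pi^i_\fu\ft}f\to\int_\fs^{\pi^i_\fu\ft}f$ and $\oint_{\pi^i_\fu\fs_\fN}^\ft f\to\int_{\pi^i_\fu\fs}^\ft f$; each of these is verified by comparing $\fs_\fN$ with the dyadic exhaustion natural to the smaller rectangle, the discrepancy being an interior primitive over a region thin in direction $i$ and hence negligible by \eqref{ineq:technicalh1}. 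I expect the genuinely delicate point of the whole argument to be exactly this reconciliation of the three different dyadic exhaustions (of $[\fs,\ft]$, $[\fs,\pi^i_\fu\ft]$ and $[\pi^i_\fu\fs,\ft]$) needed for additivity, together with the mild right-continuity input for the sharp $\cF_\ft$-measurability; the two Hölder-type estimates themselves are routine geometric-series computations on top of Proposition~\ref{prop:h}.
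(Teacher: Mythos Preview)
Your approach is correct and essentially identical to the paper's: apply Proposition~\ref{prop:h} with $\fepsilon=\fone$ to each dyadic block (the paper records the same computation $|(\fs_{\fk-\fone})_i-(\fs_\fk)_i|=2^{-k_i}|t_i-s_i|=|(\fs_\fk)_i-s_i|$, i.e.\ $\hat\fs'=\fs$), then sum the resulting geometric series to get \eqref{ineq:primitiveHoelder1} and say \eqref{ineq:primitiveHolder2} is analogous. The paper's proof stops there and does not argue additivity or $\cF_\ft$-measurability at all, so your sketches for those points already go beyond what the paper provides; your identification of the reconciliation of dyadic exhaustions as the delicate step is accurate, though note that with $\fepsilon=\fone$ one in fact has $\hat\fs'=\fs$ exactly, so no tower-property detour is needed for \eqref{ineq:primitiveHolder2}.
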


\begin{proof}
    By construction of $\fs_\fk$, we have for all $\fk\ge\fone$ and $i\in[d]$
    \begin{equation*}
        \abs{(\fs_{\fk-\fone})_i-(\fs_\fk)_i} = 2^{-k_i}\abs{t_i-s_i} = \abs{(\fs_\fk)_i-s_i}\,.
    \end{equation*}
    Together with the assumption on $\ft$, this allows us to apply Proposition \ref{prop:h} to $\oint_{\fs_\fk}^{\fs_{\fk-\fone}} f(d\fy)$ to get
    \[
        \norm{\oint_{\fs_\fk}^{\fs_{\fk-\fone}} f(d\fy)} \lesssim \norm{f}_{C^{\falpha,\fdelta}L_m}2^{-\fk*(\falpha+\fone)}\compAbs{\ft-\fs}^{\falpha+\fone}\,.
    \]
    This is clearly summable in $\fk$ and summing over $\fk$ gives \eqref{ineq:primitiveHoelder1}. \eqref{ineq:primitiveHolder2} follows analogously.
\end{proof}

\begin{remark}
    With the same proof, we can also tackle the case in which $\fs-\frac 12\abs{\ft-\fs}_{\text c} \in[0,T]$ and $\fs\le\ft\le T\fone$. More generally, the same argument allows us to construct
    \[
        \int_\fs^\ft f(d\fy)
    \]
    for any $\foo\le\fs\le\ft\le T\fone$, such that for any $i\in[d]$, we have
    \[
        0\le s_i-\frac 12\abs{t_i-s_i} \qquad\text{or}\qquad t_i+\frac 12\abs{t_i-s_i}\le T\,.
    \]
\end{remark}

If we are given any $\fs\le\ft\in[0,T]^d$, we can extend the primitive by separating the rectangle spanned between $\fs$ and $\ft$ into $2^d$ smaller rectangles at the middle point $u=\frac 12(\fs+\ft)$. The additivity property \eqref{eq:additive_integral} as well as \eqref{ineq:primitiveHoelder1} and \eqref{ineq:primitiveHolder2} immediately extend to this case. Let us summarize this in a proposition.

\begin{proposition}
    Let $f\in C^{\falpha,\fdelta}L_m$ with $-\fone<\falpha<\foo$. For any $\fs\le\ft\in[0,T]^d$, let $\fu = \frac 12(\fs+\ft)$. Define
    \begin{equation*}
        \int_\fs^\ft f(d\fy) := \sum_{\theta\subset[d]} \int_{\pi^\theta_\fs \fu}^{\pi^\theta_\fu\ft} f(d\fy)\,.
    \end{equation*}
    Then the above is well-defined, fulfills the additivity property \eqref{eq:additive_integral}, and is $\cF_\ft$-measurable. Furthermore, we have for all $\eta\subset[d]$
    \begin{align}
    \norm{\int_\fs^\ft f(d\fy)}_m &\lesssim \norm{f}_{C^{\falpha,\fdelta}L_m}\abs{\ft-\fs}_{\text c}^{\falpha+\fone} \label{ineq:g1}\\
    \norm{\bE^\eta_\fs\int_\fs^\ft f(d\fy)}_m &\lesssim \norm{f}_{C^{\falpha,\fdelta}L_m}\abs{\ft-\fs}_{\text c}^{\falpha+\fone}\abs{\ft-\fs}_{\text c,\eta}^{\fdelta}\,.\label{ineq:g2}
\end{align}
\end{proposition}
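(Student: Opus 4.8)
The plan is to reduce the general rectangle to the $2^d$ sub-rectangles obtained by cutting at the midpoint, each of which falls under the extended form of Lemma~\ref{prop:integration} described in the Remark following it. Fix $\fs\le\ft$ in $[0,T]^d$, put $\fu=\tfrac12(\fs+\ft)$, and for $\theta\subset[d]$ write $\fs^\theta:=\pi^\theta_\fs\fu$ and $\ft^\theta:=\pi^\theta_\fu\ft$, so that the rectangle $[\fs^\theta,\ft^\theta]$ equals $[s_i,u_i]$ in each direction $i\in\theta$ and $[u_i,t_i]$ in each direction $i\notin\theta$; in particular $\compAbs{\ft^\theta-\fs^\theta}=\tfrac12\compAbs{\ft-\fs}$. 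A direct coordinatewise check gives $t^\theta_i+\tfrac12\abs{t^\theta_i-s^\theta_i}=\tfrac14 s_i+\tfrac34 t_i\le T$ for $i\in\theta$ and $s^\theta_i-\tfrac12\abs{t^\theta_i-s^\theta_i}=\tfrac34 s_i+\tfrac14 t_i\ge 0$ for $i\notin\theta$, so each $[\fs^\theta,\ft^\theta]$ satisfies the hypothesis of the extended Lemma~\ref{prop:integration}. Hence $\int_{\fs^\theta}^{\ft^\theta}f(d\fy)$ is well defined in $L_m$, is $\cF_{\ft^\theta}$-measurable (and therefore $\cF_\ft$-measurable, since $\ft^\theta\le\ft$), and satisfies \eqref{ineq:primitiveHoelder1}--\eqref{ineq:primitiveHolder2}. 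Summing over the $2^d$ values of $\theta$ gives that $\int_\fs^\ft f(d\fy)$ is well defined and $\cF_\ft$-measurable, and, using $\compAbs{\ft^\theta-\fs^\theta}=\tfrac12\compAbs{\ft-\fs}$, summing \eqref{ineq:primitiveHoelder1} over $\theta$ yields \eqref{ineq:g1} with a constant depending only on $d$ and $\falpha$.

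For \eqref{ineq:g2} the extra step is to move the conditioning from each corner $\fs^\theta$ back to $\fs$. Fix $\eta\subset[d]$ and $\theta\subset[d]$. For every $i\in\eta$ we have $s_i\le s^\theta_i$ (with equality when $i\in\theta$), hence $\cF^i_{s_i}\subset\cF^i_{s^\theta_i}$; using the commutation property $\bE^\eta_\fx=\prod_{i\in\eta}\bE^i_{x_i}$ together with the tower property in each coordinate, $\bE^\eta_\fs=\bE^\eta_\fs\,\bE^\eta_{\fs^\theta}$ as operators on $L_m$. Since every conditional expectation is an $L_m$-contraction, $\norm{\bE^\eta_\fs\int_{\fs^\theta}^{\ft^\theta}f(d\fy)}_m\le\norm{\bE^\eta_{\fs^\theta}\int_{\fs^\theta}^{\ft^\theta}f(d\fy)}_m$, and \eqref{ineq:primitiveHolder2} (in its extended form) bounds the right-hand side by $\norm{f}_{C^{\falpha,\fdelta}L_m}\compAbs{\ft^\theta-\fs^\theta}^{\falpha+\fone}\abs{\ft^\theta-\fs^\theta}_{\text c,\eta}^{\fdelta}$. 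Using once more $\compAbs{\ft^\theta-\fs^\theta}=\tfrac12\compAbs{\ft-\fs}$ and summing over $\theta$ gives \eqref{ineq:g2}.

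Finally, the additivity property \eqref{eq:additive_integral} for $\int_\fs^\ft f(d\fy)$ is inherited from that of the interior primitive: by Remark~\ref{rem:additiveIsOneParameter} each $\oint$ is the top rectangular increment of a genuine $d$-parameter function, the partial integrals of Lemma~\ref{prop:integration} and of this Proposition arise from it by the convergent dyadic sums letting the lower corner approach the boundary, and rectangular increments are additive in each endpoint; inserting a point $\fw$ with $\fs<\fw<\ft$ in a direction $i$ then splits each of the $2^d$ sub-integrals compatibly with the definitions of $\int_\fs^{\pi^i_\fw\ft}f$ and $\int_{\pi^i_\fw\fs}^\ft f$. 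I expect this last bookkeeping---tracking which of the $2^d$ sub-rectangles lies on which side of the new cut $w_i$, re-subdividing where necessary so that $w_i$ is respected, and re-applying additivity of the interior primitive there---to be the only genuinely delicate point; the quantitative estimates \eqref{ineq:g1}--\eqref{ineq:g2} are the substantive content and follow from the scaling $\compAbs{\ft^\theta-\fs^\theta}=\tfrac12\compAbs{\ft-\fs}$ together with the extended Lemma~\ref{prop:integration} exactly as above.
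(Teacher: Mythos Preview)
The proposal is correct and follows exactly the approach the paper sketches: split at the midpoint into $2^d$ sub-rectangles, apply the extended form of Lemma~\ref{prop:integration} to each, and sum. You in fact supply more detail than the paper does---the explicit coordinate check that each sub-rectangle satisfies the one-sided boundary condition, and the tower/contraction argument needed to pass the conditioning from $\fs^\theta$ back to $\fs$ in \eqref{ineq:g2}---both of which the paper leaves implicit.
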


Similar to Remark \ref{rem:additiveIsOneParameter}, due to its additivity we can think of the primitive of $f$ as a one-parameter process $Y_\ft = \int_\foo^\ft f(d\fy)$. \eqref{ineq:g1} and \eqref{ineq:g2} immediately give us the Hölder regularity of $Y$.

\begin{corollary}\label{cor:IntegrationInCalpha}
    Due to the additivity of $\int f(d\fy)$, there is a unique process $Y$, which is $0$ on the boundary $\bigcup_{i\in[d]} \pi^{\{i\}^c}_\foo [0,T]^d$ such that
    \[
        \square^{[d]}_{\fx,\fy} Y = \int_\fx^\fy f(d\fy)\,.
    \]
    This implies for any $\theta\subset[d]$, we have
    \[
        \square^\theta_{\fx,\fy} Y = \int_{\pi_\fx^\theta \foo}^{\pi^\theta_\fy \fx} f(d\fy)\,.
    \]
    It follows that $Y\in C^{\falpha+\fone,\fdelta}L_m$. We call $Y$ the primitive of $f$, and we have
    \[
\norm{Y}_{C^{\falpha+\fone,\fdelta}L_m} \lesssim \norm{f}_{C^{\falpha,\fdelta}L_m}\,.
    \]
\end{corollary}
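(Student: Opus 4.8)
The plan is to take the process $Y_\ft:=\int_\foo^\ft f(d\fy)$ produced by the preceding proposition and verify the four assertions of the statement in turn: that $Y$ vanishes on the boundary $\bigcup_{i\in[d]}\pi^{\{i\}^c}_\foo[0,T]^d$, that it is the \emph{unique} such process satisfying $\square^{[d]}_{\fx,\fy}Y=\int_\fx^\fy f(d\fy)$, that this identity forces $\square^\theta_{\fx,\fy}Y=\int_{\pi^\theta_\fx\foo}^{\pi^\theta_\fy\fx}f(d\fy)$ for every $\theta\subset[d]$, and finally that the associated seminorms are controlled by $\norm{f}_{C^{\falpha,\fdelta}L_m}$.

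First I would note that $\cF_\ft$-measurability of $Y_\ft$ is part of the conclusion of the preceding proposition, and that $Y$ vanishes on the boundary: if $t_i=0$ for some $i$ then $\abs{\ft-\foo}_{\text c}^{\falpha+\fone}=0$ since $\alpha_i+1>0$, so \eqref{ineq:g1} gives $Y_\ft=0$ a.s. Next, by the additivity property \eqref{eq:additive_integral} (applied with base point $\foo$, which is now legitimate thanks to the extension of the primitive to all $\fs\le\ft\in[0,T]^d$), one has $\oint_\fx^\fy f(d\fz)=\square^{[d]}_{\fx,\fy}I_{\foo,\cdot}(f)$ in the sense of Remark \ref{rem:additiveIsOneParameter}, i.e.\ $\square^{[d]}_{\fx,\fy}Y=\int_\fx^\fy f(d\fy)$. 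For uniqueness, if $Y'$ vanishes on the boundary and satisfies $\square^{[d]}_{\fx,\fy}Y'=\int_\fx^\fy f(d\fy)$, then expanding $\square^{[d]}_{\foo,\fz}Y'=\sum_{\theta'\subset[d]}(-1)^{\sharp([d]\setminus\theta')}Y'(\pi^{\theta'}_\fz\foo)$ kills every term with $\theta'\subsetneq[d]$ (such a term has an $i$-th coordinate equal to $0$ for $i\notin\theta'$), leaving $Y'(\fz)=\square^{[d]}_{\foo,\fz}Y'=\int_\foo^\fz f(d\fy)=Y_\fz$.

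For the general increment formula I would use that $\square^\theta_{\fx,\fy}Y$ depends on $\fy$ only through $\fy_\theta$, so $\square^\theta_{\fx,\fy}Y=\square^\theta_{\fx,\pi^\theta_\fy\fx}Y$, and then expand $\square^{[d]}_{\pi^\theta_\fx\foo,\pi^\theta_\fy\fx}Y$: by the boundary-vanishing of $Y$, only those terms survive in which each coordinate $i\in\theta^c$ is taken from $\pi^\theta_\fy\fx$ (hence equals $x_i$, not $0$), and reindexing the surviving terms over subsets of $\theta$ reproduces exactly the defining sum of $\square^\theta_{\fx,\fy}Y$; thus $\square^\theta_{\fx,\fy}Y=\square^{[d]}_{\pi^\theta_\fx\foo,\pi^\theta_\fy\fx}Y=\int_{\pi^\theta_\fx\foo}^{\pi^\theta_\fy\fx}f(d\fy)$. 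This combinatorial bookkeeping — essentially the one already used in the proof of Proposition \ref{prop:CalphaEmbedding}, now specialised to the primitive — is the step I expect to require the most care, though it is entirely routine.

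Finally, to bound the norm I would apply \eqref{ineq:g1} and \eqref{ineq:g2} to the box $[\pi^\theta_\fx\foo,\pi^\theta_\fy\fx]$: its edge lengths are $\abs{x_i-y_i}$ for $i\in\theta$ and $x_i\le T$ for $i\notin\theta$, so $\abs{\pi^\theta_\fy\fx-\pi^\theta_\fx\foo}_{\text c}^{\falpha+\fone}\lesssim\abs{\fx-\fy}_{\text c,\theta}^{\falpha+\fone}$ (constant depending on $T$) and $\abs{\pi^\theta_\fy\fx-\pi^\theta_\fx\foo}_{\text c,\eta}^{\fdelta}=\abs{\fx-\fy}_{\text c,\eta}^{\fdelta}$ since $\eta\subset\theta$. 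Moreover $\cF^\eta_{\pi^\theta_\fx\foo}=\cF^\eta_\fx$, because $\eta\subset\theta$ implies these two points share the same $\eta$-coordinates, whence $\bE^\eta_{\pi^\theta_\fx\foo}=\bE^\eta_\fx$; plugging this into \eqref{ineq:g2} yields $\norm{\bE^\eta_\fx\square^\theta_{\fx,\fy}Y}_m\lesssim\norm{f}_{C^{\falpha,\fdelta}L_m}\abs{\fx-\fy}_{\text c,\theta}^{\falpha+\fone}\abs{\fx-\fy}_{\text c,\eta}^{\fdelta}$, i.e.\ $\norm{\square Y}_{\falpha+\fone,\fdelta,\theta,\eta,m}\lesssim\norm{f}_{C^{\falpha,\fdelta}L_m}$ for every $\eta\subset\theta\subset[d]$ with $\theta\neq\emptyset$. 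Together with $\norm{Y}_{m,\infty}\lesssim\norm{f}_{C^{\falpha,\fdelta}L_m}$ (the case $\theta=[d]$, $\fx=\foo$ in \eqref{ineq:g1}) and summing over all $\eta\subset\theta$, this gives $\norm{Y}_{C^{\falpha+\fone,\fdelta}L_m}\lesssim\norm{f}_{C^{\falpha,\fdelta}L_m}$, completing the proof.
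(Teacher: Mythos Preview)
Your proposal is correct and follows exactly the approach the paper intends; the paper states the corollary without proof as an immediate consequence of the preceding proposition and the additivity of the primitive, and you have simply filled in the routine details (boundary vanishing from \eqref{ineq:g1}, the identity $\square^\theta_{\fx,\fy}Y=\square^{[d]}_{\pi^\theta_\fx\foo,\pi^\theta_\fy\fx}Y$ via boundary vanishing, and the observation $\cF^\eta_{\pi^\theta_\fx\foo}=\cF^\eta_\fx$ for $\eta\subset\theta$). One small typo: in the sentence invoking Remark~\ref{rem:additiveIsOneParameter} you write $\oint_\fx^\fy$ where you mean the extended primitive $\int_\fx^\fy$.
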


\subsection{Existence and uniqueness}

We are now ready to state the main theorem for the existence and uniqueness of \eqref{eq:FixedPointGoal} on any finite time interval $[0,T]^d$ and an $m\ge 2$. 

\begin{theorem}\label{spde_thm}
    Let $\xi$ be white noise over $\bR^d_+$ and $Z\in C^{\fbeta}([0,T]^d)$ for some $\frac 12\fone <\fbeta <\fone$. Let also be $\fdelta =\fbeta-\frac 12\fone$ and $\falpha$  satisfying $\fdelta\le\falpha<\frac 12\fone$  and $\fdelta+ \fbeta+\falpha>\fone$. Under the assumption $\sigma\in\cC^1(\bR)$ and $f\in C^{\falpha, \fdelta}L_{\infty}([0,T]^d)$  for any initial datum $v\colon \partial_{\foo} [0,T]^d\to \mathbb R$ such that $I(v)\in C^{\falpha, \fdelta}L_{m}([0,T]^d)$ there exists a unique solution $u\in C^{\falpha, \fdelta}L_{m}([0,T]^d)$ to the equation \eqref{eq:FixedPointGoal}, with $ C^{\falpha, \fdelta}L_{m}([0,T]^d) $ defined with the filtration generated by the Brownian sheet  of $\xi$.
\end{theorem}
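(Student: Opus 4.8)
The strategy is the standard Banach fixed point argument in the space $C^{\falpha,\fdelta}L_m([0,T']^d)$ for a sufficiently small time horizon $T'\le T$, followed by a patching argument to extend the solution to all of $[0,T]^d$. Define the solution map
\[
\Gamma(u)(\fx) := \mathcal I(v)(\fx) + \int_\foo^\fx \cR\!\left(\sigma(u)\cdot\xi + (fu)\cdot\tfrac{\partial^d}{\partial x_1\ldots\partial x_d}Z\right)(d\fy)\,.
\]
First I would check that $\Gamma$ maps $C^{\falpha,\fdelta}L_m$ into itself. For the noise term, Corollary \ref{cor:ItoReconstructionInC} gives $\cR(\sigma(u)\cdot\xi)\in C^{-\frac12\fone,\infty}L_m$ with norm controlled by $\norm{\sigma(u)}_{C^\falpha L_m}$, and since $\sigma\in\cC^1$, Proposition \ref{prop:g(u)CanBeReconstructed} (applied with $\epsilon=1$, noting $u\in C^{\falpha,\fdelta}L_m\subset C^\falpha L_m$) controls the latter by $\norm{u}$; after integration via Corollary \ref{cor:IntegrationInCalpha} this lands in $C^{\frac12\fone,\infty}L_m\subset C^{\falpha,\fdelta}L_m$, using $\falpha<\frac12\fone$ and $\fdelta\le\frac12\fone$ together with Propositions \ref{prop:CalphaEmbedding} and \ref{prop:CalphaEmbedding2}. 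For the $Z$-term, I would use Proposition \ref{prop:ProductCalphaSpace} to get $fu\in C^{\falpha,\fdelta}L_m$ (since $f\in C^{\falpha,\fdelta}L_\infty$), then interpret $\tfrac{\partial^d}{\partial x_1\ldots\partial x_d}Z$ as an element of $C^{\fbeta-\fone}$ (i.e.\ $\zeta=Z$ gives a distribution of negative regularity $\fbeta-\fone$), apply Proposition \ref{lem:prodDeterministicZeta} to obtain $(fu)\cdot\partial^d Z\in G^{\fbeta-\fone,\falpha+\fbeta-\fone,\fdelta}L_m$, and reconstruct: the reconstruction hypotheses require $\falpha+\fbeta-\fone>-\frac12\fone$ (i.e.\ $\falpha+\fbeta>\frac12\fone$, which follows from $\fdelta+\fbeta+\falpha>\fone$ and $\fdelta\le\frac12\fone$) and $\falpha+\fbeta-\fone+\fdelta>\foo$, which is exactly the hypothesis $\fdelta+\fbeta+\falpha>\fone$. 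This yields $\cR((fu)\cdot\partial^d Z)\in C^{\fbeta-\fone}L_m$, and after integration a gain of $\fone$ places it in $C^{\fbeta}L_m\subset C^{\falpha,\fdelta}L_m$, again using $\falpha<\frac12\fone<\fbeta$ and $\fdelta<\fbeta$. Finally $\mathcal I(v)\in C^{\falpha,\fdelta}L_m$ by hypothesis, so $\Gamma(u)\in C^{\falpha,\fdelta}L_m$.

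\textbf{The contraction estimate.} Next I would show $\Gamma$ is a contraction on a ball of $C^{\falpha,\fdelta}L_m([0,T']^d)$ once $T'$ is small. The difference $\Gamma(u)-\Gamma(v)$ splits into the two reconstruction-integral terms evaluated at $u$ minus those at $v$. For the noise term, the Lipschitz estimate \eqref{ineq:ItoLikeIsometry} from Corollary \ref{cor:ItoReconstructionInC} gives
\[
\norm{\cR(\sigma(u)\cdot\xi)-\cR(\sigma(v)\cdot\xi)}_{C^{-\frac12\fone,\infty}L_m}\lesssim \norm{\sigma}_{\cC^1}\norm{u-v}_{C^\falpha L_m}\,.
\]
For the $Z$-term, I would write $(fu)\cdot\partial^d Z-(fv)\cdot\partial^d Z = (f(u-v))\cdot\partial^d Z$, use the linearity of reconstruction and the bounds \eqref{ineq:prodDeterministicZeta2}, Proposition \ref{prop:ProductCalphaSpace}, to get a bound by $\norm{f}_{C^{\falpha,\fdelta}L_\infty}\norm{u-v}_{C^{\falpha,\fdelta}L_m}\norm{Z}_{C^\fbeta}$. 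After integration, all contributions carry a positive power of $T'$ coming either from the embedding estimates \eqref{ineq:CalphaEmbedding1}--\eqref{ineq:CalphaEmbedding2} and \eqref{ineq:primitiveHoelder1}--\eqref{ineq:primitiveHolder2} applied on the cube $[0,T']^d$ (since all the relevant primitives vanish on the boundary $\partial_\foo[0,T']^d$, we are in the setting of \eqref{ineq:CalphaEmbedding2}, which genuinely gains a factor $(T')^{\fbeta-\falpha}$ resp.\ $(T')^{\frac12\fone-\falpha}$), so choosing $T'$ small makes the Lipschitz constant of $\Gamma$ strictly less than $1$. The same estimates with $v=0$ (or $v$ equal to the boundary term alone) show $\Gamma$ maps a suitable ball into itself.

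\textbf{Globalization and the main obstacle.} Having a unique local solution on $[0,T']^d$, I would extend it by the usual covering argument: partition $[0,T]^d$ into a grid of small cubes and solve successively, using the additivity \eqref{eq:additive_integral} of the integration map and additivity of the reconstruction against indicator-type test functions to glue local solutions into a global one; uniqueness propagates cube by cube. The boundary data for the shifted cubes is supplied by the already-constructed solution on the preceding cubes, and one checks it still lies in the relevant $C^{\falpha,\fdelta}L_m$ class on each sub-cube. \emph{The main obstacle} I anticipate is not the abstract fixed-point scheme but bookkeeping the interplay of the two different regularity gains: the noise term only gives $C^{\frac12\fone}$ (no $\fdelta$-gain survives the reconstruction, cf.\ Remark \ref{rem:ReconLosesDelta}), whereas the $Z$-term gives $C^{\fbeta}$ with $\fbeta>\frac12\fone$; one must verify that the hypothesis chain $\fdelta=\fbeta-\frac12\fone$, $\fdelta\le\falpha<\frac12\fone$, $\fdelta+\fbeta+\falpha>\fone$ is exactly what is needed so that (i) both reconstructions are licensed (the condition $\fgamma+\fdelta>\foo$ for the $Z$-germ is the binding one), (ii) both integrated terms embed back into $C^{\falpha,\fdelta}L_m$, and (iii) there is a genuine positive power of $T'$ to spare for the contraction. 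A secondary technical point is that $\sigma(u)\in C^\falpha L_m$ only — composition destroys the $\fdelta$-regularity — but since the noise reconstruction only requires a $C^\falpha L_m$ integrand this is harmless; one must simply be careful to route $\sigma(u)$ through Proposition \ref{prop:ItoRecon} rather than Proposition \ref{lem:prodDeterministicZeta}.
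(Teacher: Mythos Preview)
Your proposal is correct and follows essentially the same approach as the paper: a Banach fixed-point argument in $C^{\falpha,\fdelta}L_m$ on a small cube, using Propositions \ref{prop:g(u)CanBeReconstructed}--\ref{prop:ItoRecon} and Corollary \ref{cor:ItoReconstructionInC} for the noise term, Propositions \ref{prop:ProductCalphaSpace} and \ref{lem:prodDeterministicZeta} for the $Z$-term, Corollary \ref{cor:IntegrationInCalpha} for integration, and the embedding \eqref{ineq:CalphaEmbedding2} to extract a power of $T'$, followed by patching. The only cosmetic difference is that the paper first sums the two reconstructed distributions via Lemma \ref{lem:sumOfCalpha} (landing in $C^{-\frac12\fone,\fdelta}L_m$ thanks to $\fdelta=\fbeta-\frac12\fone$) and then integrates once, whereas you integrate each term separately and embed afterwards---for the $Z$-term this requires $C^{\fbeta}L_m\subset C^{\falpha,\fdelta}L_m$, which needs $\falpha+\fdelta\le\fbeta$ (not merely $\falpha<\fbeta$ and $\fdelta<\fbeta$), but this is exactly $\falpha<\frac12\fone$ under $\fdelta=\fbeta-\frac12\fone$, so your route works.
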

 

\begin{proof}
We will use the shorthand notation 
\[\zeta =\frac{\partial^d Z}{\partial x_1\ldots \partial x_d}\in C^{\fbeta- \fone}\,, \quad  v_0=\mathcal{I}(v)\,.\] We start by showing local existence and uniqueness, that is we first show that for any initial condition $v_0\in C^{\falpha,\fdelta}L_m$ there exists a  time horizon $\tau> 0$ 
where the equation \eqref{eq:FixedPointGoal} admits a unique solution. The global result will then be constructed by patching together local solutions over the whole time $T>0$. For any $0<\tau\leq T$ we consider the affine space
    \[
    C_{\tau, v_0} = \{u\in C^{\falpha,\fdelta}L_m([0,\tau]^d)\colon  \pi^i_{\foo}u= \pi^i_{\foo}v_0 \quad \text{for any} \; i\in [d] \}\,.    \]
 Similarly, we define the solution map
    \begin{align*}
    \cS_{\tau}(u) = \left\{\fx\mapsto v_0(\fx)+\int_\foo^\fx \cR(\sigma(u)\cdot\xi + (fu)\cdot\zeta)(d\fy)\colon \fx\in [0,\tau]^d\right\}
    \end{align*}
    We claim that $\cS_{\tau}\colon C_{\tau, v_0}\to C_{\tau, v_0}$ is a well-posed application which becomes a contraction for $\tau>0$ small enough, by endowing  $C_{\tau, v_0}$ with the distance 
  $  d(u,v) = \norm{u-v}_{C^{\falpha,\fdelta}L_m([0,\tau]^d)}$.
   Let us first check that $\cS_{\tau}(u)\in C_{\tau, v_0}$. Our choice of parameters ensures $\fbeta-\fone +\falpha > -\frac 12\fone$ as well as $\fbeta- \fone +\falpha+\fdelta >\foo$. Thus, by Proposition \ref{prop:ProductCalphaSpace} together with Proposition \ref{lem:prodDeterministicZeta},  we have
   \[\mathcal{R}((fu)\cdot\zeta))\in  C^{\fbeta- \fone}L_m\]
   Similarly, from Proposition \ref{prop:g(u)CanBeReconstructed} and Corollary \ref{cor:ItoReconstructionInC}
   \[\cR(\sigma(u)\cdot\xi) \in C^{-\frac 12\fone,+\infty}L_m\]
   Summing the two distributions, we have from Lemma \ref{lem:sumOfCalpha} that 
   \[\cR(\sigma(u)\cdot\xi + (fu)\cdot\zeta)\in C^{-\frac 12\fone,\fdelta}L_m\,,\]
   where we used $\fdelta = \fbeta-\frac 12\fone$. After the integration and the trivial inclusion $C^{\frac12\fone,\fdelta}L_m\subset C^{\falpha,\fdelta}L_m\ $ we can conclude that $\cS_{\tau}(u)\in C_{\tau, v_0}$. Passing to the contraction property,    we fix $u,v\in C_{\tau, v_0}$. Using Corollary \ref{cor:ItoReconstructionInC}  we have the estimate
    \begin{align}
        \norm{\cR(\sigma(u)\cdot\xi)-\cR(\sigma(v)\cdot\xi)}_{C^{-\frac 12\fone,\infty}L_m} &\lesssim \norm{\sigma}_{\cC^1}\norm{u-v}_{C^{\falpha}L_m} \label{ineq:estimateRSigmaXi}  \\    \nonumber    &\lesssim \norm{\sigma}_{\cC^1}\norm{u-v}_{C^{\falpha,\fdelta}L_m}\,,
    \end{align}
    where we used the H\"older embedding given in Proposition \ref{prop:CalphaEmbedding2} in the second line and we simply bound the elements with $\tau$ by $T$. 
    Let us now analyze the term $\cR((fu)\cdot\zeta -(fv)\cdot\zeta)$.  Propositions \ref{lem:prodDeterministicZeta} and \ref{prop:ProductCalphaSpace} come also with the estimate
    \begin{equation}\label{ineq:estimateRfZeta}
    \begin{split}
         &\norm{\cR((fu)\cdot\zeta)-\cR((f v)\cdot\zeta)}_{C^{\fbeta-\fone}L_m} =\norm{\cR((f(u-v))\cdot\zeta)}_{C^{\fbeta-\fone}L_m} \\&\lesssim\norm{f(u-v)}_{C^{\falpha,\fdelta}L_m}\norm{\zeta}_{C^\fbeta}
        \lesssim 
    \norm{f}_{C^{\falpha,\fdelta}L_\infty}\norm{u-v}_{C^{\alpha,\fdelta}L_m}\norm{\zeta}_{C^\fbeta}\,.
    \end{split}
    \end{equation}
     By Lemma \ref{lem:sumOfCalpha}, we have
     \begin{align*}
         &\norm{\cR(\sigma(u)\cdot\xi)+\cR((fu)\cdot\zeta) - \cR(\sigma(v)\cdot\xi) - \cR((fv)\cdot\zeta)}_{C^{-\frac 12\fone,\fdelta}L_m} \\
         &\qquad\qquad\qquad\qquad\lesssim \left(\norm{f}_{C^{\falpha,\fdelta}L_\infty} \norm{\zeta}_{C^\fbeta} + \norm{\sigma}_{\cC^1}\right)\norm{u-v}_{C^{\falpha,\fdelta}L_m}\,.
     \end{align*}
 Using the Corollary \eqref{cor:IntegrationInCalpha}, it follows that 
 \begin{align*}
&\norm{\int_\foo^{\cdot}\left(\cR(\sigma(u)\cdot\xi)+\cR((fu)\cdot\zeta) - \cR(\sigma(v)\cdot\xi) - \cR((fv)\cdot\zeta)\right)(d\fy)}_{C^{\frac 12\fone,\fdelta}L_m} \\
&\qquad\qquad\qquad\qquad\lesssim \left(\norm{f}_{C^{\falpha,\fdelta}L_\infty} \norm{\zeta}_{C^\fbeta} + \norm{\sigma}_{\cC^1}\right)\norm{u-v}_{C^{\falpha,\fbeta}L_m}
\end{align*}
 Finally, we use the H\"older embedding result shown in Proposition \ref{prop:CalphaEmbedding} to compute
     \begin{align*}
         &\norm{\cS_{\tau}(u)-\cS_{\tau}(v)}_{C^{\falpha,\fdelta}L_m}\lesssim \sum_{\theta\subset[d]} (\tau\fone)^{\frac 12\fone-\falpha_\theta}\norm{\cS_{\tau}(u)-\cS_{\tau}(v)}_{C^{\frac 12\fone,\fdelta}L_m}\\& \sum_{\theta\subset[d]} (\tau\fone)^{\frac 12\fone-\falpha_\theta}\norm{\int_\foo^{\cdot}\left(\cR(\sigma(u)\cdot\xi)+\cR((fu)\cdot\zeta) - \cR(\sigma(v)\cdot\xi) - \cR((fv)\cdot\zeta)\right)(d\fy)}_{C^{\frac 12\fone,\fdelta}L_m}\\
         &\lesssim \sum_{\theta\subset[d]} (\tau\fone)^{\frac 12\fone-\falpha_\theta}\left(\norm{f}_{C^{\falpha,\fdelta}L_\infty} \norm{\zeta}_{C^\fbeta} + \norm{\sigma}_{\cC^1}\right)\norm{u-v}_{C^{\falpha,\fbeta}L_m}\,,
     \end{align*}
     which becomes a contraction for some parameter $\tau>0$ small enough, which we denote by $T^{*}$. By the uniqueness of Banach's fixed point theorem, we immediately have the uniqueness of the local (and thus global) solution. To see the global existence, we will explicitly show how to construct the solution on $[0,T]^2$, for the sake of clarity  leaving the general construction in general dimension $d$ as a trivial extension. Note that the value $T^{*}$ constructed above does not depend on $v_0$, but only on $\norm{f}_{C^{\falpha,\fdelta}L_\infty}$, $ \norm{\zeta}_{C^\fbeta}$, $\norm{\sigma}_{\cC^1}$. Thus, by choosing $N$ large enough, such that $T/N\le T^*$, the above proof shows the existence and uniqueness of the solution over every small patch 
     \[
     A^{\fk} = A^{(k_1,k_2)} = [k_1 T/N, (k_1+1) T/N] \times [k_2 T/N,(k_2+1) T/N]\,,
     \]
     where $k_1,k_2 = 0,\dots, N-1$ and the initial condition for each patch is given by the equation solved on the previous patches in lexicographic order. We call the solutions on those patches $u^{\fk}$ and set for any $\fx\in A^\fk$ $u(\fx) = u^{\fk}(\fx)$\,.
     This gives us a unique solution to the equation \eqref{eq:FixedPointGoal}, as long as $u\in C^{\falpha,\fdelta}L_m$. To see this, we use the following notation: For any $x_i\in[0,T], i=1,2$ and $k\in[N-1]$, set
     \begin{equation*}
         x_i^{(k)} := \begin{cases}
            x_i\qquad &x_i \in[kT/N,(k+1)T/N] \\
            kT/N \qquad &x_i<kT/N \\
            (k+1)T/N \qquad &x_i>(k+1)T/N\,,
         \end{cases}
     \end{equation*}
     and for any patch $A^{\fk}$, $\fx\in[0,T]^2$ we set $\fx^{(\fk)} := (x_1^{(k_1)},x_2^{(k_2)})$. Then, the additivity of the rectangular increment immediately gives us for any $\fx\le\fy$ in $[0,T]^2$ and $\eta\subset\theta\subset[d]$:
     \begin{equation*}
         \norm{\bE^\eta_\fx\square^\theta_{\fx,\fy} u}_m \le \sum_{\fk\in[N-1]^2} \norm{\bE^\eta_\fx \square_{\fx^{(k)},\fy^{(k)}}^\theta u}_m\,.
     \end{equation*}
     Note that $\square^\theta_{\fx^{(k)}, \fy^{(k)}} u = 0$ for all $\fk$ such that $x_i\le y_i \le k_iT/N$ for some $i\in\theta$, so we get no problems with measurability. Furthermore, we have $\abs{x^{(k)} -y^{(k)}} \le\abs{x-y}$ for all $x,y\in[0,T]$ and $k\in[N-1]$. One easily calculates now that
     \begin{equation*}
         \norm{u}_{C^{\falpha,\fdelta}L_m} \le \sum_{\fk\in[N-1]^2} \norm{u^\fk}_{C^{\falpha,\fdelta}L_m} <\infty.
     \end{equation*}
\end{proof}

\section{Connection to stochastic multiparameter sewing}\label{sec:sto_sewing}

In dimension $d=1$, sewing can be seen as a special case of the reconstruction theorem \cite{Broux2022}. This observation also extends to the stochastic case \cite{kern2021}, but in general does not hold in dimension $d>1$. In fact, Harang's multiparameter sewing lemma \cite{harang21} and the reconstruction theorem described in \cite{zambotti2020} are not connected in a similar way. We claim that our formulation using rectangular increments is the missing link between the two results. As in dimension $1$, Theorem \ref{theo:reconstruction} extends the stochastic multiparameter sewing lemma from \cite{multiparameterStochSewing}. It also goes beyond it, as reconstruction is able to handle negative regularities. In this section, we show how one can derive the stochastic multiparameter sewing lemma from Theorem \ref{theo:reconstruction} and then present an example which lies beyond the scope of sewing, but can be solved by our reconstruction theorem.

\subsection{Deriving  sewing from  reconstruction}

Let us start by recalling the sewing lemma in question. Throughout this section, we fix a commuting filtration $(\cF_\ft)_{\ft\in[0,T]^d}$. A germ in the world of sewing is a $2d$-parameter process given by a map $\Xi:\Delta_T\times\Omega\to\bR$, where
\[
    \Delta_T = \{(\fs,\ft)\colon \foo\le\fs\le\ft\le T\fone\}
\]
is the simplex over $[0,T]^d$. We say that $\Xi$ is $\cF$-adapted, if for all $(\fs,\ft)\in\Delta_T$, $\Xi_{\fs,\ft}$ is $\cF_\ft$ measurable. One aims to construct a Riemann-type sum
\begin{equation}\label{eq:limitSewing}
    \cI\Xi_{\fs,\ft} := \lim_{\abs{\cP}\to 0} \sum_{[\fu,\fv]\in\cP} \Xi_{\fu,\fv}\,,
\end{equation}
where the limit goes over grid-like partitions with vanishing mesh size. A grid-like partition of the interval $[s_1,t_1]\times\dots\times[s_d,t_d]$ is a set of the form 
\[
\cP = \cP_1 \times\dots\times\cP_d\,
\]
where $\cP_i$ is a partition of $[s_i,t_i], i\in[d]$ and its mesh size is the supremum
\[
    \abs{\cP} = \sup_{[u_i,v_i]\in\cP_i, i\in[d]}\abs{u_i-v_i}\,.
\]
For each indexset $\theta\subset[d]$, we define 
\[
\cP^\theta = \cQ_1\times\dots\cQ_d\,
\]
$\cQ_i = \cP_i$ for $i\in\theta$ and $\cQ_i = \{[s_i,t_i]\}$ for $i\notin\theta$. Similar to one-dimensional sewing, the multiparameter sewing lemma is based on the $\delta$-operator, which in $d = 1$ is given by
\begin{equation*}
    \delta_u \Xi_{s,t} = \Xi_{s,t}-(\Xi_{s,u}+\Xi_{u,t})\,,
\end{equation*}
for $s\le u\le t$. It can be generalized to a multiparameter setting by applying a $\delta$ operator in each direction. More precisely, in $d\ge1$ we set for all $i\in[d]$ and $\fs\le\fu\le\ft$
\[
\delta^i_\fu \Xi_{\fs,\ft} = \Xi_{\fs,\ft} - (\Xi_{\fs,\pi^i_\fu \ft} + \Xi_{\pi^i_\fu\fs,\ft})\,.
\]
For indexsets $\eta\subset[d]$, this can be extended as usual by
\[
    \delta_\fu^\eta :=\prod_{i\in\eta} \delta^i_\fu\,.
\]
With these definitions, the coherence property for $2d$-parameter objects reads as follows.
\begin{definition}
    Let $\Xi:\Delta_T\times\Omega\to\bR$ be an $\cF$-adapted $2d$-parameter process. Further let $\falpha,\fbeta,\fgamma\ge\foo$ and $1\le m <\infty$. Then we call $\Xi$ stochastic $(\falpha,\fbeta,\fgamma)$-coherent in the sewing sense if there is a constant $C>0$ such that for all $(\fs,\ft)\in\Delta_T$ 
    \begin{equation}\label{ineq:coherenceSewing2}
        \norm{\Xi_{\fs,\ft}}_m\le C\compAbs{\ft-\fs}^{\falpha}\,,
    \end{equation}
    holds and for all $\eta\subset\theta\subset[d]$ with $\theta\neq\emptyset$ and $\fs\le\fu\le\ft$,
    \begin{equation}\label{ineq:coherenceSewing}
        \norm{\bE^\eta_\fs \delta^\theta_\fu\Xi_{\fs,\ft}}_m \le C\abs{\ft-\fs}^{\falpha}_{\text c,\theta^c}\abs{\ft-\fs}^{\fbeta}_{\text c,\theta}\abs{\ft-\fs}^{\fgamma}_{\text c,\eta}\,.
    \end{equation}
    We write $\Xi\in C_2^{\falpha,\fbeta,\fgamma} L_m$ and denote by $\norm{\Xi}_{C_2^{\falpha,\fbeta,\fgamma}L_m}$ the smallest possible constant $C\ge 0$ such that \eqref{ineq:coherenceSewing}, \eqref{ineq:coherenceSewing2} hold.
\end{definition}
\noindent This coherence property suffices to show the following.
\begin{theorem}[Multiparameter stochastic sewing, \cite{multiparameterStochSewing}]\label{theo:sewing}
    Let $\Xi\in C_2^{\falpha,\fbeta,\fgamma}L_m$. Assume $2\le m<\infty, \falpha\in[0,1]^d$, $\fgamma\in[0,\frac 12)^d, \fbeta\in(\frac 12,\infty)^d$ with $\fbeta+\fgamma > \fone$. Then, for $\theta\subset[d],(\fs,\ft)\in\Delta_T$ the following limit exists:
    \[
        \cI^\theta\Xi_{\fs,\ft} =\lim_{\abs{\cP}\to 0} \sum_{[\fu,\fv]\in\cP^\theta} \Xi_{\fu,\fv}\,\quad \text{in}\; L_m.
    \]
    Furthermore, the limits $(\cI^\theta\Xi_{\fs,\ft})_{\theta\subset[d],(\fs,\ft)\in\Delta_T}$ are the unique family of $2d$-parameter processes such that $\cI^\theta\Xi_{\fs,\ft}\in L_m(\Omega)$ holds for all $\theta\subset[d]$ and $(\fs,\ft)\in\Delta_T$ and the following four properties hold.
    \begin{itemize}
        \item[i)] $\cI^\emptyset\Xi = \Xi$.
        \item[ii)] $\cI^\theta\Xi_{\fs,\ft}$ is $\cF_\ft$-measurable for all $\theta\subset[d], \fs\le\ft$.
        \item[iii)] For all $\theta\subset[d]$, $\cI^\theta\Xi$ is additive in $\bR^\theta$. I.e. for all $i\in\theta$, we have that $\delta^i_\fu\cI^\theta\Xi_{\fs,\ft} = 0$ for all $\fs\le\fu\le\ft$.
        \item[iv)] For all $\eta\subset\theta\subset[d]$ with $\theta\neq\emptyset$ and $\fs\le\ft$, we have:
        \begin{equation*}
            \norm{\bE^\eta_\fs\sum_{\hat\theta\subset\theta} (-1)^{\abs{\hat\theta}}\cI^{\hat\theta}\Xi_{\fs,\ft}}_m \lesssim \norm{\Xi}_{C_2^{\falpha,\fbeta,\fgamma}L_m}\abs{\ft-\fs}^{\falpha}_{\text c,\theta^c}\abs{\ft-\fs}^{\fbeta}_{\text c,\theta}\abs{\ft-\fs}^{\fgamma}_{\text c,\eta}\,.
        \end{equation*}
    \end{itemize}
\end{theorem}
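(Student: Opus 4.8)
The plan is to realize Theorem \ref{theo:sewing} as a consequence of the stochastic reconstruction theorem \ref{theo:reconstruction}, mirroring the $d=1$ correspondence between sewing and reconstruction from \cite{Broux2022,kern2021}. First I would associate to a $2d$-parameter germ $\Xi$ a stochastic germ $F$ in the sense of Section \ref{sec:reconstruction}. The natural candidate is, roughly, $F_\fx(\psi) := \sum_{\theta\subset[d]}(-1)^{\sharp\theta}\,\Xi_{\pi^\theta_\fx\cdot,\,\cdot}$ differentiated appropriately — more precisely, one should set $F_\fx$ to be the distributional mixed derivative $\tfrac{\partial^d}{\partial y_1\cdots\partial y_d}$ of $\fy\mapsto \Xi_{\fx,\fy}$ (extended so that its rectangular increments reproduce $\delta^\theta_\fu\Xi$). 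The key algebraic identity to check is that the rectangular increment $\square^\theta_{\fx,\fy}F$ corresponds, after integration, to $\delta^\theta$-increments of $\Xi$; this is the same bookkeeping that underlies Remark after Theorem \ref{thm:rec_form} and the identity $\int_\foo^\fx \tfrac{\partial^d}{\partial x_1\cdots\partial x_d}u(d\fy)=\square^{[d]}_{\foo,\fx}u$.

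Second, I would translate the coherence hypotheses. The sewing coherence \eqref{ineq:coherenceSewing2}--\eqref{ineq:coherenceSewing} should imply that $F$ is stochastic $(\falpha',\fgamma',\fdelta')$-coherent in the sense of Definition \ref{def:extendedCoherence} with shifted indices — concretely $\falpha' = \falpha-\fone$, $\fgamma' = \fbeta-\fone$, $\fdelta' = \fgamma$, so that $\falpha'<\foo$, $\fgamma'>-\tfrac12\fone$ (from $\fbeta>\tfrac12\fone$) and $\fgamma'+\fdelta' = \fbeta+\fgamma-\fone>\foo$, matching exactly the hypotheses of Theorem \ref{theo:reconstruction}. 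Testing $F$ against $\psi^\flambda_\fy$ amounts, up to the wavelet/test-function machinery, to evaluating $\delta^\theta_\fu\Xi$ on boxes of size $\flambda$, so the bounds \eqref{ineq:coherenceSewing} go over to \eqref{ineq:coherence} after the index shift. Then Theorem \ref{theo:reconstruction} produces the family $\mathcal R^\theta_\fx(F)$, and I would define $\cI^\theta\Xi_{\fs,\ft}$ by integrating $\mathcal R^\theta_\fx(F)$ over the box $[\fs,\ft]$ (using the primitive construction of Section 4, or directly testing against $1_{(\fs,\ft]}$), reading off properties i)--iv) of Theorem \ref{theo:sewing} from properties 1--4 of Theorem \ref{theo:reconstruction}: $\cI^\emptyset\Xi=\Xi$ from $\mathcal R^\emptyset_\fx(F)=F_\fx$; additivity in $\bR^\theta$ from the fact that $\mathcal R^\theta_\fx(F)$ does not depend on $x_i$, $i\in\theta$; measurability from property 3; and the final estimate iv) from \eqref{cond4Recon} integrated over $[\fs,\ft]$, using \eqref{ineq:g1}--\eqref{ineq:g2}-type bounds.

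Third, I would verify the uniqueness transfer: a family $(\cI^\theta\Xi)$ satisfying i)--iv) gives rise, by "differentiating", to a family $(\mathcal R^\theta_\fx)$ satisfying properties 1--4, so uniqueness in Theorem \ref{theo:reconstruction} forces uniqueness here. One must also check the Riemann-sum characterization \eqref{eq:limitSewing}: the partition sums $\sum_{[\fu,\fv]\in\cP^\theta}\Xi_{\fu,\fv}$ should be identified with wavelet-type approximations $\mathcal R^{\theta,\fn}_\fx$ (or with the $P_\fn 1_{(\fs,\ft]}$ approximations of Proposition \ref{prop:h}) so that the convergence along grid-like partitions follows from the $L_m$-convergence already established — this is exactly the kind of "mollifier-independence" argument used for $d=1$ in \cite{Broux2022}.

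The main obstacle I anticipate is the precise passage between the $2d$-parameter combinatorics of $\delta^\theta_\fu\Xi$ and the distributional rectangular increments $\square^\theta_{\fx,\fy}F(\psi^\flambda)$: one needs $F$ to be genuinely a germ of local distributions of the right order, and the identity linking $\delta^\theta$-increments of $\Xi$ on boxes to increments of $F$ tested against (wavelet-smoothed) indicators must be made rigorous, including the boundary/support subtleties that already forced the ``interior primitive'' workaround in Section 4 and the $\epsilon_\fn$-fattening in the reconstruction theorem. In particular, matching the Riemann sums over grid-like partitions (which use sharp indicators) with wavelet approximations (which enlarge supports) requires the same careful dyadic bookkeeping as in Proposition \ref{prop:h} and \cite{kern2021}, and making sure no regularity or measurability is lost in that identification is where most of the real work lies.
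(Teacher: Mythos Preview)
The paper does not prove Theorem \ref{theo:sewing}; it is quoted from \cite{multiparameterStochSewing}. What the paper does prove is Proposition \ref{prop:SewingIsReconstruction}, which establishes the link between sewing and reconstruction that your proposal is built around. So your strategy is exactly the right idea, and the germ $F_\fx=\partial^{\fone}\Xi_{\fx,\cdot}$ together with the index shift $(\falpha',\fgamma',\fdelta')=(-\fone,\fbeta-\fone,\fgamma)$ is precisely what the paper uses (the paper's final remark explains why the sewing parameter $\falpha$ drops out; your $\falpha'=\falpha-\fone$ also works but is not what they write). The algebraic identity you sketch, $\square^\theta_{\fx,\fy}F(\psi)=(-1)^{d+\sharp\theta}\scalar{\delta^\theta_\fy\Xi_{\fx,\cdot},\partial^{\fone}\psi}$, is exactly the computation carried out in the proof of Proposition \ref{prop:SewingIsReconstruction}.

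However, your plan and the paper's argument run in \emph{opposite directions}, and this matters. The paper assumes the existence of $\cI^\theta\Xi$ from \cite{multiparameterStochSewing}, constructs the reconstructions $f^\theta_\fx$ from Theorem \ref{theo:reconstruction}, and then verifies that $\partial^{\fone}\cI^\theta\Xi_{\fx,\cdot}$ satisfies the four characterizing properties of Theorem \ref{theo:reconstruction}, invoking \emph{uniqueness of the reconstruction} to conclude $f^\theta_\fx=\partial^{\fone}\cI^\theta\Xi_{\fx,\cdot}$. This never requires integrating $f^\theta_\fx$ against indicators. Your proposal instead tries to \emph{define} $\cI^\theta\Xi_{\fs,\ft}$ by integrating $f^\theta_\fs$ over $[\fs,\ft]$ via the primitive construction of Section \ref{spde_section}. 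That is a genuine gap: Proposition \ref{prop:h} and Corollary \ref{cor:IntegrationInCalpha} require the distribution to lie in $C^{\falpha',\fdelta}L_m$ with $\falpha'>-\fone$ strictly, whereas Corollary \ref{cor:reconExtendedCoherent} only gives $f^\theta_\fx\in C^{-\fone}L_m$ (and when the sewing parameter satisfies $\alpha_i=0$, there is no room to improve this). So the integration step as you describe it does not go through with the tools in the paper.

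The second obstacle you flag --- passing from wavelet approximations to convergence of Riemann sums along \emph{arbitrary} grid-like partitions $\cP$ with $\abs{\cP}\to 0$ --- is also real, and the paper makes no attempt at it. This is not a ``mollifier-independence'' issue that uniqueness handles automatically: the limit \eqref{eq:limitSewing} is part of the \emph{statement} of Theorem \ref{theo:sewing}, and the reconstruction machinery only produces convergence along the specific wavelet approximations $\cR^{\theta,\fn}$. To recover the full Riemann-sum statement you would need to reproduce the partition-refinement argument from \cite{multiparameterStochSewing} (which is where the condition $\fbeta>\tfrac12\fone$, $\fbeta+\fgamma>\fone$ is used directly on $\delta^\theta_\fu\Xi$, not through any distribution). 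In short: your outline recovers Proposition \ref{prop:SewingIsReconstruction}, but it does not give an independent proof of Theorem \ref{theo:sewing} without importing the convergence argument from \cite{multiparameterStochSewing}.
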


\begin{remark}\label{rem:strictlyOrderedSetting}
    Comparing the coherence properties for sewing and reconstruction, we see a slight difference in the choice of $\fx,\fy\in[0,T]^d$ compared to $(\fs,\ft)\in \Delta_T$: Reconstruction allows all $\fx,\fy$ such that $\fx_\theta\le\fy_\theta$ holds in equation \eqref{ineq:coherence}, whereas for sewing, \eqref{ineq:coherenceSewing} only holds for $\fs\le\ft$. The property $\fx_\theta\le\fy_\theta$ is necessary to construct the reconstructions $f_\fx^\theta(\psi)$ for test functions with support in $(0,T)^d$. If we restrict \eqref{ineq:coherence} to only hold for $\fx\le\fy$, the limit $f_\fx^\theta(\psi)$ only exists for test functions $\psi$ with support in $(x_1,T)\times\dots(x_d,T)$. Apart from this, Theorem \ref{theo:reconstruction} still holds in this setting. This restriction on $\supp(\psi)$ mirrors the fact that $\cI^\theta \Xi_{\fs,\ft}$ only exists for $\fs\le\ft$.
\end{remark}

\noindent We claim that under the assumption of Theorem \ref{theo:sewing}, the distributional derivative of $\Xi$ given by
\[
    \cF_\fx(\psi) := (-1)^d\int_{[0,T]^d} \Xi_{\fx,\fz}\partial^\fone \psi(\fz)d\fz = \scalar{\partial^\fone\Xi_{\fx,\cdot},\psi}
\]
is stochastic $(-\fone,\fbeta-\fone,\fgamma)$-coherent and can thus be reconstructed (up to the restriction described in Remark \ref{rem:strictlyOrderedSetting}). Here, $\partial^\fone \psi := \frac{\partial^d}{\partial x_1 \dots\partial x_d}\psi$. Furthermore, comparing the four properties uniquely characterizing $\cI^\theta\Xi$ and $f^\theta$, one quickly sees that $f^\theta$ is given by the distributional derivative of $\cI^\theta\Xi$. Let us formalize this observation.

\begin{proposition}\label{prop:SewingIsReconstruction}
    Let $\Xi\in C_2^{\falpha,\fbeta,\fgamma}L_m$ with $\falpha,\fbeta,\fgamma,m$ as in Theorem \ref{theo:sewing} and we set $r=2$. Further assume that for all $\fs\in[0,T]^d$, $\ft\mapsto \Xi_{\fs,\ft}$ is almost surely in $L^1([s_1,T]\times\dots\times[s_d,T])$. Then the germ $F_\fx := \partial^\fone\Xi_{\fx,\cdot}$ is extended stochastic $(-\fone,\fbeta-\fone,\fgamma)$-coherent. It follows that it can be reconstructed, and the reconstruction $f^\theta$ is connected to the sewings $\cI^\theta\Xi$ via
    \begin{equation*}
         f^\theta_\fx(\psi) = (-1)^d \int_{[0,T]^d} \cI^\theta\Xi_{\fx,\fz}\partial^\fone\psi(\fz)d\fz = \scalar{\partial^\fone\cI^\theta_{\fx,\cdot},\psi}\,.
    \end{equation*}
    for all $\theta\subset[d], \fx\in[0,T]^d$ and $\psi\in C^{rd}_c((x_1,T)\times\dots\times(x_d,T))$.
\end{proposition}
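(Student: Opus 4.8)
The strategy is to verify the coherence property for the germ $F_\fx = \partial^\fone \Xi_{\fx,\cdot}$ by translating the sewing estimates \eqref{ineq:coherenceSewing2}, \eqref{ineq:coherenceSewing} into statements about rectangular increments of the distributional derivative, and then to identify $f^\theta$ with the distributional derivative of $\cI^\theta\Xi$ using the uniqueness clauses of both theorems. The first step is a dictionary between the $\delta$-operator on $2d$-parameter germs and the rectangular increment $\square^\theta$: one checks that for a function $\ft\mapsto\Xi_{\fx,\ft}$, the operator $\delta^\theta_\fu$ acting in the ``left'' variable, combined with integration against $\partial^\fone\psi$, produces exactly $\square^\theta_{\fx,\fy}F(\psi_\fy^\flambda)$-type quantities. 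Concretely, integration by parts turns $\scalar{\partial^\fone\Xi_{\fx,\cdot},\psi_\fy^\flambda}$ into $\pm\int \Xi_{\fx,\fz}\,\partial^\fone\psi_\fy^\flambda(\fz)\,d\fz$, and the additivity/$\delta$ structure in the first argument of $\Xi$ corresponds to the $\square^\theta_{\fx,\fy}$ combinatorics. This is the step where I'd be most careful about signs, support conditions and the precise appearance of the localized test function $\psi_\fy^\flambda$.

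Second, I would estimate $\norm{\bE^\eta_\fx\square^\theta_{\fx,\fy}F(\psi_\fy^\flambda)}_m$. For $\theta=\emptyset=\eta$ one needs $\norm{F_\fx(\psi_\fy^\flambda)}_m\lesssim\flambda^{-\fone}$, which follows from \eqref{ineq:coherenceSewing2} with $\falpha\ge\foo$ after integrating by parts once in each coordinate (each derivative on $\psi_\fy^\flambda$ costs a factor $\lambda_i^{-1}$, and the $L^1$-assumption on $\ft\mapsto\Xi_{\fx,\ft}$ guarantees the integral is well-defined). For general $\emptyset\neq\theta$, one writes $\square^\theta_{\fx,\fy}F(\psi_\fy^\flambda)$ in terms of $\delta^\theta_{(\cdot)}\Xi$ evaluated at the corners of the support of $\psi_\fy^\flambda$ (which has diameter $\approx\lambda_i$ in direction $i$), so that $\abs{\fy-\fx'}_{\text c}\lesssim\flambda$ on the support; then \eqref{ineq:coherenceSewing} with parameters $(\falpha,\fbeta,\fgamma)$ yields the bound $\flambda^{\falpha}_{\theta^c}\flambda^{\fbeta}_\theta\flambda^{\fgamma}_\eta$ on the $\delta$-term, and $d$ integrations by parts against $\partial^\fone\psi_\fy^\flambda$ add a factor $\flambda^{-\fone}$. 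Collecting, $\norm{\bE^\eta_\fx\square^\theta_{\fx,\fy}F(\psi_\fy^\flambda)}_m\lesssim\flambda^{-\fone}(\compAbs{\fx-\fy}+\flambda)^{\fbeta-\fone}_\theta(\compAbs{\fx-\fy}+\flambda)^{\fgamma}_\eta$, which is exactly the coherence inequality \eqref{ineq:coherence} with $\falpha_{\text{rec}}=-\fone$, $\fgamma_{\text{rec}}=\fbeta-\fone$, $\fdelta_{\text{rec}}=\fgamma$. Note that $\falpha_{\text{rec}}<\foo$, $\fgamma_{\text{rec}}>-\tfrac12\fone$ (since $\fbeta>\tfrac12\fone$), $\fgamma_{\text{rec}}\ge\falpha_{\text{rec}}$, and $\fgamma_{\text{rec}}+\fdelta_{\text{rec}}=\fbeta+\fgamma-\fone>\foo$, so Theorem \ref{theo:reconstruction} applies (in the restricted form of Remark \ref{rem:strictlyOrderedSetting}, since sewing coherence is only assumed for $\fs\le\ft$).

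Third, I would identify the reconstruction with $\partial^\fone\cI^\theta\Xi$. By Theorem \ref{theo:sewing}, the family $\cI^\theta\Xi$ satisfies additivity in $\bR^\theta$ (property iii), is $\cF_\ft$-measurable (property ii), and satisfies the rectangular-increment bound iv. Taking distributional derivatives in $\fz$, define $\tilde f^\theta_\fx(\psi):=\scalar{\partial^\fone\cI^\theta\Xi_{\fx,\cdot},\psi}$. One checks that $\tilde f^\emptyset=F$ (property i), that $\tilde f^\theta_\fx$ does not depend on $x_i$ for $i\in\theta$ (because additivity in $\bR^\theta$ forces $\cI^\theta\Xi_{\fx,\fz}$ to depend on $\fx$ only through $\fx_{\theta^c}$ up to an additive term killed by $\partial^\fone$), that the measurability condition 3 of Theorem \ref{theo:reconstruction} holds, and — applying the same integration-by-parts/dictionary computation as in step one, now to property iv of Theorem \ref{theo:sewing} — that $\tilde f^\theta$ satisfies the reconstruction bound \eqref{cond4Recon}. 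By the uniqueness clause of Theorem \ref{theo:reconstruction}, $\tilde f^\theta = f^\theta$, which is the claimed identity.

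\textbf{Main obstacle.} The hard part is the bookkeeping in step one: getting the correspondence between $\delta^\theta_\fu$ acting on the first variable of $\Xi$, the rectangular increment $\square^\theta_{\fx,\fy}$ of the germ, and the corner-expansion of $\partial^\fone\psi_\fy^\flambda$ exactly right, including signs and the fact that the localized test function's support (which may ``see the past'' in directions $i\in\theta^c$ but only ``the future'' in $i\in\theta$) matches the support restriction $\psi\in C^{rd}_c((x_1,T)\times\dots\times(x_d,T))$ imposed by the restricted reconstruction theorem. The $L^1$-integrability hypothesis on $\ft\mapsto\Xi_{\fx,\ft}$ is precisely what makes the integration-by-parts manipulations legitimate, so one must be careful to invoke it at each step rather than treat $\Xi$ as smooth.
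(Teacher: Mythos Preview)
Your plan is correct and follows essentially the same route as the paper: establish the $\square^\theta\leftrightarrow\delta^\theta$ dictionary, derive coherence from \eqref{ineq:coherenceSewing}, then identify the reconstruction via the uniqueness clause of Theorem~\ref{theo:reconstruction}. One slip: in your collected bound the exponent on $(\compAbs{\fx-\fy}+\flambda)_\theta$ should be $\fbeta$, not $\fbeta-\fone$ (with $\falpha_{\text{rec}}=-\fone$, $\fgamma_{\text{rec}}=\fbeta-\fone$ the coherence inequality \eqref{ineq:coherence} has $\fgamma_{\text{rec}}-\falpha_{\text{rec}}=\fbeta$ in that slot); your parameter identification in the next sentence is nevertheless right.

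Regarding the obstacle you flag: the paper resolves it with a clean identity rather than a ``corner expansion''. Since $\psi$ has compact support, for any $i\in[d]$ the integrand $\pi^{i,2}_\fy\Xi_{\fx,\fz}$ is constant in $z_i$, so $\int\pi^{i,2}_\fy\Xi_{\fx,\fz}\,\partial^\fone\psi(\fz)\,d\fz=0$. This lets you freely insert $\pi^{i,2}_\fy$ terms inside the pairing, turning $\prod_{i\in\theta}(\pi^{i,1}_\fy-\Id)$ into $\prod_{i\in\theta}(\pi^{i,1}_\fy+\pi^{i,2}_\fy-\Id)=(-1)^{\sharp\theta}\delta^\theta_\fy$, and yielding the exact identity
\[
\square^\theta_{\fx,\fy}F(\psi)=(-1)^{d+\sharp\theta}\scalar{\delta^\theta_\fy\Xi_{\fx,\cdot},\partial^\fone\psi}\,.
\]
The coherence estimate is then simply the pointwise bound $\sup_{\fz\in\supp(\psi_\fy^\flambda)}\norm{\bE^\eta_\fx\delta^\theta_\fy\Xi_{\fx,\fz}}_m$ times $\norm{\partial^\fone\psi_\fy^\flambda}_{L^1}\lesssim\flambda^{-\fone}$; no integration by parts is needed beyond the definition of $F_\fx$, and the $L^1$-assumption on $\Xi_{\fx,\cdot}$ just ensures that $F_\fx(\psi)$ is a well-defined integral. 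The same vanishing trick handles property~2 (independence of $x_i$ for $i\in\theta$) in your step three: additivity gives $\cI^\theta\Xi_{\fx,\fz}=\cI^\theta\Xi_{\pi^i_{\tilde\fx}\fx,\fz}+\cI^\theta\Xi_{\fx,\pi^i_{\tilde\fx}\fz}$, and the second term pairs to zero against $\partial^\fone\psi$.
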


\noindent As mentioned in Remark \ref{rem:strictlyOrderedSetting}, $f^\theta(\psi)$ only exists for $\psi\in C^{rd}_c((x_1,T)\times\dots\times(x_d,T))$.

\begin{proof}
    We define the projections $\pi^{i,1},\pi^{i,2}$ by
    \begin{equation*}
        \pi^{i,1}_\fu(\fs,\ft) = (\pi_\fu^i\fs,\ft) \qquad \pi^{i,2}_\fu(\fs,\ft) = (\fs,\pi^i_\fu\ft)\,,       
    \end{equation*}
    for all $i\in[d]$. Since we only consider test functions with compact support, Fubini gives us for all $\psi\in C_c^{rd}([0,T]^d)$:
    \begin{equation*}
        \int_{[0,T]^d} \pi^{i,2}_\fy\Xi_{\fx,\fz}\partial^\fone\psi(\fz) d\fz = 0
    \end{equation*}
    for all $i\in[d]$ and $\fx\le\fy$. Thus, the following holds for $\fx\le\fy$ and $\psi\in C_c^{rd}((y_1,T)\times\dots\times(y_d,T))$.
    \begin{align*}
        \square^\theta_{\fx,\fy} F(\psi) &= (-1)^d\prod_{i\in\theta}\scalar{(\pi_\fy^{i,1}-\Id)\Xi_{\fx,\cdot},\partial^\fone\psi} \\
        &= (-1)^d\prod_{i\in\theta}\scalar{(\pi_\fy^{i,1}+\pi_\fy^{i,2}-\Id)\Xi_{\fx,\cdot},\partial^\fone\psi}\\
        &= (-1)^{d+\sharp{\theta}}\scalar{\delta^\theta_\fy\Xi_{\fx,\cdot},\partial^\fone\psi}\,,
    \end{align*}
    Let us conclude that $F$ is stochastic coherent. For $\fx,\fy,\psi$ and $\flambda$ as in Definition \ref{def:extendedCoherence} and $\eta\subset\theta\subset[d]$, we have
    \begin{align}
        \norm{\bE^\eta_\fx \square^\theta_{\fx,\fy}F(\psi_\fy^\flambda)}_m &\le \sup_{\fz\in\supp(\psi_\fy^\flambda)}\norm{\bE^\eta_\fx\delta^\theta_\fy\Xi_{\fx,\fz}}_m \norm{\partial^\fone\psi_\fy^\flambda}_{L_1(\bR^d)}\nonumber\\& \le \norm{\Xi}_{C_2^{\falpha,\fbeta,\fgamma}L_m}\flambda^{-\fone}(\compAbs{\fx-\fy}+\flambda)^{\fbeta_\theta+\fgamma_\eta+\falpha_{\theta^c}}\label{ineq:cohFromSewing}\\
        &\lesssim \norm{\Xi}_{C_2^{\falpha,\fbeta,\fgamma}L_m}\flambda^{-\fone}(\compAbs{\fx-\fy}+\flambda)^{\fbeta_\theta+\fgamma_\eta}\,.\nonumber
    \end{align}
    By assumption, $\fbeta-\fone > -\frac 12\fone$ and $\fbeta+\fgamma-\fone > 0$. Thus, we can apply Theorem \ref{theo:reconstruction} to get reconstructions $f^\theta$, $\theta\subset[d]$. We claim that $f^\theta_\fx = \partial^\fone \cI^\theta\Xi_{\fx,\cdot}$ for all $\theta\subset[d],\fx\in[0,T]^d$. To prove this, we check the four properties uniquely characterizing $f^\theta$ in Theorem \ref{theo:reconstruction}:
    \begin{itemize}
        \item $\partial^\fone\cI^\emptyset\Xi_{\fx,\cdot} = \partial^\fone \Xi_{\fx,\cdot} = F_\fx$.
        \item $\scalar{\partial^\fone\cI^\theta\Xi_{\fx,\cdot},\psi}$ is $\cF_{\fx\vee\fy}$-measurable for all $\psi$ with $\supp(\psi)\subset(-\infty,\fy]$.
        \item As before, $\scalar{\partial^\fone \cI^\theta\Xi_{\fx,\pi^i_{\tilde\fx}\cdot},\psi} = 0$ for all $\fx\le\tilde\fx$ and $i\in[d]$ due to $\psi$ having a compact support. Together with the additivity of $\cI^\theta\Xi$, this implies that
        \[
        \scalar{\partial^\fone\cI^\theta\Xi_{\fx,\cdot},\psi} = \scalar{\partial^\fone \cI^\theta\Xi_{\pi^i_{\tilde\fx}\fx,\cdot}+ \partial^\fone \cI^\theta\Xi_{\fx,\pi^i_{\tilde\fx}\cdot},\psi} = \scalar{\partial^\fone \cI^\theta\Xi_{\pi^i_{\tilde\fx}\fx,\cdot},\psi}\,,
        \] 
        for all $i\in\theta$. It follows that $\partial^\fone \cI^\theta\Xi_{\fx,\cdot}$ only depends on $x_i$ for $i\notin\theta$.
        \item Let $\psi,\fx\le\fz,\flambda,\eta,\theta$ be as in \eqref{cond4Recon} of Theorem \ref{theo:reconstruction}. Then:
          \begin{align*}
\norm{\bE^\eta_\fx\sum_{\hat\theta\subset\theta}(-1)^{\sharp{\hat\theta}} \scalar{\partial^\fone\cI^{\hat\theta}\Xi_{\fx,\cdot},\psi_\fz^\flambda}}_m &\le \sup_{\fy\in\supp(\psi_\fz^\flambda)}\norm{\bE^\eta_\fx \sum_{\hat\theta\subset\theta}(-1)^{\sharp{\hat\theta}} \cI^{\hat\theta}\Xi_{\fx,\fy}}_m\norm{\partial^\fone\psi_\fz^\flambda}_{L^1(\bR^d)} \\
            &\lesssim \flambda^{-\fone}\flambda^{\fbeta}_\theta\flambda^{\fgamma}_\eta\,,
        \end{align*}
        where we used $\abs{y_i-x_i} = \abs{y_i-z_i}\lesssim\lambda_i$ for all $\fy\in\supp(\psi_\fz^\flambda)$ and $i\in\theta$.
    \end{itemize}
    This shows that $\partial^\fone\cI^\theta\Xi_{\fx,\cdot} = f_\fx^\theta$, which finishes the proof.
\end{proof}

\begin{remark}
    Surprisingly, the parameter $\falpha$ from the sewing lemma does not turn up at all on the reconstruction side. It is rather replaced by the regularity of the distributional derivative $-\fone$. If one wants to keep track of the H\"older regularity $\falpha$, one would need to exchange \eqref{ineq:coherence} in the coherence property with a term of the form
    \begin{equation*}
        \norm{\bE^\eta_\fx\square_{\fx,\fy}^\theta F(\psi_\fy^\flambda)}_p \lesssim \flambda^{\falpha}(\compAbs{\fx-\fy}+\flambda)^{(\fgamma-\falpha)_\theta+\fdelta_\eta+\tilde\falpha_{\theta^c}}\,,
    \end{equation*}
    where $\falpha$ would be the typical parameter encountered in reconstruction and $\tilde\falpha$ is the H\"older continuity inherited from sewing.
\end{remark}
\begin{remark}\label{rk_sew_fix_point}
By linking the stochastic multiparameter  reconstruction with stochastic multiparameter sewing, it is natural to ask if  we can obtain an alternative proof of Theorem  \eqref{spde_thm} using just the second tool. This is indeed possible provided that the resulting path $\fx\to \mathcal{I}^{[d]}\Xi_{\foo, \fx}$  lies in the right $C^{\falpha,\fdelta}L_m$ space in the same way as the combined action of the primitive operation defined in Corollary \ref{cor:IntegrationInCalpha} and the multiparameter stochastic reconstruction theorem. We briefly sketch how to adapt the proof of Theorem  \eqref{spde_thm} in this setting. Firstly, one writes the equation 
\[
   u(\fx) =  I(v)(\fx) + \int_{\foo}^{\fx} \sigma(u(\fy)) \xi(d\fy) + \int_\foo^\fx f(\fy)u(\fy)\frac{\partial^d }{\partial x_1\ldots \partial x_d}Z(d\fy) \,,
\]
via sewing as 
\[u(\fx) =  I(v)(\fx) + \mathcal{I}^{[d]}_{\foo,\fx}(\Xi^{\sigma}(u))+ \mathcal{I}^{[d]}_{\foo,\fx}(\Xi^{f}(u))\,,\]
where $\Xi^{\sigma}(u)$ and $\Xi^{f}(u)$ are the $2d$ parameter processes
\[\Xi^{\sigma}(u)_{\fu,\fv} := \sigma(u(\fu))\square_{\fu,\fv}^{[d]}B\,, \quad  \Xi^{f}(u)_{\fu,\fv} := f(\fu)u(\fu)\square_{\fu,\fv}^{[d]}Z\,, \]
Using the properties of $C^{\falpha,\fdelta}L_m$, in order to prove the same results of Theorem  \eqref{spde_thm} it is sufficient to show that for any $\mathcal{I}^{[d]}(\Xi^{\sigma}(u))\in C^{\frac{1}{2}\fone,+\infty}L_m$ and $\mathcal{I}^{[d]}(\Xi^{f}(u))\in C^{\fbeta,0}L_m$. From this one can apply Lemma  \ref{lem:sumOfCalpha} and obtain that the sum belongs to $C^{\falpha,\delta}L_m$ with $\fdelta= \fbeta-\frac{1}{2}\fone$ and close the fix point.
\end{remark}

\subsection{Example: Products between functions and stochastic noises}

One of the core advantages of reconstruction over sewing is its ability to handle objects of arbitrarily (i.e., negative) regularity. We did not fully exploit this in the solution of Equation \ref{equation_mild}, which could also be solved using stochastic multiparameter sewing, see Remark \ref{rk_sew_fix_point}.  In this last part, we present a simple example outside the reaches of stochastic multiparameter sewing where one need to apply the stochastic multiparameter reconstruction.

Let us first recall how sewing can be used to construct integrals against white noise. Since $\xi(1_{[\fs,\ft]}) := \int_{\fs}^\ft d\xi$ corresponds to the rectangular increment of a Brownian sheet $\square_{\fs,\ft}^{[d]} B$, the integral $\int Z_\fx d\xi(\fx)$ of some stochastic process $Z_\fx$ can be constructed by sewing the germ $\Xi_{\fs,\ft} = Z_\fs \square_{\fs,\ft}^{[d]} B$. However, if we replace $\xi$ with a more irregular noise $\zeta$, $\zeta(1_{[\fs,\ft]})$ ceases to be well defined and one can no longer construct $\int Z_\fx\zeta(d\fx)$ via stochastic multiparameter sewing. Therefore, multiplying a sufficiently smooth function $g$ by a low-regularity noise $\zeta$ serves as an example where reconstruction becomes necessary.

Throughout this section, we chose $\zeta\in C^{\falpha,\fdelta}L_m([0,T]^d)$ for arbitrarily low $\falpha<\foo$ and $\fdelta\ge \foo$. When $\alpha_i <0$ for at least one $i\in[d]$, multiparameter sewing no longer suffices to construct integrals of the form $\int Z_\fx\zeta(d\fx)$. If it is less than $-\frac 12$, it is outside the scope of stochastic multiparameter sewing. Our goal is to construct the product $g\cdot\zeta$ for deterministic functions $g\in C^\fbeta,\fbeta>\foo$, given a fixed noise $\zeta\in C^{\falpha,\fdelta}L_m$ for arbitrarily low $\falpha <\foo$, while minimizing the required regularity $\fbeta>\foo$ of $g$. We begin by extending the space $C^\fbeta$ to a broader regime $\fbeta >\fone$:

\begin{definition}\label{def:Calpha}
    Let $\fbeta\in[0,\infty)^d$ and choose $\fn\in\bN^d, \fgamma\in(0,1]^d$ such that $\fbeta = \fn+\fgamma$. If $\beta_i = 0$ for some $i\in[d]$, we choose $n_i = \gamma_i= 0$. Let $g:[0,T]^d\to \bR$ be such that all derivatives $\partial^\fk g$ exist for $\foo\le\fk\le\fn$. Then we say that $g\in C^\fbeta$ if the norm
    \[
        \norm{g}_{C^\fbeta} := \sum_{\fk\le\fn} \left(\norm{\partial^\fk g}_\infty + \norm{\partial^{\fk} g}_{C^\fgamma}\right)
    \]
    is finite. Here, $\norm{\cdot}_{C^\fgamma}$ is defined as in \eqref{eq_det_Calpha}.
\end{definition}

\begin{remark}
    In $d=1$, it is more common to use the norm
    \begin{equation}\label{eq:CalphaUsual}
        \norm{g}^*_{C^\beta} = \left(\sum_{k\le n}\norm{\partial^k g}_\infty\right) + \norm{\partial^ng}_{C^\gamma}\,,
    \end{equation}
    than the one used in Definition \ref{def:Calpha}. Although \eqref{eq:CalphaUsual} can be extended to a rectangular increment setting, it requires surprisingly heavy notation as one needs to deal with $\fk$, such that $k_i=n_i$ for some $i\in[d]$ but not all. Since these norms are equivalent on compact sets $[0,T]^d$, we prefer to work with the norm specified in Definition \ref{def:Calpha}.
\end{remark}

\noindent Let us briefly discuss the expected regularity $\fbeta$: Without the use of rectangular increments or stochastic techniques, we still have $\zeta\in C^{-\abs{\falpha}}$ and $g\in C^{\min_{i\in[d]} \beta_i}$. Thus, the Young product (e.g., \cite[Section 14]{zambotti2020}) guarantees the existence of $g\cdot\zeta$ whenever $\beta_i>\abs{\falpha}$ for all $i\in[d]$. If we use rectangular increments, one should be able to make full use of all derivatives of $g$, allowing one to relax the assumption to $\falpha+\fbeta > \foo$. Using stochastic techniques, we gain $\frac 12$ regularity in all directions as before, provided that $\fdelta$ is sufficiently large. This leads us to anticipate the following conditions on $\fbeta$:
\begin{align*}
\fbeta+\falpha &> -\frac 12\fone\\   
\fbeta+\falpha+\fdelta&>\foo\,.
\end{align*}
Proposition \ref{prop:YoungProduct} confirms that this is indeed the right set of conditions.

Before presenting our main results, we develop some Taylor estimates for the rectangular increment setting. These are motivated by \cite[Example 4.1]{zambotti2020}. Given a sufficiently smooth function $g:[0,T]^d\to\bR$, $i\in[d]$ and $n_i\in\bN$ we use the notation $\partial^{i,n_i} := \frac{\partial^{n_i}}{\partial x_i^{n_i}}$, and define the $n_i$-th Taylor approximation in the $i-th$ coordinate by
\[
    T^{i,n_i}_{x_i} g(\fz) := \sum_{k_i\le n_i}\frac{\partial^{i,k_i} g(\pi^i_{x_i}\fz)}{k_i!} (z_i-x_i)^{k_i}\,.
\]
Recall the remainder formula
\[
    (\Id-T^{i,n_i}_{x_i}) g(\fz) = R^ig(x_i,\fz,n_i) = \int_{x_i}^{z_i} \frac{(z_i-t_i)^{n_i-1}}{(n_i-1)!} (\partial^{i,n_i} g(\pi^i_{t_i}\fz)- \partial^{i,n_i} g(\pi^i_{x_i}\fz)) dt_i\,,
\]
for any $n_i\ge 1$. For $n_i=0$, we simply set
\[
    R^ig(x_i,\fz,n_i) = g(\fz)-g(\pi^i_{x_i}\fz)\,.
\]
For any $\theta\subset[d]$, $\fn\in\bN^d$ and $\fx\in[0,T]^d$, we can extend these definitions by setting
\[
    T^{\theta,\fn}_\fx := \prod_{i\in\theta} T^{i,n_i}_{x_i},\qquad R^\theta g(\fx,\fz,\fn) = \prod_{i\in\theta} (\Id-T^{i,n_i}_{x_i})g(\fz)\,.
\]
We write $T^\fn_\fx := T^{[d],\fn}_\fx$. A direct calculation leads to an explicit formula for $R^\theta g(\fx,\fz,\fn)$. Let $\tilde\theta\subset\theta$ be such that $i\in\tilde\theta$ if and only if $n_i\ge 1$. Then
\begin{equation}\label{def:remainderTaylorRegIncrement}
R^\theta g(\fx,\fz,\fn) := \int_{\fx_{\tilde\theta}}^{\fz_{\tilde\theta}} \prod_{i\in\tilde\theta} \frac{(z_i-t_i)^{n_i-1}}{(n_i-1)!} \square_{\pi^\theta_\fx\fz,\pi_\ft^{\tilde\theta}\fz}^\theta \partial^{\fn_\theta} g ~d\ft_\theta\,.
\end{equation}

\begin{lemma}\label{lem:TaylorRecIncrement}
    Let $g\in C^{\fbeta}, \fbeta>\foo$ and let $\fn,\fgamma$ be as in Definition \ref{def:Calpha}. Let $\theta\subset[d], \fx,\fy,\fz\in[0,T]^d$. Then for all $\fk\le\fn$, there exists $A(\fx,\fy,\fk,\theta)\in\bR$ such that
    \begin{align}
    \begin{split}\label{ineq:Taylor1}
        \square^\theta_{\fx,\fy} T^{\fn} g(\fz) &= \sum_{\fk\le\fn} (\fz-\fy)^\fk A(\fx,\fy,\fk,\theta)\\
        \abs{A(\fx,\fy,\fk,\theta)} &\lesssim \compAbs{\fx-\fy}^{\fbeta_\theta-\fk}\norm{g}_{C^\fbeta}\,.
        \end{split}
    \end{align}
    Here, the rectangular increment is taken with respect to the map $\fx\mapsto T^\fn_\fx g(\fz)$.
    
    Furthermore, if $g\in C^{r+\abs{\fn_\theta+\fone_\theta}}$, then for each $\theta\subset[d],~\fx,\fy,\fz\in[0,T]^d$ such that $\fx_\theta = \fz_\theta$ and $\flambda\in(0,1]^d$, there exists a $B(\fy,\fz,\flambda,\theta)\in\bR$, such that
    \begin{align}
    \begin{split}\label{ineq:Taylor2}
        R^\theta g(\fx,\flambda*\fy+\fz,\fn) &= \flambda^{\fn_\theta+1_\theta} B(\fy,\fz,\flambda,\theta)\,, \\
        \norm{B(\cdot,\fz,\flambda,\theta)}_{C^r}&\lesssim \norm{g}_{C^{r+\abs{\fn_\theta+\fone_\theta}}}
    \end{split}
    \end{align}
\end{lemma}

\begin{proof}Let us start with proofing \eqref{ineq:Taylor1}. Writing out the rectangular increment using \eqref{rectangular_increment}, one gets
    \begin{align*}
        \square_{\fx,\fy}^\theta T^\fn g(\fz) &= \left[\prod_{i\in\theta} (\pi^i_{y_i} -\Id) \prod_{i=1}^d T_{x_i}^{i,n_i}\right]g(\fz) \\
        &=\left[\prod_{i\in\theta} (T_{y_i}^{i,n_i}-T_{x_i}^{i,n_i}) \prod_{i\notin\theta} T_{x_i}^{i,n_i}\right]g(\fz)\,.
    \end{align*}
    We write $\tilde g(\fz) = T^{\theta^c,\fn}_\fx g(\fz)$. Observe that writing out the Taylor expansion $T^{i,n_i}_{y_i}$ for some $i\in\theta$ and expanding all $\partial^{i,k_i}g(\pi^i_{y_i}\fz)$ in $x_i$ leads to
     \begin{align*}
        T^{i,n_i}_{y_i} \tilde g(\fz) &= \sum_{k_i=0}^{n_i} \frac{1}{k_i!} \partial^{i,k_i} \tilde g(\pi_{y_i}^i\fz) (z_i-y_i)^{k_i}\\
        &= \sum_{k_i=0}^{n_i}  \frac{(z_i-y_i)^{k_i}}{k_i!} \left( \sum_{l_i= 0}^{n_i-k_i}\frac{1}{l_i!} \partial^{i,k_i+l_i} \tilde g(\pi^i_{x_i}\fz)(y_i-x_i)^{l_i} + R^i\tilde g(x_i,\pi_{y_i}^i\fz,n_i-k_i)\right)\,.
    \end{align*}
    Using the binomial formula leads to
    \begin{equation}\label{eq:NewSupportPoint}
    (T_{y_i}^{i,n_i} -T_{x_i}^{i,n_i})\tilde g(\fz) = \sum_{k_i=0}^{n_i} \frac{(z_i-y_i)^{k_i}}{k_i!} R^i\tilde g(x_i,\pi_{y_i}^i\fz,n_i-k_i)\,.
    \end{equation}
    Note that due to the definition of $R^i\tilde g$, we can write for any $j\neq i$
    \[
    (T^{j,n_j}_{y_j}- T^{j,n_j}_{x_j}) R^i\tilde g(x_i,\pi^i_{y_i}\fz,n_i) = \int_{x_i}^{y_i} \frac{(y_i-t_i)^{n_i-1}}{(n_i-1)!} (\pi^i_{t_i} - \pi^i_{x_i})(T^{j,n_j}_{y_j}- T^{j,n_j}_{x_j})\left(\partial^{i,n_i}\tilde g\right)(\fz) dt_i\,,
    \]
    allowing us to iteratively apply the identity \eqref{eq:NewSupportPoint} to all $i\in\theta$. This leads to the formula
    \begin{equation}\label{eq:rectangIncrementTaylorExpansion}
        \square_{\fx,\fy}^\theta T^\fn g(\fz) = \prod_{i\in\theta}(T^{i,n_i}_{y_i}- T^{i,n_i}_{x_i})\tilde g(\fz) = \sum_{\fk_\theta = \foo}^{\fn_\theta}\prod_{i\in\theta}\frac{(z_i-y_i)^{k_i}}{k_i!} R^\theta \tilde g(\fx,\pi^\theta_\fy \fz,\fn-\fk)\,.
    \end{equation}
    The term $R^\theta \tilde g(\fx,\pi^\theta_\fy \fz,\fn-\fk)$ still depends on $z_i$ for $i\notin\theta$, so we extend it as follows:
    \begin{align}\begin{split}\label{eq:ExpandingGTaylor}
        R^\theta T_\fx^{\theta^c,\fn} g(\fx,\pi^\theta_\fy \fz,\fn-\fk) &= \sum_{\fk_{\theta^c}\le\fn_{\theta^c}} \prod_{i\in\theta^c} \frac{(z_i-x_i)^{k_i}}{k_i!} \\&\qquad\int_{\fx_{\tilde\theta}}^{\fy_{\tilde\theta}} \prod_{i\in\tilde\theta} \frac{(y_i-t_i)^{n_i-k_i-1}}{(n_i-k_i-1)!} \square^\theta_{\pi_\fx^\theta\fz,\pi_{\ft_{\tilde\theta}}^{\tilde\theta}\fy} \partial^{\fn_\theta-\fk_{\theta}+\fk_{\theta^c}}g(\pi_\fx^{\theta^c}\cdot)d\ft_{\tilde\theta}\,,
    \end{split}\end{align}
    where $\tilde\theta\subset\theta$ such that $i\in\tilde\theta$ if and only if $n_i-k_i\ge 1$. Note that
    \begin{align*}
        A_1(\fx,\fy,\fk,\theta) :&= \int_{\fx_{\tilde\theta}}^{\fy_{\tilde\theta}} \prod_{i\in\tilde\theta} \frac{(y_i-t_i)^{n_i-k_i-1}}{(n_i-k_i-1)!} \square^\theta_{\pi_\fx^\theta\fz,\pi_{\ft_{\tilde\theta}}^{\tilde\theta}\fy} \partial^{\fn_\theta-\fk_\theta+\fk_{\theta^c}}g(\pi_\fx^{\theta^c}\cdot)d\ft_{\tilde\theta}\\
        &=\int_{\fx_{\tilde\theta}}^{\fy_{\tilde\theta}} \prod_{i\in\tilde\theta} \frac{(y_i-t_i)^{n_i-k_i-1}}{(n_i-k_i-1)!} \square^\theta_{\fx,\pi_{\ft_{\tilde\theta}}^{\tilde\theta}\fy} \partial^{\fn_\theta-\fk_\theta+\fk_{\theta^c}}gd\ft_{\tilde\theta}
    \end{align*}
    does not depend on $\fz$ at all. Using the definition of $\norm{g}_{C^\fbeta}$, it is easy to see
    \[
        \abs{A_1(\fx,\fy,\fk,\theta)} \lesssim \abs{\fx-\fy}^{\fbeta-\fk}_{\text{c},\theta} \norm{g}_{C^\fbeta}\,.
    \]
    Putting this back into \eqref{eq:ExpandingGTaylor} and using the binomial formula on $(z_i-x_i)^{k_i}, i\in\theta^c$ leads to
    \begin{equation}\label{def_A_2}
        R^\theta T_\fx^{\theta^c,\fn} g(\fx,\pi^\theta_\fy \fz,\fn-\fk) = \sum_{\fl_{\theta^c}\le\fk_{\theta^c}\le\fn_{\theta^c}} (\fz-\fy)^{\fl_{\theta^c}}\underbrace{\prod_{i\in\theta^c} \frac{(y_i-x_i)^{k_i-l_i}}{l_i! (k_i-l_i)!}A_1(\fx,\fy,\fk,\theta)}_{A_2(\fx,\fy,\fk,\fl_{\theta^c},\theta)}\,,
    \end{equation}
    where $A_2$ clearly fulfils
    \[
    \abs{A_2(\fx,\fy,\fk,\fl_{\theta^c},\theta)} \lesssim \abs{\fx-\fy}^{\fbeta_\theta-\fk_\theta+\fk_{\theta^c}-\fl_{\theta^c}}_{\text{c}} \norm{g}_{C^\fbeta}\,.
    \]
    By summing out $\fk_{\theta^c}$, we get
    \begin{align}\begin{split}\label{def_A_3}
        R^\theta T_\fx^{\theta^c,\fn} g(\fx,\pi^\theta_\fy \fz,\fn-\fk) &= \sum_{\fl_{\theta^c}\le\fn_{\theta^c}} (\fz-\fy)^{\fl_{\theta^c}}\underbrace{\sum_{\fl_{\theta^c}\le\fk_{\theta^c}\le\fn_{\theta^c}}A_2(\fx,\fy,\fk,\fl_{\theta^c},\theta)}_{A_3(\fx,\fy,\fk_{\theta},\fl_{\theta^c},\theta)}\\
        \abs{A_3(\fx,\fy,\fk_\theta,\fl_{\theta^c},\theta)} &\lesssim \abs{\fx-\fy}^{\fbeta_\theta-\fk_\theta-\fl_{\theta^c}}_{\text{c}} \norm{g}_{C^\fbeta}
    \end{split}\end{align}
    Here, $\sum_{\fl_{\theta^c}\le\fk_{\theta^c}\le\fn_{\theta^c}}$ is seen as a sum over $\fk_{\theta^c}$ with fixed $\fl_{\theta^c}$. Putting \eqref{def_A_3} back into \eqref{eq:rectangIncrementTaylorExpansion} with $A(\fx,\fy,\fk,\theta) := A_3(\fx,\fy,\fk_\theta,\fk_{\theta^c},\theta)$ gives \eqref{ineq:Taylor1}.

    We now pass on to prove \eqref{ineq:Taylor2}. If $g$ has more than $n_i$ derivatives in some direction $i\in[d]$, one can work with the easier remainder formula 
    \begin{equation}
        R^ig(x_i,\fz,n_i) =\int_{x_i}^{z_i} \frac{(z_i-t_i)^{n_i}}{n_i!} \partial^{i,n_i+1} g(\pi^i_{t_i}\fz) dt_i\,,
    \end{equation}
    leading to
    \begin{equation}
        R^\theta g(\fx,\fz,\fn) = \int_{\fx_\theta}^{\fz_\theta} \prod_{i\in\theta}\frac{(z_i-t_i)^{n_i}}{n_i!} \partial^{\fn_\theta+\fone_\theta} g(\pi_{\ft_\theta}^\theta \fz) d\ft_\theta\,.
    \end{equation}
    The rest of the proof follows by using the substitution $\ft = \flambda*\tilde\ft + \fx$ and calculating (recall $\fx_\theta = \fz_\theta$)
    \begin{align*}
    R^\theta g(\fx,\flambda*\fy+\fz,\fn) &= \int_{\fx_\theta}^{\fy_\theta*\flambda_\theta +\fx_\theta}\prod_{i\in\theta}\frac{(\lambda_i y_i -x_i-t_i)^{n_i}}{n_i!} \partial^{\fn_\theta+\fone_\theta} g(\pi^\theta_{\ft_\theta} (\flambda*\fy+\fz)) d\ft_\theta\\
    &= \flambda^{\fn_\theta+\fone_\theta} \int_\foo^{\fy_\theta} \prod_{i\in\theta}\frac{(y_i-\tilde t_i)^{n_i}}{n_i!} \partial^{\fn_\theta+\fone_\theta} g(\flambda*\pi^\theta_{\tilde\ft}\fy+\fz)d\tilde\ft_\theta\\
    &= \flambda^{\fn_\theta+\fone_\theta} B(\fy,\fz,\flambda,\theta)\,.
\end{align*}
The definition of $B$ immediately gives $\norm{B(\cdot,\fz,\flambda,\theta)}_{C^r} \lesssim \norm{g}_{C^{r+\abs{\fn_\theta+\fone_\theta}}}$.
\end{proof}

\noindent With this, we can show the main result of this section. Let $g\in C^\fbeta$ and consider the germ
\[
F_\fx(\psi) = \zeta(T_\fx^\fn g\cdot\psi)\,,
\]
where we fix $\fn,\fgamma$ as in Definition \ref{def:Calpha}. $\zeta\in C^{\falpha,\fdelta}L_m$ gives us that the germ $(F_\fx)_{\fx\in[0,T]^d}$ is adapted to $(\cF_\fx)_{\fx\in[0,T]^d}$. As it turns out, it is also stochastically coherent, allowing the use of reconstruction to find the product between $g$ and $\zeta$. We fix $r\in\bN$ sufficiently large, such that $\zeta(\psi)$ is well defined for all $\psi\in C^r$.

\begin{proposition}\label{prop:YoungProduct}
    $(F_\fx)$ is stochastically $(\falpha, \falpha+\fbeta,\fdelta)$-coherent. It follows in particular that it can be reconstructed, as long as $2\le m<\infty$ and
    \begin{equation}\label{cond:TaylorRecon}
        \falpha+\fbeta > -\frac 12\fone\,,\qquad \alpha+\fbeta+\fdelta>\foo\,. 
    \end{equation}
    Furthermore, if $g$ is smooth, the reconstruction will be given by the canonical product between $\zeta$ and $g$
    \[
        \cR^{[d]}(F)(\psi) = \zeta(g\cdot\psi)
    \]
    for any test function $\psi\in C_c^{r}$.
\end{proposition}

\begin{remark}
    For smooth $g$, one can explicitly write $\cR^\theta(F)$ for each $\theta\subset[d]$, since the reconstruction ``undoes'' the Taylor-approximation. That is, for each $\theta\subset[d]$, $\fx\in[0,T]^d$ and $\psi\in C_c^{r}$ we have
    \begin{equation}\label{eq:ReconstructionTaylor}
        \cR^\theta_\fx(F)(\psi) = \zeta(T^{\theta^c,\fn}_\fx g\cdot\psi)\,.
    \end{equation}
\end{remark}


\noindent This result is the natural extension of \cite{zambotti2020}[Section 14] to the stochastic rectangular increment setting.

\begin{proof}
    Let $\eta\subset\theta\subset[d]$, $\fx,\fy\in[0,T]^d$ with $\fx_\theta\le\fy_\theta$, $\flambda\in(0,1]^d$ and $\psi\in C_c^r([-1,1]^d)$ such that $\supp(\psi_\fy^\flambda)\subset[0,T]^d$. Then \eqref{ineq:Taylor1} gives us
    \begin{align*}
        \norm{\bE_\fx^\eta\square^\theta_{\fx,\fy} F(\psi_\fy^\flambda)}_m &= \norm{\bE^\eta_\fx \zeta(\square^\theta_{\fx,\fy}T^{\fn}g\cdot\psi_\fy^\flambda )} \\
        &\le \sum_{\fk\le\fn} \abs{A(\fx,\fy,\fk,\theta)} \norm{\zeta((\cdot-\fy)^\fk\psi_\fy^\flambda)}_m\,.
    \end{align*}
    Recall 
    \[
    (\cdot-\fy)^\fk\psi_\fy^\flambda = \flambda^\fk\tilde\psi_\fy^\flambda\,,\quad\text{where}\quad \tilde\psi(\fz) = \fz^\fk\psi(\fz)\,.
    \]
    This together with $\zeta\in C^{\falpha,\fdelta}L_m$ and \eqref{ineq:Taylor1} then gives
    \begin{align*}
        \norm{\bE_\fx^\eta\square^\theta_{\fx,\fy} F(\psi_\fy^\flambda)}_m &\lesssim \sum_{\fk\le\fn}\compAbs{\fx-\fy}^{\fbeta_\theta-\fk} \flambda^{\fk+\falpha+\fdelta_\eta}\norm{g}_{C^\fbeta}\\
        &\lesssim \flambda^{\falpha}(\compAbs{\fx-\fy}+\flambda)^{\fbeta}_\theta (\compAbs{\fx-\fy}+\flambda)^{\fdelta}_\eta \norm{g}_{C^\fbeta}\,,
    \end{align*}
    showing stochastic $(\falpha,\falpha+\fbeta+\fdelta)$-coherence. Thus, the germ can be reconstructed as long as \eqref{cond:TaylorRecon} holds.

    It remains to show that for smooth $g$, the reconstruction is given by $\cR^{[d]} (F)(\psi) = \zeta(\psi\cdot g)$. We do this by showing the stronger condition \eqref{eq:ReconstructionTaylor}. It is clear that the family $(\cR^\theta_\fx(F))_{\theta\subset[d],\fx\in[0,T]^d}$,
    \[
        \cR^\theta_\fx(F)(\psi) = \zeta(T^{\theta^c,\fn}_\fx g\cdot\psi)\,,
    \]
    fulfils conditions 1.-3. from the reconstruction theorem \ref{theo:reconstruction}, so we only show 4. Let $\psi,\fz,\fx,\flambda$ and $\theta$ be as in Condition 4. We write
    \
    \begin{align*}
    \sum_{\hat\theta\subset\theta}(-1)^{\sharp{\hat\theta}} \mathcal{R}^{\hat\theta}_\fx(F)(\psi^{\flambda}_\fz) &= \sum_{\hat\theta\subset\theta}(-1)^{\sharp{\hat\theta}} \zeta(\psi^{\flambda}_\fz\cdot T^{\hat\theta^c,\fn}_\fx g)\\
    &=(-1)^{\sharp\theta} \zeta(\psi_\fz^\flambda\cdot\prod_{i\in\theta}(\Id-T_{x_i}^{i,n_i})T^{\theta^c,\fn}_\fx g) \\
    &= (-1)^{\sharp\theta} \zeta(\psi_\fz^\flambda\cdot R^\theta \tilde g(\fx,\cdot,\fn))\,,
    \end{align*}
    where $\tilde g := T_\fx^{\theta^c,\fn} g$. We use \eqref{ineq:Taylor2} to write
    \[
        \tilde\psi(\fy) := \psi(\fy)\cdot R^\theta\tilde g(\fx,\flambda*\fy+\fz,\fn) = \psi(\fy)\cdot\flambda^{\fn_\theta+\fone_\theta} B(\fy,\fz,\flambda,\theta)\,.
    \]
    The definition of $\tilde\psi$ implies
    \[
        \tilde\psi_\fz^\flambda = \psi_\fz^\flambda\cdot R^\theta \tilde g(\fx,\cdot,\fn)\,.
    \]
    Note that $\eta\subset\theta$ and $\fx_\theta = \fz_\theta$ implies $\bE^\eta_\fx = \bE^\eta_\fz$. Thus, we can use $\zeta\in C^{\falpha,\fdelta}L_m$ to conclude
    \begin{align*}
        \norm{E^\eta_\fx\sum_{\hat\theta\subset\theta}(-1)^{\sharp{\hat\theta}} \mathcal{R}^{\hat\theta}_\fx(F)(\psi^{\flambda}_\fz)}_m &= \norm{\bE_\fz^\eta \zeta(\tilde\psi_\fz^\lambda)}_m \\
        &\lesssim \flambda^{\falpha+\fdelta_\eta} \norm{\tilde\psi}_{C^r_c}\\
        &\lesssim \flambda^{\falpha+\delta_\eta} \flambda^{\fn_\theta+\fone_\theta} \norm{\psi\cdot B(\cdot,\fz,\flambda,\theta)}_{C^r_c} \\
        &\le \flambda^{\falpha+\fbeta_\theta+\fdelta_\eta} \norm{\psi\cdot B(\cdot,\fz,\flambda,\theta)}_{C^r_c}\,.
    \end{align*}
    Condition 4 then follows from $\norm{B(\cdot,\fz,\flambda,\theta)}_{C^r}$ being bounded by \\$\norm{\tilde g}_{C^{r+\abs{\fn_\theta+\fone_\theta}}} \lesssim \norm{g}_{C^{r+\abs{\fn+\fone_\theta}}}$.
\end{proof}

\bibliographystyle{alpha}
\bibliography{bibliography}

\newcommand{\etalchar}[1]{$^{#1}$}
\begin{thebibliography}{SCF{\etalchar{+}}21}

\bibitem[BHK23]{multiparameterStochSewing}
Florian Bechtold, Fabian~A. Harang, and Hannes~Lutz Kern.
\newblock A multiparameter stochastic sewing lemma and the regularity of local
  times associated to gaussian sheets.
\newblock {\em arXiv e-prints arXiv:2307.11527}, 2023.

\bibitem[BHR24]{bechtold2022}
Florian Bechtold, Fabian~A. Harang, and Nimit Rana.
\newblock Non-linear {Young} equations in the plane and pathwise regularization
  by noise for the stochastic wave equation.
\newblock {\em Stoch. Partial Differ. Equ., Anal. Comput.}, 12(2):857--897,
  2024.

\bibitem[BL23]{Broux2023}
Lucas Broux and David Lee.
\newblock Besov reconstruction.
\newblock {\em Potential Anal.}, 59(4):1875--1912, 2023.

\bibitem[Bra19]{Brault2019}
Antoine Brault.
\newblock Solving rough differential equations with the theory of regularity
  structures.
\newblock In {\em S\'eminaire de probabilit\'es}, pages 127--164. Cham:
  Springer, 2019.

\bibitem[BZ22]{Broux2022}
Lucas Broux and Lorenzo Zambotti.
\newblock The {Sewing} lemma for {{\(0 < \gamma \leq 1\)}}.
\newblock {\em J. Funct. Anal.}, 283(10):34, 2022.
\newblock Id/No 109644.

\bibitem[Cai70]{Cairoli1970}
R.~Cairoli.
\newblock Une in{\'e}galit{\'e} pour martingales {\`a} indices multiples et ses
  applications.
\newblock S{\'e}min. {Probab}. {IV}, {Univ}. {Strasbourg} 1968/69, {Lect}.
  {Notes} {Math}. 124, 1-27, 1970.

\bibitem[Cai72]{cairoli72}
Renzo Cairoli.
\newblock Sur une {\'e}quation diff{\'e}rentielle stochastique.
\newblock {\em C. R. Acad. Sci., Paris, S{\'e}r. A}, 274:1739--1742, 1972.

\bibitem[CG14]{chouk2014roughsheets}
Khalil Chouk and Massimiliano Gubinelli.
\newblock Rough sheets.
\newblock {\em arXiv e-prints arXiv:1406.7748}, 2014.

\bibitem[CW75]{Cairoli1975}
R.~Cairoli and John~B. Walsh.
\newblock Stochastic integrals in the plane.
\newblock {\em Acta Math.}, 134:111--183, 1975.

\bibitem[CZ20]{zambotti2020}
Francesco Caravenna and Lorenzo Zambotti.
\newblock Hairer's reconstruction theorem without regularity structures.
\newblock {\em EMS Surv. Math. Sci.}, 7(2):207--251, 2020.

\bibitem[Dau88]{debauchies}
Ingrid Daubechies.
\newblock Orthonormal bases of compactly supported wavelets.
\newblock {\em Commun. Pure Appl. Math.}, 41(7):909--996, 1988.

\bibitem[FHL21]{Friz_Le_Hocquet21}
Peter~K. {Friz}, Antoine {Hocquet}, and Khoa {L{\^e}}.
\newblock {Rough stochastic differential equations}.
\newblock {\em arXiv e-prints arXiv:2106.10340}, 2021.

\bibitem[Hai14]{Hairer2014}
M.~Hairer.
\newblock A theory of regularity structures.
\newblock {\em Invent. Math.}, 198(2):269--504, 2014.

\bibitem[Har21]{harang21}
Fabian~A. Harang.
\newblock An extension of the sewing lemma to hyper-cubes and hyperbolic
  equations driven by multi-parameter {Young} fields.
\newblock {\em Stoch. Partial Differ. Equ., Anal. Comput.}, 9(3):746--788,
  2021.

\bibitem[Imk85]{Imkeller85}
Peter Imkeller.
\newblock A stochastic calculus for continuous {N}-parameter strong
  martingales.
\newblock {\em Stochastic Processes Appl.}, 20:1--40, 1985.

\bibitem[Ker24]{kern2021}
Hannes~Lutz Kern.
\newblock A stochastic reconstruction theorem.
\newblock {\em Ann. Inst. Henri Poincar{\'e}, Probab. Stat.}, 60(4):2468--2507,
  2024.

\bibitem[Kho02]{Khoshnevisan02}
Davar Khoshnevisan.
\newblock {\em Multiparameter processes. {An} introduction to random fields}.
\newblock Springer Monogr. Math. New York, NY: Springer, 2002.

\bibitem[L{\^e}20]{le2020}
Khoa L{\^e}.
\newblock A stochastic sewing lemma and applications.
\newblock {\em Electron. J. Probab.}, 25:55, 2020.
\newblock Id/No 38.

\bibitem[Mey95]{meyer}
Yves Meyer.
\newblock {\em Wavelets and operators.}, volume~37 of {\em Camb. Stud. Adv.
  Math.}
\newblock Cambridge: Cambridge University Press, 1995.

\bibitem[MSS94]{millet94}
Annie Millet and Marta Sanz-Sol{\'e}.
\newblock The support of the solution to a hyperbolic {SPDE}.
\newblock {\em Probab. Theory Relat. Fields}, 98(3):361--387, 1994.

\bibitem[Nor95]{norris95}
James~R. Norris.
\newblock Twisted sheets.
\newblock {\em J. Funct. Anal.}, 132(2):273--334, 1995.

\bibitem[Nua95]{nualart1995malliavin}
David Nualart.
\newblock {\em The {Malliavin} calculus and related topics}.
\newblock Probab. Appl. New York, NY: Springer-Verlag, 1995.

\bibitem[NY89a]{nualart89}
David Nualart and James~J. Yeh.
\newblock Dependence on the boundary condition for linear stochastic
  differential equations in the plane.
\newblock {\em Stochastic Processes Appl.}, 33(1):45--61, 1989.

\bibitem[NY89b]{nualart89A}
David Nualart and James~J. Yeh.
\newblock Existence and uniqueness of a strong solution to stochastic
  differential equations in the plane with stochastic boundary process.
\newblock {\em J. Multivariate Anal.}, 28(1):149--171, 1989.

\bibitem[QST07]{tindel07}
Llu{\'{\i}}s Quer-Sardanyons and Samy Tindel.
\newblock The 1-d stochastic wave equation driven by a fractional {Brownian}
  sheet.
\newblock {\em Stochastic Processes Appl.}, 117(10):1448--1472, 2007.

\bibitem[SCF{\etalchar{+}}21]{kernel_pde}
Cristopher Salvi, Thomas Cass, James Foster, Terry Lyons, and Weixin Yang.
\newblock The signature kernel is the solution of a {Goursat} {PDE}.
\newblock {\em SIAM J. Math. Data Sci.}, 3(3):873--899, 2021.

\bibitem[TC15]{tindel15}
Samy Tindel and Khalil Chouk.
\newblock Skorohod and {Stratonovich} integration in the plane.
\newblock {\em Electron. J. Probab.}, 20:39, 2015.
\newblock Id/No 39.

\bibitem[Wal86]{walsh1986}
John~B. Walsh.
\newblock An introduction to stochastic partial differential equations.
\newblock {\'E}cole d'{\'e}t{\'e} de probabilit{\'e}s de {Saint}-{Flour} {XIV}
  - 1984, {Lect}. {Notes} {Math}. 1180, 265-437, 1986.

\bibitem[ZK23]{ZorinKranich2022}
Pavel Zorin-Kranich.
\newblock The reconstruction theorem in quasinormed spaces.
\newblock {\em Rev. Mat. Iberoam.}, 39(4):1233--1246, 2023.

\end{thebibliography}

\end{document}